\documentclass[10pt]{scrartcl}
\usepackage[english]{babel}
\usepackage{tikz}
\usepackage[above, below,section]{placeins}

\usepackage{url}

\usepackage{xcolor}

\usepackage{a4wide}

\usepackage{amsmath,amssymb,mathrsfs,bbm,cases}
\usepackage{mathtools}
\usepackage{amsthm}

\usepackage[numbers,sort]{natbib}

\usepackage{graphicx}
\usepackage{subcaption}

\usepackage{enumerate}

\usepackage{hyperref}

\usepackage{appendix}

\textheight22cm
\textwidth155mm

\allowdisplaybreaks

%theoremstyles%%%%%%%%%%%%%%%%%%%%%%%%%%%%%%%%%%%%%%%%%%%%%%%%%%%%%%%%

\theoremstyle{plain}
\newtheorem{thm}{Theorem}[section]
\newtheorem{prop}[thm]{Proposition}
\newtheorem{lem}[thm]{Lemma}
\newtheorem{cor}[thm]{Corollary}

\theoremstyle{definition}
\newtheorem{ex}[thm]{Example}

\theoremstyle{remark}
\newtheorem{rem}[thm]{Remark}

%commands%%%%%%%%%%%%%%%%%%%%%%%%%%%%%%%%%%%%%%%%%%%%%%%%%%%%%%%%%%%%%
%number systems

\newcommand{\R}{\mathbbm{R}}

\newcommand{\N}{\mathbbm{N}}

%sigma fields, filtrations
\newcommand{\F}{\mathscr{F}}

%probability, expectation, variance, covarince
\renewcommand{\P}{\mathbf{P}}

\newcommand{\E}{\mathbf{E}}

\newcommand{\Cov}{\mathrm{Cov}}

%distributions

%indicator function
\newcommand{\1}{\mathbf{1}}

%independence
%\newcommand{\independent}{\protect\mathpalette{\protect\independenT}{\perp}}

%independent

%Landau symbol
\renewcommand{\O}{\mathcal O}

%roman letters

\newcommand{\e}{\mathrm{e}}

\newcommand{\pen}{\mathrm{pen}}

%epsilon, phi
\newcommand{\eps}{\varepsilon}

\newcommand{\vt}{\vartheta}

%argmax
\DeclareMathOperator*{\argmax}{arg\,max} 
\DeclareMathOperator*{\argmin}{arg\,min}

%%%%%%%%%%%%%%%%%%%%%%%%%%%%%%%%%%%%%%%%%%%%%%%%%%%%%%%%%%%%%%%%%%%%%%

\title{Nonparametric calibration for stochastic reaction-diffusion equations based on discrete observations}
\author{Florian Hildebrandt and Mathias Trabs\footnote{M.T. acknowledges financial support by DFG via the Heisenberg grant TR 1349/4-1.}}
\date{Universit\"{a}t Hamburg and Karlsruhe Institute of Technology}

\begin{document}
\maketitle
\begin{abstract}
%Nonparametric estimation for semilinear SPDEs, namely stochastic reaction-diffusion equations in one space dimension, is studied.  We consider observations of the solution field on a discrete grid in time and space with infill asymptotics in both coordinates. Firstly, we derive a nonparametric estimator for the reaction function specifying the underlying equation. The estimate is chosen from a finite-dimensional function space based on a least squares criterion. Oracle inequalities with respect to both the empirical and $L^2$-risk provide conditions for the estimator to achieve the usual nonparametric rate of convergence. Adaptivity is provided via model selection. Secondly, based on a precise analysis of the H\"older regularity of the solution process and its nonlinear component, we show that  the asymptotic properties of diffusivity and volatility estimators derived from realized quadratic variations in the linear setup generalize to the semilinear SPDE. In particular, we obtain a rate-optimal joint estimator of the two parameters. Both steps of the calibration can be carried out simultaneously without prior knowledge of the parameters.\\
Nonparametric estimation for semilinear SPDEs, namely stochastic reaction-diffusion equations in one space dimension, is studied.  We consider observations of the solution field on a discrete grid in time and space with infill asymptotics in both coordinates. Firstly, we derive a nonparametric estimator for the reaction function of the underlying equation. The estimate is chosen from a finite-dimensional function space based on a least squares criterion. Oracle inequalities provide conditions for the estimator to achieve the usual nonparametric rate of convergence. Adaptivity is provided via model selection. Secondly, we show that  the asymptotic properties of realized quadratic variation based estimators for the diffusivity and volatility carry over from linear SPDEs. In particular, we obtain a rate-optimal joint estimator of the two parameters. The result relies on our precise analysis of the H\"older regularity of the solution process and its nonlinear component, which may be of its own interest. Both steps of the calibration can be carried out simultaneously without prior knowledge of the parameters.
\end{abstract}

\noindent\textbf{Keywords:}  infill asymptotics,  realized quadratic variation, model selection, semilinear stochastic partial differential equations.
\smallskip

\noindent\textbf{2010 MSC:} 60F05, 62G05, 60H15 

\section{Introduction}
In view of a growing number of stochastic partial differential equation (SPDE) models used in the natural sciences as well as in mathematical finance, their data-based calibration has become an increasingly active field of research during the last years. 
 By studying stochastic reaction-diffusion equations, this article advances the statistical theory for SPDEs based on discrete observations in time and space to a semilinear framework and provides a first nonparametric estimator for the reaction function.
Specifically, we consider the mild solution $X=(X_t(x), x\in [0,1], t\geq 0)$ of the SPDE
\begin{equation} \label{eq:SPDE_intro}
\begin{cases}
dX_t(x) = \big(\vt \frac{\partial^2}{\partial x^2} X_t(x) + f(X_t(x)) \big)\,dt +\sigma dW_t(x),\\
 X_t(0)=X_t(1)=0,\\
X_0(x) = \xi(x) ,
\end{cases} 
\end{equation}
with  $dW$ denoting a white noise in space and time and a random initial condition $\xi\colon[0,1]\to \R$. The equation is parameterized by the volatility $\sigma>0$, the diffusivity
$\vt>0$  and a possibly nonlinear reaction function $f\colon\R \to \R$ on which we impose no
parametric assumptions.   Reaction-diffusion equations  are typically used to model a scenario where local production
of some quantity $X$ with the nonlinear reaction function $f$ competes with a linear diffusion effect while
undergoing internal fluctuations, see \cite{Haken13} for the physical background. Of particular interest is the case where
$f$ is a polynomial of odd degree with a negative leading coefficient. With $f \equiv 0$, the model also includes the classical linear stochastic heat equation
on an interval.

 A complete calibration of the model decomposes into the parametric estimation of $\sigma^2$ and $\vt$ and the nonparametric estimation of $f$.
%Our aim is to infer on the model parameters $\sigma^2$, $\vt$ and $f$
We study the practically most natural situation where $X$ is observed at a discrete grid $\{(t_i,y_k)\}_{i=0,\ldots,N,\, k = 0,\ldots,M} \subset [0,T]\times [0,1]$  in time and space with $T > 0$ either fixed or $T \to \infty$. Our focus lies on a high frequency regime in time and space where both the number
$M$ of spatial observations and the number $N$ of temporal observations tend to infinity. \\

%This work extends the theory on statistical estimation for SPDEs
%based on fully discrete observations into two new directions: 
%First of all, by considering reaction-diffusion equations, we work in a nonlinear SPDE framework. Secondly, by estimating the function $f$ on a whole continuum, we treat a fully nonparametric problem. Moreover, our estimator for $f$ is the first nonparametric estimator of the nonlinearity in SPDEs  regardless of the observation scheme.

So far, the vast majority of literature on statistics for SPDEs deals with linear equations, see \cite{Cialenco18, lototsky2009} for reviews of various available approaches and observation schemes.
The statistical analysis of semilinear SPDEs is still limited. Within the \emph{spectral approach}, where one considers the observations $\langle X_t,e_k\rangle, t\in [0,T],k\leq K \to \infty$ with $(e_k)_{k \geq 1}$ being the eigenbasis of the differential operator in the underlying SPDE, \citet{Cialenco11} have considered diffusivity estimation for the stochastic two-dimensional Navier-Stokes equation. This theory has been generalized by \citet{Pasemann20} as well as \citet{PasemannFleming20} to more general equations. Diffusivity estimation based on the \emph{local measurements} approach due to \citet{altmeyerReiss2019} was studied in a semilinear framework in \citet{Altmeyer20, AltmeyerBretschneider20}. There, the observations are given by $\langle X_t,K_h \rangle, t\in [0,T],$ for a kernel function $K_h$ that localizes in space as $h\to 0$. The behavior of power variations (in space, at a fixed time instance) for semilinear SPDEs has been studied only very recently by \citet{Cialenco21}. 
%In both approaches, an analysis of the semilinear framework is facilitated by regarding the solution process as a perturbation of the linear case so that the corresponding estimation methods retain their validity under certain assumptions on the nonlinearity in the SPDE. More precisely, the argument is that, within the decomposition  $X_t= X_t^0 +N_t$ with $X_t^0$ being the solution to the corresponding linear system, the regularity of the nonlinear component $N_t$  exceeds the regularity of $X_t^0$ in the Sobolev spaces $\mathcal D((-A_\vt)^\gamma),\,\gamma >0$, where $A_\vt=\vt A$ is the driving differential operator. \\

 To estimate the parameters $\sigma^2$ and $\vt$ based on fully discrete observations, we extend the realized quadratic variation based methods developed in \citet{Hildebrandt19} and \citet{Bibinger18} to the semilinear framework \eqref{eq:SPDE_intro}. Similar methods have also been applied to various linear SPDE models, e.g., in \cite{torresEtAl2014, Cialenco17, shevchenkoEtAl2019, Khalil19, Chong18}. Specifically, assuming $f \in C^1(\R)$ in the model \eqref{eq:SPDE_intro}, we show that the asymptotic properties of the realized quadratic variations based on space, time and the double increments
derived for the case $f\equiv 0$  are robust with respect to a nonlinear perturbation of the equation. In particular, using double increments $D_{ik}:= X_{t_{i+1}}(y_{k+1})-X_{t_{i+1}}(y_k)-X_{t_{i}}(y_{k+1})+X_{t_{i}}(y_k)$, the parameters $(\sigma^2,\vt)$ can be estimated jointly at the rate $$\Big(\frac{\delta^3\vee \Delta^{3/2}}{T}\Big)^{1/2}\qquad \text{with}\qquad \delta:= y_{k+1}-y_k,\quad \Delta := t_{i+1}-t_i.$$ This rate is generally slower than the usual parametric rate $(MN)^{-1/2}$, unless a \emph{balanced sampling design} $\delta \eqsim \sqrt \Delta$ is present. Nevertheless, in view of an immediate extension of the lower bound from \cite{Hildebrandt19} to the case $T \to \infty$, this rate is seen to be optimal  up to a logarithmic factor. %In particular, the resulting estimator $\hat\theta$ is applicable for the plug-in approach to reconstruct the reaction function.

As for spectral and local observations, we analyze the diffusivity and volatility estimators by regarding the semilinear equation as a perturbation of the linear case, i.e., we decompose  $X_t= X_t^0 +N_t$ with $X_t^0$ being the solution to the corresponding linear system. The linear component $X_t^0$ contains all necessary information on $(\sigma^2,\vt)$. Exploiting that the regularity of the nonlinear component $N_t$ exceeds the regularity of $X_t^0$, statistical methods for the linear equation turn out to be applicable also in the semilinear model. Since our estimators are based on quadratic variations, we have to quantify the regularity of $N_t$ in terms of H\"{o}lder spaces. Besides the concrete application in statistics, our detailed account of the H\"{o}lder regularity in time and space also provides structural insights from a probabilistic point of view. 
\\

%To facilitate the analysis, we would have to consider a more regular noise process than a space-time white noise. Indeed, in order to obtain the higher order regularity of the nonlinear component, it is necessary for the solution process to take values in the domain of the nonlinearity. This is generally not the case for equations driven by space-time white noise and we restrict our analysis to nonlinearities of Nemytskii-type where $F(u)=f\circ u$. Also, a more regular noise process would be necessary to obtain the existence of a function-valued solution process to equation \eqref{eq:SPDE_intro} in space dimension $d\geq 2$, which is a crucial requirement for dealing with discrete observations. \\

When we aim for the reaction function,  all relevant information is encoded in the nonlinear component $N_t$. The estimation of the nonlinearity in semilinear SPDEs  was conducted by \citet{Goldys02} who have studied a parametric problem, assuming a full observation $(X_t)_{t\leq T}$ as $ T\to \infty$. Somewhat related, \citet{PasemannFleming20} have studied the estimation of the diffusivity parameter when the nonlinearity is only known up to a finite-dimensional nuisance parameter by using a joint maximum likelihood approach for spectral observations.  Since it follows, e.g., from the absolute continuity result in \citet{KoskiLoges85} that even the simple linear function $f(x)=\vt_0 x$ for some $\vt_0<0$ cannot be identified in finite time (unless $\sigma \downarrow 0$), consistent estimation of the reaction function requires $T \to \infty$. Very recently \citet{gaudlitzReiss2022} have considered estimation of the nonlinearity for full observations  $(X_t)_{t\leq T}$ and some extensions to discrete observations in an alternative asymptotic regime where $T$ is fixed, but $\sigma\downarrow0$.  

Nonparametric estimation of the reaction function $f$ turns out to be comparable to nonparametric drift estimation for finite-dimensional stochastic ordinary differential equations (SODEs). The latter problem, has been addressed in the statistics literature for high-frequency observations in numerous works, see, e.g., \cite{Hoffmann99, Comte07}.  The starting point for drift estimation for SODEs is to formulate a regression model based on the discrete observations. Generalizing this approach to the infinite-dimensional SPDE framework, our key insight is the regression-type decomposition 
$$\frac{X_{t+\Delta}-S(\Delta)X_t}{\Delta} = f(X_t) + \text{``stochastic noise term"} + \text{``negligible remainder terms"}$$
where $(S(t))_{t\geq 0}$ is the strongly continuous semigroup on $L^2((0,1))$ generated by the operator $\vt \frac{\partial^2}{\partial x^2}$. Note that, in contrast to SODEs, the time increments in the response variables have to be corrected in terms of the semigroup due to its presence in the nonlinear component $N_t = \int_0^t S(t-s)f(X_s)\,ds$. Doing so, the contribution of the linear component to the right hand side of the regression model automatically becomes stochastically independent of the covariate $X_t$ and can, thus, be treated as stochastic noise. In a related context, semigroup corrected increments have also been studied by \citet{benthEtAl2022}. Clearly, computing $S(\Delta)X_t$ is not feasible in the discrete observation scheme,  as it depends on the whole spatial process $(X_t(x),\,x\in (0,1))$, and we replace it by an empirical counterpart. Additionally, the semigroup depends on the possibly unknown parameter $\vt$ which we address by employing a plug-in approach using any diffusivity estimator with a sufficiently fast convergence rate. Doing so, we obtain an approximation  $ \check S_t^\Delta \approx  S(\Delta)X_t$ which is only based on the discrete data.

To estimate $f$, we adapt the nonparametric least squares approach by \citet{Comte02} which was successfully applied to  ergodic one-dimensional diffusion processes in \citet{Comte07}. Hence, our nonparametric estimator is defined as the minimizer of 
$$\Gamma_{N,M}(g) := \frac{1}{MN} \sum_{i=0}^{N-1} \sum_{k=1}^{M-1} \Big( g(X_{t_i}(y_k))- \frac{X_{t_{i+1}}(y_k)-\check S_{t_i}^\Delta (y_k)}{\Delta} \Big)^2$$ 
over the functions $g$ from a suitable finite-dimensional approximation space. Working  in an ergodic regime for the process $(X_t)_{t\geq 0}$,
we derive $\mathcal{O}_p$-type oracle inequalities for the estimator when the risk is either the empirical 2-norm with evaluations at the data points or the usual $L^2$-norm on a compact set.  These oracle inequalities reflect the well-known bias-variance trade-off in nonparametric statistics. An optimal choice of the dimension of the approximation space yields the usual nonparametric rate $T^{-{\alpha}/({2\alpha+1})}$ where $\alpha$  quantifies the regularity of $f$. Employing a model selection approach similar to \cite{Comte07}, we obtain an adaptive estimator. Since our estimator is based on an approximation of the spatially continuous model, we require a fine observation mesh throughout the whole space domain. More precisely, the condition $M\Delta^2 \to \infty$ is necessary.\\
% we derive an oracle inequality for the expected risk
%$$\frac{1}{MN} \sum_{i=0}^{N-1} \sum_{k=0}^{M-1} \E\Big( \big( f(X_{t_i}(y_k))- \hat f(X_{t_i}(y_k)) \big)^2\Big). $$
%If one chooses the dimension of the approximation space owing to the usual variance-bias trade-off, we provide conditions under which the estimator achieves the usual nonparametric rate $T^{-{\alpha}/({2\alpha+1})}$ where $\alpha$ is some regularity parameter associated with $f$. In that sense, the result from \cite{Comte07} for finite dimensional systems carries over, though we need to require some stricter assumption in order for remainder terms to be negligible. 

This article is organized as follows: In Section \ref{sec:model}, we introduce the SPDE model and the considered observation scheme. Further, we analyze the H\"{o}lder regularity in time and space of the solution process itself and of its nonlinear component. Section \ref{sec:nonpara} is devoted to nonparametric estimation of $f$. We introduce the approximation spaces, we define the estimator and we derive corresponding oracle inequalities. To that aim, we assume, firstly, that the diffusivity parameter $\vt$ is known and then employ a plug-in approach. In Section \ref{sec:parametric_nonlinear}, we verify that the asymptotic properties of realized quadratic variations based on space, time and double increments as well as the corresponding estimators mainly carry over from the linear setting.  All proofs are collected in Section \ref{sec:proofs}.\\  

We use the notations $\N:=\{1,2,\ldots\}$ and $\N_0 := \N \cup \{0\}$ as well as $\R_+:=[0,\infty)$. For $a,b \in \R$ we use the shorthand $a\wedge b:=\min (a,b)$ and $a\vee b := \max(a,b)$ as well as $[a]:= \max\{k\in \N_0:\,k\leq a\}$ for $a\in \R_+$. For two sequences $(a_n),(b_n)$, we write $a_n \lesssim b_n$ to indicate that there exists some $c>0$ such that $|a_n|\leq c\cdot |b_n|$ for all $n\in \N$ and we write {$a_n \eqsim b_n$ if $a_n\lesssim b_n\lesssim a_n$}. Throughout $a_n,b_n \to \infty$ is meant in the sense of $a_n\wedge b_n\to \infty$ for $n\to \infty$.  If  $a_n=a$ for some $a\in \R$ and  all $n\in\N$, we write $(a_n)\equiv a$. When we write statements like $M,N \to \infty$, we implicitly assume that $M$ and $N$ depend on a common index $n\in \N$ such that $N_n,M_n \to \infty$ for $n\to \infty$. Convergence in probability and convergence in distribution are denoted by $\overset {\P}{ \longrightarrow}$ and $\overset{d}{ \longrightarrow}$, respectively. The total variation distance of two probability measures $P,Q$ on some measure space $(\Omega,\mathcal F)$ is denoted by $\Vert P- Q\Vert_{\mathrm{TV}}:= \sup_{A\in \mathcal F} |P(A)-Q(A)|$. The stochastic Landau symbols are denoted by $\mathcal O_p$ and $o_p$.
%For a sequence $(X_n)$ of real random variables on a probability space $(\Omega,\mathcal F,\P)$, we use the usual stochastic Landau symbols:  $X_n = o_p(a_n)$  indicates that $X_n / a_n \to 0 $ holds in probability and $X_n = \mathcal O_p(a_n)$ means that for any $\eps >0$ there exists $C>0$ such that $\P(|X_n/a_n|\geq C) \leq \eps$ for all sufficiently large $n \in \N$.

\section{Basic assumptions and H\"{o}lder regularity of the solution process} \label{sec:model}
Throughout, we work on a probability space $(\Omega, \mathcal F, \P)$ equipped with a filtration $(\mathcal F_t)_{t\geq 0}$ satisfying the usual conditions.
	In order to introduce the reaction-diffusion equation from \eqref{eq:SPDE_intro} thoroughly, let us consider the semilinear SPDE
\begin{equation} \label{eq:SPDE1}
dX_t = \big(\vt \frac{\partial^2}{\partial x^2} X_t + F(X_t)\big)\,dt+\sigma dW_t,\quad X_0=\xi,
\end{equation}
where $W$ denotes a cylindrical Brownian motion on $L^2((0,1))$, $\xi$ is some $\mathcal F_0$-measurable initial value and where the nonlinearity $F$ acts on functions $u\colon[0,1]\to \R$ via $$F(u)=f\circ u \quad \text{for some} \quad f\in C^1(\R).$$ For simplicity, we will also refer to the functional by $f$, i.e.,~we write $f(u)=f\circ u$.

%It is worth noting that, similar to \cite{Pasemann20,Altmeyer20}, our parameter estimation approach can principally be applied to more general types of semilinear SPDEs, e.g., Burgers' equation where the nonlinearity is given by $F(u)=-u\frac{\partial}{\partial x} u$, see also \cite{Cialenco21}. Since such nonlinearities would require more regular noise processes, we restrict our analysis to nonlinearities of Nemytskii-type where $F(u)=f\circ u$.\\

Throughout, the parameters $\sigma$ and $\vt$ are strictly positive constants. As usual, the Dirichlet boundary conditions in \eqref{eq:SPDE1} are implemented into the domain of the Laplace operator $A_{\vt} := \vt \frac{\partial^2}{\partial x^2} $, namely, we define
$\mathcal D (A_\vt) := H^2((0,1))\cap H_0^1((0,1))$ where $H^k((0,1))$  denotes the $L^2$-Sobolev spaces of order $k\in\N$ and with $H_0^1((0,1))$ being the closure of $C_c^\infty((0,1))$ in $H^1((0,1))$. We denote the strongly continuous semigroup on $L^2((0,1))$ generated by the operator $A_\vt$ by $(S(t))_{t\geq 0}$. Recall that a cylindrical Brownian motion $W$ is defined as a linear mapping $L^2((0,1)) \ni u \mapsto W_\cdot(u)$ such that 
 $t\mapsto  W_t(u)$ is a one-dimensional standard Brownian motion for all normalized $u\in L^2((0,1))$
and such that the covariance structure is given by
$\Cov\left( W_t(u) , W_s(v) \right)=(s\wedge t)\, \langle u,v \rangle_{L^2},$ for $u,v\in L^2((0,1)),\,s,t\geq 0$. $W$ can thus be understood as the anti-derivative in time of space-time white noise. \\

We will work under the standing assumption that there is a unique mild solution 
\begin{equation} \label{eq:SPDE1_solution}
X_t = S(t)\xi + \sigma \int_0^t S(t-s)\,dW_s + \int_0^t S(t-s)f(X_s)\,ds,\quad t\geq 0 ,
\end{equation}
such that $X:= (X_t)_{t\geq 0}$ is a Markov process with state space $E:=C_0([0,1]):=\{u \in C([0,1]):\, u(0)=u(1)=0\}$ and such that $X\in C(\R_+,E)$ holds almost surely for all $\xi \in E$. It follows from \cite[Example 7.8]{DaPrato14} that this assumption is fulfilled, e.g., when $f$ satisfies
\begin{equation} \label{eq:coerc} 
f(\lambda+\eta)\mathrm{sgn}(\lambda)\leq a(|\eta|)(1+|\lambda|),\qquad \lambda,\eta \in \R,
\end{equation}
for some increasing function $a\colon \R_+ \to \R_+$. This is particularly the case when $f$ is a polynomial of odd degree with a negative leading coefficient. The existence of continuous trajectories is a crucial requirement for dealing with fully discrete observations. Even in the linear setting $f\equiv 0$, function valued solutions to equation \eqref{eq:SPDE1} only exist in dimension one.

In our whole analysis, the decomposition of $X$ into its linear and its nonlinear component turns out to be useful. The decomposition is given by $X_t = S(t) \xi +X_t^0+N_t,\,t\geq 0,$ where 
$$X_t^0 := \sigma \int_0^t S(t-s)\,dW_s,\qquad N_t:=\int_0^t S(t-s)f(X_s)\,ds,\quad t\geq 0 ,$$
and $(X_t^0)_{t\ge0}$ is the mild solution to the associated linear SPDE ($f\equiv 0$, $\xi \equiv 0$).
Recall that the operator $A_\vt$ has a complete orthonormal system of eigenfunctions in $L^2((0,1))$. Its eigenpairs $(-\lambda_\ell,e_\ell)_{\ell \geq 1}$  are given by 
\begin{gather*} e_\ell (y) = \sqrt{2} \sin(\pi \ell y ),\quad
\lambda_\ell = \vartheta\pi^2 \ell^2, \qquad  y\in [0,1],\,\ell\in \N.
\end{gather*}
Employing this sine base, the cylindrical Brownian motion $W$ can be realized via $W_t=\sum_{\ell\geq 1} \beta_\ell(t)e_\ell$ in the sense that
$W_t(\cdot)= \sum_{\ell\geq 1} \beta_\ell(t)\langle \cdot,e_k\rangle_{L^2}$ for a sequence of independent standard Brownian motions $(\beta_\ell)_{\ell \geq 1}$. In particular, the linear component of $X$ admits the representation
\begin{equation} \label{eq:SPDE_solution}
{X_t^0 (x)}  {=\sum _{\ell\geq 1} u_\ell(t)e_\ell(x)}, \quad t \geq 0,\,x\in [0,1], 
\end{equation}
where $(u_\ell)_{\ell \ge1}$ are one dimensional independent processes satisfying the Ornstein-Uhlenbeck dynamics 
$
du_\ell(t)= -\lambda_\ell u_\ell(t)\,dt +\sigma\, d\beta_\ell(t)
$
or, equivalently,
$$u_\ell(t)= \sigma\int_0^t \e^{-\lambda_\ell (t-s)}\,d\beta_\ell(s)$$
in the sense of a finite-dimensional stochastic integral. From representation~\eqref{eq:SPDE_solution} it is evident that $(t,x) \mapsto X^0_t(x)$ is a two-parameter centered Gaussian field with covariance structure 
\begin{equation} \label{eq:cov}
\Cov\left(X_s^0(x),X_t^0(y)\right)= \sigma^2 \sum_{\ell\geq 1}\frac{\e^{-\lambda_\ell|t-s|}-\e^{-\lambda_\ell (t+s)}}{2\lambda_\ell}e_\ell(x)e_\ell(y), \quad s,t\geq 0,\,x,y\in [0,1]. 
\end{equation}

In our analysis, we will always assume that either $\xi=0$ or that $\xi$ follows the stationary distribution on $E$ associated with the Markov process $X$, provided that it exists.  The case $\xi\equiv 0$ is representative for other smooth initial conditions and considered here for the sake of simplicity. For the linear system $f\equiv 0$, the stationary distribution can be realized by letting $\{\beta_\ell,\langle \xi,e_\ell\rangle,\,\ell\in \mathbb N\}$ be independent with $\langle \xi,e_\ell\rangle\sim \mathcal N(0,{\sigma^2}/({2 \lambda_\ell}))$ such that each Fourier mode $t\mapsto \langle X_t ,e_\ell \rangle $ is stationary with covariance  function 
$
c(s,t)
%=\e^{-\lambda_\ell (s+t)}\left(\V(u_\ell(0))+\sigma^2 \int_0^{s\wedge t}\e^{2\lambda_\ell r}\,dr\right)
:=\frac{\sigma^2}{2\lambda_\ell}\e^{-\lambda_\ell|t-s|}$, $s,t\geq 0$. For  semilinear equations there are generally no explicit expressions for the invariant distribution, though its existence can be guaranteed via abstract criteria in a large variety of cases, see, e.g., Proposition \ref{prop:f_coerc} below.\\

As long as the time horizon $T$ remains bounded, the realized-quadratic-variation-based estimators for $\sigma^2$ and $\vt$ can be generalized to the just introduced semilinear setting if the nonlinearity $f$ and its derivative are at most of polynomial growth:
\begin{itemize}
 \item[(F)] There exist constants $c>0$ and $d\in \N$ such that  $|f(x)|,|f'(x)| \leq c (1+|x|^d)$, $x\in \R$.
\end{itemize}
This basic assumption is sufficient to deduce the higher regularity of $(N_t)$ from properties of $X$.
 When dealing with the case $T\to \infty$, on the other hand, we need to impose a stricter assumption, that ensures that the error induced by the nonlinearity remains negligible uniformly in time, namely: 
\begin{itemize}
\item[(B)] The process $X$ from \eqref{eq:SPDE1_solution} with zero or, in case of existence,  stationary initial condition satisfies $\sup_{t\geq 0}\E(\Vert X_t \Vert_\infty^p)<\infty$ for any $p\geq 1$. 
\end{itemize}
For the nonparametric estimation of the nonlinearity $f$, our analysis will rely on a concentration inequality derived via the mixing property of a stationary process. Hence, we will assume:  
\begin{itemize}
\item[(M)] For the Markov process $X$ from \eqref{eq:SPDE1_solution}, there exists a stationary distribution $\pi$ on $E$ and the mild solution with $X_0=\xi\sim \pi$ satisfies $\E(\Vert X_t \Vert_\infty^p)= \E(\Vert X_0 \Vert_\infty^p)<\infty$ for any $p\geq 1$. Furthermore, $X$ is exponentially $\beta$-mixing, i.e., there exist constants $L,\tau>0$ such that 
\begin{equation} \label{eq:(M)betaX_def}
\beta_X(t):=\int_{E} \Vert P_t(u,\cdot) - \pi(\cdot)\Vert_{\mathrm{TV}}\,\pi(du)\leq L \e^{-\tau t},\qquad t\geq 0,
\end{equation}
where $(P_t)_{t\geq 0}$ is the transition semigroup on $E$ associated with the Markov process $X$.
\end{itemize}
Sufficient conditions for Assumptions (B) and (M) to be satisfied are given in the following Proposition which is a slight extension of  results derived in \citet{Goldys06}.
\begin{prop} \label{prop:f_coerc}
If (F) is satfisfied and there are constants $a,b,c,\beta \geq 0$ such that
\begin{equation} \label{eq:f_coerc}
\mathrm{sgn}(x)f(x+y)\leq -a |x| + b|y|^\beta +c
\end{equation}
holds for all $x,y \in \R$, then Assumptions $(B)$ and $(M)$ are satisfied.
\end{prop}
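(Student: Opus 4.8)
The plan is to verify the two assumptions in turn, in both cases reducing to a uniform-in-time moment bound for $\|X_t\|_\infty$ obtained from the dissipativity condition \eqref{eq:f_coerc}, and then to invoke the general results of \citet{Goldys06}. The starting point is the pathwise decomposition $X_t = S(t)\xi + X_t^0 + N_t$ with $N_t = \int_0^t S(t-s)f(X_s)\,ds$, combined with the fact that $Y_t := X_t - X_t^0 - S(t)\xi$ solves, pathwise, the random PDE $\partial_t Y_t = A_\vt Y_t + f(X_t)$ with $Y_0 = 0$. The key analytic input is that, because $S(t)$ acts on $C_0([0,1])$ as a contraction semigroup generated by a dissipative operator and because $f$ has the one-sided bound \eqref{eq:f_coerc}, one can run a comparison/energy argument on the sup-norm of a suitable regularization of $Y_t$ (or directly on $\|X_t\|_\infty$): formally, $\tfrac{d}{dt}^+\|X_t\|_\infty \le -a\|X_t\|_\infty + b\|X_t^0 + S(t)\xi\|_\infty^\beta + c + \langle\!\langle$ noise fluctuation $\rangle\!\rangle$, where the noise fluctuation is controlled by the Gaussian process $X^0$ whose moments are explicitly available from \eqref{eq:cov}.

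First I would establish, for the linear component, that $\sup_{t\ge0}\E\big(\|X_t^0\|_\infty^p\big) < \infty$ for every $p\ge1$; this follows from \eqref{eq:cov} together with a standard Kolmogorov/Dudley-type chaining bound for Gaussian fields, using that the stationary variance $\sigma^2\sum_\ell (2\lambda_\ell)^{-1}e_\ell(x)^2$ is finite and that the increments of $X^0_t$ in $x$ are Hölder-$\tfrac12^-$. The same estimate, with the stationary initial condition, gives the uniform control of $\|S(t)\xi\|_\infty$ (for $\xi\equiv0$ this term is absent). Second, I would feed this into a Gronwall-type argument for the nonlinear part: combining the dissipativity \eqref{eq:f_coerc} with $|f(x)|\le c(1+|x|^d)$ from (F) to ensure the relevant integrals exist, one derives $\E(\|X_t\|_\infty^p) \le C_p$ uniformly in $t\ge0$, which is exactly Assumption (B). Third, for Assumption (M), I would note that \eqref{eq:f_coerc} is precisely the kind of strict dissipativity condition under which \citet{Goldys06} prove existence and uniqueness of an invariant measure $\pi$ on $C_0([0,1])$ together with exponential convergence of the transition semigroup in total variation at a geometric rate $\e^{-\tau t}$; I would quote that result, checking that their hypotheses are implied by (F) and \eqref{eq:f_coerc}, and conclude the exponential $\beta$-mixing bound \eqref{eq:(M)betaX_def} by integrating the total-variation convergence against $\pi$ and using (B) with $\xi\sim\pi$ to get the moment identity $\E(\|X_t\|_\infty^p) = \E(\|X_0\|_\infty^p)$.

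The main obstacle is the uniform-in-time sup-norm moment bound (Assumption (B)): one must make the formal dissipative inequality rigorous despite $X_t^0$ being only Hölder-$\tfrac12^-$ and not in the domain of $A_\vt$, so that $\tfrac{d}{dt}\|X_t\|_\infty$ is not classically defined. The standard remedy is to work with the mild formulation and smoothing properties of $S(t)$ on $C_0([0,1])$ (estimating $\|S(t-s)f(X_s)\|_\infty$ and exploiting the exponential decay $\|S(t)\|_{L(C_0)}\le \e^{-\vt\pi^2 t}$ coming from the spectral gap $\lambda_1=\vt\pi^2$), or to regularize via the resolvent $(I-\eps A_\vt)^{-1}$, derive the a priori bound for the regularized equation using the dissipativity, and pass to the limit. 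Either route requires care to keep all constants independent of $t$; once that is in hand, (M) is essentially a citation to \citet{Goldys06}, with only the routine verification that \eqref{eq:f_coerc} and (F) match their structural assumptions.
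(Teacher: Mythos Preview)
Your overall strategy matches the paper's proof closely: split $X_t$ into a ``noise'' part $w(t)=X_t^0+\int_0^t S(r)m\,dr$ and a remainder $v(t)=S(t)\xi+\int_0^t S(t-s)f_0(X_s)\,ds$, run a subdifferential/Gronwall argument on the remainder after resolvent regularization (exactly your second option, $v_n:=nR(n,A_\vt)v$), and then invoke \citet{Goldys06} for the exponential total-variation convergence. Two points deserve tightening.

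First, the formal dissipative inequality should be written for $\|v(t)\|_\infty$ (your $Y_t$), not for $\|X_t\|_\infty$: one applies \eqref{eq:f_coerc} pointwise with $x=v_n(t)(\xi^*)$ and $y=w(t)(\xi^*)$ at the extremal point $\xi^*$, obtaining $\frac{d^-}{dt}\|v_n(t)\|_\infty\le -a\|v_n(t)\|_\infty+b\|w(t)\|_\infty^\beta+c+\|\delta_n(t)\|_\infty$ with $\delta_n\to0$. There is no separate ``noise fluctuation'' term once the stochastic convolution has been absorbed into $w$; your inequality as stated mixes the two pictures.

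Second, and this is the only genuine gap: your argument for (M) is circular as written. You propose to conclude the $\beta$-mixing bound and the moment identity by ``using (B) with $\xi\sim\pi$'', but (B) in the stationary case \emph{requires} $\int_E\|u\|_\infty^p\,\pi(du)<\infty$, which has not yet been shown; likewise, integrating the Goldys--Maslowski bound $\|P_t(x,\cdot)-\pi\|_{\mathrm{TV}}\le C(\|x\|_\infty+1)\e^{-\gamma t}$ against $\pi$ needs $\int\|u\|_\infty\,\pi(du)<\infty$. The paper closes this by first deriving the deterministic-initial-condition bound $\E_x(\|X_t\|_\infty^p)\lesssim \e^{-apt}\|x\|_\infty^p+K$, then using weak convergence $P_t(x,\cdot)\Rightarrow\pi$, Skorokhod's representation theorem, and Fatou's lemma to pass the moment bound to $\pi$. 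Once that is in hand, both (M) and (B) in the stationary case follow.
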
 
A proof for the above proposition is given in Section \ref{subsec:aux_nonlinear}. Condition \eqref{eq:f_coerc} requires that $-f$ has at least linear growth at infinity and is, not surprisingly, stronger than the general existence condition \eqref{eq:coerc}. Still, it covers a large class of systems, including the case where $f$ is a polynomial of odd degree with a negative leading coefficient.

Finally, for the nonparametric estimation of $f$ on a compact set $\mathcal A\subset \R$, we will need that the $L^2(\mathcal A)$-norm is comparable to the empirical norm induced by the process $X$. This can be achieved by requiring the following equivalence condition.

\begin{itemize}
\item[(E)] For the Markov process $X$ from \eqref{eq:SPDE1_solution} there exists a stationary distribution $\pi$ on $E$ and, if $\xi \sim \pi$, the random variables $\xi(x)$ admit a Lebesgue density $\mu_x$ for each $x\in (0,1)$. Further, for any compact set $\mathcal A \subset \R$, there are constants $c_0,c_1>0$ and $\mathfrak b\in (0,\frac{1}{2})$ such that 
\begin{align*}
\mu_x(z)\leq  c_1 &\;\text{for all}\, z\in \mathcal A,\,x\in (0,1)\qquad\text{and}\qquad
\mu_x(z)\geq  c_0 \;\text{for all}\, z\in \mathcal A,\,x\in (\mathfrak b,1-\mathfrak b).
\end{align*}
\end{itemize}
The presence of the constant $\mathfrak b$ in the above lower bound is due to the Dirichlet boundary conditions.
%Specifically, Assumption (E) will be used in order to conclude the existence of constants $c,C>0$ such that
%\begin{gather*} 
%c\Vert t \Vert_{L^2(\mathcal A)}^2\leq \E \Big( \int_0^1 t^2(X_0(y))\,dy \Big) \leq C \Vert t \Vert_{L^2(\mathcal A)}^2,\\
%c\Vert g \Vert_{L^2(\mathcal A)}^2\leq \E \Big( \frac{1}{M}\sum_{k=1}^{M-1} g^2 \big(X_0\big(\frac{k}{M}\big)\big) \Big) \leq C \Vert g \Vert_{L^2(\mathcal A)}^2
%\end{gather*}
%holds for all functions $g\in L^2(\R)$ with support in the compact set $\mathcal A$.  
Assumption (E) is clearly satisfied in the case where $f$ is a linear function $f(x)=\vt_0x$ for some $\vt_0<0$. Indeed, the corresponding stationary distribution matches the stationary distribution for the case $f\equiv 0$  with $\lambda_\ell$ replaced by $\tilde \lambda_\ell := \lambda_\ell -\vt_0$. Thus, the random variables $X_0(x),\,x\in (0,1),$ are Gaussian and (E) can be checked by examining their variances. Concerning more general nonlinearities $f$, there is a large amount of literature concerned with the existence and regularity of Lebesgue densities corresponding to the marginal distributions associated with various SPDE models, see, e.g., \cite{Mueller08,Marinelli13,Nualart09,Bally98}. However, to the authors' best knowledge, there are so far no estimates on the densities of the random variables $X_t(x)$ that hold uniformly in $x \in \mathcal X $ for some infinite set $\mathcal X\subset (0,1)$. {Deriving a sufficient condition on $f$ to ensure (E) goes beyond the scope of this work and is postponed to further research.} 
\subsection{Observation scheme} \label{subsec:observation_scheme}
Throughout, we suppose to have time- and space-discrete observations $\{X_{t_i} (y_k ),\, i=0,\dots, N,\,k=0,\dots, M\} $ of a path of the process $X$ from \eqref{eq:SPDE1_solution}
on a regular grid $\{(t_i,y_k)\}_{i,k}\subset [0,T] \times[0,1]$  with time horizon $T>0$ and $M,N\in \N$.
More precisely, we assume that 
\begin{align}
y_k=b+k\delta\quad\text{and}\quad t_i=i\Delta \qquad\text{where}\qquad\delta =\frac{1-2b}{M},\quad \Delta=\frac TN\label{eq:obs}
\end{align}
for some fixed $b\in[0,1/2)$ (which is not related to the boundary margin $\mathfrak b$ from (E)). 
Due to the boundedness of the space domain, we have high frequency observations in space whenever $M\to\infty$. High frequency observations in time are present when  $T/N \to 0$. This is trivially satisfied if $T$ is fixed and $N \to \infty$.

Note that the spatial locations $y_k$ are equidistant inside a (possibly proper) sub-interval $[b,1-b]\subset[0,1]$. For certain statistical procedures, we will exclude observations close to the boundary by requiring $b>0$. This is done to prevent undesired boundary effects, which lead to biased estimates.
\subsection{H\"{o}lder regularity of the solution process} \label{subsec:Hoelder}

Next, we discuss the H\"{o}lder regularity of the process $(X_t(y),\,t\geq 0, y \in [0,1])$ in time and space and, in particular, we show the higher order regularity of its nonlinear component $(N_t(y),\,t\geq 0, y \in [0,1])$. For $\alpha>0$, we consider the H{\"o}lder spaces $C^\alpha := C^{\alpha}([0,1])$ consisting of all $u\in C^{[\alpha]}$ such that
$$\Vert u \Vert_{C^\alpha} := \sum_{k=0}^{[\alpha]} \Vert u^{(k)}\Vert_\infty + \sup_{x,y\in [0,1]} \frac{|u(x)-u(y)|}{|x-y|^{\alpha-[\alpha]}}<\infty.$$
H\"{o}lder continuous functions with Dirichlet boundary conditions are denoted by $C^\alpha_0:= \{u\in C^\alpha,\,u(0)=u(1)=0\}.$ 

The linear component $(X_t^0(x),\,x \in [0,1], t\geq 0)$ of $X$ is a Gaussian process satisfying
\begin{align}
\E((X_t^0(x)-X_t^0(y))^2)%&= \sum_{\ell \geq 1} \frac{\sigma^2}{2\lambda_\ell}(1-\e^{-2\lambda_\ell t}) (e_\ell(\xi)-e_\ell(\eta))^2 \nonumber\\
&\leq \sum_{\ell \geq 1} \frac{\sigma^2}{2\lambda_\ell} (e_\ell(x)-e_\ell(y))^2   \eqsim |x-y|, \label{eq:space_variation}\\
\E((X_t^0(x)-X_s^0(x))^2)%&= \sum_{\ell \geq 1} \frac{\sigma^2}{\lambda_\ell}(1-\e^{-\lambda_\ell |t-s|})\left(1-\frac{1-\e^{-\lambda_\ell|t-s|}}{2}\e^{-2\lambda_\ell s} \right) \e^2_\ell(x) \nonumber\\
&\leq \sum_{\ell \geq 1} \frac{\sigma^2}{\lambda_\ell}(1-\e^{-\lambda_\ell |t-s|}) \eqsim \sqrt{|t-s|}, \label{eq:time_variation}
\end{align} 
see, e.g., \cite[Theorem 3.3]{Hildebrandt19} and \cite[Proposition 3.1]{Bibinger18}, respectively.
As a well known consequence, we have that, almost surely,  $X_t^0 \in E$ for any $t\geq 0$, $x\mapsto X_t^0(x)$ is $2\gamma$-H{\"o}lder continuous and $t\mapsto X_t^0(x)$ is locally ${\gamma}$-H{\"o}lder continuous for any $\gamma < 1/4$. The following proposition generalizes this fact to the semilinear setting and shows that, under Assumption (B), the corresponding H{\"o}lder norms are $L^p(\P)$-bounded as functions of time.
\begin{prop} \label{prop:X_reg} Let (F) be satisfied. For any $p\in [1,\infty)$ the following hold:
\begin{enumerate}[(i)]
\item 
For any $\gamma < 1/2$, we have
$X \in C(\R_+,C_0^\gamma)$ a.s.~and, if Assumption (B) is satisfied, then \linebreak
$\sup_{t\geq 0}\E(\Vert X_t \Vert_{C_0^{\gamma}}^p)<\infty $.
\item For any $\gamma<1/4$ and $T>0$, we have $(X_t)_{0\leq t\leq T} \in C^\gamma([0,T],E)$ a.s.~and, if Assumption (B) is satisfied, then
 there is a constant $C>0$ such that $\E(\Vert X_t-X_s \Vert_\infty ^p)\leq C|t-s|^{\gamma p}$ for all $s,t\geq 0$.
\end{enumerate}
Furthermore, the same results hold for $X$ replaced by $f_0(X)$ where $f_0(x):=f(x)-f(0)$.
\end{prop}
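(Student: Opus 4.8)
The plan is to work with the decomposition $X_t = S(t)\xi + X_t^0 + N_t$ and to estimate the three summands separately, the guiding principle being that both the semigroup term $S(t)\xi$ and the nonlinear term $N_t = \int_0^t S(t-r)f(X_r)\,dr$ are strictly smoother than the Gaussian stochastic convolution $X_t^0 = \sigma\int_0^t S(t-r)\,dW_r$, so that the latter dictates the H\"older exponents $1/2$ (space) and $1/4$ (time). For $X^0$ the assertions are classical: combining \eqref{eq:space_variation} and \eqref{eq:time_variation} with the triangle inequality and Gaussian hypercontractivity gives $\E\big(|X_t^0(x)-X_s^0(y)|^p\big)\lesssim\big(|x-y|+|t-s|^{1/2}\big)^{p/2}$ for every $p\ge1$, uniformly in $s,t$, and $\E\big(X_t^0(x)^2\big)\le\sigma^2\sum_\ell\lambda_\ell^{-1}$ uniformly in $t,x$ by \eqref{eq:cov}. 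The Kolmogorov continuity theorem, in its Banach-space--valued form (for the uniform-in-$t$ $C_0^\gamma$-moment bound, $\gamma<1/2$) and its two-parameter form (for the joint, hence the temporal, H\"older regularity, $\gamma<1/4$), then yields the claim for $X^0$ with constants independent of $t$; I would simply cite the linear results referenced after \eqref{eq:time_variation}.

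For the two deterministic-evolution terms I would invoke the standard smoothing and H\"older-in-time estimates for the analytic, exponentially stable semigroup $(S(t))_{t\ge0}$ generated by the Dirichlet Laplacian $A_\vt$ on $E=C_0([0,1])$: for every $\gamma\in(0,1)$ there are $\omega>0$ and $C>0$ with
\[
\|S(h)u\|_{C_0^\gamma}\le C\,h^{-\gamma/2}\e^{-\omega h}\|u\|_\infty,\qquad
\|(S(h)-I)S(a)u\|_\infty\le C\,h^\gamma a^{-\gamma}\e^{-\omega a}\|u\|_\infty,
\]
for $h,a>0$ and $u\in C([0,1])$; these follow from identifying the fractional power domains $D((-A_\vt)^{\gamma/2})$ with the H\"older spaces $C_0^{\gamma}$ (we stay below the critical order $1/2$) together with the exponential stability of $(S(t))$, see e.g.\ Lunardi's and Da Prato--Zabczyk's monographs. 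Using $\|f(X_r)\|_\infty\le c(1+\|X_r\|_\infty^d)$ from (F), the first estimate gives $\|N_t\|_{C_0^\gamma}\le C\int_0^\infty h^{-\gamma/2}\e^{-\omega h}(1+\|X_{t-h}\|_\infty^d)\,dh$; Minkowski's integral inequality and Assumption (B) then give $\sup_t\E(\|N_t\|_{C_0^\gamma}^p)<\infty$ because $\gamma/2<1$, while without (B) the same bound with the a.s.\ locally bounded path $r\mapsto\|X_r\|_\infty$ (from the standing assumption $X\in C(\R_+,E)$ a.s.) yields $N\in C(\R_+,C_0^\gamma)$ a.s.\ (continuity in $t$ by standard parabolic regularity). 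For the time increments, the splitting $N_t-N_s=\int_s^t S(t-r)f(X_r)\,dr+\int_0^s(S(t-r)-S(s-r))f(X_r)\,dr$ together with the two estimates (and $S(t-r)-S(s-r)=(S(t-s)-I)S(s-r)$) bounds the first term in $L^p$ by $C|t-s|$ and the second by $C|t-s|^\gamma$ for every $\gamma<1$, hence $\E(\|N_t-N_s\|_\infty^p)\lesssim|t-s|^{\gamma p}$ for $|t-s|\le1$ and, via the uniform bound, for all $s,t\ge0$; the corresponding a.s.\ statement follows by Kolmogorov.

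The initial-condition term requires a little care only because, when $\xi\sim\pi$, membership $\xi\in C_0^\gamma$ is not given a priori. I would argue by restarting the equation: for $t\ge1$,
\[
X_t=S(1)X_{t-1}+\sigma\int_{t-1}^t S(t-r)\,dW_r+\int_{t-1}^t S(t-r)f(X_r)\,dr,
\]
where the last two terms are controlled as above uniformly in $t$ and $\|S(1)X_{t-1}\|_{C_0^\gamma}\le C\|X_{t-1}\|_\infty$, whose $p$-th moment is bounded by (B); combined with the direct estimate over $[0,t]$ for $t\in(0,1]$ (here $S(t)\xi=0$ if $\xi=0$, while otherwise $\xi$ has the same law as $X_1$) this gives $\sup_{t\ge0}\E(\|X_t\|_{C_0^\gamma}^p)<\infty$, so in particular $\pi$ is supported on $C_0^\gamma$ and the bound holds at $t=0$ as well. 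The a.s.\ statements for $t\mapsto S(t)\xi$ (continuity into $C_0^\gamma$, and $\gamma$-H\"older continuity in time into $E$, using $S(t)\xi-S(s)\xi=S(s)(S(t-s)-I)\xi$) then follow from $\xi\in C_0^{\gamma'}$ a.s.\ for every $\gamma'<1/2$ by interpolation and analyticity. Finally, the assertions for $f_0(X)$ — note $f_0(0)=0$, so $f_0(X_t)\in C_0$ — follow from the composition estimates $\|f_0\circ u\|_{C_0^\gamma}\le\phi(\|u\|_\infty)\|u\|_{C_0^\gamma}$ and $\|f_0\circ u-f_0\circ v\|_\infty\le\phi(\|u\|_\infty\vee\|v\|_\infty)\|u-v\|_\infty$ with $\phi(R):=\sup_{|z|\le R}|f'(z)|\le c(1+R^d)$ by (F), combined with the bounds already proved for $X$ and the Cauchy--Schwarz inequality (using once more that (B) furnishes all moments of $\|X_t\|_\infty$).

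I do not expect a deep difficulty: the new content over the known linear theory is modest, since $S(\cdot)\xi$ and $N$ are merely smoother than $X^0$. The two delicate points are (a) the uniform-in-time $L^p$ control, where the restart argument is needed to avoid circularity at $t=0$ and to deduce the regularity of the invariant measure $\pi$; and (b) obtaining the $C_0$-valued (sup-norm, Dirichlet boundary) semigroup estimates with the correct smoothing exponent $\gamma/2$ and genuine exponential decay, which is where one relies on interpolation theory for $A_\vt$ and on its exponential stability.
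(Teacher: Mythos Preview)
Your proposal is correct and follows the same overall strategy as the paper: decompose $X_t=S(t)\xi+X_t^0+N_t$, handle $X^0$ via the Gaussian increment bounds \eqref{eq:space_variation}--\eqref{eq:time_variation}, treat $N$ and $S(\cdot)\xi$ by analytic semigroup smoothing, cover the stationary case by a time-shift/restart argument, and pass to $f_0(X)$ by composition estimates. The paper's implementation differs only in minor technical choices---it invokes the Garsia--Rodemich--Rumsey inequality and Sobolev embeddings (rather than Kolmogorov) for $X^0$, and for the temporal regularity of $N$ it first establishes $N_s\in C_0^{2\gamma}$ and then uses the identification $D_{A_\vt}(\gamma,\infty)=C_0^{2\gamma}$ to bound $(S(h)-I)N_s$, whereas you apply the direct bound $\|(S(h)-I)S(a)u\|_\infty\lesssim h^\gamma a^{-\gamma}\e^{-\omega a}\|u\|_\infty$ inside the integral.
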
 

A norm bound as in $(i)$ with $p=1$ is also derived by  \citet[Proposition 4.2]{Cerrai99}.  
To prove the above proposition, we analyze the linear and the nonlinear component of $X$ separately. The regularity of $(X^0_t)$ can be assessed by using properties \eqref{eq:space_variation} and \eqref{eq:time_variation} together with Sobolev embeddings and,  especially, the Garsia-Rodemich-Rumsey inequality, see Lemma \ref{lem:linear}. The regularity of $(N_t)$ is a consequence of the regularizing property of the semigroup $(S(t))_{t\geq 0}$ in view of the fact that, due to our basic assumptions, the process $f(X)$ is continuous as a function of time and space. \\

Having derived the H\"{o}lder regularity of the process  $X$ and, in particular, of $f(X)$, we can use the regularizing impact of $(S(t))_{t\geq 0}$ once more to deduce that the regularity of $N_t = \int_0^t S(t-s)f(X_s)\,ds,\, t\geq 0,$ exceeds the regularity of $X$. A related strategy has been pursued by \citet{Pasemann20} who studied the higher order regularity of the nonlinear component of $X$ in the Sobolev spaces $\mathcal D((-A_\vt)^\eps), \,\eps>0$ from \eqref{eq:Deps_def}, see also Section \ref{subsec:aux_nonlinear}. For our purpose, we can proceed similarly to \citet{Sinestrari85} who studied the H\"{o}lder regularity of mild solutions to deterministic systems.
We employ the decomposition $N_t= N_t^0 +M_t$ where
\begin{align} \label{eq:Nt0Mt_def}
N_t^0 := \int_0^t S(t-s)f_0(X_s)\,ds,\quad M_t := \int_0^t S(r)m\,dr
\end{align}
for $m \equiv f(0)$ and $f_0(x)= f(x)-f(0)$. Note that $u\mapsto f_0\circ u$ maps $E$ and, in particular, $C_0^{\alpha}$  into itself. 
\begin{prop} \label{prop:AN_reg}
Let (F) be satisfied. For any $T>0$ and $p\geq 1$  the following hold.
\begin{enumerate}[(i)]
\item
For any $\gamma<1/2$ and $t\geq0$, we have $N_t^0 \in C_0^{2+\gamma}$ and $\sup_{t\leq T} \Vert {A_\vt}N_t^0 \Vert_{C_0^{\gamma}}<\infty$ almost surely. If Assumption (B) is satisfied, then $ \sup_{t\geq 0}\E(\Vert {A_\vt}N_t^0\Vert_{C_0^{\gamma}}^p)<\infty .$
\item
For any $\gamma <1/4$, we have $(N^0_t)_{t\leq T} \in C^{1+\gamma}([0,T],E)$ and $\frac{d}{dt}N^0_t = f_0(X_t)+{A_\vt}N_t^0$ in $E$ almost surely. Under Assumption (B), there exists $C>0$ such that $\E(\Vert \frac{d}{dt}(N_t^0-N_s^0)\Vert_\infty^p ) \leq C(t-s)^{\gamma p}$ holds for all $s,t\geq 0$.
\end{enumerate}
Furthermore, the same results hold for $(N_t)$ and $f$ instead of $(N^0_t)$ and $f_0$, provided that we replace $E$ by $C([b,1-b])$ and $C_0^{\gamma}$ by $C^{\gamma}([b,1-b])$ for any $b\in (0,\frac{1}{2})$.
\end{prop}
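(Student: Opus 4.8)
The plan is to prove Proposition \ref{prop:AN_reg} by exploiting the smoothing property of the analytic semigroup $(S(t))_{t\geq0}$ applied to the process $f_0(X)$, whose H\"older regularity in time and space has just been established in Proposition \ref{prop:X_reg} (together with the uniform $L^p(\P)$-bounds under (B)). The key deterministic input is the classical parabolic regularity estimate: for an analytic semigroup generated by $A_\vt$ one has $\Vert A_\vt S(t)\Vert_{\mathcal L(E)} \lesssim t^{-1}$, and more importantly the fractional smoothing bound $\Vert A_\vt S(t) u\Vert_{C_0^\gamma} \lesssim t^{-1-\gamma/2}\Vert u\Vert_\infty$ for $u\in E$ — or, better suited here, $\Vert A_\vt S(t) u\Vert_{C_0^\gamma}\lesssim t^{-(\gamma-\alpha)/2-1}\Vert u\Vert_{C_0^\alpha}$ when $u\in C_0^\alpha$ with $\alpha<\gamma$. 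I would set this up via the identification $C_0^\gamma$ (for $\gamma\notin\N$) with the real interpolation / Besov scale on which $A_\vt$ acts, exactly as in \citet{Sinestrari85}; this is where one must be slightly careful because $\gamma<1/2$ means the target regularity $2+\gamma$ is below the threshold where boundary compatibility conditions bite, so $f_0(X_t)\in C_0^\gamma$ indeed maps into $\mathcal D_{C_0^\gamma}(A_\vt)\subset C_0^{2+\gamma}$ without extra conditions.

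For part (i), I would write $A_\vt N_t^0 = \int_0^t A_\vt S(t-s) f_0(X_s)\,ds$ and split the integral at $s=t-1$ (assuming WLOG $t\geq 1$; for $t\leq 1$ a single estimate suffices). On $[t-1,t]$ use the singular bound $\Vert A_\vt S(t-s) f_0(X_s)\Vert_{C_0^\gamma}\lesssim (t-s)^{-1+\eta}\Vert f_0(X_s)\Vert_{C_0^{\gamma'}}$ for suitable $\gamma<\gamma'<1/2$ and small $\eta>0$ with $(t-s)^{-1+\eta}$ integrable; actually to get from $C_0^{\gamma'}$ to $C_0^{\gamma}$ costs $(t-s)^{-1+(\gamma'-\gamma)/2}$, integrable in $s$. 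On $[0,t-1]$ use exponential decay: $\Vert A_\vt S(t-s)\Vert_{\mathcal L(E,C_0^\gamma)}\lesssim (t-s)^{-1-\gamma/2}e^{-\lambda_1(t-s)/2}$, which is integrable over $[0,\infty)$ and yields a time-uniform bound. Taking $L^p(\P)$-norms inside the integral via Minkowski's inequality and invoking $\sup_s\E(\Vert f_0(X_s)\Vert_{C_0^{\gamma'}}^p)<\infty$ from Proposition \ref{prop:X_reg} (last sentence) gives $\sup_{t\geq0}\E(\Vert A_\vt N_t^0\Vert_{C_0^\gamma}^p)<\infty$. The almost-sure statement $\sup_{t\leq T}\Vert A_\vt N_t^0\Vert_{C_0^\gamma}<\infty$ follows by the same computation with the pathwise bound $\sup_{s\leq T}\Vert f_0(X_s)\Vert_{C_0^{\gamma'}}<\infty$, which holds since $X\in C(\R_+,C_0^{\gamma'})$ a.s.

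For part (ii), differentiability in time of $N_t^0 = \int_0^t S(t-s)f_0(X_s)\,ds$ is the standard fact for mild solutions with H\"older-continuous forcing: writing $N_t^0 = \int_0^t S(r)f_0(X_{t-r})\,dr$, one differentiates to get $\frac{d}{dt}N_t^0 = S(t)f_0(X_0) + \int_0^t S(r) \,d_r[f_0(X_{t-r})] $ — more cleanly, one verifies $N_{t+h}^0-N_t^0 = \int_t^{t+h}S(t+h-s)f_0(X_s)\,ds + (S(h)-I)N_t^0$, divides by $h$, and lets $h\to0$ using $f_0(X_\cdot)$ continuity for the first term (giving $f_0(X_t)$) and $A_\vt N_t^0\in E$ (from (i), since $C_0^\gamma\hookrightarrow E$) for the second (giving $A_\vt N_t^0$), so $\frac{d}{dt}N_t^0 = f_0(X_t)+A_\vt N_t^0$. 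The $C^{1+\gamma}([0,T],E)$-claim and the moment bound $\E(\Vert\frac{d}{dt}(N_t^0-N_s^0)\Vert_\infty^p)\lesssim(t-s)^{\gamma p}$ then reduce to H\"older-in-time control of $t\mapsto f_0(X_t)$ (which is $\gamma$-H\"older with bounded $L^p$ moments by Proposition \ref{prop:X_reg}(ii)) plus H\"older-in-time control of $t\mapsto A_\vt N_t^0$; the latter I would get by estimating $A_\vt N_t^0 - A_\vt N_s^0 = \int_s^t A_\vt S(t-r)f_0(X_r)\,dr + \int_0^s (A_\vt S(t-r)-A_\vt S(s-r))f_0(X_r)\,dr$ using $\Vert(S(t-r)-S(s-r))A_\vt u\Vert_\infty\lesssim (t-s)^\gamma(s-r)^{-1-\gamma+\cdots}$-type bounds, again uniformly integrable. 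The final sentence — passing from $(N_t^0,f_0)$ on $E,C_0^\gamma$ to $(N_t,f)$ on $C([b,1-b]),C^\gamma([b,1-b])$ — follows since $N_t - N_t^0 = M_t = \int_0^t S(r)m\,dr$ with $m$ constant: away from the boundary $M_t$ is smooth (indeed $A_\vt M_t = S(t)m - m$ and $x\mapsto (S(t)m)(x)$ is $C^\infty$ with bounds uniform on $[b,1-b]$, using that the Dirichlet heat kernel is smooth in the interior), so all stated regularity is inherited, only the boundary behaviour $u(0)=u(1)=0$ is lost, hence the restriction to $[b,1-b]$.

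The main obstacle I anticipate is getting the semigroup smoothing estimates in the \emph{H\"older scale} $C_0^\gamma$ rather than the more familiar $L^2$-Sobolev scale $\mathcal D((-A_\vt)^\eps)$: one either works through the interpolation-space characterization of little-H\"older spaces as $(C_0([0,1]),\mathcal D(A_\vt))_{\theta,\infty}$ and cites that $A_\vt$ generates an analytic semigroup on $C_0([0,1])$ with the requisite maximal-regularity/smoothing bounds (Sinestrari, Lunardi), or one uses explicit Gaussian (Dirichlet) heat-kernel bounds to prove $\Vert A_\vt S(t)u\Vert_{C^\gamma}\lesssim t^{-1-\gamma/2}\Vert u\Vert_\infty$ by hand. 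The former is cleaner but requires care that $\gamma<1/2$ keeps us strictly below the first boundary-compatibility threshold so that no spurious conditions on $f_0(X_s)$ at $x\in\{0,1\}$ arise (they do not, since $f_0(0)=0$ gives $f_0(X_s)\in C_0^\gamma$ and that is all that is needed). Everything else is bookkeeping: splitting integrals near the diagonal, applying Minkowski's integral inequality for the $L^p(\P)$-bounds, and using exponential decay of the semigroup from $\lambda_1>0$ for time-uniformity.
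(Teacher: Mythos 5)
Your proposal is correct and follows essentially the same route as the paper: both argue via the analytic-semigroup smoothing estimates on the Dirichlet--H\"older scale (Sinestrari/Lunardi, with the identification $D_{A_\vt}(\alpha,\infty)=C_0^{2\alpha}$), both establish part (ii) through the increment decomposition $N^0_{t+h}-N^0_t = (S(h)-I)N^0_t + \int_t^{t+h}S(t+h-s)f_0(X_s)\,ds$, and both isolate $M_t=\int_0^t S(r)m\,dr$ and observe that $A_\vt M_t=S(t)m-m$ is uniformly controllable only in the interior $[b,1-b]$. Your initial statement of the smoothing bound has the inequality $\alpha<\gamma$ backwards, but you self-correct in the subsequent computation with $\gamma<\gamma'<1/2$; otherwise the details (exponential decay of $S(t)$ for time-uniformity, Minkowski/Jensen for $L^p$ bounds, the second-difference split for $A_\vt N_t^0$) match the paper's argument.
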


\section{Nonparametric estimation of the nonlinearity} \label{sec:nonpara}
In order to estimate $f$ nonparametrically, we adapt an estimation procedure, introduced by \citet{Comte07} in the context of one-dimensional diffusions, to the SPDE setting. In a first step, we will assume that the parameters $(\sigma^2,\vt)$ (in fact, only $\vt$ is necessary) are known. Subsequently, a plug-in approach will be considered for the case of unknown $\vt$. The construction of a suitable estimator $\hat\vt$ is given in Section~\ref{sec:parametric_nonlinear}.

We assume that the mild solution $X$, defined by \eqref{eq:SPDE1_solution},
admits a stationary distribution, denoted by $\pi$, and, moreover, that the mixing assumption (M) is satisfied. Furthermore, it will be essential for the derivation of our oracle inequalities that we have $T\to \infty$ and $\Delta \to 0$. Let $\mathcal A=[-a,a]$, for some $a>0$, be the interval on which we want to estimate $f$.

%Before treating the actual estimation problem, we introduce the approximation spaces serving as candidate functions for the estimation of $f$ in the following section.

\subsection{Approximation spaces}
In order to estimate $f$ on the set $\mathcal A$, we consider a sequence $(\mathcal V_m)_{m\in \N}$ of finite-dimensional sub-spaces of $L^2(\mathcal A)$ such that $D_m:=\mathrm{dim}(\mathcal V_m) \to \infty$ for $m\to \infty$. For a well chosen $m$, we will estimate $f$ by a function $\hat f_m \in \mathcal V_m$ that minimizes the empirical loss to be defined later.
%matches the data in the best possible way with respect to a suitable criterion to be defined later.    
As in  \cite{Baraud01}, our key assumption on the approximation spaces $(\mathcal V_m)$ is the following.
\begin{itemize}
\item[(N)] There is a constant $C>0$ such that for each $m\in \N$ there is an orthonormal basis $(\varphi_k)_{k\in \Lambda_m}$ of $\mathcal V_m$, $|\Lambda_m|= D_m$, with
$
\Vert \sum_{k \in \Lambda_m} \varphi_k^2 \Vert_\infty \leq C D_m.
$
\end{itemize}
The dependence of $\varphi_k$ on $m$ is not made explicit for ease of notation.
It is shown in \citet{Birge97} that Assumption (N) is equivalent to requiring $\Vert g\Vert_\infty^2 \leq C D_m \Vert g\Vert^2_{L^2(\mathcal A)}$ for all $g \in \mathcal V_m$ and $m\in \N$. 
Additionally, a minimal continuity property in the approximation spaces will be required:

\begin{itemize}
\item[(H)] For any $g\in \bigcup_{m\in \N}\mathcal V_m$, let $\bar g\colon\R \to \R$ be the extension of $g$ by zero on the set $\mathcal A^c$. Then, the function $\bar g$ is piecewise H\"{o}lder continuous, i.e., there are constants $\alpha>0$ and $-\infty =a_0<a_1<\ldots< a_L=\infty,\,L\in \N,$ such that $\bar g |_{(a_l,a_{l+1})}\in C^\alpha((a_l,a_{l+1}))$ for any $1\leq l\leq L-1$. 
\end{itemize}

%Note that the approximation spaces given by [T]  and [W] meet the requirement of Assumption (H), as they consist of smooth functions. The approximation spaces given by [P], on the other hand, contain functions with jumps, hence, these are excluded from our further considerations.

Let us briefly recall some examples of approximation spaces with property (N)  that are considered in \cite{Baraud01}. In fact, all of those also meet our additional continuity requirement (H).
\begin{ex}~
\begin{itemize}
\item[{[T]}]  The trigonometric spaces 
$$\mathcal V_m = \mathrm{span}\Big(\Big\{\frac{1}{\sqrt{2a}},\,\frac{1}{\sqrt{a}}\sin\big(\frac{k\pi}{a}\cdot \big),\, \frac{1}{\sqrt{a}}\cos \big(\frac{k\pi}{a}\cdot \big),\,1\leq k\leq m \Big\} \Big)$$ have dimension $D_m=2m+1$ and property (N) follows directly from the fact that the trigonometric base functions are uniformly bounded.
\item[{[P]}] Piecewise polynomials on a dyadic grid: Let $(p_l)_{l\in \N_0}$ be the complete orthonormal system in $L^2([0,1])$ such that $p_l$ is the rescaled Legendre polynomial of degree $l$ for $l\in \N_0$. For $p\in \N$ and $j\in \{-2^{p},\ldots,2^p-1\}$, let $I_j^p := [ja2^{-p},(j+1)a2^{-p})$. Then, for $m=(p,r)$ with $r\in \{0,\ldots,r_{\max}\}$ and $r_{\max} \in \N_0$, we define 
$$\mathcal V_{(p,r)}:=\mathrm{span}\Big( \{ \varphi_{j,l}^p,\, l\leq r ,-2^p\leq j \leq 2^p-1\} \Big),\quad \varphi_{j,l}^p(x):=\sqrt{\frac{2^p}{a}}p_l\Big(\frac{2^px}{a}-j\Big)\1_{I_j^p}(x),x\in\mathcal A.$$
Clearly, $\mathrm{dim}(\mathcal V_{p,r}) = (r+1)2^{p+1} \leq (r_{max}+1)2^{p+1}$ and property (N) holds with a constant $C$ depending on $r_{\max}$.
\item[{[W]}] The dyadic wavelet generated spaces: For arbitrary $r \in \N$, there are a scaling and a wavelet function $\phi,\psi \in C^\alpha(\R)$, respectively, for some $\alpha>0$ with support in $[0,1]$ such that $\psi$ has $r$ vanishing moments and
$\Big\{\frac{1}{\sqrt a}\phi\Big(\frac{\cdot}{a}\Big),\, \frac{1}{\sqrt a}\phi \Big(\frac{\cdot}{a}+1\Big), \sqrt{\frac{2^p}{a}}\psi \Big(\frac{2^p\cdot}{a}-j\Big),\,-2^{p}\leq j < 2^p,\,p \in \N \Big\}$
is a complete orthonormal system in $L^2(\mathcal A)$, see \cite{Daubechies92}. Then, the subspace
$$\mathcal V_m= \mathrm{span}\Big(\Big\{\frac{1}{\sqrt a}\phi\Big(\frac{\cdot}{a}\Big),\, \frac{1}{\sqrt a}\phi \Big(\frac{\cdot}{a}+1\Big), \sqrt{\frac{2^p}{a}}\psi \Big(\frac{2^p\cdot}{a}-j\Big),\,-2^{p}\leq j < 2^p,\,p \leq m \Big\} \Big)$$
satisfies $\mathrm{dim}(\mathcal V_m) = 2^{m+2}$ and property (N) is fulfilled.
\end{itemize} 
\end{ex}

%Note that in \cite{Comte07}, the authors use certain spaces of piecewise polynomials as their prime examples for approximation spaces. However, piecewise polynomials may have discontinuities and, thus, do not meet the requirement of our Assumption (H). \\

Following \citet{Baraud01}, we define matrices $V^m,B^m \in \R^{\Lambda_m\times \Lambda_m}$ by 
$$V^m_{k,k'}:= \Vert \varphi_k \varphi_{k'}\Vert_{L^2(\mathcal A)},\qquad B^m_{k,k'}:= \Vert  \varphi_k \varphi_{k'} \Vert_{\infty}$$
for a fixed orthonormal basis $(\varphi_k,\,k \in \Lambda_m)$ of $\mathcal V_m$ according to Assumption (N). These matrices are especially usefull to bound  $|\varphi_{k}(Z)\varphi_{k'}(Z)|\leq B^m_{k,k'}$ and $\E(|\varphi_{k}(Z)\varphi_{k'}(Z)|^2)\lesssim (V^m_{k,k'})^2$ for any $\mathcal A$-valued random variable $Z$ with bounded Lebesgue density.
Further, let
\begin{equation} \label{eq:Lm_def}
L_m:=\max(\rho^2({V^m}),\rho(B^m)),\qquad \rho(H):= \sup_{a \in \R^{\Lambda_m},\, \Vert a\Vert \leq 1}\sum_{k,k'} |a_k a_{k'} H_{k,k'}|,\,H\in \{{V^m},B^m\}.
\end{equation}
For our main oracle inequalities, we will require that $L_m$ is asymptotically negligible with respect to the time horizon $T$.
For the previous examples of approximation spaces, it is shown in \cite{Baraud01} that $L_m \lesssim D_m^2$ for [T] and $L_m \lesssim D_m$ for [P] and [W].	
 
\subsection{Construction and analysis of the estimator}
For a moment, let us consider observations that are discrete in time but continuous in space, i.e., the data is given by 
$$\{X_{t_i}(x),\,x\in[0,1],\,i=0,\ldots,N \}.$$
From \eqref{eq:SPDE1_solution}, it is evident that we can decompose
\begin{align*}
X_{t+\Delta} = S(\Delta)X_t + \sigma \int_t^{t+\Delta} S(t+\Delta-s)\,dW_s+\int_t^{t+\Delta} S(t+\Delta-s)f(X_s)\,ds .
\end{align*}
By rearranging, we can pass to  
\begin{align*}
\frac{X_{t+\Delta}-S(\Delta)X_t}{\Delta} =  f(X_t) &+ \frac{\sigma}{\Delta} \int_t^{t+\Delta} S(t+\Delta-s)\,dW_s \\
&+ \frac{1}{\Delta}\int_t^{t+\Delta} \Big( S(t+\Delta-s)f(X_s)-f(X_t)\Big)\,ds, 
\end{align*}
yielding the regression model
\begin{equation} \label{eq:reg_model_cont}
Y_i^{\mathrm{cont}} = f(X_{t_i}) + R_i^{\mathrm{cont}} + \eps_i^{\mathrm{cont}},\qquad 0\leq i \leq N-1,
\end{equation}
with
\begin{gather*}
Y_i^{\mathrm{cont}} := \frac{X_{t_{i+1}}-S(\Delta)X_{t_i}}{\Delta},\qquad
\eps_i^{\mathrm{cont}}:=\frac{\sigma}{\Delta} \int_{t_i}^{t_{i+1}} S(t_{i+1}-s)\,dW_s,\\
R_i^{\mathrm{cont}}: =\frac{1}{\Delta}\int_{t_i}^{t_{i+1}} \Big( S(t_{i+1}-s)f(X_s)-f(X_{t_i})\Big)\,ds .
\end{gather*}
The main term in the regression model is given by $f(X_{t_i})$, $\eps_i^{\mathrm{cont}}$ is the stochastic noise term and $R_i^{\mathrm{cont}}$ is a negligible bias. Note that the stochastic noise term is stochastically independent of the covariate $X_{t_i}$. 
The corresponding least squares estimator is defined by
$$\hat f_m^{\mathrm{cont}}:= \argmin_{g \in \mathcal V_m}  \frac{1}{N} \sum_{i=0}^{N-1}\Vert Y_i^{\mathrm{cont}} -g(X_{t_i})\Vert^2_{L^2}$$
with $\Vert \cdot \Vert_{L^2}:=\Vert \cdot \Vert_{L^2((0,1))}$.
While this estimator hinges on the parameter $\vt$ through the semigroup $S(\cdot)$, it is independent of $\sigma^2$.\\

Let us return to the fully discrete observation scheme described in Section \ref{subsec:observation_scheme}. In order to derive a discretized version of $\hat f_m^{\mathrm{cont}}$, we assume that discrete observations are recorded throughout the whole space domain $(0,1)$, i.e., we have $b=0$ in \eqref{eq:obs}. This allows us to approximate the Fourier modes $x_{k}(t):=\langle X_t,e_k \rangle_{L^2}$ by their empirical counterpart given by a Riemann sum approximation.
Recall that there is a discrete version of the othonormality property for the sine base, see, e.g., \cite[Section 5]{Hildebrandt19}. In particular, for $k \leq M-1$, we have the relation 
$$\langle X_t,e_k \rangle_M:=\frac{1}{M} \sum_{l=1}^{M-1}X_t(y_l)e_k(y_l)=\sum_{\ell \in \mathcal I_k^+} x_\ell(t)-\sum_{\ell \in \mathcal I_k^-} x_\ell(t)$$
where
$\mathcal I_k^+ := k+2M\cdot \N_0$ and $\mathcal I_k^- := 2M-k+2M\cdot\N_0$.
In order to approximate the expression $S(\Delta)X_{t_i} $ appearing in the definition of $\hat f _m ^{\mathrm{cont}}$, we define 
$\hat S(\Delta):= \hat S_M(\Delta)$ by $$\hat S(\Delta)  X_t:=\sum_{\ell = 1}^{ M-1}  \e^{-\lambda_\ell \Delta} \langle X_t,e_\ell \rangle_M\, e_\ell$$
which only hinges on $X_t$ through the discrete data $(X_t(y_k),\,k=1,\ldots,M-1)$.
Hence, the discrete version of the space-continuous regression model
\eqref{eq:reg_model_cont} is given by
\begin{equation} \label{eq:Reg_model_discrete}
 Y_i= \hat S(0)f(X_{t_i}) +  R_i +\eps_i
\end{equation}
with
\begin{gather*}
Y_i:= \frac{\hat S(0) X_{t_{i+1}}-\hat S(\Delta)X_{t_i}}{\Delta},\qquad \eps_i := \eps_i^{\mathrm{cont}}\qquad\text{and}\\
R_i:= R_i^{\mathrm{cont}}+f(X_{t_i})-\hat S(0) f(X_{t_i}) +\frac{ S(\Delta) X_{t_{i}}-\hat S(\Delta) X_{t_{i}}}{\Delta}+\frac{\hat S(0) X_{t_{i+1}}-X_{t_{i+1}}}{\Delta}.
\end{gather*}
This motivates our least squares estimator 
\begin{align}
\hat f_m &:= \argmin_{g\in \mathcal V_m} \frac{1}{N}\sum_{i=0}^{N-1}\big\Vert Y_i  -\hat S(0)g(X_{t_i})\big\Vert_{L^2}^2 \nonumber\\
&=\argmin_{g\in \mathcal V_m} \frac{1}{N}\sum_{i=0}^{N-1} \sum_{k=1}^{ M-1}\left(\frac{\langle X_{t_{i+1}}, e_k\rangle_M -\e^{-\lambda_k\Delta}\langle  X_{t_{i}}, e_k\rangle_M}{\Delta}-\langle g( X_{t_{i}}), e_k\rangle_M  \right)^2, \label{eq:hatf_cont}
\end{align}
which is purely based on the fully discrete observations.
Under Assumption (H), it is possible to derive a convenient and intuitive representation for our estimator $\hat f_m$ based on the following lemma.

\begin{lem} \label{lem:L2norm_approx}
Let  $H\colon[0,1]\to \R$ be H\"{o}lder continuous in a neighborhood of $y_k$ for each $1\leq k \leq M-1$ and set $h_k := \langle H,e_k \rangle_{L^2}$. Then, the series $H_k :=\sum_{l \in \mathcal I_k^+}h_l-\sum_{l \in \mathcal I_k^-}h_l$ converges and we have  
$\langle H ,e_k \rangle_M = H_k $ as well as
$$\frac{1}{M}\sum_{k=1}^{M-1} H^2 (y_k )= \Vert H^M\Vert_{L^2}^2=\sum_{l=1}^{M-1}H_l^2$$ 
 with $H^M := \hat S(0) H=\sum_{l=1}^{M-1}H_l e_l$.
\end{lem}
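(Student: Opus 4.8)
The statement is really a discrete Parseval-type identity coupled with a convergence claim for the aliasing series, so I would structure the argument in three steps. First, I would establish convergence of the series defining $H_k$. Since $H$ is H\"older continuous near each grid point $y_k$ — in particular $H$ is bounded and, being continuous on $[0,1]$, lies in $L^2((0,1))$ — the Fourier coefficients $h_l = \langle H, e_l\rangle_{L^2}$ are square-summable, hence $\sum_l h_l^2 < \infty$. However, square-summability alone does not give absolute convergence of $\sum_{l\in\mathcal I_k^+} h_l - \sum_{l\in\mathcal I_k^-}h_l$; this is exactly where the local H\"older hypothesis is needed. The plan is to use the known decay rate of Fourier sine coefficients of a piecewise-H\"older function: if $H$ is $\beta$-H\"older in a neighborhood of $y_k$ (and merely bounded elsewhere), a localization/integration-by-parts argument shows $|h_l| \lesssim l^{-(1+\beta)}$ up to a boundary term, or more carefully one splits $H$ into a smooth bump times $H$ near $y_k$ plus a remainder; the remainder part is handled because the aliased sums telescope against the pointwise value. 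In practice the cleanest route is to invoke the identity for $\langle H, e_k\rangle_M$ directly (see below) and read off convergence from it.

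Second, I would prove the identity $\langle H, e_k\rangle_M = H_k$. Write $\langle H, e_k\rangle_M = \frac1M\sum_{l=1}^{M-1} H(y_l) e_k(y_l)$ and substitute the $L^2$-expansion $H(y_l) = \sum_{\ell\ge 1} h_\ell e_\ell(y_l)$ — valid pointwise at $y_l$ by the local H\"older regularity, which guarantees pointwise convergence of the sine series there. Exchanging the (finite) sum over $l$ with the sum over $\ell$ and using the discrete orthogonality relation for the sine basis recalled in the excerpt, namely $\frac1M\sum_{l=1}^{M-1} e_\ell(y_l)e_k(y_l)$ equals $1$ if $\ell \in \mathcal I_k^+$, equals $-1$ if $\ell\in\mathcal I_k^-$, and $0$ otherwise, collapses the double sum to exactly $\sum_{\ell\in\mathcal I_k^+}h_\ell - \sum_{\ell\in\mathcal I_k^-}h_\ell = H_k$. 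The justification for interchanging sum and (conditionally convergent) series is the delicate point and I would handle it by first truncating the $\ell$-series, applying the exact finite discrete orthogonality, and then passing to the limit using the decay estimate from Step 1.

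Third, the norm identity. The leftmost equality $\frac1M\sum_{k=1}^{M-1}H^2(y_k) = \Vert H^M\Vert_{L^2}^2$ is the discrete Parseval identity for the $(M-1)$-point sine transform: expand $H(y_k) = \sum_{\ell}h_\ell e_\ell(y_k)$, or better yet note that $H$ and $H^M := \sum_{l=1}^{M-1}H_l e_l$ agree at all grid points $y_1,\dots,y_{M-1}$ (this is precisely Step 2, since $\langle H^M, e_k\rangle_M = H_k = \langle H, e_k\rangle_M$ and both are trigonometric polynomials of degree $\le M-1$ determined by their grid values), so $\frac1M\sum_k H^2(y_k) = \frac1M\sum_k (H^M)^2(y_k)$; then apply exact discrete orthogonality to the trigonometric polynomial $H^M$ to get $\frac1M\sum_k (H^M)^2(y_k) = \sum_{l=1}^{M-1}H_l^2$. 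Finally $\Vert H^M\Vert_{L^2}^2 = \sum_{l=1}^{M-1}H_l^2$ is ordinary continuous Parseval, since the $e_l$ are orthonormal in $L^2((0,1))$.

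The main obstacle is the rigorous interchange of summation orders in Step 2 and the attendant convergence control in Step 1: one must show the aliasing series $\sum_{l\in\mathcal I_k^\pm}h_l$ converges absolutely (or at least that the partial sums converge to the claimed limit), which forces one to quantify the Fourier decay of a merely locally H\"older function. I would isolate this as a short sub-lemma: for $H$ bounded on $[0,1]$ and $\beta$-H\"older on a neighborhood of $y_k$, the coefficients along the arithmetic progressions $k + 2M\N_0$ and $2M-k+2M\N_0$ are summable, using a partition-of-unity decomposition $H = \chi H + (1-\chi)H$ with $\chi$ a smooth cutoff around $y_k$: the term $\chi H$ inherits global $\beta$-H\"older regularity so its sine coefficients decay like $l^{-1-\beta}$ along any progression avoiding resonances, while for $(1-\chi)H$ one uses that it vanishes near $y_k$ so that the contribution of its (only conditionally summable) coefficients to the aliased sum at $y_k$ is controlled by its value at $y_k$, which is zero. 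Everything else is bookkeeping with the explicit index sets $\mathcal I_k^\pm$ and the discrete orthogonality relation already cited from \cite[Section 5]{Hildebrandt19}.
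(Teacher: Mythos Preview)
Your Steps~2 and~3 --- truncate the sine series, apply the discrete orthogonality relation to the truncation, and pass to the limit; then read off the norm identity from discrete Parseval --- match the paper's argument essentially line for line. The norm identity in particular is handled exactly as you describe.

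However, your Step~1 is both unnecessary and, as written, not correct. The claimed decay $|h_l|\lesssim l^{-(1+\beta)}$ for the sine coefficients of a globally $\beta$-H\"older function is false in general: the standard estimate gives only $|h_l|\lesssim l^{-\beta}$, which is not summable for $\beta<1$. So the partition-of-unity route to absolute convergence of the aliased sums does not go through as stated. More importantly, no such absolute-convergence argument is needed. You already note in Step~2 that local H\"older regularity at each $y_l$ guarantees pointwise convergence of the sine series there; this is precisely Dini's test (H\"older continuity at a point implies the Dini condition), and it is all the paper uses. Once you have $\bar H^n(y_l):=\sum_{j=1}^n h_j e_j(y_l)\to H(y_l)$ for each $l=1,\dots,M-1$, the interchange you call ``delicate'' is trivial: the outer sum in $\langle H,e_k\rangle_M=\frac{1}{M}\sum_{l=1}^{M-1}H(y_l)e_k(y_l)$ has only $M-1$ terms, so $\langle \bar H^n,e_k\rangle_M\to\langle H,e_k\rangle_M$ immediately. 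Since $\langle \bar H^n,e_k\rangle_M=\sum_{l\in\mathcal I_k^+\cap[1,n]}h_l-\sum_{l\in\mathcal I_k^-\cap[1,n]}h_l$ by exact discrete orthogonality of the finite trigonometric polynomial $\bar H^n$, the aliasing series converges (in the sense of these particular partial sums) to $\langle H,e_k\rangle_M$. This is the paper's entire argument for the first part; drop Step~1 and invoke Dini's test directly.
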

%The H\"{o}lder condition in the above lemma can be relaxed to requiring  convergence of the Fourier series of $H$ at $y_k$ to $H(y_k)$ for each $1\leq k\leq M-1$.

Under Assumptions (H) and (E), the random variables $X_{t_i}(y_k)$ hit a discontinuity of the extension $\bar g$ of some $g\in \bigcup_{m\in \N}\mathcal V_m$ with probability zero and, hence, the above lemma is applicable with $H:= \frac{X_{t_{i+1}}-\hat S(\Delta)X_{t_i}}{\Delta}  -g(X_{t_i})$. In particular, the estimator $\hat f_m$ can, almost surely, be expressed via 
\begin{align} \label{eq:hatfm_discrete}
\hat f_m &= \argmin_{g\in \mathcal V_m} \Gamma_{N,M}(g),\qquad \Gamma_{N,M}(g):= \frac{1}{N M}\sum_{i=0}^{N-1} \sum_{k=1}^{M-1}\Big( \frac{X_{t_{i+1}}(y_k)-S^\Delta_{t_i}(y_k)}{\Delta}  -g(X_{t_i}(y_k)) \Big)^2,
\end{align}
where $ S^\Delta_{t_i}:=\hat S(\Delta) X_{t_{i}}$.

The natural empirical norm associated with the discrete observations scheme is given by
$$\Vert g\Vert_{N,M}^2 := \frac{1}{NM}\sum_{i=0}^{N-1}\sum_{k=1}^{M-1} g^2(X_{t_i}(y_k))$$
and, in the sequel, we derive a bound on $\E(\Vert \hat f_m -f_{\mathcal A} \Vert_{N,M}^2)$ with $f_{\mathcal A} := f \1_{\mathcal A}$. As before, $\pi$ denotes the stationary distribution for $X$ and, for nonrandom $g\in L^2(\mathcal A)$, let
$$\Vert g \Vert_{\pi,M}^2 := \frac{1}{M} \sum_{k=1}^{M-1} \E\big( g^2(X_0(y_k))\big).$$
Due to Assumption (E), there are constants $c,C>0$ such that 
\begin{equation} \label{eq:(E)implication_cont}
c\Vert g\Vert_{L^2(\mathcal A)}^2 \leq \Vert g \Vert_{\pi,M}^2\leq C \Vert g\Vert_{L^2(\mathcal A)}^2 
\end{equation}
holds for all $g\in L^2(\mathcal A)$.
The oracle choice for an estimate of $f_{\mathcal A}$ from the space $\mathcal V_m$ is defined by 
\begin{equation*} 
f_m^* := \argmin_{g\in \mathcal V_m} \Vert f-g \Vert_{L^2(\mathcal A)}^2.
\end{equation*}

\begin{thm} \label{thm:nonpara_discrete}
Grant  Assumptions (F), (M), (E), (N) and (H). Assume that $M\Delta^2 \to \infty$ as well as $\frac{N\Delta}{\log^2 N} \to \infty$, $L_m=o(\frac{N\Delta}{\log^2 N})$ and $D_m\leq N$. Then, for any $\gamma<1/2$, we have
$$\E\big(\Vert \hat f_m - f_{\mathcal A}\Vert_{N,M}^2\big)\lesssim \Vert f- f_m^* \Vert_{L^2({\mathcal A})}^2 + \frac{D_m}{T} + \Delta^\gamma  +\frac{1}{M\Delta^2}.$$
\end{thm}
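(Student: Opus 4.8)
The plan is to work entirely with the space-continuous regression model \eqref{eq:reg_model_cont} first and then show that the discretization error — the difference between $\Gamma_{N,M}$ and its space-continuous analogue — contributes only the term $\tfrac{1}{M\Delta^2}$. For the continuous part I would follow the standard penalized-least-squares strategy of \citet{Comte07} and \citet{Baraud01}. Writing $\hat f_m^{\mathrm{cont}}$ as the orthogonal projection (in the random empirical inner product $\langle\cdot,\cdot\rangle_{N,M}$, really its continuous counterpart $\langle\cdot,\cdot\rangle_N$ taking values in $L^2((0,1))$) of the response onto $\mathcal V_m$, the defining inequality $\tfrac1N\sum_i\Vert Y_i^{\mathrm{cont}}-\hat f_m^{\mathrm{cont}}(X_{t_i})\Vert_{L^2}^2\le \tfrac1N\sum_i\Vert Y_i^{\mathrm{cont}}-f_m^*(X_{t_i})\Vert_{L^2}^2$ expands, after plugging in $Y_i^{\mathrm{cont}}=f(X_{t_i})+R_i^{\mathrm{cont}}+\eps_i^{\mathrm{cont}}$, into a bias term $\Vert f-f_m^*\Vert_{N,M}^2$, a remainder contribution controlled through $\sup_i\Vert R_i^{\mathrm{cont}}\Vert_{L^2}^2$, and a stochastic cross term $\tfrac1N\sum_i\langle \eps_i^{\mathrm{cont}}+R_i^{\mathrm{cont}},(\hat f_m^{\mathrm{cont}}-f_m^*)(X_{t_i})\rangle_{L^2}$ which one handles by Cauchy–Schwarz and a supremum over the unit ball of $\mathcal V_m$, absorbing a fraction of $\Vert\hat f_m^{\mathrm{cont}}-f_m^*\Vert_{N,M}^2$ into the left side.

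The three quantitative inputs are: (a) a bound $\E(\sup_i\Vert R_i^{\mathrm{cont}}\Vert_{L^2}^2)\lesssim \Delta^\gamma$ for any $\gamma<1/2$, which comes from Proposition \ref{prop:AN_reg}(ii) and Proposition \ref{prop:X_reg} — the time increment $S(t_{i+1}-s)f(X_s)-f(X_{t_i})$ over an interval of length $\Delta$ splits into the Hölder-in-time fluctuation of $f(X)$ (Proposition \ref{prop:X_reg}(ii), exponent $\gamma<1/4$, so $\Delta^{2\gamma}$ after squaring) and the semigroup-regularization error $(S(t+\Delta-s)-\mathrm{Id})f(X_t)$, which is $\mathcal O(\Delta^\gamma)$ in $L^2$ by smoothness of $f_0(X_t)$ in $C_0^\gamma$ (Proposition \ref{prop:X_reg}(i)) plus the explicit linear-in-$\Delta$ contribution of the constant mode $M_t$; (b) a concentration/equivalence-of-norms statement asserting that on an event of probability $\ge 1-\mathcal O((N\Delta)^{-1}\log^2N)$ one has $\Vert g\Vert_{N,M}^2\ge \tfrac12\Vert g\Vert_{\pi,M}^2\gtrsim\Vert g\Vert_{L^2(\mathcal A)}^2$ uniformly over $g\in\mathcal V_m$ with $\Vert g\Vert_{L^2(\mathcal A)}=1$ — this is where the $\beta$-mixing assumption (M), Assumption (E) via \eqref{eq:(E)implication_cont}, Assumption (N) (to pass from pointwise to uniform control over $\mathcal V_m$ with the price $D_m$), and the hypothesis $L_m=o(N\Delta/\log^2N)$ enter, via a Bernstein-type inequality for $\beta$-mixing sequences applied blockwise with block length $\sim\log N$; and (c) control of the stochastic term, $\E\big(\sup_{g\in\mathcal V_m,\Vert g\Vert_{L^2(\mathcal A)}\le1}(\tfrac1N\sum_i\langle\eps_i^{\mathrm{cont}},g(X_{t_i})\rangle_{L^2})^2\big)\lesssim \tfrac{D_m}{N\Delta}=\tfrac{D_m}{T}$, using independence of $\eps_i^{\mathrm{cont}}$ from $X_{t_i}$, the identity $\E\Vert\eps_i^{\mathrm{cont}}\Vert_{L^2}^2\eqsim \tfrac{\sigma^2}{\Delta}\cdot\tfrac1{\sqrt\Delta}$ — actually the relevant covariance against a fixed test function gives variance $\eqsim 1/(N^2\Delta)$ per summand times $N$ terms, so $1/(N\Delta)=1/T$ — together with Assumption (N) and $D_m\le N$ for the uniformization over the (random) basis evaluated at data points.

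The discretization passage is the last step: replacing $\Vert\cdot\Vert_{L^2((0,1))}$ by $\tfrac1M\sum_{k=1}^{M-1}(\cdot)^2(y_k)$ and $S(\Delta)X_{t_i}$ by $\hat S(\Delta)X_{t_i}$, one rewrites, via Lemma \ref{lem:L2norm_approx}, all norms as finite $\ell^2$-sums over truncated Fourier modes and estimates the aliasing/truncation error. The extra remainder pieces in $R_i$ beyond $R_i^{\mathrm{cont}}$ — namely $f(X_{t_i})-\hat S(0)f(X_{t_i})$, $(S(\Delta)-\hat S(\Delta))X_{t_i}/\Delta$, and $(\hat S(0)-\mathrm{Id})X_{t_{i+1}}/\Delta$ — are each $\mathcal O_p(1/(M\Delta^2))$ in squared $L^2$ norm: the first is a smoothness-times-$M^{-1}$ term (harmless), while the last two are spectral truncation errors of order $M^{-1}$ (using the $C^\gamma$-regularity of $X_{t_i}$ and decay of $\langle X_t,e_\ell\rangle$) divided by $\Delta^2$, and the condition $M\Delta^2\to\infty$ makes them vanish. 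I expect step (b) — the uniform equivalence of the empirical and $L^2(\mathcal A)$ norms on $\mathcal V_m$ under $\beta$-mixing with the sharp dependence on $L_m$ and the $\log^2N$ factor — to be the main obstacle, since it requires a careful blocking argument and the Assumption (N)/$L_m$ bookkeeping à la \citet{Baraud01}; the remainder and discretization estimates are routine given Propositions \ref{prop:X_reg} and \ref{prop:AN_reg} and Lemma \ref{lem:L2norm_approx}.
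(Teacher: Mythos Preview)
Your ingredients (a), (b), (c) and the discretization bounds are all correct and match the paper's. There are, however, two points where your plan diverges from what actually works.

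\medskip
\textbf{Organization: do not analyze $\hat f_m^{\mathrm{cont}}$ first.} Your stated plan is to derive an oracle inequality for $\hat f_m^{\mathrm{cont}}$ and then ``show that the discretization error --- the difference between $\Gamma_{N,M}$ and its space-continuous analogue --- contributes only $\tfrac{1}{M\Delta^2}$.'' But $\hat f_m$ and $\hat f_m^{\mathrm{cont}}$ are minimizers of \emph{different} criteria, and bounding the difference between the criteria does not directly bound the difference between the minimizers; you would need an extra stability argument. The paper avoids this entirely: it writes the \emph{discrete} regression model $Y_i=\hat S(0)f(X_{t_i})+R_i+\eps_i$ with $R_i=R_i^{\mathrm{cont}}+\big(f(X_{t_i})-\hat S(0)f(X_{t_i})\big)+\tfrac{1}{\Delta}(S(\Delta)-\hat S(\Delta))X_{t_i}+\tfrac{1}{\Delta}(\hat S(0)-I)X_{t_{i+1}}$, and runs the whole least-squares argument for $\hat f_m$ directly in this model. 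Lemma~\ref{lem:L2norm_approx} is what makes this work: it identifies $\|g\|_{N,M}^2$ with $\tfrac1N\sum_i\|\hat S(0)g(X_{t_i})\|_{L^2}^2$, so the expansion of $\Gamma_{N,M}(\hat f_m)\le\Gamma_{N,M}(f_m^*)$ produces exactly the terms you list, now with $R_i$ in place of $R_i^{\mathrm{cont}}$. Your final paragraph (``extra remainder pieces in $R_i$ beyond $R_i^{\mathrm{cont}}$'') is already the correct picture; just start there rather than with $\hat f_m^{\mathrm{cont}}$.

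\medskip
\textbf{The bad event.} Your rate for (b), ``probability $\ge 1-\mathcal O((N\Delta)^{-1}\log^2N)$'', is too weak and you do not say how to control $\|\hat f_m-f_{\mathcal A}\|_{N,M}^2$ on the complement. The paper shows, via Rio's Bennett inequality for $\alpha$-mixing sequences and the $L_m$-bookkeeping you mention, that $\P(\Xi_{N,M,m}^c)\lesssim N^{-\gamma}$ for \emph{every} $\gamma>0$. This strength is needed: on $\Xi_{N,M,m}^c$ one only has the crude bound $\|\hat f_m-f_{\mathcal A}\|_{N,M}^2\le\|\bar f\|_{H^N}^2+\|\bar Y-\bar f\|_{H^N}^2$ (from $\hat f_m$ being an $H^N$-projection of $\bar Y$), and since $\E\|\eps_i\|_{L^2}^4\eqsim\Delta^{-3}$, Cauchy--Schwarz gives $\E(\|\hat f_m-f_{\mathcal A}\|_{N,M}^2\1_{\Xi^c})\lesssim\Delta^{-3/2}\P(\Xi^c)^{1/2}$. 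A bound of order $T^{-1}$ on $\P(\Xi^c)$ would leave $\Delta^{-3/2}T^{-1/2}$, which need not vanish; one really needs arbitrary polynomial decay in $N$.
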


\begin{rem}
Under the same assumptions as in the above theorem, we can obtain the oracle inequality
$$ \frac{1}{N}\sum_{i=0}^{N-1}\E\big(\Vert \hat f_m^{\mathrm{cont}}(X_{t_i}) - f_{\mathcal A}(X_{t_i})\Vert_{L^2}^2\big)\lesssim \Vert f- f_m^* \Vert_{L^2({\mathcal A})}^2 + \frac{D_m}{T} + \Delta^\gamma $$
for the estimator $\hat f_m^{\mathrm{cont}}$ from \eqref{eq:hatf_cont} based on space-continuous observations.  In fact, this result can be obtained without the continuity Assumption (H).
\end{rem}
We encounter the usual bias-variance trade-off in nonparametric statistics: When $m$ is too small the estimator is not sufficiently versatile, leading to a large bias term $\Vert f- f_m^* \Vert_{L^2({\mathcal A})}^2$. On the other hand, when $m$ is too large, the estimated function will suffer from overfitting, resulting in a large variance term $D_m/T$. 
Assuming that $\Vert f_m^* -f\Vert_{L^2({\mathcal A})} \eqsim D_m^{-\alpha}$ for some $\alpha >0$, balancing the bias and the variance term leads to the optimal choice $D_m \eqsim T^{\frac{1}{1+2\alpha}}$. Under the additional assumption that 
\begin{equation} \label{eq:remainder_negligible}
T\Big( \Delta^\gamma  +\frac{1}{M\Delta^2}\Big) \to 0
\end{equation}
holds for some $\gamma <1/2$, the last two terms on the right hand side of the oracle inequality in Theorem \ref{thm:nonpara_discrete} are negligible and we obtain the usual (squared) nonparametric rate
$$\E\big(\Vert \hat f_m - f_{\mathcal A}\Vert_{N,M}^2\big)\lesssim T^{-\frac{2\alpha}{2\alpha+1}}. $$
%{\color{red}
%In fact, there might be additional conditions due to the assumption $L_m=o\big(\frac{T}{\log^2N}\big)$ in the theorem. For instance, for the trigonometric spaces [T], we have $L_m \eqsim D_m^2$ and, thus, it is only possible to take $D_m \eqsim T^{\frac{1}{1+2\alpha}}$, provided that $\alpha>\frac{1}{2}$, as already pointed out in \cite{Comte07}. For instance, when using $[T]$, the $k$-th Fourier coefficients of a function  $f \in C^1$ are generally of the order $1/k$. A faster decay is only expectable in the exceptional case where the function $f$ is periodic on ${\mathcal A}$. Thus, we have $\Vert f_m-f \Vert_{L^2({\mathcal A})}\eqsim D_m^{-\alpha}$ with $\alpha = \frac{1}{2}$ and it is possible to achieve a convergence rate of $T^{-\frac{ \tilde \alpha}{2\tilde \alpha +1}}$ for any $\tilde \alpha< \alpha$. 
%}
Some caution is necessary in order to prevent a contradiction between $D_m\eqsim T^{\frac{1}{1+2\alpha}}$ and the condition $L_m=o\big(\frac{T}{\log^2N}\big)$ in the theorem, as already pointed out in \cite{Comte07}. When working with [P] or [W], we have $L_m \eqsim D_m$ and the conditions can be met at the same time. When working with the trigonometric spaces [T], we have $L_m \eqsim D_m^2$ and, thus, it is only possible to take $D_m \eqsim T^{\frac{1}{1+2\alpha}}$, provided that $\alpha>1/2$. For a function $f$ meeting our fundamental requirement $f \in C^1(\R)$, the $k$-th Fourier coefficients  are generally of the order $1/k$, a faster decay is only present in the exceptional case where the function $f$ is periodic on ${\mathcal A}$. Thus, we have $\Vert f_m^*-f \Vert_{L^2({\mathcal A})}\eqsim D_m^{-\alpha}$ with $\alpha = 1/2$ and it is still possible to achieve a convergence rate of $T^{-\frac{ \tilde \alpha}{2\tilde \alpha +1}}$ for any $\tilde \alpha< \alpha$. When using the approximation spaces [P], the decay $\Vert f-f_m^* \Vert_{L^2({\mathcal A})}\lesssim D_m^{-\alpha}$ can be ensured by assuming that $f_{\mathcal A}$ belongs to the Besov space $\mathcal B^\alpha_{2,\infty}({\mathcal A})$ with $\alpha < r_{\max} + 1$, see Theorem 7.3 in \cite[Chapter 7]{Devore93}. This gives the following:

\begin{cor}
Additionally to the assumptions of Theorem \ref{thm:nonpara_discrete} grant \eqref{eq:remainder_negligible} and $f_{\mathcal A}\in\mathcal B_{2,\infty}^\alpha({\mathcal A})$ for some $\alpha>0$. If one uses the approximation spaces $[P]$ with $r_{\max}>\alpha-1$ and $D_m\eqsim T^{\frac{1}{2\alpha+1}}$, we have
$$ \E\big(\Vert \hat f_m - f_{\mathcal A}\Vert_{N,M}^2\big)\lesssim T^{-\frac{2\alpha}{2\alpha+1}}.$$
\end{cor}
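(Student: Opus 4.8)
The plan is to derive the corollary directly from Theorem~\ref{thm:nonpara_discrete} by quantifying the approximation error $\Vert f-f_m^*\Vert_{L^2(\mathcal A)}$ for the piecewise-polynomial spaces $[P]$ and then optimizing over the model dimension. I would fix $r=r_{\max}$ and choose, for the remaining index, $p=p(T)\in\N$ so that $D_m=(r_{\max}+1)2^{p+1}\eqsim T^{1/(2\alpha+1)}$; since the dyadic scale only lets $D_m$ be attained up to a multiplicative factor of $2$, this is absorbed by the relation $\eqsim$. With this choice I would first check that the structural hypotheses of Theorem~\ref{thm:nonpara_discrete} remain compatible: for $[P]$ one has $L_m\lesssim D_m\eqsim T^{1/(2\alpha+1)}$, so $L_m=o(N\Delta/\log^2N)$ and $D_m\le N$ are consistent with the standing requirement $N\Delta/\log^2N\to\infty$, and no contradiction of the kind discussed after the theorem for $[T]$ occurs here.

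For the bias term, recall that $f_m^*$ is by definition the $L^2(\mathcal A)$-orthogonal projection of $f$ onto $\mathcal V_m$, so $\Vert f-f_m^*\Vert_{L^2(\mathcal A)}=\inf_{g\in\mathcal V_m}\Vert f_{\mathcal A}-g\Vert_{L^2(\mathcal A)}$. Since $f_{\mathcal A}\in\mathcal B_{2,\infty}^\alpha(\mathcal A)$ with $\alpha<r_{\max}+1$, I would invoke the classical approximation estimate for piecewise polynomials of degree $\le r_{\max}$ on a dyadic partition, namely Theorem~7.3 in \cite[Chapter~7]{Devore93}, to get $\inf_{g\in\mathcal V_{(p,r_{\max})}}\Vert f_{\mathcal A}-g\Vert_{L^2(\mathcal A)}\lesssim 2^{-p\alpha}\eqsim D_m^{-\alpha}$, hence $\Vert f-f_m^*\Vert_{L^2(\mathcal A)}^2\lesssim D_m^{-2\alpha}$.

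Plugging this into the oracle inequality of Theorem~\ref{thm:nonpara_discrete} would give, for some $\gamma<1/2$,
\[
\E\big(\Vert \hat f_m-f_{\mathcal A}\Vert_{N,M}^2\big)\lesssim D_m^{-2\alpha}+\frac{D_m}{T}+\Delta^\gamma+\frac{1}{M\Delta^2}.
\]
By the choice $D_m\eqsim T^{1/(2\alpha+1)}$ the first two terms are both of order $T^{-2\alpha/(2\alpha+1)}$, realizing the bias--variance balance. For the last two terms I would use condition \eqref{eq:remainder_negligible}, which says $T(\Delta^\gamma+1/(M\Delta^2))\to0$, so that $\Delta^\gamma+1/(M\Delta^2)=o(T^{-1})$; since $2\alpha/(2\alpha+1)<1$ one has $T^{-1}\le T^{-2\alpha/(2\alpha+1)}$ for large $T$, hence these terms are $o\big(T^{-2\alpha/(2\alpha+1)}\big)$ and are absorbed, leaving $\E(\Vert \hat f_m-f_{\mathcal A}\Vert_{N,M}^2)\lesssim T^{-2\alpha/(2\alpha+1)}$.

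There is no genuine obstacle: the statement is a plug-in of a known approximation-theoretic rate into the already-established oracle inequality. The only points that need a little care are matching the prescribed dimension $D_m\eqsim T^{1/(2\alpha+1)}$ to the discrete dyadic scale of the $[P]$ spaces (keeping $r=r_{\max}$ fixed), and verifying that this choice respects the compatibility conditions $L_m=o(N\Delta/\log^2N)$ and $D_m\le N$ --- which it does for $[P]$ precisely because $L_m\lesssim D_m$, in contrast to the trigonometric spaces where $L_m\eqsim D_m^2$ would force the extra restriction $\alpha>1/2$.
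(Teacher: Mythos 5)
Your proof is correct and follows essentially the same route the paper intends: the corollary is meant to be read off from the preceding discussion (DeVore--Lorentz Theorem 7.3 gives $\Vert f-f_m^*\Vert_{L^2(\mathcal A)}\lesssim D_m^{-\alpha}$ for $[P]$, which one then plugs into the oracle inequality of Theorem~\ref{thm:nonpara_discrete} and balances against $D_m/T$, with the remainder terms absorbed via \eqref{eq:remainder_negligible}). Your additional remarks — fixing $r=r_{\max}$ and tuning the dyadic level $p$, and checking that $L_m\lesssim D_m\eqsim T^{1/(2\alpha+1)}$ stays consistent with $L_m=o(N\Delta/\log^2N)$ and $D_m\le N$ — are correct and make explicit the compatibility checks the paper leaves implicit.
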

To ensure $\Vert f-f_m^* \Vert_{L^2({\mathcal A})}\lesssim D_m^{-\alpha}$  based on $f\in \mathcal B^\alpha_{2,\infty}({\mathcal A})$ when working with [W] or [T], respectively, one generally needs the additional assumption of periodicity or compact support in ${\mathcal A}$, respectively, see \cite{Birge97}.

In practice, the true value of the regularity parameter $\alpha$ is unknown, as it is a property of the unknown function $f$. This issue will be addressed via an adaptive procedure that chooses the approximation space $\mathcal V_m$ in a data driven way, see Theorem \ref{thm:adaptive}. 

The error term $\Delta^\gamma,\,\gamma<1/2$, in the oracle inequality bounds the remainder $R_i^{\mathrm{cont}}$ in the underlying regression model. In the corresponding result for SODEs \citet{Comte07} obtain instead the smaller bound $\Delta$.
The difference in the order of magnitude is due to the fact that for the SPDE model there only  is  temporal H\"{o}lder regularity up to exponent $1/4$, as opposed to exponent $1/2$ in the finite-dimensional setting. 

The last term on the right hand side of the  oracle inequality in Theorem \ref{thm:nonpara_discrete} is caused by the approximation error $R_i-R_i^{\mathrm{cont}}$ of the continuous model, which is not present in the SODE setting. More precisely, to approximate the semigroup, we estimate the Fourier coefficient processes of $X$ by their empirical counterparts. Due to the roughness  of the paths $x\mapsto X_t(x)$, the corresponding approximation quality  is rather poor. The resulting estimation error is of the order $\E(\Vert S(h)X_t-\hat S(h)X_t \Vert_{L^2}^2)=\O(1/M)$ for $h\ge0$.
%In the proof, we show that, for $h\geq 0$, we have $\E(\Vert S(h)X_t-\hat S(h)X_t \Vert_{L^2}^2)=\O(1/M) $. This error is caused by approximating the coefficient processes of $X$ by their empirical counterparts, in order to get an approximation of the semigroup. 
A similar effect occurs in \citet{Uchida19} where a spectral approximation is used for parametric estimation for the linear equation. The approximation error of the order $\O(1/M)$ gets further amplified by dividing by the squared renormalization $\Delta^{2}$, leading to the condition $M \Delta^2 \to \infty$. Under this condition, the  observation frequency in space is much larger than in time which, in particular,  rules out a balanced sampling design $\delta \eqsim \sqrt \Delta$, see Section \ref{sec:parametric_nonlinear} below.  The additional error term $f(X_t)-\hat S(0)f(X_t)$ included in $R_i-R_i^{\mathrm{cont}}$  turns out to be negligible with respect to $\Delta^\gamma$ under the condition $M \Delta^2 \to \infty$.
 
As an illustration of the method Figure~\ref{fig:nonparametric} shows ten exemplary realizations of the estimator $\hat f_m$ with the trigonometric basis [T]  when $f$ is the polynomial $f(x):= - x^3+x/5$. The compact set on which $f$ is estimated is ${\mathcal A}= [-1,1]$. The general shape of the function $f$ is  captured accurately inside some interval containing the origin, roughly $[-0.5,0.5]$. It is evident from the histogram that areas further away from the origin do not contain as many data points which, naturally, affects the quality of the estimator there. Also, there is a boundary effect caused by the fact that the functions in $\mathcal V_m$ are necessarily periodic over $[-1,1]$.\\
%As an illustration of the method Figure~\ref{fig:nonparametric} shows ten exemplary realizations of the estimator $\hat f_m$ with the trigonometric basis [T]  when $f$ is the linear function $f(x):= \vt_0 x$ for some $\vt_0<0$ such that discrete observations of the solution process can be generated by means of the replacement method introduced in \citet{Hildebrandt20}. The compact set on which $f$ is estimated is ${\mathcal A}= [-1,1]$. The general shape of the function $f$ is  captured accurately inside some interval containing the origin, roughly $[-0.5,0.5]$. It is evident from the histogram that areas further away from the origin do not contain as many data points which, naturally, affects the quality of the estimator there. Also, there is a boundary effect caused by the fact that the functions in $\mathcal V_m$ are necessarily periodic over $[-1,1]$.\\
  
\begin{figure}[t]
\centering
\input{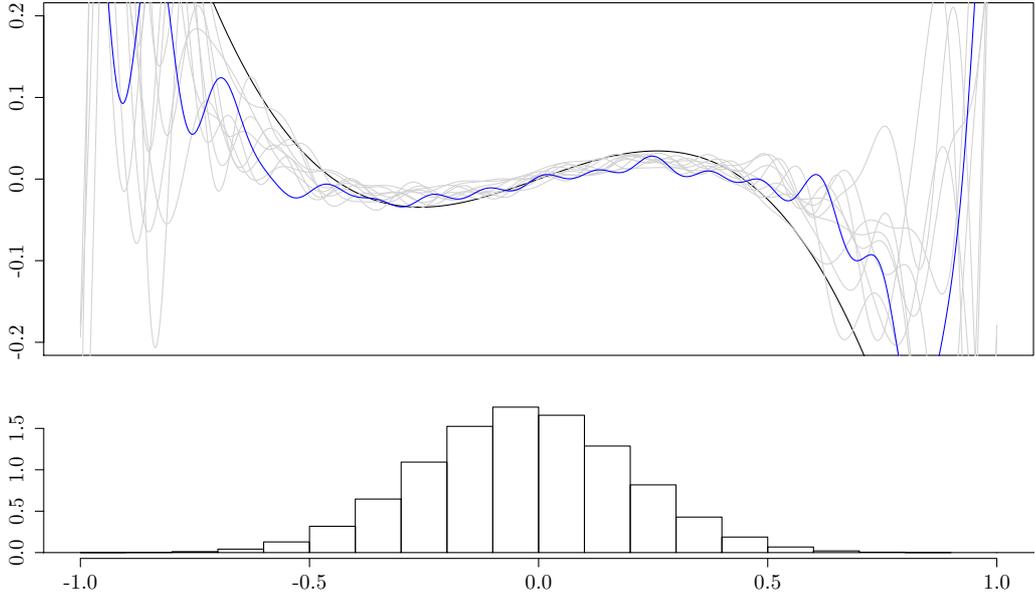}%\includegraphics[width=.7\textwidth, height=.4\textheight]{Nonpara2}
\caption{Ten realizations of the estimator $\hat f_m$ from \eqref{eq:hatfm_discrete} with the trigonometric basis [T] on ${\mathcal A}=[-1,1]$ (blue or gray) along with the true underlying function $f(x) = -x^3+0.2\cdot x$ (black). The barplot shows a histogram of the corresponding discrete observations $\{X_{t_i}(y_k)\}_{i,k}$.  The sample sizes are given via $M=800,\,T=250,\,\Delta = 0.05$. The dimension of the approximation space was chosen to be $D_m=2m+1=33$, which corresponds to  $D_m \eqsim \sqrt T$. The discrete observations of $X$ are obtained by the exponential Euler method  with spectral cut-off at 1000. The parameter values are $\sigma=0.05$ and $\vt =0.01$.}
\label{fig:nonparametric}
\end{figure}

In order to prove Theorem \ref{thm:nonpara_discrete}, we adapt the proof strategy from \cite{Comte07} to our infinite-dimensional setting. The main steps of the proof are explained in the following: 
For an arbitrary function $g \in \bigcup_{m\in \N}\mathcal V_m$, we can use Lemma \ref{lem:L2norm_approx} and the regression model \eqref{eq:Reg_model_discrete} to write
\begin{align*}
\Gamma_{N,M}(g)-\Gamma_{N,M}(f) &= \Vert g-f \Vert_{N,M}^2 + \frac{2}{N}\sum_{i=0}^{N-1} \langle Y_i-\hat S(0)f(X_{t_i}), \hat S(0)f(X_{t_i})-\hat S(0)g(X_{t_i})\rangle_{L^2} \\
&= \Vert g-f \Vert_{N,M}^2 + \frac{2}{N}\sum_{i=0}^{N-1} \langle \eps_i+R_i, \hat S(0)f(X_{t_i})-\hat S(0)g(X_{t_i})\rangle_{L^2}.
\end{align*}
By definition of $\hat f_m$, we have $\Gamma_{N,M}(\hat f_m)-\Gamma_{N,M}(f)\leq \Gamma_{N,M}( f_m^*)-\Gamma_{N,M}(f)$ and using the above expansion on both sides of this inequality yields
\begin{align*}
\Vert \hat f_m -f \Vert^2_{N,M} \leq \Vert f_m^* -f \Vert^2_{N,M} + \frac{2}{N}\sum_{i=0}^{N-1} \langle \eps_i +R_i,\hat S(0)\hat f_m(X_{t_i})-\hat S(0)f_m^*(X_{t_i})\rangle_{L^2}.
\end{align*}
Since both $\hat f_m$ and $f_m^*$ are ${\mathcal A}$-supported, if we insert $f= f\1_{\mathcal A} +f\1_{{\mathcal A}^c}$ in the above equation, then the terms $\Vert f1_{{\mathcal A}^c}\Vert_{N,M}^2$ on both sides of the inequality cancel. We arrive at the fundamental oracle inequality
\begin{align}
\Vert \hat f_m - f_{\mathcal A}  \Vert^2_{N,M} \leq \Vert f_m^* -f_{\mathcal A} \Vert^2_{N,M} + & \frac{2}{N}\sum_{i=0}^{N-1} \langle \eps_i,\hat S(0)\hat f_m(X_{t_i})-\hat S(0)f_m^*(X_{t_i})\rangle_{L^2} \nonumber\\
+&\frac{2}{N}\sum_{i=0}^{N-1} \langle R_i,\hat S(0)\hat f_m(X_{t_i})-\hat S(0)f_m^*(X_{t_i})\rangle_{L^2}. \label{eq:oracle_prelim}
\end{align}
By treating each of the three terms appearing on the right hand side above individually, we can derive the following proposition.

\begin{prop} \label{prop:SpaceDiscrete}
Grant Assumptions (F), (M), (N) and (H) and assume $M\Delta^2 \to \infty$. For $\underline{c}>0$, define the event
\begin{align*}
\Omega_{N,M,m}&:=\Omega_{N,M,m,\underline{c}}:= \left\{ \Vert g \Vert_{N,M}^2 \geq \underline{c}\Vert g\Vert_{L^2({\mathcal A})}^2  \text{ for all } g \in \mathcal V_m\right\}.
\end{align*}
Then, for any $\gamma <1/2$, we have
$$\E \Big( \Vert \hat f_m - f_{\mathcal A} \Vert_{N,M}^2\1_{\Omega_{N,M,m}}\Big) \lesssim \Vert f_{\mathcal A}- f_m^* \Vert_{\pi,M}^2 + \frac{D_m}{T} + \Delta^\gamma  +\frac{1}{M \Delta^2}. $$
\end{prop}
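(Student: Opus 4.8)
The starting point is the fundamental oracle inequality \eqref{eq:oracle_prelim} obtained above, so the task reduces to controlling the three terms on the right-hand side on the event $\Omega_{N,M,m}$. I would treat the bias term, the stochastic (noise) term, and the remainder term in turn, and conclude by collecting the bounds.

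\emph{The bias term.}
The first term $\Vert f_m^*-f_{\mathcal A}\Vert_{N,M}^2$ is random; I would take expectations and use that $\E(\Vert g\Vert_{N,M}^2)=\Vert g\Vert_{\pi,M}^2$ for nonrandom $g\in L^2(\mathcal A)$ by stationarity of $X$ under $\pi$ and Assumption (M). Hence $\E(\Vert f_m^*-f_{\mathcal A}\Vert_{N,M}^2\1_{\Omega_{N,M,m}})\le\Vert f_m^*-f_{\mathcal A}\Vert_{\pi,M}^2$, which is exactly the first term in the claimed bound.

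\emph{The stochastic term.}
Write $\hat f_m-f_m^*=\sum_{k\in\Lambda_m}a_k\varphi_k$. Since $\hat S(0)$ is an $L^2$-orthogonal projection onto $\mathrm{span}(e_1,\dots,e_{M-1})$, by Lemma \ref{lem:L2norm_approx} the inner product $\langle\eps_i,\hat S(0)(\hat f_m-f_m^*)(X_{t_i})\rangle_{L^2}$ equals $\frac1M\sum_{l=1}^{M-1}\eps_i(y_l)(\hat f_m-f_m^*)(X_{t_i}(y_l))$ (up to the discretisation identity), so the stochastic term becomes $\sum_{k}a_k\,\nu_k$ with $\nu_k:=\frac2N\sum_{i}\langle\eps_i,\hat S(0)\varphi_k(X_{t_i})\rangle_{L^2}$. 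A Cauchy–Schwarz step on $\Omega_{N,M,m}$ splits this into $\tfrac14\Vert\hat f_m-f_m^*\Vert_{N,M}^2+\tfrac1{\underline c}\sum_k\nu_k^2$; the first part is absorbed into the left-hand side (after inserting the triangle inequality $\Vert\hat f_m-f_{\mathcal A}\Vert_{N,M}^2\le 2\Vert\hat f_m-f_m^*\Vert_{N,M}^2+2\Vert f_m^*-f_{\mathcal A}\Vert_{N,M}^2$), and it remains to show $\E(\sum_k\nu_k^2\1_{\Omega})\lesssim D_m/T$. For this I use that $\eps_i^{\mathrm{cont}}=\frac\sigma\Delta\int_{t_i}^{t_{i+1}}S(t_{i+1}-s)\,dW_s$ is $\mathcal F_{t_{i+1}}$-measurable, centred, and \emph{independent of the covariates} $X_{t_i}$ (it only uses the driving noise after time $t_i$), so conditionally on $\sigma(X_{t_i},\,i\le N-1)$ the increments form a martingale difference array; $\E(\Vert\eps_i^{\mathrm{cont}}\Vert_{L^2}^2\mid\cdot)=\O(1/\Delta)$ from \eqref{eq:time_variation}-type computations. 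Combining this conditional independence with Assumption (N) (which gives $\sum_k\varphi_k^2\le CD_m$) yields $\E(\sum_k\nu_k^2)\lesssim \frac1{N^2}\cdot N\cdot\frac{D_m}{\Delta}\eqsim\frac{D_m}{N\Delta}=\frac{D_m}{T}$.

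\emph{The remainder term.}
Here I split $R_i=R_i^{\mathrm{cont}}+(R_i-R_i^{\mathrm{cont}})$. For $R_i^{\mathrm{cont}}=\frac1\Delta\int_{t_i}^{t_{i+1}}(S(t_{i+1}-s)f(X_s)-f(X_{t_i}))\,ds$, Proposition \ref{prop:X_reg}(ii) and Proposition \ref{prop:AN_reg} give $\E(\Vert S(h)f(X_s)-f(X_t)\Vert_\infty^2)\lesssim |h|+|s-t|^{\gamma}\lesssim\Delta^\gamma$ for $h,|s-t|\le\Delta$ and any $\gamma<1/4$ — one uses the temporal $\gamma$-Hölder regularity of $f(X)$ together with the contraction and smoothing of $S(\cdot)$. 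Then, as in the stochastic term, a Cauchy–Schwarz/Young step on $\Omega_{N,M,m}$ absorbs $\tfrac14\Vert\hat f_m-f_m^*\Vert_{N,M}^2$ and leaves $\frac{C}{\underline c N}\sum_i\frac1M\sum_l\E(R_i^{\mathrm{cont}}(y_l)^2)\lesssim\Delta^\gamma$ (here one should actually carry $\gamma<1/2$ in the squared bound since the error enters quadratically; the exponent $\gamma<1/2$ in the statement corresponds to $2\cdot(1/4)$ from the $L^2$-estimate). For $R_i-R_i^{\mathrm{cont}}$, which collects the three spectral-approximation errors $f(X_{t_i})-\hat S(0)f(X_{t_i})$, $\Delta^{-1}(S(\Delta)-\hat S(\Delta))X_{t_i}$ and $\Delta^{-1}(\hat S(0)-\mathrm{Id})X_{t_{i+1}}$, I invoke the key bound $\E(\Vert S(h)X_t-\hat S(h)X_t\Vert_{L^2}^2)=\O(1/M)$ (Riemann-sum error for the Fourier modes, using the $2\gamma$-Hölder spatial regularity of $X$); the two terms divided by $\Delta$ contribute $\O(1/(M\Delta^2))$ and the undivided term $\O(1/M)=o(1/(M\Delta^2))$. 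Summing, the remainder part is $\lesssim\Delta^\gamma+\frac1{M\Delta^2}$, which requires $M\Delta^2\to\infty$ merely to know these are $o(1)$; no such condition is actually needed for the bound itself.

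\emph{Conclusion and main obstacle.}
Collecting the three bounds in \eqref{eq:oracle_prelim}, absorbing the $\tfrac14+\tfrac14$ copies of $\Vert\hat f_m-f_m^*\Vert_{N,M}^2$ on the left and using $\Vert\hat f_m-f_{\mathcal A}\Vert_{N,M}^2\le 2\Vert\hat f_m-f_m^*\Vert_{N,M}^2+2\Vert f_m^*-f_{\mathcal A}\Vert_{N,M}^2$, gives exactly $\E(\Vert\hat f_m-f_{\mathcal A}\Vert_{N,M}^2\1_{\Omega_{N,M,m}})\lesssim\Vert f_{\mathcal A}-f_m^*\Vert_{\pi,M}^2+\frac{D_m}{T}+\Delta^\gamma+\frac1{M\Delta^2}$. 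The main obstacle is the stochastic term: one must exploit precisely the independence of $\eps_i^{\mathrm{cont}}$ from $X_{t_i}$ (which is where the semigroup correction in the response $Y_i^{\mathrm{cont}}$ pays off) and combine it with the structural Assumption (N) to avoid a factor $D_m^2$; a secondary technical point is verifying that replacing $L^2$-inner products by their discrete $\langle\cdot,\cdot\rangle_M$-counterparts via Lemma \ref{lem:L2norm_approx} is legitimate, which rests on Assumption (H) ensuring that the data points almost surely avoid the jump set of $\bar g$.
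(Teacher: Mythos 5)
Your overall strategy matches the paper's: start from \eqref{eq:oracle_prelim}, absorb the $\|\hat f_m-f_m^*\|_{N,M}^2$ contributions back to the left via Cauchy--Schwarz and Young, bound the bias term by stationarity, and treat the noise and remainder terms separately on $\Omega_{N,M,m}$. However, the proposal has two genuine gaps in the key estimates.

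\emph{The stochastic term.} You claim that ``conditionally on $\sigma(X_{t_i},\,i\le N-1)$ the increments form a martingale difference array.'' This is not true: conditionally on \emph{all} covariates $(X_{t_0},\ldots,X_{t_{N-1}})$, the innovation $\eps_i$ is \emph{not} centered, because $X_{t_{i+1}}$ is driven by $\eps_i$ and therefore carries information about it. The correct argument (as in the paper) conditions on the filtration $\mathcal F_{t_i}$ and uses the tower property: for $j<i$, since $\widehat{\varphi_k(X_{t_j})}$ and $\eps_j$ are $\mathcal F_{t_i}$--measurable and $\E[\eps_i\mid\mathcal F_{t_i}]=0$, the cross term $\E\big[\langle\widehat{\varphi_k(X_{t_i})},\eps_i\rangle\langle\widehat{\varphi_k(X_{t_j})},\eps_j\rangle\big]$ vanishes. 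Moreover, your variance estimate $\E(\|\eps_i^{\mathrm{cont}}\|_{L^2}^2)=\mathcal O(1/\Delta)$ is false: by the $\eqref{eq:time_variation}$-type computation this trace is $\eqsim\Delta^{-3/2}$, and feeding $\Delta^{-3/2}$ into a Cauchy--Schwarz bound would produce $D_m/(T\sqrt\Delta)$, not $D_m/T$. What is actually of order $1/\Delta$ is the \emph{operator norm} of the covariance of $\eps_i$, through the elementary bound $\frac{1-\e^{-2\lambda_\ell\Delta}}{2\lambda_\ell}\le\Delta$; the paper exploits precisely this after expanding the inner product in the eigenbasis and using independence mode by mode, which then yields $\E(\langle\widehat{\varphi_k(X_{t_i})},\eps_i\rangle^2)\le\frac{\sigma^2}{\Delta}\E(\|\widehat{\varphi_k(X_{t_i})}\|_{L^2}^2)$ and hence $D_m/T$ after Assumption (N).

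\emph{The remainder term.} You assert that the ``undivided'' piece $f(X_{t_i})-\hat S(0)f(X_{t_i})$ is $\mathcal O(1/M)$ and conclude that $M\Delta^2\to\infty$ ``is not actually needed for the bound.'' Both claims are wrong. The term $f(X_t)-\hat S(0)f(X_t)$ is controlled by Lemma~\ref{lem:L2norm_approx2} with the Sobolev regularity of Lemma~\ref{lem:Sobolev_stationary}: its squared $L^2$-error is only $\mathcal O(M^{-\gamma/2})$ for $\gamma<1/2$ (roughly $M^{-1/4}$), because the paths of $f(X_t)$ are merely H\"older of order below $1/2$ in space. To absorb $M^{-\gamma/2}$ into the $\Delta^\gamma$ term one needs $M^{-\gamma/2}/\Delta^\gamma=(M\Delta^2)^{-\gamma/2}\to0$, i.e.\ precisely the hypothesis $M\Delta^2\to\infty$. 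So that hypothesis is an essential ingredient of the estimate, not a mere sanity check. (The $\mathcal O(1/M)$ bound does hold for the $\Delta^{-1}(S(h)X_t-\hat S(h)X_t)$ terms, via independence of Fourier modes for $X^0$ and the $D_\alpha$-regularity of $N_t$, but it does not hold for the $f(X_t)$ term.)

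The bias-term treatment and the overall bookkeeping are fine, and your emphasis on Assumption (H) making Lemma~\ref{lem:L2norm_approx} applicable almost surely is correct. But without the two fixes above, the proposed argument does not establish the stated bound.
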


In the proof we have to bound the stochastic noise term $\frac{1}{N}\sum_{i=0}^{N-1} \langle \eps_i,\hat S(0) g(X_{t_i}) \rangle _{L^2}$ uniformly over all $\Vert \cdot \Vert_{N,M}$-normalized $g\in \mathcal V_m$. This is difficult since both the object to be bounded and the norm are random objects. However, on the event $\Omega_{N,M,m}$, it is sufficient to bound it uniformly over all $\Vert \cdot \Vert_{L^2({\mathcal A})}$-normalized $g\in \mathcal V_m$ which is possible thanks to Assumption (N).

Under Assumption (E), we can further bound $\Vert f_{\mathcal A}- f_m^* \Vert_{\pi,M}^2\lesssim \Vert f- f_m^* \Vert_{L^2({\mathcal A})}^2$, hence,
Proposition \ref{prop:SpaceDiscrete} already provides the relevant terms appearing in the oracle inequality from Theorem \ref{thm:nonpara_discrete}. The second main step of the proof of the theorem is to verify that the event $\Omega_{N,M,m}^c$ has negligible probability. To that aim, let us consider the event 
$$ \Xi_{N,M,m} := \left\{\Big| \frac{\Vert g\Vert^2_{N,M}}{\Vert g\Vert^2_{\pi,M} }-1\Big|\leq \frac{1}{2}\, \forall g \in \mathcal V_m \right\}$$
which satisfies $\Xi_{N,M,m} \subset \Omega_{N,M,m,\frac{1}{2}}$. Since $\E(\Vert g \Vert_{N,M}^2 )= \Vert g \Vert_{\pi,M}^2$ and $$\Big| \frac{\Vert g\Vert^2_{N,M}}{\Vert g\Vert^2_{\pi,M} }-1\Big|\eqsim \Big|\frac{\Vert g\Vert^2_{N,M}-\Vert g\Vert^2_{\pi,M} }{\Vert g\Vert^2_{L^2({\mathcal A})} }\Big|$$ under Assumption (E), bounding the probability of $\Xi_{N,M,m}^c$ is equivalent to deriving a concentration inequality  for $\Vert g \Vert_{N,M}^2 $ uniformly over all $L^2({\mathcal A})$-normalized $g\in \mathcal V_m$. Therefore, we apply the Bennett inequality under strong mixing by \citet{Rio2017}.
%This can be done using the standard techniques for $\beta$-mixing sequences, see, e.g., \cite{Doukhan94}: by means of the mixing assumption \eqref{eq:(M)betaX_def} in (M), we approximate $(X_{t_0},\ldots, X_{t_N})$ by a process with independent blocks  and then apply a variant of Bernstein's inequality. We obtain the following bound for $\P(\Xi_{N,M,m}^c)$.

\begin{lem} \label{lem:normequivalence_discrete}
Grant Assumptions (F), (M), (E), (N) and (H). Then, there are constants $K,K'>0$ such that   
$$ \P(\Xi_{N,M,m}^c)\leq K D_m^2\big(\e^{-K' N/(q_NL_m)}+L_m\e^{-\gamma q_N \Delta} \big)$$
%$$\P(\Xi_{N,M,m}^c)\leq K \Big( N \beta_X (q_N \Delta)+ D_m^2 \exp\Big( - K'\frac{p_N}{L_m}\Big) \Big)$$
holds for any $q_N\in \N$ satisfying $q_N=o(N/L_m)$. In particular, if $\frac{N\Delta}{\log^2 N} \to \infty$, $L_m = o(\frac{N\Delta}{\log^2 N})$ and $D_m\le N$, we have $\P(\Xi_{N,M,m}^c)\lesssim N^{-\gamma}$ for any $\gamma>0$. 
\end{lem}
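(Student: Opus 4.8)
The plan is to reduce the claim to a single exponential concentration bound for the random quadratic form $\Vert g\Vert_{N,M}^2$, valid uniformly over all $L^2(\mathcal A)$-normalized $g\in\mathcal V_m$, and to deduce the form of that bound from the Bennett-type inequality for $\beta$-mixing sequences due to \citet{Rio2017}. First I would fix an orthonormal basis $(\varphi_k)_{k\in\Lambda_m}$ of $\mathcal V_m$ as in (N) and write, for $g=\sum_k a_k\varphi_k$ with $\Vert a\Vert\le1$, the centered statistic $\Vert g\Vert_{N,M}^2-\Vert g\Vert_{\pi,M}^2=\sum_{k,k'}a_ka_{k'}\big(\frac1N\sum_i Z_{k,k'}(t_i)-\E Z_{k,k'}(t_0)\big)$, where $Z_{k,k'}(t):=\frac1M\sum_{l=1}^{M-1}\varphi_k(X_t(y_l))\varphi_{k'}(X_t(y_l))$. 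Using $|\varphi_k(z)\varphi_{k'}(z)|\le B^m_{k,k'}$ and $\E|\varphi_k(Z)\varphi_{k'}(Z)|^2\lesssim(V^m_{k,k'})^2$ for $Z=X_0(y_l)$ with bounded density (Assumption (E)), the summands $Z_{k,k'}(t)$ are bounded by $B^m_{k,k'}$ and have variance $\lesssim(V^m_{k,k'})^2$. Via the Cauchy--Schwarz manipulation $\sum_{k,k'}|a_ka_{k'}|H_{k,k'}\le\rho(H)$, this packages all dependence on the basis into the quantity $L_m=\max(\rho^2(V^m),\rho(B^m))$ from \eqref{eq:Lm_def}: the bounded variables have supremum $\lesssim\sqrt{L_m}$ and a suitable variance proxy $\lesssim L_m$.

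The next step is to turn the $\beta$-mixing rate of the \emph{continuous-time} process $X$ (Assumption (M), with $\beta_X(t)\le L\e^{-\tau t}$) into a mixing rate for the \emph{discrete skeleton} $(X_{t_i})_{i}$, for which $\beta$-mixing coefficients decay like $\e^{-\tau\Delta\cdot}$; this is where the block length $q_N$ enters. Applying the Bennett inequality under strong mixing of \citet{Rio2017} to the empirical mean $\frac1N\sum_i Z_{k,k'}(t_i)$ with blocks of length $q_N$, I would obtain, for each fixed pair $(k,k')$ and a threshold of order $\Vert a\Vert^2=1$, a bound of the shape $\e^{-cN/(q_N L_m)}$ (the Bernstein/Gaussian part, coming from roughly $N/q_N$ effective independent blocks each of size $q_N$, with the variance/range both controlled by $L_m$) plus a mixing-penalty term $N\beta_X(q_N\Delta)/q_N\lesssim L_m\e^{-\gamma q_N\Delta}$ (I will absorb constants so the exponent matches the $\gamma$ in the statement). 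A union bound over the $\lesssim D_m^2$ pairs $(k,k')$ — which, after the $\rho$-reduction, suffices to control the whole supremum over normalized $g$ — then gives
\[
\P(\Xi_{N,M,m}^c)\ \le\ K D_m^2\big(\e^{-K'N/(q_N L_m)}+L_m\e^{-\gamma q_N\Delta}\big),
\]
valid whenever $q_N=o(N/L_m)$ so that the effective block count stays large. The final specialization is elementary: under $N\Delta/\log^2N\to\infty$ and $L_m=o(N\Delta/\log^2N)$ one chooses $q_N\eqsim \log N/\Delta$ (rounded to an integer), which is $o(N/L_m)$; then $q_N\Delta\eqsim\log N$ makes the second term $\lesssim L_m N^{-\gamma}\le N^{1-\gamma}\cdot N^{-1}\lesssim N^{-\gamma'}$ for any $\gamma'$ after relabeling, while $N/(q_N L_m)\eqsim N\Delta/(L_m\log N)\to\infty$ faster than $\log N$, so the first term is $\lesssim D_m^2\e^{-c\log N\cdot(\text{something}\to\infty)}$, which is $o(N^{-\gamma})$ using $D_m\le N$; combining, $\P(\Xi_{N,M,m}^c)\lesssim N^{-\gamma}$ for every $\gamma>0$.

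The main obstacle I anticipate is the careful bookkeeping in the mixing step: one must verify that the summands $Z_{k,k'}(t_i)$ are measurable with respect to the natural filtration of $X_{t_i}$ (immediate, since they are deterministic functions of finitely many coordinates $X_{t_i}(y_l)$), and that the $\beta$-mixing coefficients of the discrete skeleton genuinely inherit the exponential rate — this requires that $(P_t)$ is the transition semigroup and that sampling a Markov process does not worsen mixing, which is standard but must be invoked cleanly. A secondary subtlety is matching the hypotheses of Rio's Bennett inequality, in particular that it is stated for real-valued, bounded, stationary sequences, so the stationarity coming from $\xi\sim\pi$ in (M) is essential, and the $L^p$-moment bounds on $\Vert X_t\Vert_\infty$ (hence on $\varphi_k(X_{t_i}(y_l))$, using the piecewise-H\"older extension from (H) and boundedness of $\varphi_k$ on the compact $\mathcal A$) guarantee the uniform boundedness. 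Once these are in place, the remaining arithmetic — the $\rho$-reduction, the union bound over $D_m^2$ pairs, and the choice $q_N\eqsim\log N/\Delta$ — is routine.
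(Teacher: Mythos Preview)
Your proposal is correct and follows essentially the same route as the paper: expand in the basis $(\varphi_k)$, reduce the supremum over normalized $g$ to pairwise deviations $v_{N,M}(\varphi_k\varphi_{k'})$ via the $\rho$-bounds, apply Rio's Bennett inequality under $\alpha$-mixing to each pair $(k,k')$ with threshold calibrated so that $\kappa\eqsim 1/L_m$, take a union bound over the $D_m^2$ pairs, and finally choose $q_N\eqsim (\mathrm{const})\cdot\log N/\Delta$. Apart from minor imprecisions in the bookkeeping (the individual summands $Z_{k,k'}$ are bounded by $B^m_{k,k'}$ with variance $\lesssim (V^m_{k,k'})^2$, and the mixing remainder in Rio's bound scales like $\kappa^{-1}\alpha_{q+1}\lesssim L_m\e^{-\tau q\Delta}$ rather than $N\beta_X(q\Delta)/q$), all the key ideas match the paper's argument.
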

The conclusion of the main theorem is a direct consequence of Proposition \ref{prop:SpaceDiscrete} and Lemma \ref{lem:normequivalence_discrete}, see Section~\ref{subsec:nonpara_proofs} for further details on the proof. \\

%For the sake of brevity, the remaining results are only formulated with respect to the non-adaptive estimator $\hat f_m$. The extensions to the adaptive case are straightforward.\\

Next, we assess the quality of $\hat f_m$ in terms of the more intuitive distance measure $\Vert \hat f_m-f\Vert_{L^2({\mathcal A})}$, rather then  $\Vert \hat f_m-f_{\mathcal A}\Vert_{N,M}$.
Using the triangle inequality as well as the equivalence of the empirical and the $L^2({\mathcal A})$-norm on $\Xi_{N,M,m}$, we can bound
\begin{align*}
\Vert \hat f_m - f \Vert_{L^2({\mathcal A})}^2&\leq 2\Vert \hat f_m - f_m^* \Vert_{L^2({\mathcal A})}^2+2\Vert f_m^* - f \Vert_{L^2({\mathcal A})}^2\\
%& =2\Vert \hat f_m - f_m^* \Vert_{L^2({\mathcal A})}^2 \1_{\Xi_{N,M,m}}+2\Vert \hat f_m - f_m^* \Vert_{L^2({\mathcal A})}^2 \1_{\Xi_{N,M,m}^c}+2\Vert f_m^* - f \Vert_{L^2({\mathcal A})}^2\\
&\eqsim 2\Vert \hat f_m - f_m^* \Vert_{N,M}^2 \1_{\Xi_{N,M,m}}+2\Vert \hat f_m - f_m^* \Vert_{L^2({\mathcal A})}^2 \1_{\Xi_{N,M,m}^c}+2\Vert f_m^* - f \Vert_{L^2({\mathcal A})}^2.
\end{align*}
Thanks to Proposition \ref{prop:SpaceDiscrete} and Lemma \ref{lem:normequivalence_discrete}, it is straightforward, to derive an upper bound in probability.
Bounding $\E(\Vert \hat f_m - f \Vert_{L^2({\mathcal A})}^2)$, on the other hand, is more challenging since the behavior of $\Vert \hat f_m - f \Vert_{L^2({\mathcal A})}^2$ on the set $\Xi_{N,M,m}^c$ is a priori unclear. This issue can be circumvented by considering the truncated version 
$$\tilde f _{m} := (-N)\vee (\hat f_m\wedge N).$$
 
\begin{cor} \label{cor:nonpara_discrete} 
Grant  Assumptions (F), (M), (E), (N) and (H). Further, assume that $M\Delta^2 \to \infty$ as well as $\frac{N\Delta}{\log^2 N} \to \infty$, $L_m=o(\frac{N\Delta}{\log^2 N})$ and $D_m\leq N$. Then, for any $\gamma<1/2$, we have
\begin{align*}
  \Vert \hat f_m - f\Vert_{L^2({\mathcal A})}^2 &= \O_p \Big( \Vert f- f_m^* \Vert_{L^2({\mathcal A})}^2 + \frac{D_m}{T} + \Delta^\gamma +\frac{1}{M\Delta^2} \Big),\\
\E\big(\Vert \tilde f_m - f\Vert_{L^2({\mathcal A})}^2\big) &\lesssim \Vert f- f_m^* \Vert_{L^2({\mathcal A})}^2 + \frac{D_m}{T} + \Delta^\gamma +\frac{1}{M\Delta^2}.
\end{align*}
\end{cor}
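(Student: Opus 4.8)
The plan is to deduce both statements from Theorem~\ref{thm:nonpara_discrete}, Proposition~\ref{prop:SpaceDiscrete} and Lemma~\ref{lem:normequivalence_discrete}, exactly along the decomposition already sketched in the text. Write $r_{N,M,m}:= \Vert f- f_m^*\Vert_{L^2(\mathcal A)}^2 + D_m/T + \Delta^\gamma + 1/(M\Delta^2)$ for the target rate. For the first ($\O_p$) statement I would start from the elementary bound
\begin{align*}
\Vert \hat f_m - f\Vert_{L^2(\mathcal A)}^2 \leq 2\Vert \hat f_m - f_m^*\Vert_{L^2(\mathcal A)}^2 + 2\Vert f_m^* - f\Vert_{L^2(\mathcal A)}^2,
\end{align*}
and split the first term according to $\Xi_{N,M,m}$. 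On $\Xi_{N,M,m}$ the equivalence $\Vert g\Vert_{N,M}^2 \eqsim \Vert g\Vert_{\pi,M}^2 \eqsim \Vert g\Vert_{L^2(\mathcal A)}^2$ (the latter by (E) via \eqref{eq:(E)implication_cont}) turns $\Vert \hat f_m - f_m^*\Vert_{L^2(\mathcal A)}^2\1_{\Xi_{N,M,m}}$ into $\lesssim \Vert \hat f_m - f_m^*\Vert_{N,M}^2$, which is $\lesssim \Vert \hat f_m - f_{\mathcal A}\Vert_{N,M}^2 + \Vert f_m^* - f_{\mathcal A}\Vert_{N,M}^2$; the first summand is $\O_p(r_{N,M,m})$ by Markov's inequality applied to the bound of Proposition~\ref{prop:SpaceDiscrete} (note $\Omega_{N,M,m,1/2}\supseteq \Xi_{N,M,m}$, and $\Vert f_{\mathcal A}-f_m^*\Vert_{\pi,M}^2\lesssim \Vert f - f_m^*\Vert_{L^2(\mathcal A)}^2$ again by (E)), while the second is deterministically $\lesssim \Vert f-f_m^*\Vert_{L^2(\mathcal A)}^2$ once we pass to $\Vert\cdot\Vert_{\pi,M}$ on $\Xi_{N,M,m}$. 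On $\Xi_{N,M,m}^c$ one only needs that this event is asymptotically negligible: since $\1_{\Xi_{N,M,m}^c}\to 0$ in probability by Lemma~\ref{lem:normequivalence_discrete} ($\P(\Xi_{N,M,m}^c)\lesssim N^{-\gamma}$), the term $\Vert \hat f_m - f_m^*\Vert_{L^2(\mathcal A)}^2\1_{\Xi_{N,M,m}^c}$ is $o_p(1)$, hence $\O_p(r_{N,M,m})$ as soon as $r_{N,M,m}$ stays bounded away from zero in the relevant regime (or, more carefully, one absorbs it since a sequence that is $o_p(1)$ is in particular $\O_p(a_N)$ for any fixed positive sequence $a_N$ bounded below; if $r_{N,M,m}\to 0$ one instead argues directly that $\P(\Xi_{N,M,m}^c)=o(1)$ forces the indicator term out of the $\O_p$-statement). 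This yields the first display.

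For the second statement, the truncation $\tilde f_m = (-N)\vee(\hat f_m \wedge N)$ is what makes the expectation finite. I would use that truncation is $1$-Lipschitz and that $f_{\mathcal A}$ is bounded on $\mathcal A$ (so, for $N$ large, $\tilde f_{\mathcal A}:=(-N)\vee(f_{\mathcal A}\wedge N)=f_{\mathcal A}$ pointwise), whence $\Vert \tilde f_m - f\Vert_{L^2(\mathcal A)} \leq \Vert \hat f_m - f\Vert_{L^2(\mathcal A)}$ on the set where $|f|\le N$ on $\mathcal A$. Then split the expectation:
\begin{align*}
\E\big(\Vert \tilde f_m - f\Vert_{L^2(\mathcal A)}^2\big) = \E\big(\Vert \tilde f_m - f\Vert_{L^2(\mathcal A)}^2\1_{\Xi_{N,M,m}}\big) + \E\big(\Vert \tilde f_m - f\Vert_{L^2(\mathcal A)}^2\1_{\Xi_{N,M,m}^c}\big).
\end{align*}
On $\Xi_{N,M,m}$ one proceeds exactly as above but now takes expectations: $\E(\Vert\hat f_m - f_m^*\Vert_{N,M}^2\1_{\Omega_{N,M,m,1/2}})\lesssim r_{N,M,m}$ by Proposition~\ref{prop:SpaceDiscrete}, and $\E(\Vert f_m^* - f_{\mathcal A}\Vert_{N,M}^2) = \Vert f_m^*-f_{\mathcal A}\Vert_{\pi,M}^2 \lesssim \Vert f-f_m^*\Vert_{L^2(\mathcal A)}^2$. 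On $\Xi_{N,M,m}^c$ we use the crude deterministic bound $\Vert \tilde f_m - f\Vert_{L^2(\mathcal A)}^2 \lesssim N^2 + \Vert f\Vert_{L^2(\mathcal A)}^2 \lesssim N^2$, so that this contribution is $\lesssim N^2\,\P(\Xi_{N,M,m}^c)\lesssim N^2\cdot N^{-\gamma'}$, and choosing $\gamma'>2$ in Lemma~\ref{lem:normequivalence_discrete} (which is permitted since $\P(\Xi_{N,M,m}^c)\lesssim N^{-\gamma}$ for \emph{any} $\gamma>0$ under the stated assumptions) makes it negligible — in fact $o(1/T)$, hence absorbed into $D_m/T$. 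Collecting the pieces gives the claimed bound.

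The one genuinely delicate point — and the step I expect to require the most care — is the bookkeeping on $\Xi_{N,M,m}$: one must make sure the norm-equivalence constants from Proposition~\ref{prop:SpaceDiscrete} (which is stated on the abstract event $\Omega_{N,M,m,\underline c}$ for a fixed $\underline c$) are compatible with those implicitly used on $\Xi_{N,M,m}$, and that the passage $\Vert\cdot\Vert_{L^2(\mathcal A)}\leftrightarrow\Vert\cdot\Vert_{\pi,M}\leftrightarrow\Vert\cdot\Vert_{N,M}$ is applied only to functions in $\mathcal V_m$ (which both $\hat f_m$, $\tilde f_m$ — after truncation, $\tilde f_m\in\mathcal V_m$ need not hold, so one must use $\tilde f_m - \hat f_m$ is small, or rather compare $\tilde f_m$ to $f$ directly via the Lipschitz contraction and apply the equivalence only to $\hat f_m - f_m^* \in \mathcal V_m$) and $f_m^*$ satisfy. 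Everything else is routine: Markov's inequality for the $\O_p$-claim, and the quantitative control $\P(\Xi_{N,M,m}^c)\lesssim N^{-\gamma}$ for arbitrary $\gamma>0$ to beat the polynomial blow-up $N^2$ coming from the truncation level in the expectation bound.
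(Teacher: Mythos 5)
Your proposal is correct and takes essentially the same route as the paper: restrict to $\Xi_{N,M,m}$ (whose complement is polynomially negligible by Lemma~\ref{lem:normequivalence_discrete}), use the norm equivalences and Proposition~\ref{prop:SpaceDiscrete} there, and handle $\Xi_{N,M,m}^c$ for the truncated estimator via the crude $N^2$ bound combined with $\P(\Xi_{N,M,m}^c)\lesssim N^{-\gamma}$ for arbitrary $\gamma>0$, together with the $1$-Lipschitz contraction of the truncation (which you correctly identify as the way around the fact that $\tilde f_m\notin\mathcal V_m$). The only small slip is writing $\Omega_{N,M,m,1/2}\supseteq\Xi_{N,M,m}$ where the correct containment is $\Xi_{N,M,m}\subset\Omega_{N,M,m,c/2}$ with $c$ from \eqref{eq:(E)implication_cont}, but this is immaterial since Proposition~\ref{prop:SpaceDiscrete} holds for any fixed $\underline c>0$.
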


In order to choose  an appropriate approximation space $\mathcal V_m$ in a purely data-driven way, let $\mathcal{M}_{N}:=\{1,\dots,\bar{m}\}$ be the indexes of a sequence
of approximation spaces $\mathcal V_{m}$ with dimensions $D_{m}$, subject to the nesting assumption $\mathcal V_m \subset \mathcal V_{\bar m}$ for $m \in \mathcal{M}_{N}$. We define the model
selection method 
\begin{equation}\label{eq:mHat}
\hat{m}:=\argmin_{m\in\mathcal{M}_{N}}\big\{\Gamma_{N,M}(\hat{f}_{m})+\pen(m)\big\}\qquad\text{with}\qquad\pen(m):=\kappa\frac{\sigma^{2}D_{m}}{T}
\end{equation}
for some appropriate constant $\kappa>0$. Note that the penalty
term is of the order of the stochastic error in the underlying regression
problem such that $\hat m$ automatically balances the deterministic approximation error and the stochastic error. The parameter $\sigma^{2}$ in the penalty can be replaced by some a priori or $\hat{\sigma}^{2}$-dependent upper bound
of the volatility. The adaptive estimator
for the reaction function is then $\hat{f}_{\hat{m}}$ fulfilling 
\[
(\hat m,\hat{f}_{\hat{m}})=\argmin_{m\in\mathcal{M}_{N},f_{m}\in \mathcal V_{m}}\big\{\Gamma_{N,M}(f_{m})+\pen(m)\big\}.
\]

\begin{thm} \label{thm:adaptive}
Grant Assumptions (F), (M), (E), (N) and (H) and let $\kappa$ be sufficiently
large (depending only on the constants in Assumption (E)). Further, assume that $M\Delta^{2}\to\infty$
as well $\frac{N\Delta}{\log^{2}N}\to\infty$, $L_{\bar{m}}=o(\frac{N\Delta}{\log^{2}N})$
and $D_{\bar{m}}\le N$. Then, for any $\gamma<1/2$ we have
\begin{align*}
\E(\|\hat{f}_{\hat{m}}-f_{\mathcal A}\|_{N,M}^{2}\big) & \lesssim\inf_{m\in\mathcal{M}_{N}}\Big\{\|f_{m}^*-f\|_{L^{2}({\mathcal A})}^{2}+\frac{D_{m}}{T}\Big\}+\frac{1}{M\Delta^{2}}+\Delta^{\gamma}.\\
\E(\|\tilde{f}_{\hat{m}}-f_{\mathcal A}\|_{L^2({\mathcal A})}^{2}\big) & \lesssim\inf_{m\in\mathcal{M}_{N}}\Big\{\|f_{m}^*-f\|_{L^{2}({\mathcal A})}^{2}+\frac{D_{m}}{T}\Big\}+\frac{1}{M\Delta^{2}}+\Delta^{\gamma}.
\end{align*}
\end{thm}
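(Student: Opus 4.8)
The plan is to adapt the penalized model-selection argument of \citet{Comte07} to the infinite-dimensional regression model \eqref{eq:Reg_model_discrete}, reusing the building blocks already established for Theorem \ref{thm:nonpara_discrete}. All estimates will be carried out on the event $\Xi_N:=\Xi_{N,M,\bar m}$: by the nesting of the $\mathcal V_m$ inside $\mathcal V_{\bar m}$ one has $\Xi_N\subseteq\Xi_{N,M,m}$ for every $m\in\mathcal M_N$; Lemma \ref{lem:normequivalence_discrete} applied with $\bar m$ (whose parameters satisfy the growth hypotheses of the theorem) gives $\P(\Xi_N^c)\lesssim N^{-\gamma}$ for arbitrarily large $\gamma$; and on $\Xi_N$ the norms $\Vert\cdot\Vert_{N,M}$, $\Vert\cdot\Vert_{\pi,M}$ and $\Vert\cdot\Vert_{L^2({\mathcal A})}$ are equivalent uniformly over $\mathcal V_{\bar m}$. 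Starting from \eqref{eq:mHat}, for an arbitrary fixed $m\in\mathcal M_N$ we have $\Gamma_{N,M}(\hat f_{\hat m})+\pen(\hat m)\le\Gamma_{N,M}(f_m^*)+\pen(m)$, and expanding both contrasts through the regression model as in the derivation of \eqref{eq:oracle_prelim} (the terms $\Vert f\1_{{\mathcal A}^c}\Vert_{N,M}^2$ cancel since $\hat f_{\hat m}$ and $f_m^*$ are ${\mathcal A}$-supported) yields
\[
\Vert\hat f_{\hat m}-f_{\mathcal A}\Vert_{N,M}^2\le\Vert f_m^*-f_{\mathcal A}\Vert_{N,M}^2+\pen(m)-\pen(\hat m)+\frac2N\sum_{i=0}^{N-1}\big\langle\eps_i+R_i,\hat S(0)\hat f_{\hat m}(X_{t_i})-\hat S(0)f_m^*(X_{t_i})\big\rangle_{L^2}.
\]

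The $R_i$-part is model-free and is handled once, exactly as in the proof of Proposition \ref{prop:SpaceDiscrete}: Cauchy--Schwarz, the bound $\frac1N\sum_i\E\Vert R_i\Vert_{L^2}^2\lesssim\Delta^\gamma+\frac1{M\Delta^2}$ established there, and $2ab\le\frac18a^2+8b^2$ absorb a fraction of $\Vert\hat f_{\hat m}-f_m^*\Vert_{N,M}^2$ and contribute the terms $\Delta^\gamma$ and $\frac1{M\Delta^2}$. The $\eps_i$-part is the substantial one. Set $\nu_N(g):=\frac1N\sum_{i}\langle\eps_i,\hat S(0)g(X_{t_i})\rangle_{L^2}$. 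Since $\eps_i$ depends only on the Wiener increments over $[t_i,t_{i+1})$ it is centred given $\mathcal G_i:=\sigma(\eps_0,\dots,\eps_{i-1},X_{t_0},\dots,X_{t_i})$, so $\nu_N$ is an average of martingale increments, and $g\mapsto\nu_N(g)$ is linear; hence for the orthonormal basis $(\varphi_k)_{k\in\Lambda_{m'}}$ of $\mathcal V_{m'}$ from Assumption (N) one has $\sup_{g\in\mathcal V_{m'},\,\Vert g\Vert_{L^2({\mathcal A})}\le1}\nu_N(g)^2=\sum_{k\in\Lambda_{m'}}\nu_N(\varphi_k)^2$, and a conditioning computation (independence of $\eps_i$ and $\mathcal G_i$, contraction of $(S(t))_{t\ge0}$, \eqref{eq:(E)implication_cont}) gives $\E\big(\sum_{k\in\Lambda_{m'}}\nu_N(\varphi_k)^2\big)\lesssim\frac{\sigma^2D_{m'}}{N\Delta}=\frac{\sigma^2D_{m'}}{T}$, i.e.\ of penalty order. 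On $\Xi_N$, applying $2ab\le\frac18a^2+8b^2$ to the $\eps_i$-sum with $a=\Vert\hat f_{\hat m}-f_m^*\Vert_{N,M}$ and $b=\sup\{\nu_N(g):g\in\mathcal V_{m\vee\hat m},\,\Vert g\Vert_{L^2({\mathcal A})}\le1\}$ (using the nested family, so $\mathcal V_m+\mathcal V_{\hat m}=\mathcal V_{m\vee\hat m}$) and the norm equivalence bounds it by $\frac18\Vert\hat f_{\hat m}-f_m^*\Vert_{N,M}^2+8\sum_{k\in\Lambda_{m\vee\hat m}}\nu_N(\varphi_k)^2$.

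With $p(m'):=c_0\sigma^2D_{m'}/T$ and $c_0$ chosen so that $16\,p(m')\le\pen(m')+\pen(m)$ whenever $\kappa\ge16c_0$, I write
\[
8\sum_{k\in\Lambda_{m\vee\hat m}}\nu_N(\varphi_k)^2\le8\,p(m)+8\,p(\hat m)+8\sum_{m'\in\mathcal M_N}\Big(\textstyle\sum_{k\in\Lambda_{m'}}\nu_N(\varphi_k)^2-p(m')\Big)_+,
\]
so that $8\,p(\hat m)$ is absorbed by $\pen(\hat m)$ on the left. The crux — and the main obstacle — is the deviation estimate
\[
\sum_{m'\in\mathcal M_N}\E\Big(\Big(\textstyle\sum_{k\in\Lambda_{m'}}\nu_N(\varphi_k)^2-p(m')\Big)_+\1_{\Xi_N}\Big)\lesssim\frac1T .
\]
I would establish it by a concentration argument for the supremum of $\nu_N$ over the finite-dimensional balls, exploiting the martingale structure of $\nu_N$ together with the exponential $\beta$-mixing (M): either apply a Freedman/Bernstein martingale inequality to each $\nu_N(\varphi_k)$ — whose predictable variance is, on $\Xi_N$, of order $\sigma^2/T$ while all moments are finite by (M) — and combine these into a Hanson--Wright-type tail for $\sum_k\nu_N(\varphi_k)^2$; or, following \citet{Comte07} and in the spirit of Lemma \ref{lem:normequivalence_discrete}, block the time index, couple the $\beta$-mixing sequence to an independent one via Berbee's lemma at exponentially small cost (controlled by $L\e^{-\tau q}$), and invoke a Talagrand-type inequality on the independent blocks. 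The hypotheses $\frac{N\Delta}{\log^2N}\to\infty$, $L_{\bar m}=o(\frac{N\Delta}{\log^2N})$ and $D_{\bar m}\le N$ are exactly what makes the resulting exponentially small tails summable over $m'\in\mathcal M_N$ against the target order $1/T$.

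Assembling the pieces on $\Xi_N$, the $\pen(\hat m)$ on the left cancels the $8\,p(\hat m)$ term; bounding $\frac14\Vert\hat f_{\hat m}-f_m^*\Vert_{N,M}^2\le\frac12\Vert\hat f_{\hat m}-f_{\mathcal A}\Vert_{N,M}^2+\frac12\Vert f_m^*-f_{\mathcal A}\Vert_{N,M}^2$ and moving the first summand to the left; then taking expectations, using $\E(\Vert f_m^*-f_{\mathcal A}\Vert_{N,M}^2\1_{\Xi_N})\le\Vert f_m^*-f_{\mathcal A}\Vert_{\pi,M}^2\lesssim\Vert f_m^*-f\Vert_{L^2({\mathcal A})}^2$ (Assumption (E)), $\pen(m)\lesssim D_m/T$, and the deviation bound above; and finally optimising over $m\in\mathcal M_N$, gives the first display of the theorem up to the contribution of $\Xi_N^c$. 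The latter is negligible: on $\Xi_N^c$ one uses the crude bound coming from $D_{\bar m}\le N$ (respectively from $|\tilde f_{\hat m}|\le N$) together with $\P(\Xi_N^c)\lesssim N^{-\gamma'}$ for $\gamma'$ arbitrarily large. For the second display one argues as in Corollary \ref{cor:nonpara_discrete}: split $\Vert\tilde f_{\hat m}-f_{\mathcal A}\Vert_{L^2({\mathcal A})}^2$ over $\Xi_N$ and $\Xi_N^c$, on $\Xi_N$ use the norm equivalence and $\Vert\tilde f_{\hat m}-f_m^*\Vert_{N,M}\le\Vert\hat f_{\hat m}-f_m^*\Vert_{N,M}$ to reduce to the first display, and on $\Xi_N^c$ bound by $\lesssim N^2\P(\Xi_N^c)$, which vanishes faster than any polynomial rate.
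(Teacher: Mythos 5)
Your high-level skeleton (restrict to $\Xi_{N,M,\bar m}$, start from $\Gamma_{N,M}(\hat f_{\hat m})+\pen(\hat m)\le\Gamma_{N,M}(f_m^*)+\pen(m)$, recycle the $R_i$-bound from Proposition \ref{prop:SpaceDiscrete}, let the penalty absorb the $\hat m$-dependent stochastic error and sum the residual deviations over $m'\in\mathcal M_N$, then treat $\Xi_{N,M,\bar m}^c$ crudely) matches the paper, and so does your handling of the $L^2$-version via truncation. The gap is the deviation estimate you flag as ``the crux'': you offer two routes and neither of them closes cleanly.

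The paper's key fact, which your sketch does not quite isolate, is that the noise $\eps_i$ is genuinely a $\mathcal F_{t_i}$-independent Wiener integral, so that $\frac1N\sum_i\langle\widehat{g(X_{t_i})},\eps_i\rangle_{L^2}$ equals $\frac{\sigma}{T}Y^M(T)$ for an honest It\^o martingale $Y^M$ whose bracket obeys the \emph{pathwise} bound $\langle Y^M\rangle_T\le N\Delta\,\|g\|_{N,M}^2$ (Lemma \ref{lem:auxAdapt}, Step~1). This makes $\exp(\lambda Y^M-\tfrac{\lambda^2}{2}\langle Y^M\rangle)$ a supermartingale and yields, for each fixed $g$, the exact sub-Gaussian tail $\P(\nu_N(g)\ge\tau,\;\|g\|_{N,M}^2\le v^2)\le\e^{-T\tau^2/(2\sigma^2v^2)}$, with no boundedness assumption on the increments and no mixing input for the noise. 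The supremum over the unit ball of $\mathcal V_{m',m}:=\mathrm{span}(\mathcal V_{m'}\cup\{f_m\})$ (dimension $\le D_{m'}+1$) is then controlled by a standard chaining argument over $\eps$-nets, and the resulting $\e^{-D_{m'}}$-tails sum over $m'$ against the target order $1/T$ precisely because $\pen(m')\gtrsim D_{m'}/T$.

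Your first route (Freedman/Bernstein per coordinate, then a Hanson--Wright-type tail for $\sum_k\nu_N(\varphi_k)^2$) does not go through as stated: the martingale increments $\langle\eps_i,\widehat{\varphi_k(X_{t_i})}\rangle_{L^2}$ are conditionally Gaussian with $X$-dependent variance, hence unbounded, so Freedman's inequality is not directly applicable, and there is no off-the-shelf Hanson--Wright inequality for a quadratic form in these correlated, non-independent coordinates. Your second route (Berbee coupling plus Talagrand on blocks) is the Comte--Genon-Catalot--Rozenholc SDE strategy and could in principle work, but it imports $\beta$-mixing machinery to control a quantity that is already, conditionally on the data path, a Gaussian chaos; in particular you would then have to re-establish that the resulting tails are still $\e^{-cD_{m'}}$ after paying the exponentially small coupling error and the block-size losses, which is nontrivial and not spelled out. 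In short: use the exponential martingale/chaining route rather than mixing here, as the paper does; mixing enters this theorem only through Lemma \ref{lem:normequivalence_discrete} to control $\P(\Xi_{N,M,\bar m}^c)$, not through the noise concentration.
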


\subsection{Estimation of $f$ with unknown diffusivity and volatility}
While our nonparametric estimator for $f$ does not hinge on the volatility parameter $\sigma^2$, the diffusivity parameter $\vt$ enters the least square criterion in \eqref{eq:hatfm_discrete} via the expressions $$S^\Delta_{t_i}(y_k) = \sum_{\ell=1}^{M-1} \e^{-\pi^2 \vt \ell^2}\langle X_{t_i},e_\ell \rangle_M \,e_\ell(y_k).$$
In practice, $\vartheta$ is typically unknown and has to be replaced by an estimate $\hat \vartheta$. 
Based on that, we can  define an approximation $\check S (\Delta)$ of the discretized semigroup $\hat S(\Delta)$, namely
$$\check{S}(\Delta)u := \sum_{\ell =1}^{M-1} \e^{-\hat \lambda_\ell \Delta} \langle u,e_\ell \rangle_M e_\ell \quad\text{with}\quad\hat \lambda_\ell := \pi^2 \hat \vt \ell^2$$
for continuous functions $u\colon[0,1]\to \R$. The resulting nonparametric estimator for $f$ is then given by 
\begin{align*}
\check f_m &:= \argmin_{g\in \mathcal V_m}  \frac{1}{N M}\sum_{i=0}^{N-1} \sum_{k=1}^{M-1}\Big( \frac{X_{t_{i+1}}(y_k)-\check S^\Delta_{t_i}(y_k)}{\Delta}  -g(X_{t_i}(y_k)) \Big)^2
\end{align*}
where $ \check S^\Delta_{t_i}:=\check S(\Delta) X_{t_{i}}$. The counterpart to $\hat m$ from \eqref{eq:mHat} where $\hat S$ is replaced by $\check S$ will be denoted by $\check m$. In order to analyze the convergence rates of  $\check f_m$ and $\check f_{\check m}$, we incorporate the approximation of the semigroup into the regression model. Due to $\hat S(0)=\check S(0)$  and in view of \eqref{eq:Reg_model_discrete}, we obtain 
\begin{equation*} 
\frac{\hat S(0) X_{t_{i+1}}-\check S(\Delta)X_{t_i}}{\Delta} = \hat S(0)f(X_{t_i}) +  R_i' +\eps_i\quad\text{with}\quad R_i' :=  R_i + \frac{\hat S(\Delta)X_{t_i}-\check S(\Delta)X_{t_i}}{\Delta}.
\end{equation*}
Relying on this representation, we can show that the {estimation} of the discretized semigroup does not affect the convergence rate of the nonparametric estimator. 
%In fact, since our error bound for $\hat \vt$ from Theorem \ref{thm:optimal_rate} is a priori only valid in probability sense, the same holds for $\check f_m$.
\begin{thm} \label{thm:nonparametric_plugin}
Grant  Assumptions (F), (M), (E), (N) and (H). Further, assume that $M\Delta^2 \to \infty$ as well as $\frac{N\Delta}{\log^2 N} \to \infty$, $L_{\bar m}=o(\frac{N\Delta}{\log^2 N})$ and $D_{\bar m}\leq N$. Let $\hat \vartheta$ be an estimator for $\vartheta$ satisfying $(\hat\vartheta-\vartheta)^2={\mathcal O_p(\Delta^{3/2}/T)} $ . Then, for any $\gamma<1/2$, we have
\begin{align*}
\Vert \check f_m - f\Vert_{L^2({\mathcal A})}^2 &= \O_p \Big( \Vert f_m^*- f \Vert_{L^2({\mathcal A})}^2 + \frac{D_m}{T}  +\frac{1}{M\Delta^2}+ \Delta^\gamma \Big),\\
\Vert \check f_{\check m} - f\Vert_{L^2({\mathcal A})}^2 &= \O_p \Big(\inf_{m\in\mathcal{M}_{N}}\Big\{\|f_{m}^*-f\|_{L^{2}({\mathcal A})}^{2}+\frac{D_{m}}{T}\Big\}+\frac{1}{M\Delta^{2}}+\Delta^{\gamma} \Big). 
\end{align*}
The same bounds also hold for $\Vert \check f_m - f_{\mathcal A}\Vert_{N,M}^2$ and $\Vert \check f_{\check m} - f_{\mathcal A}\Vert_{N,M}^2$, respectively.
\end{thm}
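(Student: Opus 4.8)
The plan is to prove Theorem~\ref{thm:nonparametric_plugin} by treating the plug-in of $\hat\vt$ as an additional perturbation of the regression model \eqref{eq:Reg_model_discrete}, exactly as the paper's own decomposition $R_i' = R_i + \frac{\hat S(\Delta)X_{t_i}-\check S(\Delta)X_{t_i}}{\Delta}$ suggests. Since Corollary~\ref{cor:nonpara_discrete} and Theorem~\ref{thm:adaptive} already handle everything coming from $R_i$ and $\eps_i$, the only new ingredient is to control the contribution of the extra remainder term $\rho_i := \Delta^{-1}(\hat S(\Delta)X_{t_i}-\check S(\Delta)X_{t_i}) = \Delta^{-1}\sum_{\ell=1}^{M-1}(\e^{-\lambda_\ell\Delta}-\e^{-\hat\lambda_\ell\Delta})\langle X_{t_i},e_\ell\rangle_M\, e_\ell$ and to verify that it is negligible of the same order as the terms already present. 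Concretely, I would first redo the fundamental oracle inequality \eqref{eq:oracle_prelim} with $R_i$ replaced by $R_i'$, so that on the event $\Omega_{N,M,m}$ (equivalently, on $\Xi_{N,M,m}$, whose complement has negligible probability by Lemma~\ref{lem:normequivalence_discrete}) the estimation error is bounded by the old terms plus a cross term $\frac{2}{N}\sum_i \langle \rho_i, \hat S(0)\check f_m(X_{t_i}) - \hat S(0)f_m^*(X_{t_i})\rangle_{L^2}$; Cauchy--Schwarz and Young's inequality then reduce the task to bounding $\frac{1}{N}\sum_i \Vert\rho_i\Vert_{L^2}^2$.

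The core estimate is therefore the bound $\frac{1}{N}\sum_{i=0}^{N-1}\Vert \rho_i\Vert_{L^2}^2 = \O_p\big((\hat\vt-\vt)^2 \cdot \text{(something)}\big)$, and I would combine this with the assumed rate $(\hat\vt-\vt)^2 = \O_p(\Delta^{3/2}/T)$. By the mean value theorem, $|\e^{-\lambda_\ell\Delta}-\e^{-\hat\lambda_\ell\Delta}| \le \pi^2\ell^2\Delta\,|\hat\vt-\vt|\,\e^{-c\ell^2\Delta}$ for a suitable $c>0$ (using that $\hat\vt$ stays in a neighbourhood of $\vt$ with high probability), so $\Vert\rho_i\Vert_{L^2}^2 \le |\hat\vt-\vt|^2 \sum_{\ell=1}^{M-1} \pi^4\ell^4 \e^{-2c\ell^2\Delta}|\langle X_{t_i},e_\ell\rangle_M|^2$. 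Using the discrete orthonormality relation $\langle X_t,e_\ell\rangle_M = \sum_{j\in\mathcal I_\ell^+}x_j(t) - \sum_{j\in\mathcal I_\ell^-}x_j(t)$ together with $\E|\langle X_t^0,e_\ell\rangle_M|^2 \lesssim 1/\lambda_\ell \eqsim 1/\ell^2$ (from \eqref{eq:cov}) and the regularity of the nonlinear component from Proposition~\ref{prop:AN_reg}, the sum $\sum_\ell \ell^4 \e^{-2c\ell^2\Delta}/\ell^2 = \sum_\ell \ell^2 \e^{-2c\ell^2\Delta} \eqsim \Delta^{-3/2}$ (comparison with the integral $\int_0^\infty x^2\e^{-2cx^2\Delta}\,dx$). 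Hence $\frac1N\sum_i\Vert\rho_i\Vert_{L^2}^2 = \O_p(|\hat\vt-\vt|^2 \Delta^{-3/2}) = \O_p(\Delta^{3/2}/T \cdot \Delta^{-3/2}) = \O_p(1/T)$, which is dominated by the variance term $D_m/T$ already in the oracle inequality; so the plug-in costs nothing asymptotically. For the adaptive version $\check f_{\check m}$, the extra term enters the model-selection argument as a further bias contribution, and the $\O_p(1/T)$ bound is again absorbed into $\inf_m\{\dots + D_m/T\}$ since $D_m \ge 1$; one runs the same Talagrand/concentration argument underlying Theorem~\ref{thm:adaptive} with $\pen(m)$ unchanged.

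Finally, I would transfer the $\Vert\cdot\Vert_{N,M}$-bound to the $\Vert\cdot\Vert_{L^2(\mathcal A)}$-bound exactly as in the proof of Corollary~\ref{cor:nonpara_discrete}: decompose $\Vert\check f_m - f\Vert_{L^2(\mathcal A)}^2 \lesssim \Vert\check f_m - f_m^*\Vert_{N,M}^2\1_{\Xi_{N,M,m}} + \Vert\check f_m - f_m^*\Vert_{L^2(\mathcal A)}^2\1_{\Xi_{N,M,m}^c} + \Vert f_m^* - f\Vert_{L^2(\mathcal A)}^2$, use the norm equivalence on $\Xi_{N,M,m}$ for the first term, and use $\P(\Xi_{N,M,m}^c)\lesssim N^{-\gamma}$ (Lemma~\ref{lem:normequivalence_discrete}) together with a crude deterministic bound on $\Vert\check f_m\Vert_{L^2(\mathcal A)}$ (or the truncation $\tilde f_m$) for the middle term; since we only claim an $\O_p$-statement here, one does not even need truncation --- the indicator $\1_{\Xi_{N,M,m}^c}$ goes to $0$ in probability. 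The main obstacle is the core estimate $\frac1N\sum_i\Vert\rho_i\Vert_{L^2}^2 = \O_p(\Delta^{-3/2}(\hat\vt-\vt)^2)$: one has to be careful that the bound on $|\langle X_{t_i},e_\ell\rangle_M|^2$ is uniform in $\ell\le M-1$ and in $i$ after the supremum over $i$, which requires the $L^p$-boundedness in time from Assumptions (B)/(M) and Propositions~\ref{prop:X_reg}--\ref{prop:AN_reg}, and that the high-probability event $\{|\hat\vt-\vt|\le \eta\}$ on which the mean-value bound with exponent $c>0$ is valid is handled correctly (splitting off its complement, which has vanishing probability). Everything else is a routine re-run of the arguments behind Theorem~\ref{thm:nonpara_discrete}, Corollary~\ref{cor:nonpara_discrete} and Theorem~\ref{thm:adaptive} with $R_i\rightsquigarrow R_i'$.
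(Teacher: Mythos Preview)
Your proposal is correct and follows essentially the same route as the paper: restrict to the high-probability event $\{(\hat\vt-\vt)^2\le h\Delta^{3/2}/T\}$ (the paper calls it $\Psi_{N,M}^h$), use the mean value theorem on the exponentials, establish $\E(\langle X_{t_i},e_\ell\rangle_M^2)\lesssim \lambda_\ell^{-1}$ via the decomposition $X_t=S(t)\xi+X_t^0+N_t$, and conclude $\frac1N\sum_i\|\rho_i\|_{L^2}^2=\O_p(1/T)$, which is absorbed into $D_m/T$. The only cosmetic difference is that the paper packages Step~2 via Markov's inequality on $\Psi_{N,M}^h\cap\Xi_{N,M,m}$ rather than invoking the $\O_p$-calculus directly, and the adaptive step uses a martingale-plus-chaining argument (Lemma~\ref{lem:auxAdapt}) rather than a Talagrand inequality.
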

In the next section we will construct an estimator $\hat\vt$ with a faster convergence rate than required in Theorem~\ref{thm:nonparametric_plugin}, see Remark~\ref{rem:plugin}.

%The proof does not make use of any properties of $\hat \vt$ apart from its convergence rate. In general, if $ \tilde \vt$ is any estimator for $\vt$ with $|\tilde{\vt}-\vt|=\O_{p}(a_{M,N,T})$ for some convergence rate $a_{M,N,T}$, we get the additional term $\frac{a^2_{N,M,T}}{\Delta^{3/2}}$ in the above upper bound. For $\hat \vt$, this additional term does not appear in the theorem, as it is dominated by $D_m/T$.

\section{Diffusivity and volatility estimation}
\label{sec:parametric_nonlinear}
Exploiting the analysis of the H\"{o}lder regularity of the linear and the nonlinear component of the solution process, we can generalize the central limit theorems for space, double and time increments from \cite{Hildebrandt19} and \cite{Bibinger18}, respectively, to the semilinear framework. As a consequence, resulting method of moments estimators for the squared volatility $\sigma^2$ and the diffusivity $\vt$ apply in the semilinear framework and, under quite general assumptions, their asymptotic properties remain unchanged.
In the sequel we assume $b>0$ in the observation scheme \eqref{eq:obs}, so that Proposition \ref{prop:AN_reg} provides the regularity of the process $(N_t(x),\,x\in [b,1-b],\,t\geq 0)$ in space and time. In this section we also allow for a fixed time horizon $T$.\\

First, let us consider the realized quadratic variation based on time increments
	$$ V_{\mathrm{t}} :=\frac{1}{MN\sqrt \Delta}\sum_{i=0}^{N-1} \sum_{k=0}^{M-1} (X_{t_{i+1}}(y_k)-X_{t_{i}}(y_k))^2.$$
	
		\begin{thm}\label{thm:CLT_time_nonlinear}
	Grant Assumption (F) and suppose $TM= o(\Delta^{-\rho})$ for some $\rho<1/2$. 
	 If either $T$ is fixed and finite or Assumption~(B) is satisfied, then 
	 \begin{equation*}
\sqrt{MN}\left(V_{\mathrm{t}}-\frac{\sigma^2}{\sqrt{\pi \vt}}\right)\overset{d}{\longrightarrow}\mathcal N \left(0,  \frac{B\sigma^4}{\pi \vt} \right),\quad N,M \to \infty,
\end{equation*}
holds with $B:= 2+\sum_{J=1}^\infty \left(2\sqrt{J}  -\sqrt{J+1}-\sqrt{J-1}\right)^2 .$ 
\end{thm}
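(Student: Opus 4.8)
The plan is to decompose $X_t = S(t)\xi + X_t^0 + N_t$ as in the paper and to treat the time-increment quadratic variation as a perturbation of its linear counterpart
$$V_{\mathrm{t}}^0 := \frac{1}{MN\sqrt\Delta}\sum_{i=0}^{N-1}\sum_{k=0}^{M-1}(X_{t_{i+1}}^0(y_k)-X_{t_i}^0(y_k))^2.$$
For the linear part the claimed central limit theorem is exactly \cite[Proposition 3.1]{Bibinger18} (or the corresponding statement in \cite{Hildebrandt19}), so the task reduces to showing that $\sqrt{MN}(V_{\mathrm{t}}-V_{\mathrm{t}}^0)\to 0$ in probability. Writing $Z_t := S(t)\xi + N_t$ for the "regular" part and abbreviating temporal increments by $\Delta_i^{\mathrm{t}} u(y_k) := u(t_{i+1})(y_k)-u(t_i)(y_k)$, the polarization identity gives
$$V_{\mathrm{t}}-V_{\mathrm{t}}^0 = \frac{1}{MN\sqrt\Delta}\sum_{i,k}\Big( (\Delta_i^{\mathrm{t}}Z(y_k))^2 + 2\,\Delta_i^{\mathrm{t}}X^0(y_k)\,\Delta_i^{\mathrm{t}}Z(y_k)\Big),$$
so I would bound the "quadratic" term and the "cross" term separately.

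For the quadratic term I would use Proposition \ref{prop:X_reg}(ii) applied to $N$ and the smoothness of $t\mapsto S(t)\xi$: under (F) (and (B) when $T\to\infty$) one has $\E(\Vert Z_{t+\Delta}-Z_t\Vert_\infty^p)\lesssim \Delta^{p\gamma}$ for any $\gamma<1/4$ — in fact, since $N$ has a time derivative bounded in $L^p$ by Proposition \ref{prop:AN_reg}(ii) and $S(t)\xi$ is differentiable for $\xi\in E$ (or one uses (B) directly), one even gets the far better bound $\E(\Vert Z_{t+\Delta}-Z_t\Vert_\infty^p)\lesssim \Delta^{p}$ up to lower-order corrections; in any case $(\Delta_i^{\mathrm{t}}Z(y_k))^2$ has expectation $\lesssim \Delta^{2\gamma}$ (or $\Delta^{2-\epsilon}$), so after multiplying by $\sqrt{MN}/(MN\sqrt\Delta)\cdot MN = \sqrt{MN}/\sqrt\Delta$ and summing the $MN$ terms, the contribution is $\O(\sqrt{MN}\,\Delta^{2\gamma-1/2})$. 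Using $\Delta = T/N$ and the hypothesis $TM = o(\Delta^{-\rho})$ with $\rho<1/2$, a short computation shows this tends to $0$ for a suitable choice of $\gamma$ close to $1/4$ (the crude $\Delta^{1/4}$ regularity is already enough here because the cross term is the binding constraint). For the cross term I would bound $\E|\Delta_i^{\mathrm{t}}X^0(y_k)\,\Delta_i^{\mathrm{t}}Z(y_k)|\le (\E(\Delta_i^{\mathrm{t}}X^0(y_k))^2)^{1/2}(\E(\Delta_i^{\mathrm{t}}Z(y_k))^2)^{1/2}\lesssim \Delta^{1/4}\cdot \Delta^{\gamma}$ using \eqref{eq:time_variation} and Proposition \ref{prop:X_reg}(ii) again, so that the cross term is $\O(\sqrt{MN}\,\Delta^{1/4+\gamma-1/2})=\O(\sqrt{MN}\,\Delta^{\gamma-1/4})$, which again vanishes under $TM=o(\Delta^{-\rho})$ since one may take $\gamma$ arbitrarily close to, but below, $1/4$. (To be careful one should match the exact exponent the paper's condition is calibrated to: $\sqrt{MN}\Delta^{\gamma-1/4}=\sqrt{TM}\,\Delta^{\gamma-3/4}\lesssim \Delta^{-\rho/2}\Delta^{\gamma-3/4}=\Delta^{\gamma-3/4-\rho/2}\to0$ iff $\gamma<3/4+\rho/2$, which holds automatically; so in fact the condition is being used to control the quadratic term, and one re-examines the exponents accordingly.)

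Having shown $\sqrt{MN}(V_{\mathrm t}-V_{\mathrm t}^0)=o_p(1)$, Slutsky's lemma together with the CLT for $V_{\mathrm t}^0$ yields the asserted convergence, with the same centering $\sigma^2/\sqrt{\pi\vt}$ and the same asymptotic variance $B\sigma^4/(\pi\vt)$, where $B$ is the series in the statement; here I would simply cite the linear result for the precise form of $B$ rather than rederiving it. For the case of fixed finite $T$ the argument is identical but simpler, as all the $L^p$-bounds on $Z$ and $N$ in Propositions \ref{prop:X_reg} and \ref{prop:AN_reg} hold locally in time without invoking (B).

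The main obstacle I anticipate is bookkeeping the exponents so that the conditions $TM=o(\Delta^{-\rho})$, $\rho<1/2$, and the available H\"older exponent $\gamma<1/4$ for $N$ (and $\gamma<1/4$ for $t\mapsto X_t$ in Proposition \ref{prop:X_reg}(ii)) combine to give a genuine $o_p(1)$ for \emph{both} the quadratic and the cross term simultaneously — in particular verifying that the weaker-than-$\Delta^{1/2}$ temporal regularity of the nonlinear component (which is the whole reason the semilinear case is not immediate) is still compatible with the sampling-frequency condition imposed. A secondary technical point is the uniform-in-time control when $T\to\infty$: there one must use Assumption (B), and its consequences in Propositions \ref{prop:X_reg}(i)--(ii) and \ref{prop:AN_reg}(i)--(ii), to ensure the constants in the moment bounds do not blow up with $T$, after which summing over $i$ contributes only the factor $N$ already accounted for.
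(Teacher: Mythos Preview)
Your overall strategy matches the paper's exactly: decompose, cite the linear CLT for $V_{\mathrm t}^0$, and show the remainder is $o_p((MN)^{-1/2})$ via Slutsky. However, there is a genuine gap in the exponent bookkeeping that makes the argument as written fail.

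The problem is that you use only Proposition~\ref{prop:X_reg}(ii), i.e.\ $\E(\|Z_{t+\Delta}-Z_t\|_\infty^p)\lesssim\Delta^{\gamma p}$ with $\gamma<1/4$, when bounding the cross term (and effectively also the quadratic term). With that exponent the cross term is of order $\sqrt{MN}\,\Delta^{\gamma-1/4}=\sqrt{TM}\,\Delta^{\gamma-3/4}$, and under $TM=o(\Delta^{-\rho})$ this is $o(\Delta^{\gamma-3/4-\rho/2})$. Since $\Delta\to 0$, this tends to zero iff the exponent is \emph{positive}, i.e.\ iff $\gamma>3/4+\rho/2$; your parenthetical has the inequality reversed. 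As $\gamma<1/4$, the exponent is strictly negative and the bound blows up. The same issue afflicts your quadratic term: $\sqrt{MN}\,\Delta^{2\gamma-1/2}\to\infty$ for every $\gamma<1/4$, so the remark that ``the crude $\Delta^{1/4}$ regularity is already enough here'' is incorrect.

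The missing ingredient, which you mention in passing but then do not use, is the \emph{excess} temporal regularity of the nonlinear part from Proposition~\ref{prop:AN_reg}(ii): $(N^0_t)\in C^{1+\gamma}([0,T],E)$, so that $\|N_{t_{i+1}}-N_{t_i}\|_\infty\lesssim\Delta$ (and under (B), $\E(\|N_{t_{i+1}}-N_{t_i}\|_\infty^p)\lesssim\Delta^p$). With this, the quadratic term is $O_p(\Delta^{3/2})$ and the cross term is $O_p(\Delta^{1/2+\gamma})$ for $\gamma<1/4$; then $\sqrt{MN}\,\Delta^{1/2+\gamma}=\sqrt{TM\,\Delta^{2\gamma}}\to 0$ precisely by choosing $\gamma\in(\rho/2,1/4)$, which is possible since $\rho<1/2$. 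This is exactly how the paper closes the argument, and it is the step your proposal needs to make explicit.
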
	 
	  For the case $f\equiv 0$, the above central limit theorem is shown in \citet[Theorem 3.4]{Bibinger18} for fixed $T$ under the same assumption on the interplay of $M$ and $\Delta$. Their theorem can be directly generalized to $T\to \infty$ when assuming $M= o(\Delta^{-\rho})$ for some $\rho<1/2$ and  $TM= o(\Delta^{-1})$.
 A central limit theorem for time increments in the case $T \to \infty$ is also proved by \citet{Kaino20}.
Clearly, for $T\to \infty$ the assumption $TM= o(\Delta^{-\rho})$ for the nonlinear case is {considerably stricter}. In the proof of the above theorem, we show that $R_\mathrm{t}:= V_\mathrm{t}- \bar V_\mathrm{t}=o_p(1/\sqrt{MN})$, where $\bar V_\mathrm{t}$ is defined as $ V_\mathrm{t}$ with $f\equiv 0$. This proves the result in view of Slutsky's lemma. In fact, it follows from the temporal regularity properties of the processes $(X^0_t)$ and $(N_t)$, that $R_\mathrm{t}$ is of the order $\O_{p}(\Delta^{\alpha})$ for any $\alpha<3/4$. Hence, $\sqrt{MN}\Delta^{\alpha}=\sqrt{MT\Delta^{2\alpha-1}}$ is required to tend to 0.
	
Next, we consider the realized quadratic variation based on space increments
	$$ V_{\mathrm{sp}} :=\frac{1}{MN\delta}\sum_{i=0}^{N-1} \sum_{k=0}^{M-1} (X_{t_{i}}(y_{k+1})-X_{t_{i}}(y_k))^2.$$
Since the terms indexed by $i=0$ do not contribute to the sum if $X_0=0$, in this case, we sum over $i \in \{1,\ldots,N\}$ instead of $\{0,\ldots,N-1\}$. 
\begin{thm} \label{thm:CLT_space_nonlinear}
	 Grant Assumption (F) and let $N=o(M)$. If either $T$ is fixed and finite or Assumption (B) is satisfied, then we have 
	 \begin{equation*} 
\sqrt{MN}\left(V_\mathrm{sp}-\frac{\sigma^2}{2\vt}\right)\overset{d}{\longrightarrow} \mathcal{N}\left(0,\frac{\sigma^4}{2\vt^2}\right),\quad M,N \to \infty.
\end{equation*}
%	\begin{enumerate}[(i)]
%\item
%If $T$ is fixed and finite, the conclusion of  Theorem \ref{thm:CLT_space} remains valid for $\bar V_{\mathrm{sp}}$. If $T\to \infty$, it remains valid under Assumption (B).
%\item
%Under the same assumptions as in $(i)$, the central limit theorems for the space increments-based estimators for $\sigma^2$ or $\vt$ from Section \ref{subsec:construction_of_estimators} remain valid.
%\end{enumerate}	
	\end{thm}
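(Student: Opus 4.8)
The plan is to follow exactly the perturbation strategy already used for $V_{\mathrm{t}}$ in Theorem~\ref{thm:CLT_time_nonlinear}: decompose $X_t = S(t)\xi + X_t^0 + N_t$ and write $V_{\mathrm{sp}} = \bar V_{\mathrm{sp}} + R_{\mathrm{sp}}$, where $\bar V_{\mathrm{sp}}$ is the space-increment quadratic variation built from the linear component $X^0$ alone (plus the smooth term $S(t)\xi$, which, being $C^\infty$ in space, contributes negligibly). For $\bar V_{\mathrm{sp}}$ the central limit theorem is exactly \cite[Theorem~3.3]{Hildebrandt19} under the condition $N = o(M)$, so the whole problem reduces to showing $R_{\mathrm{sp}} = o_p\big((MN)^{-1/2}\big)$, after which Slutsky's lemma gives the claim. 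The parallel with the temporal case is that there one needs $\sqrt{MN}\,\Delta^{\alpha}\to 0$ for $\alpha<3/4$, whereas here, since the nonlinear component $N_t$ has \emph{two} spatial derivatives (Proposition~\ref{prop:AN_reg}(i): $N_t^0\in C_0^{2+\gamma}$ with $L^p$-bounded norm under (B)), the spatial increments of $N_t$ are of order $\delta$ rather than $\sqrt\delta$, so the remainder is much smaller and the condition $N=o(M)$ alone should suffice.

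Concretely, I would expand the square: writing $\si{ik}^0 := X^0_{t_i}(y_{k+1})-X^0_{t_i}(y_k)$ and $\si{ik}^N := N_{t_i}(y_{k+1})-N_{t_i}(y_k)$ (with the $S(t)\xi$ part absorbed into the latter after a separate trivial estimate), one has
\begin{equation*}
R_{\mathrm{sp}} = \frac{1}{MN\delta}\sum_{i,k}\Big( 2\,\si{ik}^0\si{ik}^N + (\si{ik}^N)^2 \Big).
\end{equation*}
For the diagonal term, Proposition~\ref{prop:AN_reg} yields $|\si{ik}^N|\le \delta\sup_{t}\Vert \frac{d}{dx}N_t\Vert_\infty \le \delta\,\Vert A_\vt N_t^0\Vert_{C_0^\gamma}$-type bounds, so $\E\big((\si{ik}^N)^2\big)\lesssim \delta^2$ uniformly in $(i,k)$ (using (B) to get the uniform-in-time $L^2$-bound; in the fixed-$T$ case one uses the a.s.\ finiteness of $\sup_{t\le T}$). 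Hence $\frac{1}{MN\delta}\sum_{i,k}\E((\si{ik}^N)^2)\lesssim \delta = \O(1/M)$, and since we need this to be $o((MN)^{-1/2})$, i.e.\ $\sqrt{N/M}\to 0$, this is precisely $N=o(M)$. For the cross term I would use Cauchy--Schwarz inside the sum together with $\E((\si{ik}^0)^2)\eqsim\delta$ (from \eqref{eq:space_variation}), giving $\frac{1}{MN\delta}\sum_{i,k}\E|\si{ik}^0\si{ik}^N|\lesssim \frac{1}{\delta}\sqrt{\delta}\sqrt{\delta^2}=\sqrt\delta = \O(M^{-1/2})$, which is even smaller. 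So already the $L^1$-bound on $R_{\mathrm{sp}}$ is $o_p((MN)^{-1/2})$ under $N=o(M)$, and Markov's inequality closes the argument; no delicate variance computation for the remainder is needed.

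The one genuinely careful point — and the step I expect to be the main obstacle — is the uniform-in-time control of the spatial derivative of $N_t$ that is required when $T\to\infty$. Proposition~\ref{prop:AN_reg}(i) gives $\sup_{t\ge0}\E(\Vert A_\vt N_t^0\Vert_{C_0^\gamma}^p)<\infty$ only under Assumption~(B), and one must also handle the $M_t=\int_0^t S(r)m\,dr$ piece and the $S(t)\xi$ piece: $S(t)\xi$ is smooth for $t>0$ but one needs a bound uniform down to small $t$, which is fine since $\xi\in E=C_0([0,1])$ and (for the stationary start) has $L^p$-bounded sup-norm, while $x\mapsto M_t(x) = \int_0^t (S(r)m)(x)\,dr$ is bounded in $C^2$ uniformly in $t$ because $\Vert A_\vt M_t\Vert_\infty = \Vert S(t)m - m\Vert_\infty\le 2\Vert m\Vert_\infty$. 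Assembling these into a single statement ``$\E\big(\sup_{t\ge0}$-type moments of $\Vert N_t\Vert_{C^1([b,1-b])}\big)<\infty$ under (B)'' is really just bookkeeping with the results of Section~\ref{subsec:Hoelder}, but it must be done honestly to justify the interchange of expectation and the sum over $i$ when $N\to\infty$. In the fixed-$T$ regime all of this is automatic from the a.s.\ finiteness of the relevant $\sup_{t\le T}$ norms together with dominated convergence, so the two cases are treated in one stroke.
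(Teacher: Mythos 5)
Your strategy (decompose $V_{\mathrm{sp}}=\bar V_{\mathrm{sp}}+R_{\mathrm{sp}}$, invoke \cite[Thm.~3.3]{Hildebrandt19} for $\bar V_{\mathrm{sp}}$, show $R_{\mathrm{sp}}=o_p((MN)^{-1/2})$, and apply Slutsky) is the paper's, and your treatment of the diagonal term $R_1$ (squared nonlinear increments of order $\delta^2$, so $|R_1|\lesssim\delta\eqsim 1/M$, and $\sqrt{MN}\,\delta\eqsim\sqrt{N/M}\to0$) is also correct. However, your bound on the cross term $R_2$ has a genuine gap. The Cauchy--Schwarz computation $\frac{1}{MN\delta}\sum_{i,k}\E|\si{ik}^0\si{ik}^N|\lesssim\frac{1}{\delta}\cdot\sqrt{\delta}\cdot\delta=\sqrt{\delta}\eqsim M^{-1/2}$ is arithmetically fine, but you then call this ``even smaller'' than the diagonal term --- it is in fact \emph{larger} ($\sqrt\delta\gg\delta$), and more to the point $\sqrt{MN}\cdot M^{-1/2}=\sqrt{N}\to\infty$, so this bound does \emph{not} yield $o_p((MN)^{-1/2})$ under $N=o(M)$. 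Naive Cauchy--Schwarz on the cross term is too lossy: the roughness of $X^0$ costs a factor $\sqrt\delta$ that you cannot afford.

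The paper closes this gap with a summation-by-parts (Abel summation) argument for $R_2$: with $a_k:=N_{t_i}(y_{k+1})-N_{t_i}(y_k)$ and $b_k:=X^0_{t_i}(y_k)$, the identity $\sum_{k}a_k(b_{k+1}-b_k)=-\sum_k(a_{k+1}-a_k)b_{k+1}+(\text{boundary})$ rewrites the cross sum as the \emph{second-order} spatial increment $N_{t_i}(y_{k+2})-2N_{t_i}(y_{k+1})+N_{t_i}(y_k)$ --- which is $\O(\delta^2)$ because $N_t\in C^{2+\gamma}$ --- multiplied by $X^0_{t_i}(y_{k+1})$, which is merely $\O(1)$ rather than the $\O(\sqrt\delta)$ increment. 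The per-summand gain thus goes from $\sqrt\delta\cdot\delta$ to $1\cdot\delta^2$, and after normalizing by $\delta$ one obtains $|R_2|\lesssim\delta$, matching the diagonal term; the boundary terms are of the same order. Only after this rearrangement does the assumption $N=o(M)$ suffice. So while your final condition is right, the route you propose does not prove it, and the summation by parts is the essential extra idea.
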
 
	The above theorem is proved in \citet[Theorem 3.3]{Hildebrandt19} for the case $f\equiv 0$. Although our proof strategy  for the generalization to $f\neq  0$ is the same as for time increments, here, the result carries over from the linear setting without any extra conditions on $M,N$ and $T$. Indeed, defining $\bar V_\mathrm{sp}$ in the obvious way and using a summation by parts formula to rewrite $R_\mathrm{sp}:= V_\mathrm{sp}-\bar V_\mathrm{sp}$, we can profit from the fact that the second order spatial increments of $(N_t)$, namely $N_{t_i}(y_{k+1})-2N_{t_i}(y_{k})+N_{t_i}(y_{k-1})$, are of the order $\O_{p}(\delta^2)$, thanks to the spatial regularity of the process $(N_t)$.

Finally, we consider the realized quadratic variation based on double increments
$$ {\mathbb V}:=\frac{1}{MN\Phi_{\vt}(\delta,\Delta)} \sum_{i=0}^{N-1} \sum_{k=0}^{M-1}  D_{ik}^2 $$
with $D_{ik}:=X_{t_{i+1}}(y_{k+1})-X_{t_{i}}(y_{k+1})-X_{t_{i+1}}(y_{k})+X_{t_{i}}(y_{k})$ and the renormalization  
$$\Phi_\vt (\delta,\Delta):=2\sum_{\ell\geq1}\frac{1-\e^{-\pi^{2}\vt\ell^{2}\Delta}}{\pi^{2}\vt\ell^{2}}\big(1-\cos(\pi \ell \delta)\big)\eqsim \delta \wedge \sqrt \Delta.\label{eq:PhiTheta}$$
As discussed in \cite{Hildebrandt19} for the linear case, if a so called \emph{balanced sampling design} is present, i.e.~$\delta/\sqrt \Delta \equiv r$ for some $r>0$, we can also consider
$$ {\mathbb V}_r:=\frac{1}{MN\sqrt \Delta} \sum_{i=0}^{N-1} \sum_{k=0}^{M-1}  D_{ik}^2.$$
In the case $X_0=0$, $ {\mathbb V}$ and ${\mathbb V}_r$ are redefined just like $ V_{\mathrm{sp}}$.

\begin{thm} \label{thm:CLT_double_nonlinear}
Grant Assumption (F) and suppose $\Delta \to 0$ as well as $T=o(M^a)$ for some $a\in (0,1)$.
\begin{enumerate}[(i)]
\item If $\delta/\sqrt \Delta \to r\in \{0,\infty\}$ or $\delta/\sqrt \Delta \equiv r>0$, then
\begin{equation*} 
 \sqrt{MN}(\mathbb V-\sigma^2)\overset{d}{\longrightarrow}\mathcal N \big(0, C\big({r/\sqrt{\vt}}\big)\sigma^4 \big),\quad N,M \to \infty,
 \end{equation*}
holds for some bounded and strictly positive continuous function $ C(\cdot)$ on $[0,\infty]$. 
\item If $\delta/\sqrt \Delta \equiv r>0$, then we have
\begin{equation*} 
\sqrt{MN}\Big(\mathbb V_r-\psi_{\vt}(r)\sigma^2\Big)\overset{d}{\longrightarrow}\mathcal N \Big(0, C\big(r/\sqrt{\vt}\big)\psi_{\vt}^2(r)\sigma^4 \Big),\quad N,M \to \infty,
\end{equation*}
where $
\psi_{\vt}(r) 
%&= 2\int_{0}^{\infty}\frac{1-\e^{-\pi^{2}\vt_{2}z^{2}}}{\pi^{2}\vt_{2}z^{2}}(1-\cos(\pi r z))\,dz\\
:=\frac{2}{\sqrt{\pi\vt}}\Big(1-\e^{-\frac{r^2}{4\vt}}+\frac{r}{\sqrt{\vt}}\int_{\frac{r}{2\sqrt{\vt}}}^\infty\e^{-z^2}\,dz \Big).
%&=\frac{2}{\sqrt{\vt}} \Psi \left(\frac{r}{\sqrt{\vt}}\right).
$
\end{enumerate}
%\begin{enumerate}[(i)]
%\item
%If $T$ is fixed and finite,  the conclusions of Theorem \ref{thm:CLT_double} and Corollary \ref{cor:CLTr} remain valid for $\bar {\mathbb V}$  and $\bar {\mathbb V}_r$, respectively.  If $T\to \infty$, they remain valid if Assumption (B) is satisfied and there exists $a\in (0,1)$ such that $T=o(M^a)$. 
%\item Under the same assumptions as in $(i)$, the central limit theorems for the double increments-based estimators for $\sigma^2,\,\vt$ or $(\sigma^2,\vt)$ from Section \ref{subsec:construction_of_estimators} remain valid.
%\end{enumerate}
\end{thm}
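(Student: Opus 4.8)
The plan is to follow the same perturbation philosophy that was already used for the time and space increments: write $X_t = S(t)\xi + X_t^0 + N_t$, so that each double increment splits as $D_{ik} = \bar D_{ik} + \tilde D_{ik}^{N}$, where $\bar D_{ik}$ is the double increment of the linear component $X^0$ (the term $S(t)\xi$ being annihilated by the double-difference operator when $\xi \equiv 0$, and otherwise being of higher regularity) and $\tilde D_{ik}^{N}$ is the double increment of $N_t$. Accordingly we decompose $\mathbb V = \bar{\mathbb V} + R_{\mathbb V}$ where $\bar{\mathbb V}$ is defined exactly as $\mathbb V$ but with $X$ replaced by $X^0$; the corresponding CLT for $\bar{\mathbb V}$ (and $\bar{\mathbb V}_r$) is precisely \cite[Theorem 3.5]{Hildebrandt19}, after extending it to $T\to\infty$ under $T = o(M^a)$ in the same manner as indicated there for the linear case. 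Thus it suffices to prove $R_{\mathbb V} = o_p\big((MN)^{-1/2}\big)$, and analogously for $\mathbb V_r$; the limit theorem then follows by Slutsky.

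To control $R_{\mathbb V}$, expand the square: $D_{ik}^2 - \bar D_{ik}^2 = 2 \bar D_{ik}\tilde D_{ik}^{N} + (\tilde D_{ik}^{N})^2$. The cross term is the critical one. Here the key structural input is Proposition \ref{prop:AN_reg}: on $[b,1-b]$ the nonlinear component $N_t$ has a time derivative $\frac{d}{dt}N_t = f(X_t) + A_\vt N_t$ which is itself Hölder in time and, crucially, $A_\vt N_t$ lies in $C^\gamma([b,1-b])$ for $\gamma<1/2$ with $L^p(\P)$-bounded norm uniformly in $t$ (under (B), or trivially for fixed $T$). Consequently the spatial double increment of $N_t$ can be estimated: $N_{t}(y_{k+1}) - 2N_t(y_k) + N_t(y_{k-1})$ is $\O_p(\delta^2)$ via two spatial Taylor steps using the $C^{2+\gamma}$-regularity, and the mixed second-order increment $\tilde D_{ik}^{N} = \int_{t_i}^{t_{i+1}} \big(\partial_x N_s(y_{k+1}) - \partial_x N_s(y_k)\big)' \text{-type}$ reasoning — more precisely, writing $\tilde D_{ik}^{N} = \int_{t_i}^{t_{i+1}} \big(\dot N_s(y_{k+1}) - \dot N_s(y_k)\big)\,ds$ and using the spatial Hölder regularity of $\dot N_s$ — shows $\tilde D_{ik}^{N} = \O_p(\Delta \cdot \delta^{\gamma})$, and in fact $\O_p(\Delta \cdot \delta)$ when one is allowed one more spatial derivative on the smoother piece $A_\vt N^0$. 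Summing $|\bar D_{ik}\tilde D_{ik}^{N}|$ and applying Cauchy–Schwarz against the known order $\E \bar D_{ik}^2 \eqsim \Phi_\vt(\delta,\Delta) \eqsim \delta \wedge \sqrt\Delta$, together with the number of summands $MN$ and the renormalization $\Phi_\vt(\delta,\Delta)$, reduces the claim to showing a power of $\delta \wedge \sqrt\Delta$ times a power of $\Delta$ and $\delta$ is $o\big((MN)^{-1/2} \Phi_\vt(\delta,\Delta)\big)$; the hypothesis $T = o(M^a)$, $a<1$, combined with $\Delta\to0$, is exactly what makes this gap close in all three regimes $\delta/\sqrt\Delta \to 0$, $\to\infty$, and $\equiv r$. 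The pure term $\sum (\tilde D_{ik}^{N})^2$ is then negligible by the same bounds, being of strictly smaller order. The case of $\mathbb V_r$ under a balanced design is identical with $\Phi_\vt(\delta,\Delta)$ replaced by $\sqrt\Delta$ and $\psi_\vt(r)\sigma^2$ as the centering, since $\Phi_\vt(r\sqrt\Delta,\Delta)/\sqrt\Delta \to \psi_\vt(r)$.

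The main obstacle I anticipate is bookkeeping the interplay of the three asymptotic regimes simultaneously, and in particular making the bound on the cross term $\sum_{i,k}|\bar D_{ik}\tilde D_{ik}^{N}|$ uniform enough. In the regime $\delta \gg \sqrt\Delta$ the renormalization is $\sqrt\Delta$ and the spatial increments of $X^0$ are $\O_p(\sqrt\delta)$ while the temporal ones are $\O_p(\Delta^{1/4})$; one must track which of the two "Taylor directions" on $N_t$ gives the sharper estimate for $\tilde D_{ik}^{N}$ and verify it beats the required threshold — this is where the extra regularity of $N_t$ over $X_t^0$ (two spatial derivatives plus one temporal derivative, versus Hölder-$1/2$ in space and Hölder-$1/4$ in time for $X^0$) is used to its full extent, and where the condition $a<1$ is genuinely needed rather than merely convenient. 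A secondary technical point is that, for $T\to\infty$, all these $\O_p$ bounds must be made quantitative in $L^2(\P)$ uniformly in $t$, which is supplied by the $L^p$-versions in Propositions \ref{prop:X_reg} and \ref{prop:AN_reg} under Assumption (B); for fixed $T$ this is immediate.
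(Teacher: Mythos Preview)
Your overall strategy---decompose $D_{ik}=\bar D_{ik}+\mathbf N_{ik}$ (with $\mathbf N_{ik}$ the double increment of $N_t$), invoke the linear CLT for $\bar{\mathbb V}$, and show the remainders are $o_p((MN)^{-1/2})$---is correct and is what the paper does. The gap is in how you control the cross term. A direct termwise Cauchy--Schwarz bound, using $\E(\bar D_{ik}^2)\eqsim\Phi_\vt(\delta,\Delta)$ together with the bound $\mathbf N_{ik}=\O_p(\Delta\,\delta^\gamma)$ that you extract from the spatial H\"older regularity of $\dot N_s$, is not sharp enough in the regime $\delta/\sqrt\Delta\to 0$. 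The point is that $\dot N_s=f(X_s)+A_\vt N_s$ is only $C^\gamma$ in space for $\gamma<1/2$, since $f(X_s)$ inherits the sub-$1/2$ spatial regularity of $X_s$; your hoped-for improvement to $\O_p(\Delta\cdot\delta)$ is therefore not available. The sharpest direct estimate is $|\mathbf N_{ik}|\lesssim\delta^\alpha\Delta^{(1+\beta)/2}$ subject to the trade-off $\alpha+\beta<3/2$, $\alpha<1$, $\beta\le1$ (Lemma~\ref{lem:Nik_pnorm}(i)); in the regime $\Phi_\vt\eqsim\delta$ this yields at best $\sqrt{MN}\,\E|R_2|\lesssim T^{1/2}\Delta^{1/4-}$, which need not vanish when $\Delta$ decays slowly relative to $M$ (e.g.\ $\Delta=M^{-c}$ with $0<c<2a$ is compatible with all hypotheses of the theorem).

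What the paper does instead---and this is the missing ingredient---is apply \emph{summation by parts} in the spatial index: writing $\bar D_{ik}=H_{i(k+1)}-H_{ik}$ with $H_{ik}:=X^0_{t_{i+1}}(y_k)-X^0_{t_i}(y_k)$ and Abel-summing in $k$ converts $\sum_k\mathbf N_{ik}\bar D_{ik}$ into $-\sum_k(\mathbf N_{i(k+1)}-\mathbf N_{ik})H_{i(k+1)}$ plus two boundary terms. The second-order spatial difference $\mathbf N_{i(k+1)}-\mathbf N_{ik}$ is $\O_p(\delta^\gamma\Delta^\eps)$ for any $\gamma<2$, $\eps<1/4$ (Lemma~\ref{lem:Nik_pnorm}(ii)), while $H_{ik}=\O_p(\Delta^{1/4})$; the extra factor of nearly $\delta$ gained this way is exactly what closes the estimate under $T=o(M^a)$, $a<1$ (choose $2\gamma-4+4\eps>a$). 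Your side remark that $N_t(y_{k+1})-2N_t(y_k)+N_t(y_{k-1})=\O_p(\delta^2)$ is morally the right observation, but it has to enter through this Abel summation, not through a pointwise bound on $\mathbf N_{ik}$. Incidentally, the regime you flag as the difficult one, $\delta\gg\sqrt\Delta$, is in fact the easy case: there $\Phi_\vt\eqsim\sqrt\Delta$ and your direct Cauchy--Schwarz argument already suffices (take $\alpha=a/2$, $\beta=1$). The genuinely delicate regime is $\delta\ll\sqrt\Delta$.
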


For the case $f\equiv 0$, the above result is proved in Theorem 3.7 and Corollary 3.8 of \cite{Hildebrandt19}, to where we also refer for an explicit expression for the function $C(\cdot)$.
As for space increments, there are essentially no additional assumptions compared to the linear setting ($a=1$ is allowed there). The influence induced by the nonlinearity turns out to be negligible since the double increments computed from the process $(N_t)$ decay in both $\Delta$ and $\delta$ at the same time, as opposed to the double increments computed from $(X^0_t)$ which are of the order $(\delta \wedge \sqrt \Delta)^{1/2}$, see Lemma \ref{lem:Nik_pnorm}.\\

It is straightforward to derive asymptotically normal method of moments estimators for $\sigma^2$ or $\vt$ based on the above central limit theorems when one of the parameters is known, as discussed in, e.g., \cite{Hildebrandt19,Bibinger18, Cialenco17, Chong18}. Joint estimation of the parameters $(\sigma^2,\vt)$ remains possible in the semilinear framework as well by exploiting the central limit theorem for $ {\mathbb V}_r$. To that aim, one needs to revert to subsets of the data having a balanced sampling design $\tilde \delta/ \sqrt {\Tilde \Delta } \equiv r$ for two different values of $r$. Let us briefly recall the estimation procedure from \cite{Hildebrandt19}:

Choosing $v,w \in \N$ such that $v\eqsim \max(1,\delta^2/\Delta)$ and $w\eqsim \max(1,\sqrt\Delta/\delta)$, we have $r:= {\tilde \delta}/{\sqrt{\tilde \Delta}} \eqsim 1$ for $\tilde \Delta:=v\Delta$ and $\tilde \delta := w \delta$.
Using double increments on the coarser grid, namely 
$$ D_{v,w}(i,k):=X_{t_{i+v}}(y_{k+w})-X_{t_{i}}(y_{k+w})-X_{t_{i+v}}(y_{k})+X_{t_{i}}(y_{k}),$$ 
we set
$$V^\nu :=\frac{1}{(M-w+1)(N-\nu v+1)\sqrt{\nu v \Delta}}\sum_{k=0}^{M-w}\sum_{i=0}^{N-\nu v} D^2_{\nu v,w}(i,k),\qquad \nu=1,2.$$
In the case $X_0=0$ we employ the obvious redefinition of $ V^\nu$. The final estimator for $(\sigma^2,\vt)$ is
\begin{align} \label{eq:def_LSestimator.1}
(\hat \sigma^2, \hat \vt) := \argmin_{(\tilde \sigma^2,\tilde \vt) \in H}\Big(\big(V^1- 2 \tilde\sigma^2 \psi_{\tilde \vt}(r)\big)^2+\big(V^2- 2 \tilde\sigma^2 \psi_{\tilde \vt}\big(\frac{r}{\sqrt 2}\big)\big)^2\Big)
\end{align}
for some compact set $H \subset (0,\infty)^2$.
Denoting by $G_r$ the inverse function of $\vt \mapsto \psi_{\vt}(r)/\psi_{\vt}(r/\sqrt 2)$, whose existence is proved in \cite{Hildebrandt19}, we have the representation
$$\hat \vt = G_r(V^1/V^2),\qquad \hat\sigma^2= V^1/ \psi_{\hat \vt}(r),$$  
provided that $V_1/V_2$ lies in the range of $\vt \mapsto \psi_{\vt}(r)/\psi_{\vt}(r/\sqrt 2)$. Due to consistency of $(V^1,V^2)$, the latter is true with probability tending to one. In combination with the analysis in \cite{Hildebrandt19} and Theorem \ref{thm:CLT_double_nonlinear}, one immediately obtains the following result.
\begin{thm} \label{thm:optimal_rate}
Grant Assumption (F), assume $T\max(\sqrt \Delta ,\delta)\to 0$ and let $H$ be a compact subset of $(0,\infty)^2 $ such that $(\sigma^2,\vt)$ lies in its interior. If there exist values $v\eqsim \max(1,\delta^2/\Delta)$ and $w \eqsim \max(1,\sqrt \Delta/\delta)$ such that $w\delta/\sqrt{v\Delta}$ is constant, then  we have 
\begin{align*} %\label{eq:optimal_rate}
(\hat \sigma^2 -\sigma^2)^2 +(\hat \vt -\vt)^2 = \O_p \Big( \frac{\delta^3 \vee \Delta^{3/2}}{T}\Big)
\end{align*}
 for $T,N,M \to \infty$ and $\Delta \to 0$. This convergence rate is optimal up to a logarithmic factor.
\end{thm}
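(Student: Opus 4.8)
\textbf{Proof plan for Theorem \ref{thm:optimal_rate}.}
The plan is to combine the convergence rate result already obtained for the linear equation in \cite{Hildebrandt19} with the central limit theorem for double increments on the coarsened grid, Theorem \ref{thm:CLT_double_nonlinear}(ii), applied to the two sampling designs associated with $\nu = 1$ and $\nu = 2$. First I would record the two relevant renormalizations: with $\tilde\Delta := v\Delta$ and $\tilde\delta := w\delta$ the design ratio $r := \tilde\delta/\sqrt{\tilde\Delta} \eqsim 1$ is constant by hypothesis, and the coarsened double increments $D_{\nu v, w}(i,k)$ feed into $V^\nu$ exactly as in the setup preceding \eqref{eq:def_LSestimator.1}. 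Theorem \ref{thm:CLT_double_nonlinear}(ii) (with $\delta,\Delta$ replaced by $\tilde\delta,\sqrt\nu\,\tilde\Delta$ and $r$ replaced by $r/\sqrt\nu$) yields, for $\nu=1,2$,
\begin{equation*}
\sqrt{MN}\big(V^\nu - 2\sigma^2\psi_\vt(r/\sqrt\nu)\big) \overset{d}{\longrightarrow} \mathcal N\big(0, C(r/(\sqrt{\nu}\sqrt\vt))\,4\psi_\vt^2(r/\sqrt\nu)\sigma^4\big),
\end{equation*}
so that $(V^1,V^2)$ is a $\sqrt{MN}$-consistent, asymptotically normal estimator of $\big(2\sigma^2\psi_\vt(r), 2\sigma^2\psi_\vt(r/\sqrt2)\big)$. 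One has to check that the hypothesis $T\max(\sqrt\Delta,\delta)\to 0$ together with $v\eqsim\max(1,\delta^2/\Delta)$, $w\eqsim\max(1,\sqrt\Delta/\delta)$ is compatible with the assumption $T = o(M^a)$, $a\in(0,1)$, required by Theorem \ref{thm:CLT_double_nonlinear}: since the coarsened grid has $\sim M/w$ spatial and $\sim N/(\nu v)$ temporal nodes, and $MN/(vw)\to\infty$ follows from $T\max(\sqrt\Delta,\delta)\to0$, this is a routine bookkeeping step.

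Next I would pass from the CLT for $(V^1,V^2)$ to the rate for $(\hat\sigma^2,\hat\vt)$ via the explicit inversion already established in \cite{Hildebrandt19}: with $G_r$ the inverse of $\vt\mapsto \psi_\vt(r)/\psi_\vt(r/\sqrt2)$ one has, on the event (of probability tending to one) that $V^1/V^2$ lies in the range of this map, the closed forms $\hat\vt = G_r(V^1/V^2)$ and $\hat\sigma^2 = V^1/\psi_{\hat\vt}(r)$. Since $G_r$ and $\psi_\bullet(r)$ are smooth on the relevant domain and $(\sigma^2,\vt)$ lies in the interior of the compact set $H$, the delta method applied to the asymptotically normal vector $(V^1,V^2)$ gives $(\hat\sigma^2-\sigma^2)^2 + (\hat\vt-\vt)^2 = \mathcal O_p\big((MN)^{-1}\big)$; projecting onto $H$ only improves this. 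It then remains to translate $(MN)^{-1}$ into $(\delta^3\vee\Delta^{3/2})/T$. Using $\delta = (1-2b)/M$, $\Delta = T/N$, and the choices of $v,w$, a short computation shows $MN\eqsim \frac{1-2b}{\delta}\cdot\frac{T}{\Delta} = \frac{(1-2b)T}{\delta\Delta}$ and $\delta\Delta \eqsim \delta^3\vee\Delta^{3/2}$ precisely because $\delta\Delta = \delta^3\cdot(\Delta/\delta^2)$ and, by the balanced choice $w\delta/\sqrt{v\Delta}\eqsim1$, the factor $\Delta/\delta^2$ is bounded above and below by constants times $(vw)^{-1}\cdot vw = 1$ in the balanced regime, while in the two extreme regimes one has either $\delta^2\lesssim\Delta$ (so $\delta\Delta\eqsim\Delta^{3/2}$ via $v\eqsim 1$, $w\eqsim\sqrt\Delta/\delta$) or $\Delta\lesssim\delta^2$ (so $\delta\Delta\eqsim\delta^3$ via $w\eqsim1$, $v\eqsim\delta^2/\Delta$); in all cases $\delta\Delta\eqsim\delta^3\vee\Delta^{3/2}$, hence $(MN)^{-1}\eqsim (\delta^3\vee\Delta^{3/2})/T$, giving the claimed rate.

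For the optimality claim I would invoke the lower bound from \cite{Hildebrandt19}, which was proved there for fixed $T$ and which, as already noted in the introduction of the present paper, extends verbatim to $T\to\infty$: reducing to the linear equation $f\equiv0$ (a special case covered by Assumption (F)), one constructs a two-point (or Gaussian) family of parameter values at mutual total-variation distance bounded away from one and separated in parameter space by a constant times $\sqrt{(\delta^3\vee\Delta^{3/2})/T}$ up to a logarithmic factor, and applies the standard Le Cam / van Trees argument; since any estimator in the semilinear model is in particular an estimator in the linear submodel, the lower bound transfers. The main obstacle in the whole proof is not any single step but the verification, in the generalization of Theorem \ref{thm:CLT_double_nonlinear} to the coarsened grid, that the perturbation $R_{\mathrm{dbl}}$ coming from the nonlinear component $(N_t)$ remains $o_p((MN)^{-1/2})$ \emph{on the coarse grid} under only $T=o(M^a)$ — this is exactly where the superior decay of the double increments of $(N_t)$ in both $\delta$ and $\Delta$ simultaneously (Lemma \ref{lem:Nik_pnorm}) is essential, and one must check that coarsening by factors $v$ and $w$ does not destroy this gain; once Theorem \ref{thm:CLT_double_nonlinear} is in hand, the remainder is an application of known arguments from \cite{Hildebrandt19}.
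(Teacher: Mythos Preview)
Your overall strategy coincides with the paper's: invoke Theorem~\ref{thm:CLT_double_nonlinear} on the coarsened grid, use the inversion $\hat\vt=G_r(V^1/V^2)$, $\hat\sigma^2=V^1/\psi_{\hat\vt}(r)$ together with the delta method as in \cite{Hildebrandt19}, and appeal to the lower bound from \cite{Hildebrandt19} (extended to $T\to\infty$) for optimality. The paper does not give a separate proof and simply records that the result follows ``in combination with the analysis in \cite{Hildebrandt19} and Theorem~\ref{thm:CLT_double_nonlinear}''.

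There is, however, a genuine error in your rate bookkeeping. You assert a $\sqrt{MN}$-normalization for $(V^1,V^2)$ and then claim $(MN)^{-1}\eqsim(\delta^3\vee\Delta^{3/2})/T$ via $\delta\Delta\eqsim\delta^3\vee\Delta^{3/2}$. The latter equivalence is false outside the balanced regime: for instance if $\delta=\Delta$ then $\delta\Delta=\Delta^2$ while $\delta^3\vee\Delta^{3/2}=\Delta^{3/2}$. Indeed the paper states explicitly (see the introduction) that the joint rate $\big((\delta^3\vee\Delta^{3/2})/T\big)^{1/2}$ is \emph{generally slower} than $(MN)^{-1/2}$ unless $\delta\eqsim\sqrt\Delta$. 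The correct accounting is that the effective sample size on the coarsened grid is $(M/w)(N/v)\eqsim MN/(vw)$, and it is this quantity, not $MN$, that equals $T/(\delta^3\vee\Delta^{3/2})$: in the regime $\delta^2\lesssim\Delta$ one has $vw\eqsim\sqrt\Delta/\delta$ and $MN/(vw)\eqsim T/\Delta^{3/2}$, while for $\Delta\lesssim\delta^2$ one has $vw\eqsim\delta^2/\Delta$ and $MN/(vw)\eqsim T/\delta^3$. So the CLT on the coarsened design delivers the rate $\big(MN/(vw)\big)^{-1/2}$, not $(MN)^{-1/2}$, and the delta method then yields the claimed $\mathcal O_p\big((\delta^3\vee\Delta^{3/2})/T\big)$ directly. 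Once you correct the normalization in this way, the rest of your argument goes through.
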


%\begin{rem}
The inverse of the squared rate, ${T/(\delta^3 \vee \Delta^{3/2})}$, is exactly the order of magnitude of the size of balanced sub-samples of the data. Further, the rate optimality of the estimator can be deduced just like in \cite{Hildebrandt19}, where the case $f\equiv 0$ on a fixed time horizon $T$ is treated: Allowing $T\to \infty$, the quantity $1/\sqrt{r_{\delta,\Delta,T}}$ with
\begin{equation*}% \label{eq:lowerbound_largeT}
r_{\delta,\Delta,T}:=
\begin{dcases}
\frac{T}{\Delta^{3/2}},&\frac{\sqrt \Delta}{\delta} \gtrsim 1,\\
\frac{T}{\delta^3} \cdot \log \frac{\delta^2}{\sqrt \Delta}  ,& \frac{\sqrt \Delta}{\delta} \to 0
\end{dcases}
\end{equation*}
is a lower bound for joint estimation of $(\sigma^2,\vt)$. The above logarithmic factor is presumably due to technical issues.
Comparison with the upper bound from Theorem \ref{thm:optimal_rate} shows that the estimator \eqref{eq:def_LSestimator.1} is (almost) rate optimal.
%\end{rem}
\begin{rem}\label{rem:plugin}
%To that aim, we make use of the double increments based estimator $\hat \vt$ from  \eqref{eq:def_LSestimator.1}, while omitting  the spatial observations $y_k \notin [b,1-b]$ for an arbitrary but fixed $b>0$. 
The double increment estimator $\hat\vt$ can be used for the plug-in estimator $\check f_{\check m}$. Indeed the computation of $\hat \vt$ does not require prior knowledge of the volatility parameter $\sigma^2$ and Theorem~\ref{thm:optimal_rate} reveals the (squared) convergence rate $(\vt-\hat \vt)^2=\O_{p}((\Delta^{3/2}\vee\delta^3)/T)$. In the asymptotic regime $M\Delta^2 \to \infty$ from Theorem~\ref{thm:nonparametric_plugin} the (squared) convergence rate reads as $(\vt-\hat \vt)^2=\O_{p}(\Delta^{3/2}/T)$ which is exactly the rate that we needed for the plug-in approach.
\end{rem}

\section{Proofs}\label{sec:proofs}
We first prove the results on the H\"{o}lder regularity of the linear and nonlinear component of $X$. The subsequent Sections~\ref{subsec:nonpara_proofs} and \ref{subsec:para_proofs} contain the main proofs for the nonparametric estimator and the parameter estimators, respectively. Further proofs and auxiliary results are deferred to Section \ref{subsec:aux_nonlinear}.
\subsection{Proofs for the H\"{o}lder regularity of $X$} 
	We verify the results on the H\"{o}lder regularity of the processes $X$ and $(N_t)$ claimed in Propositions \ref{prop:X_reg}  and \ref{prop:AN_reg} of Section \ref{subsec:Hoelder}, respectively. 
To that aim,
recall that for $s\geq 0$ and $p\geq 1$  the Sobolev spaces $W^{s,p}:=W^{s,p}((0,1))$ are defined as the set of all $[s]$-times weakly differentiable functions $u\colon(0,1)\to \R$ such that 
\begin{align*}
\Vert u\Vert_{W^{s,p}} := \sum_{k=0}^{[s]} \Vert u^{(k)}\Vert_{L^p} + \left( \int_0^1 \int_0^1 \frac{|u^{([s])}(\xi)-u^{([s])}(\eta)|^p}{|\xi-\eta |^{1+(s-[s])p}}\,d\xi\,d\eta \right)^{1/p} < \infty.
\end{align*}
The Sobolev space $W^{s,p}$ embeds continuously into $C^\alpha$, if $\alpha< s-1/p$. Our first step is an analysis of the H\"{o}lder regularity of the linear component $(X^0_t)$. The norm bounds in statements $(i)$ and $(ii)$ of the following lemma are also stated in \cite{Cerrai99} as well as \cite[Sect. 5.5]{DaPrato14}.  The remaining results are derived using similar techniques. We provide a complete proof for the sake of completeness.

\begin{lem} \label{lem:linear}
For any $p\in [1,\infty)$, the following hold.
\begin{enumerate}[(i)]
\item $\sup_{t\geq 0}\E(\Vert X_t^0\Vert_\infty^p)<\infty $.
\item 
For any $\gamma < 1/2$, we have
$(X^0_t) \in C(\R_+,C_0^\gamma)$ a.s.~and
$\sup_{t\geq 0}\E(\Vert X_t^0 \Vert_{C_0^{\gamma}}^p)<\infty $.
\item For any $\gamma<1/4$ and $T>0$, we have $(X^0_t)_{0\leq t\leq T} \in C^\gamma([0,T],E)$ a.s.~and
 there exists a constant $C>0$ such that $\E(\Vert X_t^0-X_s^0 \Vert_\infty ^p)\leq C|t-s|^{\gamma p}$ for all $s,t\geq 0$.
\end{enumerate}
\end{lem}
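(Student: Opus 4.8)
\textbf{Proof plan for Lemma \ref{lem:linear}.}

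The plan is to exploit the explicit Gaussian structure of $X^0_t=\sum_\ell u_\ell(t)e_\ell$ together with the two variance estimates \eqref{eq:space_variation} and \eqref{eq:time_variation}, and then convert these second-moment bounds into H\"older-norm bounds via (a) Gaussian hypercontractivity, so that it suffices to control $p=2$, and (b) the Garsia--Rodemich--Rumsey (GRR) inequality, which turns a modulus-of-continuity-type integral bound into an a.s.\ H\"older estimate with an $L^p$-bounded constant. For $(i)$ I would first note that, since $\E((X^0_t(x))^2)=\sigma^2\sum_\ell e_\ell(x)^2/(2\lambda_\ell)\lesssim 1$ uniformly in $x,t$ by the same computation underlying \eqref{eq:space_variation}, combining \eqref{eq:space_variation} with GRR on $[0,1]$ (with the weight $\Psi(u)=u^{2q}$ and $p(u)=u$, say) gives $\E(\sup_x|X^0_t(x)|^{2q})\lesssim 1$ uniformly in $t$ for every integer $q$; Gaussian equivalence of moments then upgrades this to all $p\in[1,\infty)$. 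Statement $(ii)$ is the same argument pushed one step: \eqref{eq:space_variation} says $\E((X^0_t(x)-X^0_t(y))^2)\lesssim|x-y|$ uniformly in $t$, so for any $\gamma<1/2$ the GRR inequality with $\Psi(u)=u^{2q}$ and $p(u)=u^{\gamma+1/(2q)}$ yields $\E\big(\sup_{x\neq y}|X^0_t(x)-X^0_t(y)|^{2q}/|x-y|^{2q\gamma}\big)\lesssim 1$ uniformly in $t$ once $q$ is large enough that $\gamma+1/(2q)<1/2$; together with $(i)$ and the Dirichlet boundary values this gives $\sup_{t}\E(\Vert X^0_t\Vert_{C_0^\gamma}^p)<\infty$. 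Almost-sure membership $X^0_t\in C_0^\gamma$ and continuity of $t\mapsto X^0_t$ in $C_0^\gamma$ follow because the partial sums $\sum_{\ell\le L}u_\ell(t)e_\ell$ are continuous in $(t,x)$ and converge in the relevant norm (using a Borel--Cantelli / Kolmogorov-type argument on the tail, controlled again by the variance series).

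For $(iii)$ the temporal variance bound \eqref{eq:time_variation}, $\E((X^0_t(x)-X^0_s(x))^2)\lesssim\sqrt{|t-s|}$, plays the role of the spatial bound, but now the supremum over $x$ must be taken \emph{before} applying GRR in time. The clean way is to first use part $(ii)$-type spatial regularity to obtain, via GRR in the space variable applied to the increment process $x\mapsto X^0_t(x)-X^0_s(x)$, a bound of the form $\E\big(\Vert X^0_t-X^0_s\Vert_\infty^{2q}\big)\lesssim |t-s|^{q\beta}$ for any $\beta<1/2$ — this works because $\E((X^0_t(x)-X^0_s(x)-X^0_t(y)+X^0_s(y))^2)\lesssim|x-y|$ uniformly in $s,t$ (same computation as \eqref{eq:space_variation}, the time exponentials only help) while $\E((X^0_t(x)-X^0_s(x))^2)\lesssim\sqrt{|t-s|}$ controls the base point. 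This already delivers the asserted moment bound $\E(\Vert X^0_t-X^0_s\Vert_\infty^p)\le C|t-s|^{\gamma p}$ for $\gamma<1/4$ (take $\beta$ with $2\gamma<\beta<1/2$ and $q$ large). To then get the a.s.\ statement $(X^0_t)_{0\le t\le T}\in C^\gamma([0,T],E)$, apply GRR once more, this time in the time variable to the $E$-valued (Banach-space-valued) path $t\mapsto X^0_t$, using the moment bound just obtained; GRR is valid for Banach-valued functions and produces the a.s.\ H\"older continuity together with an $L^p$-bounded H\"older seminorm on $[0,T]$.

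The main obstacle is bookkeeping rather than conceptual: one must be careful that every supremum in $x$ (which makes the process Banach-valued) is handled before, not after, the GRR application in $t$, and that the GRR constants obtained are genuinely uniform in $t$ (for $(i)$ and $(ii)$) — this uniformity is exactly what the "$\e^{-\lambda_\ell|t-s|}-\e^{-\lambda_\ell(t+s)}$ vs.\ $1/\lambda_\ell$" comparison in \eqref{eq:cov}--\eqref{eq:time_variation} gives, so the key point is to record that the implied constants in \eqref{eq:space_variation} and \eqref{eq:time_variation} do not depend on $s,t$. A secondary technical point is justifying the passage from the Gaussian partial sums to the limit (measurability, a.s.\ convergence of the series in the H\"older and uniform norms), which is standard and can be dispatched by a quick tail estimate on $\sum_{\ell>L}\E(u_\ell(t)^2)\Vert e_\ell\Vert_{C^\gamma}^2$ combined with the fact that each finite sum is jointly continuous.
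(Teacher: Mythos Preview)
Your overall strategy---Gaussian moment equivalence plus GRR---is correct and close to the paper's. The argument for $(iii)$ as written, however, has a genuine gap. If you apply spatial GRR to $g(x):=X^0_t(x)-X^0_s(x)$ using only the bound $\E((g(x)-g(y))^2)\lesssim|x-y|$, the resulting GRR integral $B$ satisfies $\E(B)\lesssim1$ with \emph{no dependence on $|t-s|$}. The split $\|g\|_\infty\le|g(x_0)|+[g]_{C^{\gamma'}}$ then yields only $\E(\|g\|_\infty^{2q})\lesssim|t-s|^{q/2}+1$, which does not decay as $|t-s|\to0$; the base-point term alone cannot push the time factor through the supremum. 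The repair is to observe that the double-increment variance is also bounded by $C\sqrt{|t-s|}$ (same spectral computation), interpolate to $\E((g(x)-g(y))^2)\lesssim|x-y|^\theta|t-s|^{(1-\theta)/2}$ for $\theta\in(0,1)$, and feed this into the spatial GRR so that the integral itself carries the factor $|t-s|^{q(1-\theta)/2}$; together with $g(0)=0$ this gives $\E(\|g\|_\infty^{2q})\lesssim|t-s|^{q(1-\theta)/2}$, and taking $\theta$ small produces any $\beta<1/2$. The paper sidesteps this bookkeeping by applying GRR once to the two-parameter field $(t,x)\mapsto X^0_t(x)$ on a unit box $(a,a+1)\times(0,1)$, using the combined bound $\E((X^0_t(x)-X^0_s(y))^2)\lesssim((t-s)^2+(x-y)^2)^{1/4}$; evaluating the resulting two-dimensional modulus of continuity at $x=y$ gives the $\sup_x$ time-increment bound directly, uniformly over unit time windows.

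Two smaller remarks. For $(ii)$ the paper takes a slightly different route---it invokes \cite[Theorem~5.25]{DaPrato14} for $(X^0_t)\in C(\R_+,W^{2\alpha,p})$ and then uses the Sobolev embedding $W^{h,q}\hookrightarrow C^\gamma$ together with a direct estimate of $\E(\|X^0_t\|_{W^{h,q}}^q)$---but your spatial-GRR approach is equally valid there. Your proposed tail argument for a.s.\ convergence of the partial sums in $C^\gamma$, however, does not work as stated: the triangle inequality only gives $\|\sum_{\ell>L}u_\ell e_\ell\|_{C^\gamma}\le\sum_{\ell>L}|u_\ell|\,\|e_\ell\|_{C^\gamma}$, whose expectation behaves like $\sum_{\ell>L}\ell^{\gamma-1}=\infty$, so controlling $\sum_{\ell>L}\E(u_\ell(t)^2)\|e_\ell\|_{C^\gamma}^2$ is not enough. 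The continuity of $t\mapsto X^0_t$ in $C^\gamma$ should instead come from the moment bounds via a Kolmogorov-type criterion, or from the Sobolev route as in the paper.
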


\begin{proof}
$(iii)$
The property $(X^0_t)_{0\leq t\leq T} \in C^\gamma([0,T],E)$ is a consequence of Kolmogorov's criterion and $\E(\Vert X_t^0-X_s^0 \Vert_\infty ^p)\leq C|t-s|^{\gamma p}$ for all $p\ge1$. To verify the latter statement,
%we proceed similarly as in the proof of Kolmogorov's test from \cite[Theorem 3.5]{DaPrato14}, see also \cite[Remark 11.35]{DaPrato14}.
assume, without loss of generality that $s,t \in (a,a+1)$ for some $a\geq 0$ and define $\mathcal U := (a,a+1)\times(0,1)$.
Using \eqref{eq:space_variation} and \eqref{eq:time_variation}, we see that  
\begin{align*}
\E(|X^0_t(x)-X^0_s(y)|^2)&\lesssim \sqrt{|t-s|}+|x-y|
%\leq \sqrt{|t-s|}+\sqrt{|x-y|}\\
 \lesssim ((t-s)^2+(x-y)^2)^{1/4}
\end{align*}
holds uniformly in $x,y\in (0,1)$ and $s,t \geq 0$. The last step follows from the equivalence of norms on $\R^2$.
Now, since $(t,x)\mapsto X^0_t(x)$ is a continuous function, the Garsia-Rodemich-Rumsey inequality, see, e.g., \cite[Theorem B.1.5]{DaPrato96}, provides the following bound on its increments: for any $\alpha>0,\beta>4$, there exists a constant $c>0$ (independent of $a$) such that
\begin{align} \label{eq:mod_cont_bound}
|X^0_s(x)-X^0_t(y)|\leq c( (x-y)^2+(t-s)^2)^{\frac{\beta-4}{2 \alpha}} \left(\int_{\mathcal U \times \mathcal U} \frac{|X^0_u(\eta)-X^0_{u'}(\eta')|^\alpha}{(|\eta-\eta'|^2+|u-{u'}|^2)^{\beta/2}} \,d\eta \,d\eta'\, du \,d{u'}\right)^\frac{1}{\alpha}
\end{align}
for all $(x,s),(y,t)\in \mathcal U$. Note that for $x=y$, the right hand side of the above inequality is independent of $x$.
Now, choose $\alpha = 2m$ for some $m\in \N$ in such a way that $\alpha=2m>p$. Then, by applying Jensen's  inequality to the concave function $\R_+\ni h\mapsto h^{p/\alpha}$, we obtain
\begin{align*}
\E(\sup_x |X^0_s(x)-X^0_t(x)|^p)&\leq c^p (t-s)^{\frac{\beta-4}{2m}p}  \left(\int_{\mathcal U \times \mathcal U} \frac{\E(|X^0_u(\eta)-X^0_{u'}(\eta')|^{2m})}{(|\eta-\eta'|^2+|u-{u'}|^2)^{\beta/2}} \,d\eta \,d\eta'\, du \,d{u'}\right)^\frac{p}{\alpha}\\
&\lesssim c^p (t-s)^{\frac{\beta-4}{2m}p}  \left(\int_{\mathcal U \times \mathcal U} \frac{(|\eta-\eta'|^2+|u-{u'}|^2)^{m/4}}{(|\eta-\eta'|^2+|u-{u'}|^2)^{\beta/2}} \,d\eta \,d\eta'\, du \,d{u'}\right)^\frac{p}{\alpha}.
\end{align*}
The above integral is finite as long as  $\beta-\frac{m}{2}<2$.
%\textcolor{red}{[to check]}. 
Now, the result follows since for any given $\gamma<1/4$, we can pick $m\in \N$ and $\beta<2+\frac{m}{2}$ such that $\frac{\beta-4}{2m}\leq \gamma$.

Assertion $(i)$ can be proved similarly by taking $s=t$ and $y=1$ in \eqref{eq:mod_cont_bound} to obtain a bound for $\sup_x |X^0_t(x)|=\sup_x |X^0_t(x)-X^0_s(y)|$. Note that, in order to be able to chose $y=1$, we have to modify the set $\mathcal U$ by taking, e.g., $\mathcal U=(a,a+1)\times(-\eps,1+\eps)$ for some $\eps>0$, and extend $(X^0_t)$ by defining $X_t(z):=0$ for $z \notin[0,1]$ such that $(X^0_t)$ is a continuous function on $\mathcal U$.  

$(ii)$ Clearly, $A_\vt$ is a second order differential operator whose eigenvalues  satisfy the condition $\sum_{\ell \geq 1}\lambda_\ell^{-\rho}<\infty$ for any $\rho>1/2$.
Thus, by \cite[Theorem 5.25]{DaPrato14}, $(X^0_t) \in C(\R_+,W^{2\alpha,p})$ holds for any $\alpha>0$ and  $p>1$ such that $1/p+\alpha < 1/4$. Now, by choosing $\alpha$ close to $1/4$ and $p$ sufficiently large, $(X^0_t)\in C(\R_+, C_0^\gamma)$ follows from the Sobolev embedding $W^{2\alpha,p}\subset C^\gamma$.
Now,
with the bound \eqref{eq:space_variation} for the Gaussian process $(X^0_t)$, we get for any $h\in (0,1)$ that 
\begin{align*}
\E(\Vert X_t^0 \Vert_{W^{h,q}}^q )
 &\lesssim \E(\Vert X_t^0 \Vert_{\infty}^q)+ \int_0^1 \int_0^1 \frac{\E(|X_t^0(\eta)-X_t^0(\eta')|^q)}{|\eta-\eta'|^{1+h q }}\,d\eta\,d\eta' \\
&\lesssim \E(\Vert X_t^0 \Vert_{\infty}^q)+ \int_0^1 \int_0^1 \frac{|\eta-\eta'|^{q/2}}{|\eta-\eta'|^{1+h q }}\,d\eta\,d\eta'.
\end{align*}
In view of $(i)$, this shows that $\sup_{t\geq 0}\E(\Vert X_t^0 \Vert_{W^{h,q}}^q )<\infty$, as long as $h<1/2$.  
Further, by the Sobolev embedding theorem, we have 
$\Vert X_t^0\Vert_{C_0^\gamma} \lesssim \Vert X_t^0\Vert_{W^{h,q}}$, provided that $h-\frac{1}{q}> \gamma$. Thus, choosing $h \in (\gamma,\frac{1}{2})$ and $q> \max( (h-\gamma)^{-1},p)$, we get 
\begin{align*}
\E(\Vert X_t^0\Vert_{C_0^\gamma}^p) \lesssim \E(\Vert X_t^0\Vert_{W^{h,q}}^{q\frac{p}{q}})\leq \E(\Vert X_t^0\Vert_{W^{h,q}}^{q})^{\frac{p}{q}}
\end{align*}
by Jensen's inequality. The claim now follows by taking the supremum over $t\geq 0$.
%\textcolor{red}{[koennte man auch wie (iii) machen.]}
\end{proof}

Before proving Proposition \ref{prop:X_reg}, we recall some facts from semigroup theory.  For details, in particular, on analytic semigroups generated by differential operators, we refer to, e.g.,~\cite{Lunardi12}. To deal with the situation where $f(0)\neq 0$ and, hence, $f(X_t)\notin E$, we need to regard  $(S(t))_{t\geq 0}$ as a semigroup acting on the space $\tilde E = C([0,1])$. To that aim, consider the part $A_{\tilde E}$ of ${A_\vt}=\vt\Delta$ in $\tilde E$, i.e., $A_{\tilde E}x := A_\vt x$ for $x \in \mathcal D(A_{\tilde E}):= \{x \in \tilde E \cap \mathcal D(A_\vt): A_\vt x \in \tilde E  \}$. Note that $A_{\tilde E}$ generates a semigroup $(S_{\tilde E}(t))_{t\geq 0}$ on $\tilde E$ which is not strongly continuous.  Indeed, we have  $\overline{\mathcal D(A_{\tilde E})}^{\tilde E}= E$ and $\lim_{t\to 0}S_{\tilde E}(t)x= x$ in $\tilde E$ holds if and only if $x\in E$. Nevertheless, $(S_{\tilde E}(t))_{t\geq 0}$ defines a so called \emph{analytic} semigroup on $\tilde E$ which retains many properties of $C_0$-semigroups. In particular, for any $x \in \tilde E$, it holds that $\int_0^t S_{\tilde E}(r)x\,dr \in \mathcal D(A_{\tilde E})$ and we have the representation
\begin{align} \label{eq:Sx-x}
S_{\tilde E}(t)x-x = A_{\tilde E}\int_0^t S_{\tilde E}(r)x\,dr .
\end{align} 
Hence, if $r\mapsto \Vert A_{\tilde E} S_{\tilde E}(r)x\Vert_{\tilde E}$ is integrable over $[0,t]$, then  $S_{\tilde E}(t)x-x = \int_0^t A_{\tilde E} S_{\tilde E}(r)x\,dr $. 
Since the definitions of the semigroups $S$ and $S_{\tilde E}$ and their generators agree on the intersection of their domains, respectively, we will refer to both by $(S(t))_{t\geq 0}$ and $A_\vt$ from now on.  The following inequalities, which are particular cases of results derived in \cite{Sinestrari85}, are our main tool to study the regularity of $(N_t)$. 
%Recall the definition $A_\vt= \vt\frac{\partial^2}{\partial x^2}$.
\begin{lem}  \label{lem:intermediate_norms}
We fix an element $\lambda_0 \in (0, \lambda_1).$ For any $\alpha,\beta,\in (0,2)\setminus \{1\}$  and $n\in \N_0$, there exists a constant $C>0$ such that for all $t>0$:
\begin{enumerate}[(i)]
\item 
$\Vert A_\vt^n S(t) x\Vert_\infty \leq C\e^{-\lambda_0 t}t^{-n}\Vert x \Vert_\infty$  for all $x \in \tilde E$,
\item
$\Vert S(t)x \Vert_{C_0^\alpha}\leq C \e^{-\lambda_0 t}t^{-\alpha/2} \Vert x \Vert_\infty $ for all $x\in \tilde E$,
\item
$\Vert A_\vt S(t)x \Vert_{\infty} \leq C t^{-(1-\alpha/2)} \Vert x\Vert_{C_0^\alpha}$ for all $x\in C^\alpha_0$,
\item
$\Vert A_\vt^n S(t)x \Vert_{C_0^\beta}\leq C \e^{-\lambda_0 t} t^{-(n+\frac{\beta-\alpha}{2})}\Vert x\Vert_{C_0^{\alpha}} $ for all $x\in C^\alpha_0$ if either $n\geq 1$ or $\alpha \leq \beta$.
\end{enumerate} 
\end{lem}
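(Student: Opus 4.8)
The statement to be proved is Lemma~\ref{lem:intermediate_norms}, the collection of smoothing estimates for the analytic semigroup $(S(t))_{t\geq 0}$ generated by $A_\vt$ on $\tilde E = C([0,1])$ and on the H\"older spaces $C_0^\alpha$. The plan is to recognize that all four estimates are instances of the general interpolation-space machinery for analytic semigroups: $(S(t))_{t\geq 0}$ is an analytic semigroup on $\tilde E$ whose generator has spectral bound $-\lambda_1 < 0$, so for each $n$ there are constants with $\Vert A_\vt^n S(t)\Vert_{\mathcal L(\tilde E)} \le C t^{-n} \e^{-\lambda_0 t}$ for any fixed $\lambda_0 \in (0,\lambda_1)$; this is the classical estimate for analytic semigroups (see, e.g., \cite{Lunardi12}) and gives $(i)$ immediately. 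The exponential decay factor comes from shifting: $A_\vt + \lambda_0$ still generates an analytic semigroup and $\e^{\lambda_0 t}S(t)$ is the semigroup it generates, uniformly bounded since $\lambda_0 < \lambda_1$.

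For $(ii)$, $(iii)$ and $(iv)$ I would invoke the characterization of the little-H\"older spaces $C_0^\alpha$ (for $\alpha \in (0,2)\setminus\{1\}$) as real interpolation spaces between $\tilde E$ and $\mathcal D(A_\vt)$, i.e.\ $C_0^\alpha = (\tilde E, \mathcal D(A_\vt))_{\alpha/2,\infty}$ up to equivalent norms (with the appropriate Dirichlet boundary behaviour encoded in $\mathcal D(A_\vt)$); this identification, together with the fact that the norm of $C_0^\alpha$ is equivalent to $\Vert x\Vert_\infty + \sup_{t>0} t^{-\alpha/2}\Vert A_\vt S(t)x\Vert_\infty$ on such interpolation spaces, is precisely the content of the relevant results in \cite{Sinestrari85}. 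Granting that, $(ii)$ follows because mapping $\tilde E$ into the interpolation space $C_0^\alpha$ is exactly what $S(t)$ does with the stated time weight $t^{-\alpha/2}$; $(iii)$ is the dual statement, expressing that $A_\vt S(t)$ maps $C_0^\alpha$ into $\tilde E$ losing $2-\alpha$ orders and hence costing $t^{-(1-\alpha/2)}$; and $(iv)$ combines the two, moving from $C_0^\alpha$ to $C_0^\beta$ at the cost of $(n + \tfrac{\beta-\alpha}{2})$ half-derivatives, valid precisely when $n \ge 1$ or $\alpha \le \beta$ so that the exponent of $t$ is negative (when $\alpha > \beta$ and $n=0$ the map is bounded but one cannot expect a blow-up rate, which is why that case is excluded). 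In each of $(ii)$--$(iv)$ the $\e^{-\lambda_0 t}$ factor is again obtained by working with the shifted generator, and it is harmless because the interpolation norms of the shifted and unshifted operators are equivalent on a finite interval while the decay takes over for large $t$.

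Concretely I would structure the write-up as: (1) state the standard analytic-semigroup bound $\Vert A_\vt^n S(t)\Vert_{\mathcal L(\tilde E)} \le C t^{-n}$ and upgrade it to include $\e^{-\lambda_0 t}$ via the spectral shift, yielding $(i)$; (2) recall the interpolation identity $C_0^\alpha = D_{A_\vt}(\alpha/2,\infty)$ and the equivalent norm $\Vert x\Vert_\infty + \sup_{0<t\le 1} t^{1-\alpha/2}\Vert A_\vt S(t)x\Vert_\infty$, citing \cite{Sinestrari85, Lunardi12}; (3) derive $(ii)$ and $(iii)$ directly from this characterization by composing with $S(t/2)$ and using the semigroup property $S(t) = S(t/2)S(t/2)$ to split the time weight; (4) derive $(iv)$ by iterating $(ii)$ and $(iii)$ (or by the reiteration theorem for interpolation spaces) and checking the sign condition on $n + \tfrac{\beta-\alpha}{2}$.

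\textbf{Main obstacle.} The real work is entirely bookkeeping about which interpolation spaces the operator maps into and making the exponents of $t$ add up correctly, in particular being careful about the excluded value $\alpha = 1$ (where $C_0^1$ is \emph{not} an interpolation space of the right type, so Dirichlet Lipschitz functions are not characterized the same way) and about the boundary conditions: $f(X_s)$ need not vanish at the endpoints, which is exactly why the lemma is phrased for $A_\vt$ acting on $\tilde E = C([0,1])$ and its part $A_{\tilde E}$ rather than on $E = C_0([0,1])$, and one must make sure the interpolation identities used are the ones valid for this (non-strongly-continuous, analytic) semigroup. Since \cite{Sinestrari85} develops exactly this for $A_{\tilde E}$, the cleanest route is to cite those estimates directly and merely verify that our normalization (the constant $\lambda_0 \in (0,\lambda_1)$, the half-integer order convention $C^\alpha \leftrightarrow$ order $\alpha/2$) matches; I expect no genuinely hard estimate, only the need to track constants and time exponents precisely.
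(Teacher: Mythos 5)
Your proposal matches the paper's approach, which is simply to cite \cite[Proposition~2.3.1]{Lunardi12} for $(i)$, $(ii)$, $(iv)$ and \cite[Proposition~1.11]{Sinestrari85} for $(iii)$, together with the identification $D_{A_\vt}(\alpha/2,\infty)=C_0^\alpha$ from \cite{Lunardi85}; you correctly identify both of these references and the interpolation-space mechanism behind them, and you also correctly recognize the subtlety that the semigroup must be taken on $\tilde E=C([0,1])$ (where it is analytic but not strongly continuous) and that $\alpha=1$ must be excluded. Your sketch is sound and somewhat more explicit than the paper's one-line citation, though you could note in passing that $(iii)$ intentionally lacks the factor $\e^{-\lambda_0 t}$ (it is only used on bounded time intervals, so the polynomial blow-up at $t=0$ is the only relevant behaviour).
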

For a proof of $(i),(ii)$ and $(iv)$, we refer to \cite[Proposition 2.3.1]{Lunardi12}, $(iii)$ follows from \cite[Proposition 1.11]{Sinestrari85}. Further, in order to transfer the spatial to the temporal regularity, of particular importance for our study are the so called intermediate spaces, defined by
\begin{align*}
D_{A_\vt}(\alpha, \infty):=\left \{x\in \tilde E:\, \Vert x\Vert_{D_{A_\vt}(\alpha,\infty)}:=\Vert x \Vert_{\tilde E} + \sup_{t>0}\frac{\Vert S(t)x-x\Vert_{\tilde E}}{t^\alpha} <\infty \right\},\qquad \alpha\in (0,1),
\end{align*}
which are Banach spaces with the norm $\Vert \cdot \Vert_{D_{A_\vt}(\alpha,\infty)}$. These spaces can be defined for arbitrary analytic semigroups on a Banach space, see, e.g.,~\cite{Sinestrari85}.  
For our concrete choice of $A_\vt$ and $\tilde E$, they are given by the Dirichlet-H{\"o}lder spaces
$D_{A_\vt}(\alpha, \infty)= C_0^{2\alpha}([0,1])$ , $\alpha \neq \frac{1}{2},$
where the norms are equivalent, see \cite{Lunardi85}.

\begin{proof}[Proof of Proposition \ref{prop:X_reg}]
Due to Lemma \ref{lem:linear}, it remains to prove the statements for $(N_t)$ and, if $\xi$ follows the stationary distribution, for $(\xi_t)_{t\geq 0}$ with $\xi_t:=S(t)\xi$.

$(i)$ \emph{Step 1.} We show $\Vert N_t \Vert_{C_0^\gamma}<\infty$ a.s.~for all $t\geq 0$ and, under Assumption (B), $\sup_{t\geq 0}\E (\Vert N_t \Vert_{C_0^\gamma}^p)<\infty$: From Lemma \ref{lem:intermediate_norms} $(ii)$ we have that
\begin{align*}
\Vert N_t \Vert_{C_0^\gamma} \leq \int_0^t \Vert S(t-s)f(X_s) \Vert_{C_0^\gamma} \,ds \lesssim \int_0^t \e^{-\lambda_0(t-s)}(t-s)^{-\frac{\gamma}{2}} \Vert f(X_s) \Vert_{\infty} \,ds 
\end{align*}
and, consequently,
$
\Vert N_t \Vert_{C_0^\gamma}  \lesssim \sup_{s\leq t}\Vert f(X_s) \Vert_{\infty}\int_0^t \e^{-\lambda_0r}r^{-\frac{\gamma}{2}}  \,dr 
$
is almost surely finite by our basic assumptions.
Also, using Jensen's inequality and the fact that $r\mapsto a(r):=\e^{-\lambda_0r}r^{-\frac{\gamma}{2}}$ is integrable over $\R_+$, we get
\begin{align*}
\Vert N_t \Vert_{C_0^\gamma}^p \leq \int_0^t a(t-s) \Vert f(X_s) \Vert_\infty^p \,ds\, \cdot \left( \int_0^t a(r)\,dr \right)^{p-1} 
\lesssim \int_0^t a(t-s) \Vert f(X_s) \Vert_\infty^{p} \,ds.
\end{align*}
Thus,  Fubini's theorem and the polynomial growth condition on $f$ from (F) yield
\begin{align*}
\sup_{t\geq 0}\E (\Vert N_t \Vert_{C_0^\gamma}^p) \lesssim \sup_{s\geq0} \E(\Vert f(X_s) \Vert_\infty^{p})\lesssim 1+ \sup_{s\geq 0} \E(\Vert X_s \Vert_\infty^{dp})
\end{align*}
which is finite under Assumption (B). 

\emph{Step 2:} We show $(N_t) \in C(\R_+,C_0^\gamma)$: In order to verify $\Vert N_{t+h}-N_t\Vert_{C_0^\gamma} \to 0$ for $h\to 0$ almost surely, we use the decomposition 
\begin{align*}
N_{t+h}-N_t = (S(h)-I)N_t +\int_t^{t+h}S(t+h-r)f(X_r)\,dr.
\end{align*}
To treat the first term, choose $\alpha \in (\gamma,\frac{1}{2})$. Then, using \eqref{eq:Sx-x} and property $(iv)$ of Lemma \ref{lem:intermediate_norms}, we can bound
\begin{align*}
\Vert ( S(h)-I) N_t \Vert_{C_0^\gamma} \lesssim \int_0^h \Vert {A_\vt}S(r)N_t \Vert_{C_0^\gamma}\,dr \leq \Vert N_t \Vert_{C_0^\alpha} \int_0^h \e^{-\lambda_0r}r^{-(1+\frac{\gamma-\alpha}{2})} \,dr
\end{align*}
which tends to 0 for $h\to 0$.
For the second term, it follows from bound $(ii)$ in Lemma \ref{lem:intermediate_norms} that
\begin{align*}
\Big\Vert \int_t^{t+h} S(t+h-r)f(X_r)\,dr \Big\Vert_{C_0^\gamma} &\lesssim \sup_{r\leq T}\Vert f(X_r)\Vert_\infty \int_t^{t+h} \e^{-\lambda_0r}r^{-\frac{\gamma}{2}}\,dr
\end{align*}
which also tends to 0 almost surely for $h\to 0$. 

\emph{Step 3}: Steps 1 and 2 verify claim $(i)$ in the case $\xi=0$. To treat the case where $\xi$ follows the stationary distribution, we use the fact that $X$ has the same distribution as 
$\tilde X = (X_{1+t})_{t\geq 0}$. Again, we have the decomposition 
$$\tilde X_t = S(1+t)\xi + X^0_{1+t}+N_{1+t}$$ and $(i)$ has already been proved for the second and third term. 
For the first term, the result follows from $\Vert S(1+t) \xi\Vert_{C_0^\gamma}\lesssim \Vert \xi\Vert_\infty =\Vert X_0\Vert_{\infty}$ by inequality $(ii)$ in Lemma \ref{lem:intermediate_norms}.

\emph{Step 4.} We transfer the result $(i)$ from $X$ to $f_0(X)$: First of all, $f_0(X) \in C(\R_+,C_0^\gamma)$ almost surely holds due to the result for $X$ and the assumption $f_0 \in C^1(\R)$. Further, we have
\begin{align*}
\Vert f_0(X_t) \Vert_{C_0^\gamma} = \Vert f_0(X_t) \Vert_\infty+ \sup_{\xi\neq \eta} \frac{|f(X_t(\xi))-f(X_t(\eta))|}{|\xi-\eta|^\gamma} \leq  \Vert f_0(X_t) \Vert_\infty+ \Vert f'(X_t) \Vert_\infty \,\Vert X_t \Vert_{C_0^\gamma} 
\end{align*}
and, under Assumption (B),
\begin{align*}
\E(\Vert f_0(X_t) \Vert_{C_0^\gamma}^p) &\lesssim \E(\Vert f_0(X_t)\Vert_\infty^p)+ \E(\Vert f'(X_t)\Vert_\infty^{2p})+\E(\Vert X_t\Vert_{C_0^\gamma}^{2p})\\ &\lesssim 1+  \E(\Vert X_t\Vert_\infty^{2dp})+\E(\Vert X_t\Vert_{C_0^\gamma}^{2p}) <\infty
\end{align*}
uniformly in $t\geq 0$. 

$(ii)$ \emph{Step 1.} We show the claim for $(N_t)$: Using the same decomposition for the increments of $(N_t)$ as in the proof of $(i)$, we get
\begin{align*}
\Vert N_{t}-N_s \Vert _\infty \leq \Vert (S(t-s)-I)N_s\Vert _\infty + \int_s^{t}\Vert S(t-r)f(X_r) \Vert_\infty\,dr
\end{align*}
for $s<t$.
For the first term, by definition of the intermediate spaces, it holds that
\begin{align} \label{eq:N_time_reg1}
\Vert (S(t-s)-I)N_s \Vert_\infty  \lesssim \Vert N_s \Vert_{D_{A_\vt}(\gamma,\infty)}  \,(t-s)^\gamma \lesssim \Vert N_s \Vert_{C_0^{2\gamma}}  \,(t-s)^\gamma .
\end{align}
By  Lemma \ref{lem:intermediate_norms} $(i)$ and H{\"o}lder's inequality, we have
\begin{align}
\Big\Vert \int_s^{t} S(t-r)f(X_r)\,dr \Big\Vert_\infty^p &\leq \left( \int_s^{t} \Vert S(t-r)f(X_r)\Vert_\infty\,dr \right)^p \nonumber\\
& \lesssim \left( \int_s^{t} \e^{-\lambda_0(t-r)}\Vert f(X_r) \Vert_\infty \,dr \right)^p 
\leq (t-s)^{p-1} \int_s^{t} \frac{\Vert f(X_r) \Vert_\infty^p}{\e^{\,p\lambda_0(t-r)}} \,dr. \label{eq:N_time_reg2}
\end{align}
By combining \eqref{eq:N_time_reg1} and \eqref{eq:N_time_reg2}, we obtain $(N_t)_{0\leq t\leq T}\in C^\gamma ([0,T],E)$ almost surely and, under Assumption (B),
\begin{align*}
\E\left( \Vert N_{t}-N_s \Vert_\infty^p \right)\lesssim (t-s)^{\gamma p} \E( \Vert N_s \Vert_{C_0^{2\gamma}}^p) + (t-s)^p  (1+\sup_{h\geq 0} \E(\Vert  X_h\Vert_\infty ^{pd}),
\end{align*}
from which the result for $(N_t)$ follows due to $(i)$. 

\emph{Step 2.} The case where $\xi$ follows the stationary distribution can be treated as in $(i)$ since 
$$\Vert S(1+t)\xi-S(1+s)\xi\Vert_\infty \lesssim (t-s)^\gamma\Vert S(1)\xi\Vert_{C_0^{2\gamma}}\lesssim (t-s)^\gamma\Vert X_0\Vert_\infty.$$

\emph{Step 3.} We transfer the result $(ii)$ from $X$ to $f_0(X)$: First of all, the pathwise property is again a consequence of the assumption $f_0\in C^1(\R)$.
Next, without loss of generality, assume that $d$ from (F) is given by $d=2m$ for some $m\in \N$. Then, using the formula  $a^n- b^n=(a-b)\sum_{k=0}^{n-1}a^kb^{n-1-k}$ for $a,b\in \R$ and $n\in \N$, yields
\begin{align*}
| f(X_{t}(x))- f(X_s(x)) | & \leq \int_{X_{s}(x)}^{X_{t}(x)} |f'(h)|\,dh \lesssim \int_{X_{s}(x)}^{X_{t}(x)}(1+ h^{2m})\,dh \\
%&=|X_{t}(x)-X_{s}(x)| + \frac{1}{2m+1}|X_{t}(x)^{2m+1}-X_{s}(x)^{2m+1}|\\
&\lesssim |X_{t}(x)-X_{s}(x)|\Big(1+ \sum_{k=0}^{2m} |X_{t}(x)^k X_{s}(x)^{2m-k}|\Big)=:|X_t(x)-X_s(x)|Z_{s,t}
\end{align*}
where we have assumed $X_t(x)\geq X_s(x)$ without loss of generality.
Consequently, since $(s,t)\mapsto \Vert Z_{s,t}\Vert_\infty$ is bounded in $L^p(\P)$ for  any $p\geq 1$ under Assumption (B), we obtain
\begin{equation*}
\E (\Vert f(X_{t})-f(X_s)) \Vert_\infty^p) \lesssim \E(\Vert X_{t}- X_s\Vert_\infty^{2p})^{1/2}\E(\Vert Z_{s,t} \Vert_\infty^{2p} )^{1/2}  
\lesssim (t-s)^{\gamma p}. \qedhere
\end{equation*}
\end{proof}
We turn to the excess H\"{o}lder regularity of the nonlinear component $(N_t)$ of $X$. Since $(N_t)$ is the pathwise solution of the equation $dN_t = {A_\vt}N_t + f(S(t)\xi +X_t^0+N_t),\, N_0 =0$, the almost sure properties are a consequence of the results of \cite{Sinestrari85} on the regularity of solutions to deterministic systems. In the following, we give a direct proof for them, both for the sake of completeness and since we require its steps in order to bound the respective norms in $L^p(\P)$.

\begin{proof}[Proof of Proposition \ref{prop:AN_reg}]
$(i)$ Due to Proposition \ref{prop:X_reg}, we have $f_0(X_t)\in D_{A_\vt}(\frac{\gamma}{2},\infty)= C_0^{\gamma}$ for any $\gamma<1/2$. Further, for any $\tilde \gamma \in( \gamma, \frac{1}{2})$, Lemma \ref{lem:intermediate_norms} $(iv)$ yields that
\begin{align*}
\Vert {A_\vt}N_t^0 \Vert_{C_0^{\gamma}} \lesssim \int_0^t \Vert {A_\vt}S(t-s)f_0(X_s)\Vert_{C_0^{\gamma}}\,ds
\leq \int_0^t h(t-s) \Vert f_0(X_s) \Vert_{C_0^{\tilde\gamma}}\, ds
\end{align*}
with $h(r):= \e^{-\lambda_0 r} r^{-1+{(\tilde \gamma-\gamma)/2}}$.
Since $h$ is integrable over $\R_+$ and $A_\vt = \vt \frac{\partial^2}{\partial x^2}$, the almost sure properties $N_t^0 \in C_0^{2+\gamma}$ and $\sup_{t\leq T} \Vert A_\vt N_t^0\Vert_{C_0^{\gamma}}<\infty$ immediately follow from $f_0(X)\in C(\R_+,C^{\tilde \gamma}_0)$, cf.~Proposition \ref{prop:X_reg}. Further, Jensen's inequality gives 
\begin{align*}
\Vert {A_\vt}N_t^0 \Vert_{C_0^{\gamma}}^p \lesssim \int_0^t h (t-s)\Vert f_0(X_s)\Vert_{C_0^{\tilde\gamma}}^p\,ds\, \left(\int_0^t h(r)\,dr \right)^{p-1}.
\end{align*}
Consequently, under Assumption (B), $\sup_{t\geq 0}\E(\Vert {A_\vt}N_t^0 \Vert_{C_0^{\gamma}}^p)\lesssim \sup_{t\geq 0} \E(\Vert f_0(X_t) \Vert_{C_0^{\tilde  \gamma}}^p)$ is finite, by Proposition \ref{prop:X_reg}.

$(ii)$ In order to prove $\frac{d}{dt}N_t^0 = {A_\vt}N^0_{t}+f_0(X_t)$ in $E$, note that the usual decomposition for the increments of $(N^0_t)$ and formula \eqref{eq:Sx-x} yield the representation
\begin{align*}
\Delta^{-1} (N_{t+\Delta}^0-N_t^0) - {A_\vt}N^0_{t}-f_0(X_t) =& \frac{1}{\Delta} \int_0^\Delta (S(r)-I){A_\vt}N_t^0\,dr \\
& +\frac{1}{\Delta} \int _t^{t+\Delta}\Big(S(t+\Delta-r)f_0(X_r)-f_0(X_t)\Big)\,dr.
\end{align*}  
We have $\Vert (S(r)-I){A_\vt}N_t^0\Vert_\infty \lesssim r^{\gamma}\Vert {A_\vt}N_t^0 \Vert_{C_0^{2\gamma}}$ and 
\begin{align*}
\Vert S(h)f_0(X_r)-f_0(X_t)\Vert_\infty \leq& \Vert S(h)(f_0(X_r)-f_0(X_t))\Vert_\infty +\Vert (S(h)-I)f_0(X_t)\Vert_\infty\\\lesssim & \Vert f_0(X_r)-f_0(X_t)\Vert_\infty + h^\gamma \Vert f_0(X_t)\Vert_{C_0^{2\gamma}}.
\end{align*}  
Thus, $(i)$ and Proposition \ref{prop:X_reg} yield $\Vert \Delta^{-1} (N_{t+\Delta}^0-N_t^0) - {A_\vt}N^0_{t}-f_0(X_t)\Vert_\infty \lesssim \Delta^\gamma \to 0$ uniformly on bounded time intervals, almost surely. 
The properties claimed for $\frac{d}{dt}N_t^0$ now follow from the properties of $f_0(X_t)$ provided by Proposition \ref{prop:X_reg} and 
\begin{align*}
\Vert {A_\vt}N_{t+\Delta}^0 -{A_\vt}N_{t}^0 \Vert_\infty &\leq \Vert (S(\Delta)-I){A_\vt}N_t^0 \Vert_\infty+ \int_t^{t+\Delta} \Vert {A_\vt} S(t+\Delta-r)f_0(X_r) \Vert_\infty\,dr\\
& \lesssim \Delta^\gamma \Vert {A_\vt}N_t^0\Vert_{C_0^{2\gamma}} +\int_t^{t+\Delta} (t+\Delta-r)^{-1+\gamma}\Vert f_0(X_r) \Vert_{C_0^{2\gamma}} \,dr
\end{align*}
where the bound on the integrand is taken from result $(iii)$ in Lemma \ref{lem:intermediate_norms}.

It remains to analyze the regularity of the process $(M_t)$. First of all, by \eqref{eq:Sx-x}, we have ${A_\vt}M_t = S(t)m-m$ and $m\in C^{\gamma}([b,1-b])$ for $\gamma<1/2$ is trivially fulfilled. Further, setting $m_t :=S(t)m$, we have $m_t (x) = \frac{2\sqrt 2}{\pi} \sum_{\ell\geq 0}  \frac{\e^{-\lambda_{2\ell+1} t}}{2\ell+1}e_{2\ell+1}(x) $. The mean value theorem yields
\begin{align*}
m_t(x)-m_t(y) = (x-y) {8} \sum_{\ell\geq 0}  {\e^{-\lambda_{2\ell+1} t}}\cos(\pi (2\ell+1)z)
\end{align*}
for some $z$ between $x$ and $y$. Thanks to the bound on trigonometric series from \cite[Lemma A.7]{Hildebrandt19}, the sum $\sum_{\ell\geq 0}  {\e^{-\lambda_{2\ell+1} t}}\cos(\pi (2\ell+1) z)$ is uniformly bounded in $t>0$ and $z\in [b,1-b]$ and we can conclude $\sup_{t\geq 0} \Vert {A_\vt}M_t \Vert_{C^{\gamma}([b,1-b])}<\infty$ for $\gamma<1/2$. The same argument shows that 
$$ \Vert S(t+\Delta)m-S(t)m\Vert_{C([b,1-b])} \lesssim \sup_{\ell \geq 0} \frac{1-\e^{-\lambda_{2\ell+1} \Delta}}{2\ell+1} \lesssim \sqrt \Delta.$$
Hence, 
$$\Vert \Delta^{-1}(M_{t+\Delta}-M_t)-S(t)m\Vert_{C([b,1-b])} \leq \frac{1}{\Delta} \int_{t}^{t+\Delta}\Vert(S(r)m-S(t)m)\Vert_{C([b,1-b])}\,dr\lesssim \sqrt \Delta$$ 
and, in particular, $\frac{d}{dt}M_t = S(t)m$ in $C([b,1-b])$ as well as $\Vert \frac{d}{dt}(M_{t+\Delta}-M_t)\Vert_{C([b,1-b])} \lesssim \sqrt \Delta \lesssim  \Delta^\gamma $ for $\gamma<1/2$. 
\end{proof}

\subsection{Proofs for the nonparametric estimator of $f$} \label{subsec:nonpara_proofs}
%To prove Theorem~\ref{thm:nonpara_discrete}, we first verify our estimate for $\Vert \hat f_m-f_{\mathcal A}\Vert_{N,M}^2$ on the event $\Omega_{N,M,m}$.
\begin{proof}[Proof of Proposition \ref{prop:SpaceDiscrete}]
By applying the Cauchy-Schwarz inequality, Young's inequality and Lemma \ref{lem:L2norm_approx} to \eqref{eq:oracle_prelim}, we can bound
\begin{align*}
 \Vert \hat f_m - f_{\mathcal A} \Vert_{N,M}^2  \leq& \Vert  f_m^* - f_{\mathcal A} \Vert_{N,M}^2 
 + \frac{2}{N} \sum_{i=0}^{N-1}  \big \Vert\hat S(0)\big( \hat f_m(X_{t_i}) -f_m^*(X_{t_i})\big)\big\Vert_{L^2} \big\Vert R_i \big\Vert_{L^2}\\
& + \Vert \hat f_m -f_m^* \Vert_{N,M} \sup_{g\in \mathcal V_m,\, \Vert g\Vert_{N,M}=1} \frac{2}{N} \sum_{i=0}^{N-1}  \big \langle \hat S(0) g(X_{t_i}),\eps_i \big\rangle_{L^2}\\
%\leq & \Vert  f_m^* - f_{\mathcal A} \Vert_{N,M}^2  
% + \frac{1}{\eta} \Vert \hat f_m - f_m^* \Vert_{N,M}^2
%+  \frac{\eta}{N} \sum_{i=0}^{N-1}  \big\Vert R_i \big\Vert_{L^2}^2\\
%&+ \frac{1}{\eta} \Vert \hat f_m -f_m^* \Vert_{N,M}^2 +\eta\left( \sup_{g\in \mathcal V_m,\, \Vert t\Vert_{N,M}=1} \frac{1}{N} \sum_{i=0}^{N-1}  \big \langle  \hat S(0)g(X_{t_i}),\eps_i \big\rangle_{L^2} \right)^2\\
\leq & \Big(1+\frac{4}{\eta}\Big)\Vert  f_m^* - f_{\mathcal A} \Vert_{N,M}^2 +\frac{4}{\eta} \Vert \hat f_m - f_{\mathcal A} \Vert_{N,M}^2  +  \frac{\eta}{N} \sum_{i=0}^{N-1}  \big\Vert R_i \big\Vert_{L^2}^2\\
&+ \eta\left( \sup_{g\in \mathcal V_m,\, \Vert g\Vert_{N,M}=1} \frac{1}{N} \sum_{i=0}^{N-1} \big \langle  \hat S (0)g(X_{t_i}),\eps_i \big\rangle_{L^2} \right)^2
\end{align*}
for any $\eta >0$.
Taking $\eta = 8$ and rearranging gives 
\begin{align} 
 \Vert \hat f_m - f_{\mathcal A} \Vert_{N,M}^2\leq &  3\Vert  f_m^* - f_{\mathcal A} \Vert_{N,M}^2   +  \frac{16}{N} \sum_{i=0}^{N-1}  \big\Vert R_i \big\Vert_{L^2}^2 \nonumber\\
 & +\Big( \sup_{g\in \mathcal V_m,\, \Vert g\Vert_{N,M}=1} \frac{4}{N} \sum_{i=0}^{N-1} \big \langle  \hat S(0)g(X_{t_i}),\eps_i \big\rangle_{L^2} \Big)^2.  \label{eq:bound_OmegaNm}
\end{align}
The claim of the proposition follows by bounding the expectation on $\Omega_{N,M,m}$ of the three terms on the right hand side of the above inequality.

For the first term, we have
$\E(\Vert  f_m^* - f_{\mathcal A} \Vert_{N,M}^2 \1_{\Omega_{N,M,m}} ) \leq \E(\Vert  f_m^* - f_{\mathcal A} \Vert_{N,M}^2) = \Vert  f_m^* - f_{\mathcal A} \Vert_{\pi,M}^2 .$
To treat the second term, we show that
\begin{equation} \label{eq:Rtilde_bound}
\E(\Vert  R_i \Vert_{L^2}^2)\lesssim \frac{1}{M \Delta^2}  +\Delta^\gamma
\end{equation}
holds for any $\gamma<1/2$: First of all, with $f_0:= f-f(0)$ and $\1 := \1_{[0,1]}$,
 we have 
\begin{align*}
&\Big\Vert S(h )f(X_s)-f(X_{t})\Big\Vert_{L^2}^2 \\
&\qquad\lesssim \Big\Vert S(h )f_0(X_s)-f_0(X_s)\Big\Vert_{\infty}^2+f(0)^2\Big\Vert S(h )\1-\1\Big\Vert_{L^2}^2+\Big\Vert f(X_s)-f(X_{t})\Big\Vert_{\infty}^2 \\
&\qquad \lesssim  h^\gamma  \Vert f_0(X_s)\Vert^2_{D_A(\gamma/2,\infty)} + f(0)^2 \sum_{\ell \geq 1}(1-\e^{-\lambda_\ell \Delta})^2 \langle \1,e_\ell \rangle^2 +\Big\Vert f(X_s)-f(X_{t})\Big\Vert_{\infty}^2.
\end{align*}
Using Jensen's inequality, $D_{A_\vt}(\frac{\gamma}{2},\infty)=C^\gamma _0$ and $\langle \1,e_\ell\rangle^2\lesssim \ell^{-2}$, we get for any $\gamma <1/2$ that
\begin{align*}
\E \left( \big\Vert R_i^{\mathrm{cont}} \big\Vert_{L^2}^2\right)
\leq& \frac{1}{\Delta} \E \left( \int_{t_i}^{t_{i+1}} \Big\Vert S(t_{i+1} - s )f(X_s)-f(X_{t_i})\Big\Vert_{L^2}^2\,ds \right) \nonumber\\
\lesssim & \frac{1}{\Delta}  \int_{t_i}^{t_{i+1}} \Delta^\gamma \E \left(\Big\Vert f_0(X_s)\Big\Vert_{C_0^\gamma}^2\right)\,ds +f(0)^2 \sqrt \Delta \nonumber\\
&+ \frac{1}{\Delta}  \int_{t_i}^{t_{i+1}} \E \left(\Big\Vert f(X_s)-f(X_{t_i})\Big\Vert_{\infty}^2\right)\,ds 
\lesssim \Delta^\gamma %\label{eq:Ri_L2bound}
\end{align*}
in view of Proposition  \ref{prop:X_reg}.
Further, by Lemma \ref{lem:L2norm_approx2}, we have 
$$\E(\Vert f(X_{t})-\hat S(0)f(X_t)\Vert_{L^2}^2)\lesssim \E(\Vert f(X_t)\Vert_{C^{2\alpha}}^2  +\Vert f(X_t)\Vert_\infty^2+\Vert f(X_t)\Vert_{D_\alpha}^2 )\delta^{\frac{8\alpha^2}{4\alpha+1}}$$ with the space $D_\alpha$ defined in \eqref{eq:Deps_def}. The expectation on the right hand side is finite as long as $\alpha<1/4$, due to Lemma \ref{lem:Sobolev_stationary} and Proposition \ref{prop:X_reg}. Thus, by picking $\alpha $ sufficiently close to $1/4$, we get $$\E(\Vert f(X_{t})-\hat S(0)f(X_t)\Vert_{L^2}^2) \lesssim \delta^{\gamma/2}= {M^{-\gamma/2}}=o(\Delta^\gamma)$$ under the condition $M\Delta^2 \to \infty$.  To bound $\Delta^{-2} \E( \Vert S(h) X_{t}-\hat S(h) X_{t} \Vert_{L^2}^2)$ for $h\in \{0,\Delta\}$, we use the usual decomposition $X_t = S(t)X_0 + X_t^0 +N_t$ where  we can fix a convenient value for $t>0$, due to stationarity. Since the decomposition is trivial for $t=0$, we pick $t:=t_1 = \Delta$. The linear component $X^0_t$ can easily be treated due to independence in view of Lemma \ref{lem:L2norm_approx}: 
\begin{align*}
\E(\Vert S(h)X_t^0-\hat S(h)X_t^0 \Vert_{L^2}^2 ) &= \E\Big( \sum_{k=1}^{M-1}\e^{-2\lambda_k h}\Big( \sum_{\ell \in \mathcal I_k^+\setminus \{k\}}u_\ell(t) -\sum_{\ell \in \mathcal I_k^-}u_\ell(t)  \Big)^2\Big) + \E \Big(\sum_{\ell \geq M}\e^{-2\lambda_\ell h }u_\ell^2(t)\Big)\\
&\leq 2 \sum_{\ell \geq M} \E( u_\ell^2(t)) \lesssim \frac{1}{M}
\end{align*}
and dividing by the squared renormalization $\Delta^2$ yields the claimed $\O(1/(M\Delta^2))$-bound. The other two terms in the decomposition are of lower order:
For $S(t)X_0$, we have 
\begin{align*}
\Vert S(h) S(t) X_0 -\hat S(h) S(t)X_0 \Vert_{L^2}^2=& \sum_{k=1}^{M-1} \e^{-2\lambda_k h} \Big(\langle S(t)X_0,e_k \rangle_{L^2}-\langle S(t)X_0,e_k \rangle_M \Big)^2 \\
&+ \sum_{k\geq M} \e^{-2\lambda_k (h+t)} \langle X_0,e_k \rangle_{L^2}^2.
\end{align*}
For the first sum, Lemma~\ref{lem:L2norm_approx} and the Cauchy-Schwarz inequality yields
\begin{align*}
\Big(\langle S(t)X_0,e_k \rangle_{L^2}-\langle S(t)X_0,e_k \rangle_M\Big)^2 =& \Big(\sum_{l\in \mathcal I_k^+\setminus \{k\}} \e^{-\lambda_l t} \langle X_0,e_l\rangle_{L^2} - \sum_{l\in \mathcal I_k^-} \e^{-\lambda_l t} \langle X_0,e_l\rangle_{L^2} \Big)^2\\
\leq&  \Vert X_0 \Vert_{L^2}^2 \sum_{l\in (\mathcal I_k^+\cup \mathcal I_k^-)\setminus \{k\}} \e^{-2\lambda_l t} 
\end{align*}
and, thus, with $t=\Delta$,
\begin{align*}
&\sum_{k=1}^{M-1} \e^{-2\lambda_k h} \Big(\langle S(t_1)X_0,e_k \rangle_{L^2}-\langle S(t_1)X_0,e_k \rangle_M \Big)^2 \\
&\qquad\leq \Vert X_0 \Vert_{L^2}^2 \sum_{l\geq M} \e^{-2\lambda_l \Delta} \leq \Vert X_0 \Vert_{L^2}^2 \frac{1}{\sqrt \Delta} \int^\infty_{M\sqrt \Delta} \e^{{-2\pi^2 \vt x^2}}\,dx \lesssim \Vert X_0 \Vert_{L^2}^2 \frac{1}{M^2 \Delta^{3/2}}. 
\end{align*}
The same bound holds for the second sum since
\begin{align*}
\sum_{k\geq M} \e^{-2\lambda_k (h+\Delta)} \langle X_0,e_k \rangle_{L^2}^2 \leq \Vert X_0 \Vert_{L^2}^2 \e^{-2\lambda_M \Delta } \lesssim \Vert X_0 \Vert_{L^2}^2 \frac{1}{M^2 \Delta} \lesssim \Vert X_0 \Vert_{L^2}^2 \frac{1}{M^2 \Delta^{3/2}}. 
\end{align*}
Therefore, assuming $M \Delta^2 \to \infty,$ we get
\begin{align*}
\Delta^{-2} \E(\Vert S(h) S(t_1) X_0 -\hat S(h) S(t_1)X_0 \Vert_{L^2}^2 ) \lesssim \E(\Vert X_0 \Vert_{L^2}^2) \frac{1}{M^2 \Delta^{7/2}} = o\Big( \frac{1}{M\Delta^2} \Big).
\end{align*}
For the nonlinear part, set $B_k := \sum_{\ell \in \mathcal I_k^+\setminus \{k\}}n_\ell(t) -\sum_{\ell \in \mathcal I_k^-}n_\ell(t)  $ with $n_\ell(t) := \langle N_t,e_\ell\rangle_{L^2} $. Then, by the Cauchy-Schwarz inequality,
$$B_k^2 \leq \Big( \sum_{\ell \in (\mathcal I_k^+ \cup \mathcal I_k^- )\setminus \{k\}} \lambda_\ell^{2\alpha}n_\ell^2 \Big) \Big( \sum_{\ell \in (\mathcal I_k^+ \cup \mathcal I_k^- )\setminus \{k\}} \lambda_\ell^{-2\alpha}\Big) 
\leq \Vert N_t \Vert_{D_\alpha}^2 \Big( \sum_{\ell \in (\mathcal I_k^+ \cup \mathcal I_k^- )\setminus \{k\}} \lambda_\ell^{-2\alpha}\Big).$$
Since, furthermore, $\sum_{\ell \geq M} n_\ell^2(t) \leq \lambda_M^{-2\alpha} \Vert N_t\Vert_{D_\alpha}^2$, we have
\begin{align*}
\E(\Vert S(h)N_t-\hat S(h) N_t \Vert_{L^2}^2 )&\leq \E\Big( \sum_{k=1}^{M-1}B_k^2 \Big)+ \E \Big(\sum_{\ell \geq M} n_\ell^2(t)\Big) \lesssim \E(\Vert N_t\Vert_{D_\alpha}^2) \Big( \sum_{k\geq M} \lambda_k^{-2\alpha} +\lambda_M^{-2\alpha}\Big)\\
&\lesssim \frac{1}{M^{4\alpha-1}}  \E(\Vert N_t\Vert_{D_\alpha}^2).
\end{align*}
Now, by Lemma~\ref{lem:auxAdapt}, we have $\E(\Vert N_t\Vert_{D_\alpha}^2)<\infty$  for $\alpha = 1/2$ and, thus, $\E(\Vert S(h)N_t-\hat S(h) N_t \Vert_{L^2}^2 ) \lesssim \frac{1}{M}$, which finishes the proof of \eqref{eq:Rtilde_bound}.

To treat the third term on the right hand side of \eqref{eq:bound_OmegaNm}, consider an orthonormal system $\{\varphi_k,\,k \in \Lambda_m\}$ of $\mathcal V_m$ with the property $\Vert \sum_{k\in \Lambda_m}\varphi_k^2\Vert_\infty \leq C D_m$ which exists due to Assumption (N). Since on $\Omega_{N,M,m}$, $\Vert g \Vert_{N,M} = 1$ implies $\Vert g \Vert_{L^2({\mathcal A})}^2\leq 1/\underline{c},$ we obtain
\begin{align*}
 &\sup_{g\in \mathcal V_m,\, \Vert g\Vert_{N,M}=1} \left(\frac{1}{N} \sum_{i=0}^{N-1}  \big \langle  \hat S(0)g(X_{t_i}), \eps_i \big\rangle_{L^2} \right)^2\1_{\Omega_{N,M,m}} \\ 
%&\qquad \qquad\leq \frac{1}{\underline{c}}  \sup_{g\in \mathcal V_m,\, \Vert g\Vert_{L^2}\leq 1} \left( \frac{1}{N} \sum_{i=0}^{N-1}  \big \langle \hat S(0) g(X_{t_i}),\eps_i \big\rangle_{L^2}\right)^2 \\
& \qquad \qquad\le \frac{1}{ \underline{c}}  \sup_{\alpha \in \R^{\Lambda_m},\, \Vert \alpha \Vert\leq 1} \left(  \sum_{k \in \Lambda_m}\alpha_k \frac{1}{N}\sum_{i=0}^{N-1}  \big \langle  \hat S(0)\varphi_k(X_{t_i}),\eps_i \big\rangle_{L^2}\right)^2\\
& \qquad \qquad\lesssim \sum_{k \in \Lambda_m} \left( \frac{1}{N} \sum_{i=0}^{N-1}  \big \langle  \hat S(0)\varphi_k (X_{t_i}),\eps_i \big\rangle_{L^2}\right)^2.
\end{align*}
To handle the expectation of the above bound, note that $\eps_i= \frac{\sigma}{\Delta}\int_{t_i}^{t_{i+1}}S(t_{i+1}-s)\,dW_s$ is independent of $\mathcal F_{t_i}$ and $\widehat{\varphi_k(X_{t_i})}:= \hat S(0)\varphi_k(X_{t_i})$ is $\mathcal F_{t_i}$-measurable, implying
\begin{align*}
\E \left(\big\langle \widehat{\varphi_k(X_{t_i})}, \eps_i \big\rangle_{L^2} | \mathcal F_{t_i}\right)
%&=  \int_0^1\E \big( \big(\widehat{\varphi_k(X_{t_i})}\big)(x)\eps_i (x)  | \mathcal F_{t_i}\big)dx\\
&=  \int_0^1 \big(\widehat{\varphi_k(X_{t_i})}\big)(x)\E \left( \eps_i(x)  | \mathcal F_{t_i}\right)dx =0.
\end{align*}
Hence, for $j< i$, we have
\begin{align*}
&\E \left( \big\langle\widehat{\varphi_k(X_{t_i})},\eps_i \big\rangle_{L^2} \big\langle\widehat{\varphi_k(X_{t_j})},\eps_j \big\rangle_{L^2} \right)=\E \left( \E \left( \big\langle \widehat{\varphi_k(X_{t_i})},\eps_i \big\rangle_{L^2} \Big|\mathcal F_{t_i} \right)\Big\langle \widehat{\varphi_k(X_{t_j})},\eps_j \Big\rangle_{L^2} \right)=0
\end{align*}
and, consequently,
\begin{align*}
&\E\Big(  \sup_{g\in \mathcal V_m,\, \Vert g\Vert_{N,M}=1} \Big(\frac{1}{N} \sum_{i=0}^{N-1}  \big \langle  \widehat{g(X_{t_i})},\eps_i \big\rangle_{L^2} \Big)^2\1_{\Omega_{N,M,m}}\Big)\leq \sum_{k \in \Lambda_m} \frac{1}{N^2} \sum_{i=0}^{N-1} \E\left(  \big \langle \widehat{ \varphi_k(X_{t_i})},\eps_i \big\rangle_{L^2} ^2\right).
\end{align*}
Further, Parseval's relation and independence of $\F_{t_i}$ and $\langle \eps_i,e_k\rangle_{L^2}$ yield
\begin{align*}
\E\left(    \big \langle  \widehat{ \varphi_k(X_{t_i})},\eps_i \big\rangle_{L^2} ^2 \right)
&= \E\Big(  \Big( \sum_{l\geq 1} \big\langle \widehat{\varphi_k(X_{t_i})},e_l \big\rangle_{L^2} \big\langle \eps_i  ,e_l \big\rangle_{L^2} \Big)^2 \Big)\\
%&= \frac{\sigma^2}{\Delta^2} \E\Bigg(  \Big( \sum_{k\geq 1} \langle \widehat{\varphi_\lambda(X_{t_i})},e_k \rangle_{L^2}  \int_{t_i}^{t_{i+1}} \e^{-\lambda_k(t_{i+1}-s)}\,d\beta_k(s)   \Big)^2 \Bigg)\\
&= \frac{\sigma^2}{\Delta^2}    \sum_{l\geq 1} \E(\langle \widehat{\varphi_k(X_{t_i})},e_l \rangle_{L^2}^2)  \E\left( \Big(\int_{t_i}^{t_{i+1}} \e^{-\lambda_l(t_{i+1}-s)}\,d\beta_k(s)\Big)^2\right)    \\
&= \frac{\sigma^2}{\Delta^2} \sum_{l\geq 1} \frac{1-\e^{-2\lambda_l\Delta}}{2\lambda_l} \E(\langle \widehat{\varphi_k(X_{t_i})},e_l \rangle_{L^2}^2)\\
& \leq  \frac{\sigma^2}{\Delta}\E \left( \Vert \widehat{\varphi _k (X_{t_i})}\Vert^2_{L^2} \right)
= \frac{\sigma^2}{M\Delta} \E \Big( \sum_{l=1}^{M-1} \varphi_k^2(X_{t_i}(y_l)) \Big) .
\end{align*}
Above, we have used independence of the (one-dimensional) stochastic integrals from $\mathcal F_{t_i}$ and pairwise independence of $\{\beta_k,\,k\geq 1\}$ in the third step as well as Lemma \ref{lem:L2norm_approx} in the last step.
In view of Assumption (N), we have shown
\begin{align*}
\E\Big(  \sup_{g\in \mathcal V_m,\, \Vert g\Vert_{N,M}=1} \Big(\frac{1}{N} \sum_{i=0}^{N-1}  \big \langle  \hat S(0)g(X_{t_i}),\eps_i \big\rangle_{L^2} \Big)^2 \1_{\Omega_{N,M,m}}\Big) 
&\leq  \frac{1}{T} \Big\Vert\sum _{k \in \Lambda_m} \varphi_k^2\Big\Vert_\infty
\lesssim \frac{D_m}{T}.\qedhere
\end{align*}
\end{proof}

%The following proof verifies our bound on the probability of the event $\Xi_{N,M,m}$. 

\begin{proof}[Proof of Lemma \ref{lem:normequivalence_discrete}]
 With the constants $0<c<C<\infty$ from \eqref{eq:(E)implication_cont} in Assumption (E) we have
\begin{align*}
\P\left(\Xi_{N,M,m}^c\right) &= \P\left( \sup_{g\in \mathcal V_m\setminus \{0\}} \Big| \frac{\Vert g\Vert^2_{N,M}-\Vert g\Vert^2_{\pi,M}}{\Vert g\Vert^2_{\pi,M}}\Big| \geq \frac{1}{2}\right) 
\leq  \P\left( \sup_{g\in \mathcal V_m\setminus \{0\}} \Big| \frac{\Vert g\Vert^2_{N,M}-\Vert g\Vert^2_{\pi,M}}{c\Vert g\Vert^2_{L^2({\mathcal A})}}\Big| \geq \frac{1}{2}\right)\\
&= \P\left( \sup_{g\in \mathcal V_m,\Vert g\Vert_{L^2({\mathcal A})}=1} | v_{N,M}(g^2)| \geq  \frac{c}{2}\right),
\end{align*} 
where
$$v_{N,M}(g):= \frac{1}{NM}\sum_{i=0}^{N-1} \sum_{k=1}^{M-1} \left( g(X_{i\Delta}(y_k)) - \E \big( g(X_{i\Delta}(y_k))\big)\right).$$
Each $g \in \mathcal V_m$ with $\Vert g\Vert_{L^2({\mathcal A})}=1$ has a representation  $g= \sum_{l \in \Lambda_m}\alpha_l \varphi_l$ with $\sum_{l\in \Lambda_m} \alpha_l^2 =1$ and  
$v_{N,M}(g^2)= \sum_{l,l' \in \Lambda_m} \alpha_l \alpha_{l'} v_{N,M} (\varphi_l \varphi_{l'}).$
On the set $\{|v_{N,M}(\varphi_l \varphi_{l'})|\leq 5 V^m_{l l'}(C \kappa)^{1/2}+3 B^m_{l l'}\kappa,\, \forall l,l' \in \Lambda_m\}$ with $\kappa:= \frac{ c^2}{121 C L_m}$, we have $\sup_{g\in \mathcal V_m,\Vert g\Vert_{L^2({\mathcal A})}=1} | v_{N,M}(g^2)|  \leq \frac{c}{2}$ because 
\begin{align*}
\sum_{l,l'\in \Lambda_m} |\alpha_l \alpha_{l'}| |v_{N,M} (\varphi_l \varphi_{l'})| \leq 5 (C \kappa)^{1/2} \rho(V^m) +3\kappa \rho(B^m)\leq \frac{5 c}{11}  +\frac{3c^2}{121C} \leq \frac{c}{2}
\end{align*}
where the last bound is due to $c\leq C$. Consequently,
\begin{align*}
\P\left(\Xi_{N,M,m}^c\right)&\leq \P\Big(\exists l,l' \in \Lambda_m:|v_{N,M}(\varphi_l \varphi_{l'})|\geq 5 V^m_{l l'}(C \kappa)^{1/2}+3 B^m_{l l'}\kappa  \Big) \\
&\leq \sum_{l,l'\in \Lambda_m} \P^*\Big( |v_{N,M}(\varphi_l \varphi_{l'})|\geq 5 V^m_{l l'}(C \kappa)^{1/2}+3 B^m_{l l'}\kappa  \Big).
\end{align*}

To bound the probabilities in the previous line, we apply the Bennett inequality under strong mixing from \cite[Theorem 6.1]{Rio2017}. The latter implies for $\alpha$-mixing real-valued random variables $Z_1,\ldots,Z_n$ with $|Z_i| \leq B$ and $\E(Z_i^2)\leq \nu^2$ for some constants $B,\nu>0$ and with strong mixing coefficients $(\alpha_n)_{n\ge0}$ that for all $1<q\le N$ and all $\lambda\ge  Bq$ 
\begin{equation*}
  \P\Big(\Big|\sum_{i=1}^N(Z_i-\E(Z_i))\Big|\ge \frac72\lambda\Big)\le 4\exp\Big(-\frac{\lambda^2}{2(Nqv^2+\lambda q B/3)}\Big)+\frac{4BN}{\lambda}\alpha_{q+1}.
\end{equation*}
With $\lambda=N(B\kappa/3+\sqrt{(B\kappa/3)^2+2\nu^2 \kappa})$ for $\kappa\ge 3q/N$ we conclude 
\begin{equation}\label{eq:Rio}
  \P\Bigg(\frac{1}{N}\Big|\sum_{i=1}^N(Z_i-\E(Z_i))\Big|\ge \frac72\Big(\sqrt{2\nu^2 \kappa}+\frac{2B}{3}\kappa\Big)\Bigg)\le 4\e^{-\kappa N/q}+\frac{12}{\kappa}\alpha_{q+1}.
\end{equation}
By construction, $Z_{i}^{(l,l')} := \frac{1}{M}\sum_{k=1}^{M-1}(\varphi_{l}\varphi_{l'})(X_{i\Delta}(y_k))$ satisfy $|Z_{i}^{(l,l')}|\le B_{l,l'}^m$ and with Jensen's inequality:
\begin{align*}
\E\Big( (Z_{i}^{(l,l')})^2\Big) %&\leq \frac{1}{M}\sum_{k=1}^{M-1} \E\left( \varphi_l^2(X_{i\Delta}(y_k))\varphi_{l'}^2(X_{i\Delta}(y_k)) \right)\\&
= \Vert \varphi_l \varphi_{l'} \Vert_{\pi,M}^2 \leq C \Vert \varphi_l \varphi_{l'} \Vert_{L^2({\mathcal A})}^2=C (V^m_{l ,l'})^2.
\end{align*}
Since $Z_{i}^{(l,l')}$ are exponentially $\beta$-mixing under Assumption~(M) and the $\alpha$-mixing coeffients $\alpha_q$ are bounded by $\beta_X(q\Delta)$, \eqref{eq:Rio} yields for some $K,K'>0$ with $\kappa \eqsim 1/L_m \gtrsim q/N$
\[
\P\left(\Xi_{N,M,m}^c\right)\le 4D_m^2\big(\e^{-\kappa N/q}+\kappa^{-1}\beta_X(q\Delta) \big)
\le K D_m^2\big(\e^{-K' N/(qL_m)}+L_m\beta_X(q\Delta) \big)
\]
if $q=o(N/L_m)$. Note that $\beta_X(q\Delta)\le L\e^{-\tau q\Delta}$ under (M).

Assuming $\frac{N\Delta}{\log^2 N} \to \infty$ and $L_m = o(\frac{N\Delta}{\log^2 N})$, we can choose $q=q_N$ such that $q_N/ (\frac{p \log N}{\Delta}) \to 1$ for any fixed $p>0$ and $L_mq_N/N=o(1/\log N)$. Due to $D_m\le N$, we get for sufficiently large $N$:
\[
  \P\left(\Xi_{N,M,m}^c\right)\lesssim N^2\big(\e^{-K' N/(q_NL_m)}+L_m\e^{-\tau q_N\Delta} \big)
  \lesssim N^{-p}+\frac{N^3\Delta}{\log^2 N}N^{-\tau p/2}.
\]
Since $\Delta\to 0$ and $p$ was arbitrary, we obtain the upper bound $N^{-\gamma}$ for any $\gamma>0$. 
\end{proof}

Based on the previous results, we are now ready to verify the conclusion of the main theorem.
\begin{proof}[Proof of Theorem \ref{thm:nonpara_discrete}]
Consider $\Omega_{N,M,m}=\Omega_{N,M,m,\frac{c}{2}}$ as defined in Proposition \ref{prop:SpaceDiscrete} with $c>0$ from the implication \eqref{eq:(E)implication_cont} of Assumption (E). Then, on $\Xi_{N,M,m}$, we have $\Vert g\Vert_{N,M}^2\geq \frac{1}{2}\Vert g\Vert_{\pi,M}^2\geq \frac{c}{2}\Vert g\Vert_{L^2({\mathcal A})}^2 $ for all $g\in \mathcal V_m$, implying $\Xi_{N,M,m}\subset \Omega_{N,M,m}$. 
Thus,
\begin{align*}
\E\big(\Vert \hat f_m - f_{\mathcal A}\Vert_{N,M}^2\big) &= \E\big(\Vert \hat f_m - f_{\mathcal A}\Vert_{N,M}^2\1_{\Omega_{N,M,m}}\big) + \E\big(\Vert \hat f_m - f_{\mathcal A}\Vert_{N,M}^2 \1_{\Omega_{N,M,m}^c}\big)\\
%&\lesssim \Vert f_{\mathcal A}- f_m^* \Vert_{\pi,M}^2 + \frac{D_m}{T} + \Delta^\gamma + \frac{1}{M\Delta^2} + \E\big(\Vert \hat f_m - f_{\mathcal A}\Vert_{N,M}^2 \1_{\Omega_{N,M,m}^c}\big)\\
&\lesssim \Vert f_{\mathcal A}- f_m^* \Vert_{L^2({\mathcal A})}^2 + \frac{D_m}{T} + \Delta^\gamma  +  \frac{1}{M\Delta^2} +\E\big(\Vert \hat f_m - f_{\mathcal A}\Vert_{N,M}^2 \1_{\Xi_{N,M,m}^c}\big)
\end{align*}
by Proposition \ref{prop:SpaceDiscrete} and Assumption (E).
In the following, we conclude the theorem by showing that  
$$\E(\Vert \hat f_m -f_{\mathcal A} \Vert_{N,M}^2 \1_{\Xi_{N,M,m}^c})=o(\Delta^\gamma).$$ We consider the Hilbert space $H^N:=(L^2(0,1))^N$ equipped with the inner product $\langle u, v\rangle_{H^N}:= \frac{1}{N}\sum_{i=1}^N \langle u_i,v_i\rangle_{L^2}$ for $u,v\in H^N$.
Note that $\Vert g\Vert_{N,M}^2 = \Vert \bar g \Vert^2_{H^N}$ with $\bar g:= (\hat S(0)g(X_0),\ldots, \hat S(0)g(X_{t_{N-1}}))$. Clearly, the vector  \mbox{$(\hat S(0)\hat f_m(X_0),\ldots,  \hat S(0)\hat f_m(X_{(N-1)\Delta}))$} is the orthogonal projection in $H^N$ of $\bar Y := (Y_0,\ldots ,Y_{N-1})$ onto the subspace $\{(\hat S(0)g(X_0),\ldots,\hat S(0) g(X_{t_N-1})),\,g\in \mathcal V_m\}$. Denoting the corresponding projection operator by $\Pi_m$, we thus have $\hat f_m=\Pi_m \bar Y$ and 
\begin{align*}
\Vert \hat f_m -f_{\mathcal A} \Vert_{N,M}^2 & \leq \Vert \hat f_m -f \Vert_{N,M} ^2
= \Vert (I-\Pi_m)  \bar f \Vert_{H^N}^2 +\Vert \Pi_m( \bar Y - \bar f )\Vert_{H^N}^2\leq \Vert  \bar f \Vert_{H^N}^2 +\Vert  \bar Y - \bar f \Vert_{H^N}^2 
\end{align*}
since the operator norm of the projections is given by one.
Now,
\begin{align*}
\E(\Vert \bar f  \Vert_{H^N}^2 \1_{\Xi_{N,M,m}^c})=  \E(\Vert f\Vert_{N,M}^2 \1_{\Xi_{N,M,m}^c}) \leq \E(\Vert f(X_{0})\Vert_{\infty}^4)^{1/2} \P(\Xi_{N,M,m}^c)^{1/2} \lesssim \P(\Xi_{N,M,m}^c)^{1/2}
\end{align*}
and, due to $ Y_i = \hat S(0) f(X_{t_i} )+ R_i +\eps_i$, we have
\begin{align*}
\E(\Vert  \bar Y - \bar f \Vert_{H^N}^2  \1_{\Xi_{N,M,m}^c}) &= \frac{1}{N}\sum_{i=0}^{N-1} \E(\Vert R_i + \eps_i\Vert^2_{L^2} \1_{\Xi_{N,M,m}^c})\\
& \lesssim \max_i({\E(\Vert R_i\Vert^4_{L^2})}^{1/2} +{\E(\Vert \eps_i\Vert^4_{L^2} )}^{1/2})\P(\Xi_{N,M,m}^c)^{1/2}.
\end{align*}
It can be shown just like in the proof of Proposition \ref{prop:SpaceDiscrete} that
$\E(\Vert R_i\Vert^4_{L^2}) =\O(1)$
and an explicit calculation yields
\begin{align*}
\E (\Vert \eps_i \Vert^4_{L^2} ) %&= \frac{\sigma^4}{\Delta^4} \E\left(\Big( \sum_{\ell\geq 1}\Big( \int_{t_i}^{t_{i+1}} \e^{-\lambda_\ell(t_{i+1}-s)} \,d\beta_\ell(s)\Big)^2 \Big)^2 \right)\\
&= \frac{\sigma^4}{\Delta^4}  \sum_{\ell,\ell'\geq 1} \E\left(\Big( \int_{t_i}^{t_{i+1}} \e^{-\lambda_\ell(t_{i+1}-s)}\,d\beta_\ell(s) \Big)^2\Big( \int_{t_i}^{t_{i+1}} \e^{-\lambda_{\ell'}(t_{i+1}-s)}\,d\beta_{\ell'}(s) \Big)^2  \right)\\
&\lesssim \frac{1}{\Delta^4}  \Big(\sum_{\ell\geq 1}  \frac{1-\e^{-2\lambda_\ell \Delta}}{2\lambda_\ell} \Big)^2 =\O( \Delta^{-3}).
\end{align*}
Gathering bounds and applying Lemma~\ref{lem:normequivalence_discrete}, we obtain
\begin{align*} 
\E\big(\Vert \hat f_m - f_{\mathcal A}\Vert_{N,M}^2 \1_{\Xi_{N,M,m}^c}\big) &\lesssim \Delta^{-3/2} \P(\Xi_{N,M,m}^c)^{1/2}\lesssim T^{-3/2}N^{-\gamma}=o(\Delta^\gamma).\qedhere
\end{align*}
\end{proof}

%Next, we prove the oracle inequality for the $L^2$-risk.
\begin{proof}[Proof of Corollary \ref{cor:nonpara_discrete}]
For the bound in probability it suffices to bound $\|\hat f_m-f\|^2_{L^2({\mathcal A})}\1_{\Xi_{N,M,m}}$ since $\P(\Xi_{N,M,m})\to1$ by Lemma~\ref{lem:normequivalence_discrete}. Using $\Vert g\Vert_{N,M}^2\geq \frac{1}{2}\Vert g\Vert_{\pi,M}^2\geq \frac{c}{2}\Vert g\Vert_{L^2({\mathcal A})}^2 $  on $\Xi_{N,M,m}$ for all $g\in \mathcal V_m$ and $\hat f_m -f_m^* \in \mathcal V_m$, we have
\begin{align*}
  \Vert \hat f_m - f \Vert_{L^2({\mathcal A})}^2\1_{\Xi_{N,M,m}}
  &\lesssim \Vert \hat f_m - f_m^* \Vert_{N,M}^2\1_{\Xi_{N,M,m}}+\Vert f_m^* - f \Vert_{L^2({\mathcal A})}^2\\
  &\lesssim \Vert \hat f_m - f_{\mathcal A} \Vert_{N,M}^2\1_{\Xi_{N,M,m}}+\Vert f_{\mathcal A} - f_m^* \Vert_{N,M}^2+\Vert f_m^* - f \Vert_{L^2({\mathcal A})}^2.
\end{align*}
Together with Assumption~(E), Proposition~\ref{prop:SpaceDiscrete} and $\E(\Vert f_{\mathcal A} - f_m^* \Vert_{N,M}^2)=\Vert f_{\mathcal A} - f_m^* \Vert_{\pi,M}^2\lesssim\Vert f_{\mathcal A} - f_m^* \Vert_{L^2({\mathcal A})}^2$, we deduce 
\begin{equation}\label{eq:L2bound}
  \E(\Vert \hat f_m - f \Vert_{L^2({\mathcal A})}^2\1_{\Xi_{N,M,m}})\lesssim \Vert f- f_m^* \Vert_{L^2({\mathcal A})}^2 + \frac{D_m}{T} + \Delta^\gamma +\frac{1}{M\Delta^2} 
\end{equation}
which implies the claimed $\mathcal O_p$-bound for $\Vert \hat f_m - f_{\mathcal A} \Vert_{L^2({\mathcal A})}^2$.

For the truncated estimator $\tilde f_m$, we have 
\begin{align*}
\Vert\tilde f_m -f \Vert_{L^2({\mathcal A})}^2 
\leq \Vert\tilde f_m -f \Vert_{L^2({\mathcal A})}^2\1_{\Xi_{N,M,m}} + 2(\Vert f\Vert_{L^\infty({\mathcal A})}^2 +N^2) \1_{\Xi_{N,M,m}^c} 
\end{align*}
and, thus, as soon as $N \geq\Vert f\Vert_{L^\infty({\mathcal A})} $, we can further bound 
$$\Vert\tilde f_m -f \Vert_{L^2({\mathcal A})}^2\leq \Vert\hat f_m -f \Vert_{L^2({\mathcal A})}^2\1_{\Xi_{N,M,m}} + 4N^2 \1_{\Xi_{N,M,m}^c}.$$ 
The expectation of the first term is bounded by \eqref{eq:L2bound}. The expectation of the second term is $4N^2 \P(\Xi_{N,M,m}^c)$, which decreases faster than any negative power of $N$, thanks to Lemma \ref{lem:normequivalence_discrete}. Therefore,  $N^2 \P(\Xi_{N,M,m}^c)\lesssim N^{-\gamma}\lesssim \Delta^\gamma$, which finishes the proof.
\end{proof}

%Now, we prove our risk bound for our adaptive estimator.
\begin{proof}[Proof of Theorem \ref{thm:adaptive}]
As seen in the proof of Theorem~\ref{thm:nonpara_discrete}, it suffices to bound the estimation error on the event $\Xi_{N,M,\bar m}$. Since $\Gamma_{N,M}(\hat{f}_{\hat{m}})+\pen(\hat{m})\le\Gamma_{N,M}(f_{m})+\pen(m)$
for any $m\in\mathcal{M}_{N}$ and $f_{m}\in \mathcal V_{m}$, we can modify
the fundamental inequalities \eqref{eq:oracle_prelim} and \eqref{eq:bound_OmegaNm}, respectively,
to 
\begin{align*}
\|\hat{f}_{\hat{m}}-f_{\mathcal A}\|_{N,M}^{2}+2\pen(\hat{m}) & \le3\|f_{m}-f_{{\mathcal A}}\|_{N,M}^{2}+2\pen(m)+\frac{16}{N}\sum_{i=0}^{N-1}\|R_{i}\|_{L^{2}}^{2}\\
 & \qquad+16\Big(\sup_{g\in{ \mathcal V_{\hat m, m}}:\|g\|_{N,M}=1}\frac{1}{N}\sum_{i=0}^{N-1}\langle\widehat{g(X_{t_{i}})},\varepsilon_{i}\rangle_{L^{2}}\Big)^{2}
\end{align*}
with $\mathcal V_{m',m}:= \mathrm{span}(\mathcal V_{m'}\cup \{f_m\})$. In view of \eqref{eq:Rtilde_bound} and Assumption (E), we obtain
\begin{align}
 & \E\big(\|\hat{f}_{\hat{m}}-f_{\mathcal A}\|_{N,M}^{2}\1_{\Xi_{N,M,\bar{m}}}\big)\nonumber \\
 & \quad\le3\|f_{m}-f_{{\mathcal A}}\|_{\pi,M}^{2}+2\pen(m)+\mathcal{O}\Big(\frac{1}{M\Delta^{2}}+\Delta^{\gamma}\Big)\nonumber \\
 & \qquad\qquad+16\E\Big(\Big(\sup_{g\in \mathcal V_{\hat m,m}:\|g\|_{N,M}=1}\Big(\frac{1}{N}\sum_{i=1}^{N-1}\langle\widehat{g(X_{t_{i}})},\varepsilon_{i}\rangle_{L^{2}}\Big)^{2}-\frac{1}{8}\pen(\hat{m})\Big)\1_{\Xi_{N,M,\bar{m}}}\Big)\nonumber \\
 & \quad\lesssim \|f_{m}-f\|_{L^2({\mathcal A})}^2+\pen(m)+\mathcal{O}\Big(\frac{1}{M\Delta^{2}}+\Delta^{\gamma}\Big) \nonumber\\
 & \qquad\qquad+\sum_{m'\in\mathcal{M}_{N}}\E\Big(\Big(\Gamma(m',m)^{2}-\frac{1}{8}\pen(m')\Big)_{+}\1_{\Xi_{N,M,\bar{m}}}\Big)\label{eq:oracleInequAdapt}
\end{align}
with 
\begin{equation}\label{eq:gamma}
\Gamma(m',m):=\sup_{g\in {\mathcal V_{m',m}}:\|g\|_{N,M}=1}\frac{1}{N}\sum_{i=0}^{N-1}\langle\widehat{g(X_{t_{i}})},\varepsilon_{i}\rangle_{L^{2}}.
\end{equation}
Using a martingale concentration of $\frac{1}{N}\sum_{i=1}^{N-1}\langle\widehat{g(X_{t_{i}})},\varepsilon_{i}\rangle_{L^{2}}$ and a classical chaining argument, Lemma~\ref{lem:auxAdapt} shows 
\[
\E\Big(\Big(\Gamma(m',m)^{2}-\frac{1}{8}\pen(m')\Big)_{+}\1_{\Xi_{N,M,\bar{m}}}\Big)\le\frac{2\sigma^{2}}{T}\e^{-D_{m'}}.
\]
Therefore, 
\[
\E\big(\|\hat{f}_{\hat{m}}-f\|_{N,M}^{2}\1_{\Xi_{N,M,\bar{m}}}\big)\lesssim 3\|f_{m}-f\|_{L^2({\mathcal A})}^{2}+2\pen(m)+32\frac{\sigma^{2}}{T}\sum_{m'\in\mathcal{M}_{N}}\e^{-D_{m'}}+\mathcal{O}\Big(\frac{1}{M\Delta^{2}}+\Delta^{\gamma}\Big).
\]
For the considered approximation spaces, we have $\sum_{m'\in\mathcal{M}_{N}}\e^{-D_{m'}}=\mathcal O(1)$ such that this term is negligible. The $L^2({\mathcal A})$-bound for $\tilde f_{\hat m}$ follows exactly as in Corollary~\ref{cor:nonpara_discrete}.
\end{proof}

%Finally, we incorporate the estimation of $\vt$ in the definition of $\hat f_m$  and $\hat f_{\hat m}$.
\begin{proof}[Proof of Theorem \ref{thm:nonparametric_plugin}] We verify the bound for $\Vert \check f_m - f_{\mathcal A}\Vert_{N,M}^2$:
Define
$\Psi_{N,M}^h := \Big\{(\hat \vt -\vt)^2\leq h \frac{\Delta^{3/2}}T\Big\}.$

\emph{Step 1:} We show that  
$\E(\1_{\Psi_{N,M}^h}\Delta^{-2}{\Vert \hat S(\Delta)X_{t_i}-\check S(\Delta)X_{t_i}\Vert_{L^2}^2)}\lesssim T^{-1}:$
For fixed $\underline \vt\in (0,\vt)$, we have $\hat \vt \geq \underline \vt$ on the event $\Psi_{N,M}^h$ as soon as $T$ is sufficiently large. Thus, we can estimate
\begin{align*}
&\E(\1_{\Psi_{N,M}^h}\Delta^{-2}\Vert \hat S(\Delta)X_{t_i}-\check S(\Delta)X_{t_i}\Vert_{L^2}^2)= \E\Big(\1_{\Psi_{N,M}^h}\sum_{k=1}^{M-1} \frac{(\e^{-\lambda_k \Delta}-\e^{-\hat\lambda_k \Delta})^2}{\Delta^2} \langle X_{t_i},e_k\rangle_M^2\Big) \\
&\qquad\qquad\lesssim \E\Big(\1_{\Psi_{N,M}^h}(\vt - \hat \vt)^2 \sum_{k=1}^{M-1} \lambda_{k}^2 \e^{- \underline\vt\pi^2 k^2 \Delta} \langle X_{t_i},e_k\rangle_M^2\Big)
\le h \frac{\Delta^{3/2}}{T}\sum_{k=1}^{M-1} \lambda_{k}^2 \e^{- \underline\vt\pi^2 k^2 \Delta} \E\big(\langle X_{t_i},e_k\rangle_M^2\big).
\end{align*}
The sum is of the order $\Delta^{-3/2}$ due to a Riemann sum argument if we show that $\E(\langle X_{t_i},e_k\rangle_M^2)\lesssim \lambda_k^{-1}$.
To that aim, we apply the decomposition $X_t = S(t)\xi+X_t^0 +N_t$. As in previous results, $S(t)\xi$ is negligible since we can choose $t$ arbitrarily large due to stationarity. For the linear part, we have
\begin{align*}
\E(\langle X^0_{t_i},e_k\rangle_M^2) &\lesssim \sum_{\ell \in \mathcal{I}_k^+\cup\mathcal{I}_k^-} \frac{1}{\lambda_\ell} \lesssim \sum_{\ell \geq 0} \frac{1}{(k+2\ell M)^2} \leq \frac{1}{k^2} \sum_{\ell \geq 0} \frac{1}{(1+2\ell )^2} \lesssim \frac{1}{\lambda_{k}}.
\end{align*}
For the nonlinear part, define 
$n_\ell(t) := \langle N_t,e_\ell\rangle_{L^2} $. Using the Cauchy-Schwarz inequality and the spaces $D_\alpha$ from \eqref{eq:Deps_def}, we have
\begin{align*}
\langle N_{t},e_k \rangle^2_M &\leq \Big( \sum_{\ell \in (\mathcal I_k^+ \cup \mathcal I_k^- )} \lambda_\ell^{2\alpha}n_\ell^2 \Big) \Big( \sum_{\ell \in (\mathcal I_k^+ \cup \mathcal I_k^- )} \lambda_\ell^{-2\alpha}\Big) 
\leq \Vert N_t \Vert_{D_\alpha}^2 \Big( \sum_{\ell \in (\mathcal I_k^+ \cup \mathcal I_k^- )} \lambda_\ell^{-2\alpha}\Big)\\
&\leq \frac{1}{\lambda_k} \Vert N_t \Vert_{D_\alpha}^2 \Big( \sum_{\ell \geq 1} \lambda_\ell^{-(2\alpha-1)}\Big) \lesssim \frac{1}{\lambda_k} \Vert N_t \Vert_{D_\alpha}^2 ,
\end{align*}
provided that $\alpha>3/4$. By picking $\alpha \in (\frac{3}{4},1)$, we get $\E(\langle N_{t},e_k \rangle^2_M) \lesssim \lambda_k^{-1}$ in view of Lemma~\ref{lem:Sobolev_stationary}.

\emph{Step 2}:  
By Markov's inequality, we can estimate 
\begin{align*}
\P \Big( \Vert \check f_m -f_{\mathcal A} \Vert_{N,M}^2 \geq a\Big)%& \leq \P \Big( \{\Vert \check f_m -f_{\mathcal A} \Vert_{N,M}^2 \geq a\}\cap \Psi_{N,M}^h \cap \Xi_{N,M,m}\Big) +\P((\Psi_{N,M}^h)^c)+\P(\Xi_{N,M,m}^c)\\
 &\leq
 a^{-1}\E \Big(\1_{ \Psi_{N,M}^h \cap \Xi_{N,M,m}} \Vert \check f_m -f_{\mathcal A} \Vert_{N,M}^2 \Big) +\P((\Psi_{N,M}^h)^c)+\P(\Xi_{N,M,m}^c)
\end{align*}
for any $a>0$.
Now, using Step 1, we can show
$$\E \Big(\1_{ \Psi_{N,M}^h \cap \Xi_{N,M,m}} \Vert \check f_m -f_{\mathcal A} \Vert_{N,M}^2 \Big) \lesssim  \Vert f- f_m^* \Vert_{L^2({\mathcal A})}^2 + \frac{D_m}{T} + \Delta^\gamma  +\frac{1}{M \Delta^2} + \frac{h}{T}$$
just like in the proof of Proposition \ref{prop:SpaceDiscrete}.
Further, $\P(\Xi_{N,M,m}^c)$ converges to 0 under the assumptions of this theorem and, due to Theorem~\ref{thm:optimal_rate}, $\P((\Psi_{N,M}^h)^c)$ can be made arbitrarily small by choosing $h$ sufficiently large. 
Since for any fixed $h>0$ we have $h/T\lesssim T^{-1}\le D_m/T$, we have shown that, for arbitrary $\eps>0$, we can pick $K>0$ such that
\begin{equation*} \limsup_{M,N \to \infty}\P \Big( \Vert \check f_m -f_{\mathcal A} \Vert_{N,M}^2 \geq K\big(\Vert f_{\mathcal A}- f_m^* \Vert_{L^2({\mathcal A})}^2 + \frac{D_m}{T} + \Delta^\gamma  +\frac{1}{M \Delta^2} \big)\Big)<\eps. 
\end{equation*}
 From here, the bound for $\Vert \check f_m - f_{\mathcal A}\Vert_{L^2({\mathcal A})}^2$ follows as in the proof of Corollary \ref{cor:nonpara_discrete}. The proof of Theorem~\ref{thm:adaptive} for $\hat f_{\hat m}$ is applicable with the same modification such that the result for $\check f_{\check m}$ follows.
\end{proof}

\subsection{Proofs for the estimators of $\sigma^2$ and $\vt$}
\label{subsec:para_proofs}
In the following, we prove the central limit Theorems  \ref{thm:CLT_time_nonlinear}, \ref{thm:CLT_space_nonlinear} and  \ref{thm:CLT_double_nonlinear} for the realized quadratic variations in the semilinear framework.  Central limit theorems for the derived method of moments estimators for $\sigma^2$ and $\vt$ follow directly in view of the delta method. 

Note that the central limit theorems for space and double increments have been derived in \cite{Hildebrandt19} for the linear case $f\equiv 0$, assuming a stationary initial condition. A perturbation argument together with Slutsky's lemma shows that these central limit theorems remain valid in the case $X_0=0$   when using the slight modification explained in Section \ref{sec:parametric_nonlinear}. We omit a detailed verification for the sake of brevity.
%This also applies to the joint estimator of $(\sigma^2,\vt)$, as we assume $\vt_0=0$ and, hence, $(\hat \sigma^2, \hat \vt)$ can be directly calculated from two realized quadratic variations based on double increments when the sample size is sufficiently large, see Remark \ref{rem:th1=0}.\\
%We start by proving the result for time increments.
	\begin{proof}[Proof of Theorem \ref{thm:CLT_time_nonlinear}]
	It is sufficient to consider the case of zero initial condition: if $\xi$ follows the stationary distribution, then $X$ has the same distribution as $\tilde X$ with $\tilde X_t := X_{\tau+t}=S(t)S(\tau) \xi +X_{t+\tau}^0+N_{t+\tau}$ for any $\tau>0$. Now, $(S(t)S(\tau)\xi)_{t\geq 0}$ becomes negligible when choosing $\tau$ sufficiently large and the properties of $(N_{t+\tau})_t$ and $(X^0_{t+\tau})_t$ used in the sequel are not affected by the the initial condition and the value of $\tau >0$.
		
	We have the decomposition
	\begin{align*}
	 V_{\mathrm t}&= \frac{1}{MN\sqrt \Delta}\sum_{i=0}^{N-1} \sum_{k=0}^{M-1} (X^0_{t_{i+1}}(y_k)-X^0_{t_{i}}(y_k))^2
	+\frac{1}{MN\sqrt \Delta}\sum_{i=0}^{N-1} \sum_{k=0}^{M-1} (N_{t_{i+1}}(y_k)-N_{t_{i}}(y_k))^2\\
	&\qquad\qquad\qquad+ \frac{2}{MN\sqrt \Delta}\sum_{i=0}^{N-1} \sum_{k=0}^{M-1} (X^0_{t_{i+1}}(y_k)-X^0_{t_{i}}(y_k))(N_{t_{i+1}}(y_k)-N_{t_{i}}(y_k))
	 =: \bar V_{\mathrm t}+R_1+R_2.
	\end{align*}
	Since $ \bar V_{\mathrm t}$ satisfies the claimed central limit theorem, due to Slutsky's lemma, it suffices to prove that $R_1$ and $R_2$ are of the order $o_p(1/\sqrt{MN})$.
	
	 If $T$ is finite, it follows from Lemma \ref{lem:linear} and Proposition \ref{prop:AN_reg} that for all $\gamma <1/4$ and $\P$-almost all realizations $\omega\in \Omega$, there exists a constant $C=C(\omega,T)$ such that $|X^0_{t_{i+1}}(y_k)-X^0_{t_{i}}(y_k)|\leq C \Delta^\gamma$ and $|N_{t_{i+1}}(y_k)-N_{t_{i}}(y_k)|\leq C \Delta$ for all $i\leq N$, $k\leq M$ and $N,M\in \N$. Consequently, $R_1$ and $R_2$ are of the order $o_p(\Delta^{\frac{1}{2}+\gamma})$ and the statement follows due to the condition $M= o( \Delta^{-\rho})$ for some $\rho < 1/2$.
	 
	If $T\to \infty$ and Assumption (B) is satisfied, Lemma \ref{lem:linear} and Proposition \ref{prop:AN_reg} yield $\E(|R_1|)\lesssim \Delta ^{3/2}$ and, by applying the Cauchy-Schwarz inequality to the cross terms, we get  $\E(|R_2|)\lesssim \Delta ^{\frac{1}{2}+\gamma}$ for any $\gamma <1/4$. The claim follows since  $\sqrt{MN} \Delta^{\frac{1}{2}+\gamma}= \sqrt{TM \Delta^{2\gamma}} $ converges to 0 for any $\gamma \in (\frac{\rho}{2},\frac{1}{4})$ and the fact that convergence in $L^1(\P)$ implies convergence in probability.
	\end{proof}

%Next, we prove the result for space increments.
	\begin{proof}[Proof of Theorem \ref{thm:CLT_space_nonlinear}]
	We only consider the case of a finite time horizon, the case $T\to\infty$ can be treated similarly by taking expectations. Further, it suffices to consider the case $\xi=0$, see also the proof of Theorem \ref{thm:CLT_time_nonlinear}.
	We have
	\begin{align*}
	 V_{\mathrm{sp}}&= \frac{1}{MN\delta}\sum_{i=1}^{N} \sum_{k=0}^{M-1} (X^0_{t_{i}}(y_{k+1})-X^0_{t_{i}}(y_{k}))^2
	+\frac{1}{MN\delta}\sum_{i=1}^{N} \sum_{k=0}^{M-1} (N_{t_{i}}(y_{k+1})-N_{t_{i}}(y_k))^2\\
	&\qquad\qquad\qquad+ \frac{2}{MN\delta}\sum_{i=1}^{N} \sum_{k=0}^{M-1} (X^0_{t_{i}}(y_{k+1})-X^0_{t_{i}}(y_k))(N_{t_{i}}(y_{k+1})-N_{t_{i}}(y_k))
	 =: \bar V_{\mathrm{sp}}+R_1+R_2
	\end{align*}
	and the claim follows if $R_1$ and $R_2$ are of the order $o_p(1/\sqrt {MN})$. 
	
To bound the term $R_2$, we use the summation by parts formula
\begin{equation} \label{eq:sum_by_parts}
\sum_{k=0}^{M-1}a_k(b_{k+1}-b_k)=-\sum_{k=0}^{M-2}(a_{k+1}-a_k)b_{k+1}+a_{M-1}b_M-a_0b_0.
\end{equation}
Setting $a_k:= N_{t_i}(y_{k+1})-N_{t_i}(y_k)$ and $b_k:=X_{t_i}^0(y_k)$, we get
\begin{align*}
R_2
&=\frac{2}{MN \delta}\sum_{i=1}^{N}\sum_{k=0}^{M-2} X_{t_i}^0(y_{k+1}) (N_{t_i}(y_{k+2})-2N_{t_i}(y_{k+1})+N_{t_i}(y_k))\\
&\qquad+\frac{1}{MN\delta}\sum_{i=1}^{N}\left( (N_{t_i}(y_N)-N_{t_i}(y_{N-1}))X^0_{t_i}(y_N)+(N_{t_i}(y_1)-N_{t_i}(y_{0}))X^0_{t_i}(y_0)\right). 
\end{align*} 
By Lemma \ref{lem:linear} and Proposition \ref{prop:AN_reg}, we have $(X^0_t)\in C(\R_+,C([b,1-b]))$ and $\sup_{t\leq T} \Vert {A_\vt}N_t \Vert_{C([b,1-b])}\linebreak<\infty $ almost surely. Thus, there exists a random variable $C=C(\omega,T)$ with $|N_{t_i}(y_{k+2})-2N_{t_i}(y_{k+1}) -N_{t_i}(y_{k})| \leq C\delta^2$, $|N_{t_i}(y_{k+1})-N_{t_i}(y_k)| \leq C\delta$ and $|X^0_{t_i}|\lesssim C$ for all $i\leq N,\,k\leq M-1$ and $M,N\in \N$ almost surely. It follows that $|R_1|\leq C^2 \delta$ and $|R_2|\lesssim C^2 \delta$ hold almost surely and, therefore, the claim follows from the fact that $\sqrt{MN}\delta \eqsim \sqrt{{N}/{M}}$ tends to 0, by assumption.
\end{proof}

To prove the result for double increments, define $\mathbf N_{ik} := N_{t_{i+1}}(y_{k+1})-N_{t_{i+1}}(y_k)-N_{t_{i}}(y_{k+1})+N_{t_{i}}(y_k)$.
The main ingredient of the proof of Theorem \ref{thm:CLT_double_nonlinear} is the following lemma.
\begin{lem} \label{lem:Nik_pnorm} 
Assume that the constant $b$ from the observation scheme defined in Section \ref{subsec:observation_scheme} is strictly positive and let $p\geq 1$. 
\begin{enumerate}[(i)]
\item 
Let $\alpha \in (0,1)$ and $\beta \in (0,1]$ be such that $\alpha+\beta < \frac{3}{2}$.  If $T$ is finite, then there exists a random variable $C=C(\omega,T)>0$ such that
$
|\mathbf N_{ik}|\leq C  \delta^\alpha \Delta ^\frac{1+\beta}{2}
$
holds for all $i\leq N,k\leq M$ and $N,M\in \N$ almost surely. If Assumption (B) is satisfied, then there exists a constant $C>0$ such that
$
\E(|\mathbf N_{ik}|^p)\leq C \Big( \delta^\alpha \Delta ^\frac{1+\beta}{2}\Big)^p
$
holds for all $i\leq N,k\leq M$, $N,M\in \N$ uniformly in $T>0$.
\item Let $\gamma<2$ and $\eps<1/4$. If $T$ is finite, then there exists a random variable $C=C(\omega,T)>0$ such that
$
|\mathbf N_{i(k+1)}-\mathbf N_{ik}|\leq C  \delta^\gamma \Delta^\eps
$
holds for all $i\leq N,k\leq M$ and $N,M\in \N$ almost surely. If Assumption (B) is satisfied, then there exists a constant $C>0$ such that
$
\E(|\mathbf N_{i(k+1)}-\mathbf N_{ik}|^p)\leq C \big( \delta^\gamma \Delta^\eps \big)^p
$
holds for all $i\leq N,k\leq M$, $N,M\in \N$ uniformly in $T>0$.
\end{enumerate}
\end{lem}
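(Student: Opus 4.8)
\emph{Plan.} The whole point is to exploit the extra space–time regularity of $(N_t)$ recorded in Propositions~\ref{prop:X_reg} and~\ref{prop:AN_reg}, using that $\mathbf N_{ik}$ is a \emph{mixed} second difference of $(t,x)\mapsto N_t(x)$ and may be read in two ways,
\[
\mathbf N_{ik}=\int_{t_i}^{t_{i+1}}\frac{d}{ds}\big(N_s(y_{k+1})-N_s(y_k)\big)\,ds
=\int_{y_k}^{y_{k+1}}\partial_x\big(N_{t_{i+1}}-N_{t_i}\big)(y)\,dy .
\]
From the first representation and $\frac{d}{ds}N_s=f(X_s)+A_\vt N_s$ (Proposition~\ref{prop:AN_reg}(ii)), using that both $f(X_s)$ and $A_\vt N_s$ lie in $C^{\gamma}([b,1-b])$ for every $\gamma<1/2$ with seminorms bounded in $L^p$ uniformly in $t$ under (B) (Propositions~\ref{prop:X_reg} and~\ref{prop:AN_reg}), I would get a first a priori bound $|\mathbf N_{ik}|\lesssim\delta^{1/2-\eps}\Delta$, and the matching $\E(|\mathbf N_{ik}|^p)\lesssim(\delta^{1/2-\eps}\Delta)^p$ by Jensen, for every $\eps>0$. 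From the second representation the task becomes to bound the temporal modulus of continuity of $\partial_x N_t$ on $[b,1-b]$: writing $N_t-N_s=(S(t-s)-I)N_s+\int_s^t S(t-r)f(X_r)\,dr$, splitting $N=N^0+M$ and $f(X_s)=f_0(X_s)+f(0)\mathbf 1$, and feeding this into Lemma~\ref{lem:intermediate_norms} (via $\partial_x N^0_s\in C_0^{1+\gamma}$, $f_0(X_r)\in C_0^\gamma$, the smoothing $\|S(h)g\|_{C_0^{\gamma'}}\lesssim h^{-(\gamma'-\gamma)/2}\|g\|_{C_0^\gamma}$ with $\gamma'$ slightly above $1$, the bound $\|(S(h)-I)g\|_\infty\lesssim h^{(1+\gamma)/2}\|g\|_{C_0^{1+\gamma}}$, and the uniformly bounded expansion of $\partial_x S(h)(f(0)\mathbf 1)$ on $[b,1-b]$ from \cite[Lemma A.7]{Hildebrandt19}) yields $\|\partial_x(N_t-N_s)\|_{\infty,[b,1-b]}\lesssim|t-s|^{3/4-\eta}$ for every $\eta>0$, hence a second a priori bound $|\mathbf N_{ik}|\lesssim\delta\,\Delta^{3/4-\eta}$ together with its $L^p$-version, uniform in $T$ thanks to the factors $\e^{-\lambda_0(\cdot)}$ in Lemma~\ref{lem:intermediate_norms} and the $\sup_t\E(\|\cdot\|^p)<\infty$ bounds.

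Given $\alpha\in(0,1)$, $\beta\in(0,1]$ with $\alpha+\beta<3/2$, I would then interpolate the two a priori bounds: their geometric mean with a weight $\theta\in[0,1]$ lying in $[\max(0,2\beta-1),\,\min(1,2(1-\alpha))]$ gives $|\mathbf N_{ik}|\lesssim\delta^{a}\Delta^{b}$ with $a\ge\alpha$ and $b\ge(1+\beta)/2$ once $\eps,\eta$ are chosen small enough, and this $\theta$-interval has nonempty interior \emph{precisely} because $\alpha+\beta<3/2$ holds strictly — this slack is exactly what absorbs the $\eps$- and $\eta$-losses. Since $\delta,\Delta$ stay bounded, $a\ge\alpha$ and $b\ge(1+\beta)/2$ then upgrade to $|\mathbf N_{ik}|\lesssim\delta^\alpha\Delta^{(1+\beta)/2}$, and the same for the $p$-th moment; for finite $T$ the random constant $C(\omega,T)$ comes from the a.s.\ finiteness of $\sup_{s\le T}$ of the relevant $X$- and $N^0$-norms.

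For part (ii) I would use that $\mathbf N_{i(k+1)}-\mathbf N_{ik}$ is the time-increment of the \emph{centered second} spatial difference of $N_t$, so $|\mathbf N_{i(k+1)}-\mathbf N_{ik}|\le\delta^2\vt^{-1}\|A_\vt(N_{t_{i+1}}-N_{t_i})\|_{\infty,[b,1-b]}$. Rearranging $\frac{d}{dt}N_t=f(X_t)+A_\vt N_t$ gives $A_\vt N_t=\frac{d}{dt}N_t-f(X_t)$, and both $\frac{d}{dt}N_t$ and $f(X_t)=f_0(X_t)+f(0)\mathbf 1$ are $\gamma$-Hölder in time for every $\gamma<1/4$ (Propositions~\ref{prop:X_reg}(ii) and~\ref{prop:AN_reg}(ii)); hence $\|A_\vt(N_{t_{i+1}}-N_{t_i})\|_{\infty,[b,1-b]}\lesssim\Delta^{1/4-\eta}$ with its $L^p$-analogue, uniform in $T$, so $|\mathbf N_{i(k+1)}-\mathbf N_{ik}|\lesssim\delta^2\Delta^{1/4-\eta}$. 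Since $\delta<1$ and $\Delta<1$, this dominates $\delta^\gamma\Delta^\eps$ for any $\gamma<2$ and $\eps<1/4$ upon taking $\eta<1/4-\eps$, which is the claim.

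The main obstacle will be the second a priori bound in part (i): obtaining the $3/4^-$ temporal Hölder regularity of $\partial_x N_t$ cleanly, which means carefully handling the $C_0$-based intermediate-norm estimates (capped at Hölder exponent $2$ and tied to Dirichlet boundary values, so that the $f(0)$-component $M_t$ must be treated separately through its explicit sine expansion) while keeping every time integral — and hence every constant — uniform in $T$ under Assumption (B). The remaining work is just organizing the interpolation so that the strict gap $\tfrac32-(\alpha+\beta)$ swallows the $\eps$-losses.
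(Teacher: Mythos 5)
Your proposal is correct in substance but takes a genuinely different route from the paper. Where you prove two extreme a priori bounds ($|\mathbf N_{ik}|\lesssim\delta^{1/2-\eps}\Delta$ from the time derivative $\frac{d}{dt}N_t=f(X_t)+A_\vt N_t\in C^\gamma$, and $|\mathbf N_{ik}|\lesssim\delta\,\Delta^{3/4-\eta}$ from the spatial derivative $\partial_x N_t$) and then interpolate geometrically, the paper instead directly bounds $|\mathbf N^0_{ik}|\le\delta^\alpha\|N^0_{t_{i+1}}-N^0_{t_i}\|_{C_0^\alpha}$ and extracts the needed exponent $\Delta^{(1+\beta)/2}$ in a single pass by choosing the intermediate H\"older space $C_0^{\alpha+\beta-1}$ for the datum in Lemma~\ref{lem:intermediate_norms}(iii)--(iv); the hypothesis $\alpha+\beta<3/2$ enters there as $\alpha+\beta-1<1/2$ rather than as a slack in an interpolation interval. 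Your part~(ii) is arguably cleaner than the paper's: you go straight to the second spatial difference $\le\delta^2\vt^{-1}\|A_\vt(N_{t_{i+1}}-N_{t_i})\|_{\infty,[b,1-b]}$ and then use the temporal $\gamma'$-H\"older regularity of $A_\vt N_t=\frac{d}{dt}N_t-f(X_t)$; the paper instead bounds $\|N^0_{t_{i+1}}-N^0_{t_i}\|_{C_0^\gamma}$ for $\gamma\in(1,2)$ via a shift decomposition $N^0_{t_{i+1}}-N^0_{t_i}=\int_0^{t_i}S(t_i-s)(f_0(X_{s+\Delta})-f_0(X_s))\,ds+\int_0^\Delta S(t_{i+1}-s)f_0(X_s)\,ds$, and treats the $M_t$-piece with the same Taylor-kernel second difference you use. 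Two minor slips worth fixing: (a) you write ``$\partial_x N^0_s\in C_0^{1+\gamma}$'', but $\partial_x N^0_s$ does not vanish at the boundary; what you actually use is $N^0_s\in C_0^{2+\gamma}$ and $A_\vt N^0_s\in C_0^\gamma$, and then bound $\|\partial_x(\cdot)\|_\infty\le\|\cdot\|_{C_0^{1+\gamma'}}$; (b) the final absorption step ``$a\ge\alpha$ and $b\ge(1+\beta)/2$ upgrade to $\delta^\alpha\Delta^{(1+\beta)/2}$'' silently uses $\delta,\Delta\le1$; $\delta<1$ is automatic, and $\Delta\le1$ should at least be remarked (it is the regime relevant for the application), or one can keep the interpolation weight $\theta$ so that equality $a=\alpha$, $b=(1+\beta)/2$ is hit exactly, as the strict inequality $\alpha+\beta<3/2$ permits.
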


\begin{proof}
We write $\mathbf N_{ik} = \mathbf N_{ik}^0+ \mathbf M_{ik}$ where $\mathbf N_{ik}^0$ and $\mathbf M_{ik}$ are the double increments computed from the processes $(N^0_t)$ and $(M_t) $  defined by \eqref{eq:Nt0Mt_def}, respectively. In the following, these double increments are estimated separately.

$(i)$ For $\alpha \in (0,1)$, we have 
\begin{align*}
| \mathbf N_{ik}^0| &\leq \delta^\alpha \Vert N^0_{t_{i+1}}-N^0_{t_{i}} \Vert_{C_0^\alpha}
\leq \delta^\alpha \left( \Vert (S(\Delta)-I)N^0_{t_i}\Vert_{C_0^\alpha} + \Big\Vert \int_{t_i}^{t_{i+1}}S(t_{i+1}-s) f_0(X_s)\,ds \Big\Vert_{C_0^\alpha}\right) .
\end{align*}
Further, using formula \eqref{eq:Sx-x} and Lemma \ref{lem:intermediate_norms} $(iv)$ in combination with $\alpha+\beta-1\leq \alpha$ yields 
\begin{align*}
\Vert (S(\Delta)-I)N^0_{t_i}\Vert_{C_0^\alpha} &= \Big\Vert \int_0 ^\Delta {A_\vt} S(r ) N^0_{t_i}\,dr\Big\Vert_{C_0^\alpha}
\leq \int_0^\Delta \Vert S(r)\Vert_{ L(C_0^{\alpha+\beta-1},C_0^\alpha)} \Vert {A_\vt}N_{t_i}^0 \Vert_{C_0^{\alpha+\beta-1}} \,dr\\
&\lesssim \int_0^\Delta {r^{-\frac{1-\beta}{2}}} \Vert {A_\vt}N_{t_i}^0 \Vert_{C_0^{\alpha+\beta-1}} \,dr \lesssim \Delta^{\frac{1+\beta}{2}} \Vert {A_\vt}N_t^0 \Vert_{C_0^{\alpha+\beta-1}}. 
\end{align*}
Similarly, by Lemma \ref{lem:intermediate_norms} $(iii)$ and H{\"o}lder's inequality,
\begin{align*}
  \Big\Vert \int_{t_i}^{t_{i+1}}S(t_{i+1}-s) f_0(X_s)\,ds \Big\Vert_{C_0^\alpha} & \lesssim \int_{t_i}^{t_{i+1}} {{(t_{i+1}-s)}^{-\frac{1-\beta}{2}}} \Vert f_0(X_s)\Vert_{C_0^{\alpha+\beta-1}} \,ds\\
 & \lesssim \left(\int_{t_i}^{t_{i+1}} \Vert f_0(X_s) \Vert^p_{C_0^{\alpha+\beta-1}} \,ds \right)^{\frac{1}{p}} \Delta^{1-\frac{1}{p}-\frac{ 1-\beta}{2}}.
\end{align*}
Thus, noting $\alpha+\beta-1<\frac{1}{2}$,  Propositions \ref{prop:X_reg} and \ref{prop:AN_reg} yield the claim for the case of a finite time horizon and, under Assumption (B),
\begin{align*}
\E(|\mathbf N^0_{ik}|^p) &\lesssim \delta^{p\alpha} \left(\Delta^{p\frac{1+\beta}{2}} \E(\Vert {A_\vt}N^0_t \Vert_{C_0^{\alpha+\beta-1}}^p)+\Delta^{p-1-p\frac{1-\beta}{2}} \int_{t_i}^{t_{i+1}}\E(\Vert f_0(X_s)\Vert_{C_0^{\alpha+\beta -1 }}^p)\,ds  \right)
\lesssim \delta^{p\alpha} \Delta^{p\frac{1+\beta}{2}}.
\end{align*}

To verify that $\mathbf M_{ik}$ is of the claimed order,  recall that in the proof of Proposition \ref{prop:AN_reg} it is shown that $\frac{d}{dt}M_t = S(t)m=: m_t$ and that $|m_t(x)-m_t(y)|\lesssim |x-y|$ holds uniformly in $t>0$ and $x,y\in [b,1-b]$. Thus, we have
$
\mathbf M_{ik} = \int_{t_i}^{t_{i+1}}  (m_s(y_{k+1})-m_s(y_k))\,ds
$
and, consequently, $|\mathbf M_{ik} |\lesssim \Delta \delta\lesssim \delta^\alpha \Delta^{\frac{1+\beta}{2}}$. 

$(ii)$ For $\gamma \in (1,2)$, we have
$
|\mathbf N^0_{i(k+1)}-\mathbf N^0_{ik}| \leq \delta^{\gamma} \Vert N^0_{t_{i+1}}-N^0_{t_i}\Vert_{C_0^{\gamma}}.
$
Using the decomposition 
$$N^0_{t_{i+1}}-N^0_{t_i}= \int_0^{t_i} S(t_i-s)(f_0(X_{s+\Delta})-f_0(X_{s}))\,ds+ \int_0^\Delta S(t_{i+1}-s)f_0(X_s)\,ds,$$
we get from Lemma \ref{lem:intermediate_norms} $(i)$ that  
\begin{align*}
\Big\Vert\int_0^{t_i} S(t_i-s)(f_0(X_{s+\Delta})-f_0(X_{s}))\,ds \Big\Vert_{C_0^{\gamma}} &= \Big\Vert \int_0^{t_i} S(r)(f_0(X_{t_{i+1}-r})-f_0(X_{t_{i}-r}))\,dr \Big \Vert_{C_0^{\gamma}}\\
&\lesssim \int_0^{t_i} \e^{-\lambda_0 r}r^{-\frac{\gamma}{2}} \Vert f_0(X_{t_{i+1}-r})-f_0(X_{t_{i}-r}) \Vert_\infty\,dr.
\end{align*}
Further, for $h<1/2$, Lemma \ref{lem:intermediate_norms} $(iii)$ gives
\begin{align*}
\Big\Vert \int_0^\Delta S(t_{i+1}-s)f_0(X_s)\,ds \Big\Vert_{C_0^{\gamma}} \lesssim \int_{0}^\Delta (t_{i+1}-r)^{-\frac{\gamma-h}{2}} \Vert f_0(X_{r})\Vert_{C_0^h} \,dr.
\end{align*}
Now, the result in case of a fixed $T$ follows from the path regularity of $f_0(X)$. Further, under Assumption (B), we can  use Jensen's and H{\"o}lder's inequality to estimate
\begin{align*}
\E (|\mathbf N^0_{i(k+1)}-\mathbf N^0_{ik}|^p) &\lesssim \delta^\gamma\sup_{t\geq 0} \E(\Vert f_0(X_{t+\Delta})-f_0(X_t) \Vert_\infty^p)+\delta^\gamma\Delta^{p-1-\frac{\gamma-h}{2}p} \int_0^\Delta \E(\Vert f_0(X_{t_{i+1}-r}) \Vert_{C_0^h}^p)\,dr\\
&\lesssim \delta^\gamma\Delta^{p\eps}+\delta^\gamma\Delta^{p(1-\frac{\gamma-h}{2})}.
\end{align*}
The result follows, since one can pick $h\in (0,\frac{1}{2})$ such that $ 1-\frac{\gamma-h}{2}\geq \eps$.

To estimate $|\mathbf M_{ik}|$, recall that in the proof of Proposition \ref{prop:AN_reg} it is shown that $\frac{\partial^2}{\partial x^2}M_t = \frac{1}{\vt}{A_\vt} M_t= \frac{1}{\vt}(S(t)-I)m$ and that $\Vert \frac{\partial^2}{\partial x^2}M_t-\frac{\partial^2}{\partial x^2}M_s \Vert_{C([b,1-b])}= \frac{1}{\vt}\Vert S(t)m-S(s)m \Vert_{C([b,1-b])}\lesssim \sqrt{|t-s|}$.
% By Taylor's formula we have the expansion $h(x+\delta)= h(x) + \delta h'(x) + \int_x^{x+\delta}(x+\delta -z)h''(z)\,dz$ for any $h\in C^2(\R)$. Hence, we can write
%\begin{align*}
%h(x+\delta)-2h(x)+h(x-\delta)= \delta^2\int K_\delta(z-x)h''(z)\,dz
%\end{align*}
%with $K_\delta(z)=\delta^{-1}K(\delta^{-1}z)$ and the triangular kernel $K(z)=(1-|z|)\1_{\{-1\leq z\leq 1\}}$.
Further, recall that by Taylor's formula, we have the expansion $h(x+\delta)= h(x) + \delta h'(x) + \int_x^{x+\delta}(x+\delta -z)h''(z)\,dz$ for any $h\in C^2(\R)$. Hence, we can write
\begin{align*} %\label{eq:2diffTaylor}
\delta^{-2}(h(x+\delta)-2h(x)+h(x-\delta))= \int K_\delta(z-x)h''(z)\,dz
\end{align*}
with $K_\delta(z):=\delta^{-1}K(\delta^{-1}z)$ and the triangular kernel $K(z):=(1-|z|)\1_{\{-1\leq z\leq 1\}}$.
Application to the double increments yields 
\begin{align*}
\mathbf M _{i(k+1)}-\mathbf M_{ik}= \delta^2 \int_{x-\delta}^{x+\delta} K_\delta(z-x) \frac{\partial^2}{\partial z^2} (M_{t_{i+1}}(z)-M_{t_{i}}(z)) \,dz  
\end{align*} 
and, consequently, $|\mathbf M_{i(k+1)}-\mathbf M_{ik}| \lesssim \delta^2 \sqrt \Delta\lesssim \Delta^\gamma \Delta^\eps$. 
\end{proof}

\begin{proof}[Proof of Theorem \ref{thm:CLT_double_nonlinear}]
As for time and space increments, we can assume $\xi=0$ and the claim follows if we verify $|R_i|=o_p(1/{\sqrt{MN}}),\,i\in \{1,2\}$, with
$$R_1 := \frac{1}{MN \Phi_\vt(\delta,\Delta)} \sum_{i=1}^{N} \sum_{k=0}^{M-1} \mathbf N_{ik}^2\qquad\text{and} \qquad R_2 :=\frac{1}{MN \Phi_\vt(\delta,\Delta)} \sum_{i=1}^{N} \sum_{k=0}^{M-1} \mathbf N_{ik} D_{ik},$$ 
where $D_{ik}$ are the double increments computed from $(X^0_t)$ and $\Phi_\theta(\delta,\Delta)$ is given in \eqref{eq:PhiTheta}. In the following, we verify the claim under Assumption (B). The result for the case of a fixed $T$ can be shown analogously by using the pathwise properties of $(N_t)$ derived in Lemma \ref{lem:Nik_pnorm}.
We treat the cases $M\sqrt \Delta  = \O(1)$ and $M\sqrt \Delta  \to \infty$ separately.

\emph{Case $M\sqrt \Delta = \O(1)$:} 
Using Lemma \ref{lem:Nik_pnorm} with $\alpha=0$ and $\beta=1$ yields  $\E(\mathbf N_{ik}^2)\lesssim \Delta^{2}$ and, hence,
\begin{align*}
\sqrt{MN}\E(|R_1|)\lesssim \frac{\sqrt{MN}}{MN \sqrt \Delta  } \sum_{i,k}\E(\mathbf N_{ik}^2)\lesssim \sqrt{MN} \Delta^{3/2}  \to 0.
\end{align*}
For the cross terms, we take $\beta=1$ and $\alpha=\frac{a}{2}<\frac{1}{2}$ in Lemma \ref{lem:Nik_pnorm} to bound
$$\E(|D_{ik} \mathbf N_{ik}|)\leq \E(D_{ik}^2 )^{1/2}\E(\mathbf N_{ik}^2 )^{1/2}\lesssim \Delta^{1/4}\Delta \delta ^{a/2},$$ implying 
$\sqrt{MN}\E(|R_1|)
%\lesssim \sqrt{MN} \frac{\Delta^{5/4}\delta^{a/2}}{\sqrt \Delta} \eqsim \sqrt{\frac{T}{M^{a}}} \cdot \sqrt M \Delta^{1/4}
\to 0.$

\emph{Case $M\sqrt \Delta \to \infty$: } 
With $\beta=1/2$ and $\alpha=\frac{a+3}{4}<1$ in Lemma \ref{lem:Nik_pnorm}, we get $\E(\mathbf N_{ik}^2)\lesssim \Delta^{3/2}\delta^{2\alpha}$ and, hence,
$
\sqrt{MN} \E(|R_1|)
%\lesssim \sqrt{MN}\frac{\Delta^{3/2}\delta^{2\alpha}}{\delta} \eqsim \sqrt{\frac{T}{M^{a}}} \cdot \Delta 
\to 0.
$
To treat the cross terms, we use formula \eqref{eq:sum_by_parts} with $a_k:=\mathbf N_{ik}$ and $b_k:=H_{ik}:=X^0_{t_{i+1}}(y_k)-X^0_{t_{i}}(y_k)$  to deduce
\begin{align}
\sum_{i=1}^{N} \sum_{k=0}^{M-1}D_{ik} \mathbf N_{ik} = &-\sum_{i=1}^{N} \sum_{k=0}^{M-2}(\mathbf N_{i(k+1)}-\mathbf N_{ik})H_{i(k+1)} 
+ \sum_{i=1}^{N} \mathbf N_{i(M-1)}H_{iM}-\sum_{i=1}^{N} \mathbf N_{i0}H_{i0}. \label{eq:sum_py_parts_CLT}
\end{align}
Since $\E(H_{ik}^2)\lesssim \sqrt \Delta$,  Lemma \ref{lem:Nik_pnorm} gives for any $\gamma<2$ and $\eps<1/4$ that
\begin{align*}
&\E \Big(\Big|\frac{1}{MN\delta} \sum_{i=1}^{N} \sum_{k=0}^{M-2}(\mathbf N_{i(k+1)}-\mathbf N_{ik})H_{i(k+1)} \Big|\Big)\\
&\leq 
\frac{1}{MN \delta} \sum_{i=1}^{N} \sum_{k=0}^{M-2}\E((\mathbf N_{i(k+1)}-\mathbf N_{ik})^2)^{1/2}\E(H_{i(k+1)}^2)^{1/2}
\lesssim 
\frac{\delta^\gamma \Delta^\eps \sqrt \Delta}{\delta} 
=\delta^{\gamma-1} \Delta^{\eps+1/4}.
\end{align*}
Further, by picking $\eps$ and $\gamma$ in such a way that $2\gamma-4+4\eps>a$, we get $\sqrt{MN}\delta^{\gamma-1}\Delta^{\eps + 1/4}
%=\sqrt{\frac{T}{M^{2\gamma-4+4\eps}}}\cdot \frac{1}{(M\sqrt\Delta)^{1/2-2\eps}}
\to 0.$ 
For the remaining two terms in \eqref{eq:sum_py_parts_CLT}, take $\alpha=\frac{a+1}{2}<1$ and $\beta = 1/2$ in Lemma \ref{lem:Nik_pnorm}. Then,
\begin{align*}
\E \Big( \Big|\frac{1}{MN \delta}\sum_{i=1}^{N} \mathbf N_{i(M-1)}H_{iM} \Big|\Big) \lesssim  \frac{ \Delta^{1/4}\delta^\alpha \Delta^{3/4}}{M\delta} \eqsim\Delta \delta^{\gamma}
\end{align*}
and, since $\sqrt {MN} \Delta \delta^\alpha
%= \sqrt{\frac{T}{M^{a}}}\cdot \sqrt \Delta
\to 0$, we obtain $\sqrt{MN}\E(|R_2|) \to 0$, which finishes the proof.
\end{proof}

\subsection{Further proofs and auxiliary results}
\label{subsec:aux_nonlinear}
%Before turning to the auxiliary results for nonparametric estimation of the nonlinearity $f$, we prove that condition \eqref{eq:f_coerc}  implies Assumptions (B) and (M).
\begin{proof}[Proof of Proposition \ref{prop:f_coerc}]
First, we sketch the existence proof and show that Assumption (B) is satisfied for $\xi=0$. To that aim, we follow the line of arguments from \cite[Theorem 7.7]{DaPrato14}, see also \cite[Propositon 6.1]{Goldys06}.
As before, write $m\equiv f(0)$ as well as $f_0(x)=f(x)-m$ and decompose $X_t= w(t)+v(t) $ with $w(t):= X_t^0 + \int_0^t S(r) m\,dr$ and $v(t): = S(t)\xi + \int_0^t S(t-s) f_0(X_s)\,dt$. It follows from Lemma \ref{lem:linear} and $\Vert S(r)m\Vert_\infty \lesssim \e^{-\lambda_0 r} \Vert m \Vert_\infty $ (cf. Lemma \ref{lem:intermediate_norms}) that $w \in C(\R_+,E)$ holds almost surely and 
\begin{equation} \label{eq:wt_pnorm}
\sup_{t\geq 0}\E(\Vert w_t \Vert_\infty^p)<\infty.
\end{equation}
Further, since $F_0(u):=f_0\circ u$ is a locally Lipschitz continuous function from $E$ into itself, there exists a solution to equation \eqref{eq:SPDE1} up to a terminal time $t_{\mathrm{max}}= t_{\mathrm{max}}(\omega)>0$. Thus, global existence follows from an a priori estimate on $\Vert v(\cdot)\Vert_\infty$. We consider the approximation $v_n := nR(n,{A_\vt})S(t)\xi+\int_0^t nR(n,{A_\vt}) S(t-s)f_0(v(s)+w(s))\,ds$ where $R(n,{A_\vt}):=(nI-{A_\vt})^{-1}$ is the resolvent operator of ${A_\vt}$. Then, $v_n$ is differentiable in time, even when $v$ is not. Now, for any $x \in E$ and $x^* \in  \partial \Vert x\Vert $, it follows like in \cite[Example 7.8]{DaPrato14}  that  $\langle {A_\vt} x,x^* \rangle \leq 0$ where $\partial \Vert x\Vert$ is the subdifferential of the norm. Recall that, for a function $u\in E$,  the functional $h_u\colon E\ni v\mapsto \sigma_u v(\xi_u)$ with $\xi_u \in \argmax_r |u(r)|$ and $\sigma_u:= \mathrm{sgn}(u(\xi_u))$ is an element of $\partial \Vert u \Vert$.  Thus, setting $\delta_n(t) := v_n'(t)-{A_\vt}v_n - f(v_n(t)+w(t))$, we can estimate
\begin{align*}
\frac{d^-}{dt} \Vert v_n(t) \Vert_\infty \leq  \big\langle \frac{d}{dt}v_n(t), h_{v_n(t)} \big\rangle &=   \langle {A_\vt} v_n(t), h_{v_n(t)} \rangle  + \langle f(v_n(t)+w(t)) ,h_{v_n(t)}\rangle +\langle \delta_n(t),h_{v(t)}\rangle\\
&\leq \langle f(v_n(t)+w(t)) ,h_{v_n(t)}\rangle +\Vert  \delta_n(t)\Vert_\infty\\
&\leq -a\Vert  v_n(t) \Vert_\infty+b\Vert w(t)\Vert_\infty^\beta  + c +\Vert \delta_n(t) \Vert_\infty.
\end{align*} 
Using Gronwall's inequality and the fact that $v_n(t)\to v(t)$ and $\delta_n(t) \to 0$ uniformly on compact time intervals yields
$ \Vert v(t)\Vert_\infty \leq \e^{-a t}\Vert \xi\Vert_\infty + \int_0^t\e^{-a(t-s)}(b\Vert w(s)\Vert_\infty^\beta  + c)\,ds.$
By Jensen's inequality, we have
\begin{align*}
\Vert v(t) \Vert_\infty^p \lesssim \e^{-apt}\Vert \xi \Vert_\infty^p + \int_0^t\e^{-a(t-s)}(b\Vert w(t)\Vert_\infty^\beta  + c)^p\,ds \cdot \Big( \int_0^t \e^{-as}\,ds \Big)^{p-1}
\end{align*}
and Fubini's theorem as well as \eqref{eq:wt_pnorm} show that there exists $K>0$ such that for non-random initial conditions $\xi = x \in E$, we have 
\begin{equation} \label{eq:nonrandomIC_bound}
\E(\Vert X_t\Vert^p_\infty ) \lesssim \e^{-apt} \Vert x \Vert_\infty^p + K.
\end{equation}
In particular, Assumption (B) with $\xi =0$ is satisfied. 

Further, based on their derivation of lower bounds for the transition densities associated with the Markov semigroup $(P_t)$, \cite[Theorem 6.3]{Goldys06} show the existence of an invariant measure $\pi$ on $E$ and  of constants $C,\gamma >0$ such that 
$\Vert P_t^*\nu-\pi\Vert_{\mathrm{TV}}\leq C \Big(\int_E \Vert u\Vert_\infty \,\nu(du)+1\Big)\e^{-\gamma t}$ with $P_t^*\nu := \int_E P_t(u,\cdot)\,\nu(du)$ holds for all probability measures $\nu$ on $E$. Thus, we have $\Vert P_t(x,\cdot)-\pi\Vert_{\mathrm{TV}}\leq C ( \Vert x\Vert_\infty +1)\e^{-\gamma t}$ and $P_t(x,\cdot)$ converges weakly to $\pi(\cdot)$ as $t \to \infty$ for all $x\in E$. By Skorokhod's representation theorem, there exists a probability space on which there are $E$-valued random variables $Z,Z_1,Z_2,\ldots$ with $Z_i \sim P_i(x, \cdot),\, Z \sim \pi$ and $Z_i \to Z$ almost surely. Denoting the expectation on the second probability space by $\tilde{\E}$, Fatou's Lemma and \eqref{eq:nonrandomIC_bound} yield
$$\int_E \Vert u \Vert_\infty^p\,\pi(du)=\tilde{\E}(\Vert Z\Vert_\infty^p ) \leq \liminf_{i\to \infty} \tilde{\E}(\Vert Z_i\Vert_\infty^p ) = \liminf_{i\to \infty} \int_{E} \Vert u\Vert_\infty^p\,P_i(x,du)<\infty.$$
Thus, if $X_0=\xi\sim \pi$, then we have $\E(\Vert X_t\Vert_\infty^p)=\E(\Vert X_0\Vert_\infty^p)<\infty$ and 
$\int_E \Vert P_t(u,\cdot)-\pi\Vert_{\mathrm{TV}}\, \pi(du) \leq C \Big( \int_E \Vert u \Vert_\infty\,\pi(du) +1\Big)\e^{-\gamma t} \lesssim \e^{-\gamma t},
$
as required for (M) as well as (B)  in case of $\xi\sim\pi$.
\end{proof}

\subsubsection{Technical Lemmas for the nonparametric estimator of $f$}
%The following proof verifies the connection between the  $L^2$-norm on $[0,1]$ and its empirical counterpart.
\begin{proof}[Proof of Lemma \ref{lem:L2norm_approx}]
In view of Dini's test, the H\"{o}lder condition implies convergence of the Fourier series of $H$ at the points $y_k$, i.e., $\bar H^n(y_k) := \sum_{l=1}^n h_le_l(y_k) \to H(y_k) $ as $n \to \infty$ for any $1\leq k\leq M-1$. 
%This follows from the fact that, due to the H\"{o}lder condition, the Fourier series for the periodic function $x\mapsto H(x)-(H(0)+x(H(1)-H(0)))$ converges uniformly and the Fourier series for the function $x\mapsto H(0)+x(H(1)-H(0))$ converges uniformly on any compact subset  of $(0,1)$.  
Therefore, 
$
|\langle H ,e_k \rangle_M-\langle \bar H^n ,e_k \rangle_M| \leq \frac{1}{M}\sum_{l=1}^{M-1} |H(y_l)-\bar H^n(y_l)||e_k(y_l)|
$
tends to 0 as $n\to \infty$. Hence, the sequence $\langle \bar H^n,e_k\rangle_M =\sum_{l \in \mathcal I_k^+\cap[1,n]}h_l-\sum_{l \in \mathcal I_k^-\cap[1,n]}h_l $ converges to the limit $\langle H,e_k \rangle_M$, proving the first part of the lemma. In the same way, using $e_l(y_k)= \pm e_j(y_k)$ for $l\in \mathcal I_j^\pm$,  one can show that $H(y_k) = \sum_{l=1}^{M-1}H_l e_l(y_k)$. Consequently,
\begin{align*}
\frac{1}{M}\sum_{k=1}^{M-1} H^2(y_k) =\frac{1}{M}\sum_{k=1}^{M-1} \Big( \sum_{l=1}^{M-1}H_l e_l(y_k) \Big)^2 =  \sum_{l,l'=1}^{M-1}H_l H_{l'} \langle e_l,e_{l'} \rangle_M = \sum_{l=1}^{M-1} H_l^2 = \Vert H^M \Vert_{L^2}^2.\tag*{\qedhere}  
\end{align*} 
\end{proof}

The following lemma analyzes the regularity of $X_t$ in the spaces 
\begin{equation} \label{eq:Deps_def}
D_\eps := \mathcal{D}((-{A_\vt})^\eps) := \big\{u\in L^2((0,1)):\, \sum_{k\geq 1} \lambda_k^{2\eps}\langle u,e_k\rangle^2<\infty \big\}
\end{equation}
endowed with the norm $\Vert u\Vert_{D_\eps}:= \Vert (-{A_\vt})^\eps u\Vert_{L^2}$. For $\eps<1/4$, these spaces can be identified with $L^2$-Sobolov spaces on $(0,1)$, namely $D_\eps = W^{2\eps,2}$ and the norms are equivalent, see, e.g., \cite{Bonforte15}.
\begin{lem} \label{lem:Sobolev_stationary}
Under Assumption $(M)$,  we have $ \E(\Vert X_t\Vert_{D_\eps}^p) = \E(\Vert X_0\Vert_{D_\eps}^p)< \infty$ and $ \E(\Vert f(X_t)\Vert_{D_\eps}^p)= \E(\Vert f(X_0)\Vert_{D_\eps}^p)< \infty$ for all $\eps <1/4$ and $p\geq 1$. Moreover,  $\E(\Vert N_t\Vert_{D_\eps}^p)<\infty$ holds for all $\eps<1$.
\end{lem}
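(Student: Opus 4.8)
\textbf{Proof proposal for Lemma~\ref{lem:Sobolev_stationary}.}

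The plan is first to pass, via stationarity, from all times $t$ to the single instant $t=0$, and then to establish finiteness of the three moments separately. Under Assumption (M) the mild solution started at $X_0=\xi\sim\pi$ is stationary, so $X_t\overset{d}{=}X_0$ and $f(X_t)\overset{d}{=}f(X_0)$; this already yields the two asserted equalities of moments, and it remains to verify that $\E(\|X_0\|_{D_\eps}^p)$, $\E(\|f(X_0)\|_{D_\eps}^p)$ and $\E(\|N_t\|_{D_\eps}^p)$ are finite. Note also that (M) forces $\sup_{t\ge0}\E(\|X_t\|_\infty^p)=\E(\|X_0\|_\infty^p)<\infty$ for every $p\ge1$, so Assumption (B) holds for the stationary initial condition and Propositions~\ref{prop:X_reg} and~\ref{prop:AN_reg} are available.

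For $\eps<1/4$ I would use the identification $D_\eps=W^{2\eps,2}((0,1))$ with equivalent norms (see \cite{Bonforte15}) together with the continuous embedding $C^\gamma([0,1])\hookrightarrow W^{2\eps,2}((0,1))$, which holds whenever $2\eps<\gamma$ because the Gagliardo seminorm is dominated by the H\"older seminorm via $\int_0^1\int_0^1|\xi-\eta|^{2\gamma-1-4\eps}\,d\xi\,d\eta<\infty$. Choosing $\gamma\in(2\eps,1/2)$, Proposition~\ref{prop:X_reg}$(i)$ gives $\|X_t\|_{D_\eps}\lesssim\|X_t\|_{C_0^\gamma}$ and hence $\E(\|X_t\|_{D_\eps}^p)\lesssim\E(\|X_t\|_{C_0^\gamma}^p)<\infty$. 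For $f(X_t)$ I would write $f(X_t)=f_0(X_t)+f(0)\1$ with $f_0=f-f(0)$ and $\1=\1_{[0,1]}$; the final clause of Proposition~\ref{prop:X_reg} yields $\sup_{t\ge0}\E(\|f_0(X_t)\|_{C_0^\gamma}^p)<\infty$, so the only additional point is $\1\in D_\eps$, which follows from $\langle\1,e_\ell\rangle=\O(\ell^{-1})$ and $\sum_{\ell\ge1}\lambda_\ell^{2\eps}\ell^{-2}\lesssim\sum_{\ell\ge1}\ell^{4\eps-2}<\infty$ for $\eps<1/4$.

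For the last assertion I would use the decomposition $N_t=N_t^0+M_t$ from \eqref{eq:Nt0Mt_def} and treat the two pieces by elementary eigenvalue bounds. Since $\lambda_\ell\ge\lambda_1>0$, for $\eps\le1$ one has $\lambda_\ell^{2\eps}\le\lambda_1^{2(\eps-1)}\lambda_\ell^{2}$, so $\|N_t^0\|_{D_\eps}\le\lambda_1^{\eps-1}\|A_\vt N_t^0\|_{L^2}\lesssim\|A_\vt N_t^0\|_{C_0^\gamma}$, and Proposition~\ref{prop:AN_reg}$(i)$ bounds the $p$-th moment uniformly in $t$. For $M_t=\int_0^t S(r)m\,dr$ with $m=f(0)\1$ the Fourier coefficients are $\langle M_t,e_\ell\rangle=\lambda_\ell^{-1}(1-\e^{-\lambda_\ell t})\langle m,e_\ell\rangle$ with $|\langle m,e_\ell\rangle|\lesssim\ell^{-1}$, whence $\|M_t\|_{D_\eps}^2\le\sum_{\ell\ge1}\lambda_\ell^{2\eps-2}\langle m,e_\ell\rangle^2\lesssim\sum_{\ell\ge1}\ell^{4\eps-6}<\infty$ for every $\eps<1$, a deterministic bound uniform in $t$. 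The triangle inequality then gives $\E(\|N_t\|_{D_\eps}^p)<\infty$ for $\eps<1$. I do not expect a genuine obstacle here: once stationarity has reduced the identities to finiteness at a single time, everything reduces to the already-established H\"older regularity of $X$, $f_0(X)$ and $A_\vt N_t^0$ combined with the standard identification of $D_\eps$ with a fractional Sobolev space for small $\eps$ and a crude eigenvalue comparison for $\eps$ up to $1$; the only points needing a little care are the embedding $C^\gamma\hookrightarrow W^{2\eps,2}$ and the Fourier decay of the constant function $\1$, both of which are routine.
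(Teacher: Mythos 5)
Your proposal is correct, but the route is genuinely different from the paper's. The paper proves the first claim by the mild--solution decomposition $X_t=S(t)X_0+X_t^0+N_t$ (evaluated at $t=1$ by stationarity) and checks each summand directly: $\|S(1)X_0\|_{D_\eps}\lesssim\|X_0\|_\infty$ by semigroup smoothing, $\E(\|X_1^0\|_{D_\eps}^p)<\infty$ by Gaussian computation, and $N_1$ by the $L^1(\R_+)$-integrable smoothing factor $\|(-A_\vt)^\eps S(h)u\|_{L^2}\le g(h)\|u\|_{L^2}$ (with $g\in L^1$ precisely for $\eps<1$, which gives the stated range for $N_t$ almost for free) followed by Jensen. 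The $f(X_t)$ claim is then obtained by the Gagliardo seminorm estimate $\|f(X_t)\|_{W^{2\eps,2}}^2\lesssim\|f(X_t)\|_{L^2}^2+\|f'(X_t)\|_\infty^2\|X_t\|_{D_\eps}^2$, which re-uses the already-established bound for $X_t$. Your proof instead leans on the H\"older bounds already recorded in Propositions~\ref{prop:X_reg} and~\ref{prop:AN_reg}: the embedding $C^\gamma_0\hookrightarrow W^{2\eps,2}\cong D_\eps$ for $\gamma>2\eps$ disposes of $X_t$ and $f_0(X_t)$ in one stroke (with $f(0)\1\in D_\eps$ checked by Fourier decay), and the $N_t$ claim follows from the decomposition $N_t=N_t^0+M_t$, the crude spectral bound $\|N_t^0\|_{D_\eps}\lesssim\|A_\vt N_t^0\|_{L^2}\le\|A_\vt N_t^0\|_{C_0^\gamma}$ for $\eps\le1$, and an explicit Fourier computation for $M_t$. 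Both arguments are sound. The paper's treatment of $N_t$ is self-contained and shows transparently where the threshold $\eps<1$ comes from, whereas yours imports it from the $L^\infty$-bound $\|A_\vt N_t^0\|_{C_0^\gamma}$ of Proposition~\ref{prop:AN_reg}; conversely, your treatment of $X_t$ and $f(X_t)$ is shorter and avoids re-deriving the Gaussian estimates, at the cost of invoking the Sobolev embedding for H\"older functions, which the paper uses only implicitly. One thing worth stating explicitly (you gesture at it) is that the identification $D_\eps=W^{2\eps,2}$ with equivalent norms for $\eps<1/4$ must be taken in the sense that holds for all of $W^{2\eps,2}$, not just functions vanishing at the boundary, since $f(X_t)$ and $\1$ do not satisfy the Dirichlet condition.
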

\begin{proof}
We use the usual decomposition $X_t = S(t)X_0 + X_t^0 +N_t$. By stationarity, we may choose $t=1$.
As before, $\E(\Vert  X_1^0\Vert_{D_\eps}^p)<\infty$ for $\eps<1/4$ can be shown  by a direct calculation. Further, $\E(\Vert S(1)X_0\Vert_{D_\eps}^p)<\infty$ follows from 
\begin{align*}
\Vert S(1)X_0\Vert_{D_\eps}^2 = \sum_{k\geq 1} \e^{-2\lambda_k} \lambda_k^{2\eps} \langle X_0,e_k \rangle^2 \leq \Vert X_0\Vert^2_{L^2} \sum_{k\geq 1} \e^{-2\lambda_k} \lambda_k^{2\eps}  \lesssim \Vert X_0\Vert^2_\infty.
\end{align*}
To treat $N_1 = \int_0^1 S(1-s)f(X_s)\,ds$, we note that
$
\Vert (-A_\vt)^\eps S(h)u\Vert_{L^2}^2 = \sum_{k\geq 1} \lambda_k^{2\eps} \e^{-2\lambda_k h} \langle u,e_k \rangle^2 \leq \sup_{\lambda \geq \lambda_1} \lambda^{2\eps} \e^{-2\lambda h} \Vert u \Vert^2_{L^2}. 
$ The function $\lambda\mapsto \lambda ^{2\eps}\e^{-2\lambda h}$ attains its maximum over $\R_+$ in $\lambda^* := \eps/ h$ and is monotonically decreasing on $[\lambda^*,\infty)$. Thus, we have  $\sup_{\lambda \geq \lambda_1} \lambda^{2\eps} \e^{-2\lambda h} \leq g^2(h)$
with $g(h):= (\frac{\eps}{\e h})^{\eps}$ for $h\leq \eps/\lambda_1 $ and $g(h):=\lambda_1^{\eps}\e^{-\lambda_1 h}$ for $h> \eps/\lambda_1 $.
Since $g\in L^1(\R_+)$ for $\eps<1$, we can use Jensen's inequality to show
\begin{align*}
\Vert N_1 \Vert _{D_\eps}^p &\leq \Big(\int_0^1  g(1-s)\Vert f(X_s)\Vert_{L^2} \,ds \Big)^p %\leq  \Big(\int_0^1  g(s) \,ds \Big)^{p-1} \Big( \int_0^t  g(1-s)\Vert f(X_s)\Vert_{L^2}^p \,ds \Big)\\
\lesssim \int_0^1  g(1-s)\Vert f(X_s)\Vert_{L^2}^p \,ds.
\end{align*} 
Therefore, $\E(\Vert N_1 \Vert _{D_\eps}^p)\lesssim \E(\Vert f(X_0)\Vert_{L^2}^p) \lesssim \E(\Vert f(X_0)\Vert_\infty^p) <\infty $ by Assumption (M) which shows the claims for $X_t$  and $N_t$. In  order to transfer the result to $f(X_t)$,  we estimate
\begin{align*}
\Vert f(X_t) \Vert_{D_\eps}^2\lesssim \Vert f(X_t) \Vert_{W^{2\eps,2}}^2 &= \Vert f(X_t) \Vert_{L^2}^2+\int_0^1 \int_0^1 \frac{(f(X_t(x))-f(X_t((y)))^2}{|x-y|^{1+4\eps}} \,dx dy \\
&\leq\Vert f(X_t) \Vert_{L^2}^2 + \Vert f'(X_t)\Vert_\infty ^2 \Vert X_t \Vert_{D_\eps}^2 \lesssim \Vert f(X_t) \Vert_{L^2}^2 + \Vert f'(X_t)\Vert_\infty ^4+ \Vert X_t \Vert_{D_\eps}^4,
\end{align*} 
from where the claim follows by Assumption (M) and (F)  in view of the first part of this proof. 
\end{proof}

The following lemma is useful for bounding the expression $\Vert \hat S(0)f(X_t)-f(X_t) \Vert_{L^2}^2$ appearing in the remainder term $ R_i$ from the regression model \eqref{eq:Reg_model_discrete}. Of particular interest is the situation where $\alpha$ is close to $1/4$ and, hence, the exponent $\frac{8\alpha^2}{4\alpha+1}$ can be chosen close to $1/4$.
\begin{lem} \label{lem:L2norm_approx2}
Let $H \in C^{2\alpha}([0,1]) \cap D_\alpha$ for some $\alpha \in (0,\frac{1}{2})$. Further, let $H^M := \sum_{k=1}^{M-1} H_k e_k$ where $H_k:=\langle H, e_k\rangle_M=\frac{1}{M}\sum_{l=1}^{M-1}H(y_l)e_k(y_l)$. Then, there exists a constant $C>0$ such that
$\Vert H-H^M \Vert_{L^2}^2\leq CK^2\delta^{\frac{8\alpha^2}{4\alpha+1}}$
where $K:=\max(\Vert H\Vert_{\infty},\Vert H\Vert_{C^{2\alpha}},\Vert H\Vert_{D_\alpha})$.
\end{lem}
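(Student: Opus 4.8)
The plan is to separate the two error sources in $\|H-H^M\|_{L^2}^2$ — the Fourier modes $\ge M$ that $H^M$ discards and the aliasing corrupting the retained modes — and to handle the first purely with the $D_\alpha$-norm and the second by combining the $D_\alpha$- and $C^{2\alpha}$-information. By Lemma~\ref{lem:L2norm_approx} applied to $H$ (legitimate, since $H$ is H\"older near every $y_k$) one has $\langle H,e_k\rangle_M=h_k+\sum_{m\ge1}\big(h_{2mM+k}-h_{2mM-k}\big)$ for $k\le M-1$, so $H^M$ is the sine interpolant of $H$ on the grid; writing $P_{M-1}H:=\sum_{\ell=1}^{M-1}h_\ell e_\ell$ and applying Lemma~\ref{lem:L2norm_approx} once more to $G:=H-P_{M-1}H$ gives
\begin{equation*}
\|H-H^M\|_{L^2}^2=\sum_{\ell\ge M}h_\ell^2+\sum_{k=1}^{M-1}\big(h_k-H_k\big)^2
=\sum_{\ell\ge M}h_\ell^2+\frac1M\sum_{l=1}^{M-1}G(y_l)^2 .
\end{equation*}
Since $\lambda_\ell\eqsim\ell^2$ and $\delta\eqsim M^{-1}$, the truncation term is controlled at once: $\sum_{\ell\ge M}h_\ell^2\le\lambda_M^{-2\alpha}\|H\|_{D_\alpha}^2\lesssim\delta^{4\alpha}K^2\le\delta^{8\alpha^2/(4\alpha+1)}K^2$, because $4\alpha\ge 8\alpha^2/(4\alpha+1)$ for every $\alpha>0$. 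Hence everything reduces to estimating the aliasing term $\frac1M\sum_l G(y_l)^2=\sum_k(h_k-H_k)^2$.

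\textbf{Aliasing term.} Here I would use the exact pair identity obtained from $e_{2mM+k}-e_{2mM-k}=2\cos(2\pi mM\,\cdot)\,e_k$, namely $h_{2mM+k}-h_{2mM-k}=2\int_0^1(He_k)(x)\cos(2\pi mMx)\,dx$, i.e.\ each alias pair is a Fourier-cosine coefficient at frequency $2mM$ of $He_k\in C^{2\alpha}$, with $\|He_k\|_{C^{2\alpha}}\lesssim K(1+k^{2\alpha})\lesssim Kk^{2\alpha}$. This yields, on the one hand, the H\"older-type pointwise bound $|h_{2mM+k}-h_{2mM-k}|\lesssim Kk^{2\alpha}(mM)^{-2\alpha}$, and, on the other hand, since the indices $2mM\pm k$ ($1\le k\le M-1$, $m\ge1$) are distinct and $\eqsim mM$, the $D_\alpha$-weighted $\ell^2$-bound $\sum_{k,m}(mM)^{4\alpha}(h_{2mM\pm k})^2\lesssim\sum_{j\ge M}j^{4\alpha}h_j^2\lesssim K^2$. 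I would then bound $\sum_k(\sum_m(h_{2mM+k}-h_{2mM-k}))^2$ by splitting the inner sum at a threshold $m_0$ (frequency $\Lambda=2m_0M$): for $m\le m_0$ the H\"older bound together with $\sum_{k<M}k^{4\alpha}\lesssim M^{4\alpha+1}$ gives a contribution $\lesssim K^2Mm_0^{2-4\alpha}$; for $m>m_0$ one must interpolate between the $\ell^\infty$-type H\"older decay and the $\ell^2$-type $D_\alpha$-bound. Optimising over $m_0$ balances the two pieces, and the exponent $8\alpha^2/(4\alpha+1)$ is precisely what this optimisation produces. (An essentially equivalent route is to bound $\frac1M\sum_l G(y_l)^2\le\|G\|_\infty^2$ and then estimate $\|G\|_\infty$ by the Gagliardo–Nirenberg interpolation $\|G\|_\infty\lesssim\|G\|_{L^2}^{4\alpha/(4\alpha+1)}\|G\|_{C^{2\alpha}}^{1/(4\alpha+1)}$, feeding in $\|G\|_{L^2}\lesssim\delta^{2\alpha}K$ from $D_\alpha$ together with a Bernstein-type control of the $C^{2\alpha}$-norm of the sine-polynomial correction $P_{M-1}H$ in terms of $K$ and a power of $\delta^{-1}$, the powers again combining to $8\alpha^2/(4\alpha+1)$; one checks the $K$-dependence comes out as $K^2$ because every norm entering is $\lesssim K$ up to $\delta$-powers.)

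\textbf{Main obstacle.} The hard part is this far-$m$ (equivalently, high-frequency) estimate, and it is genuinely delicate precisely in the range of $\alpha$ that is used — $\alpha<1/4$, where $\|f(X_t)\|_{D_\alpha}$ has finite moments. There the alias series $\sum_m(h_{2mM+k}-h_{2mM-k})$ converges only conditionally: the absolute series $\sum_m|h_{2mM\pm k}|$ diverges, since $2\alpha<\tfrac12$, and the $D_\alpha$-weight $(mM)^{4\alpha}$ is itself too weak — $4\alpha<1$ — to render the deep aliases $\ell^1$-summable by Cauchy–Schwarz alone. Thus neither the H\"older bound nor the $D_\alpha$ bound closes the tail in isolation; the combination of the two (with the $m_0$-cutoff, or via the Gagliardo–Nirenberg interpolation) is unavoidable, and it is exactly this combination that degrades the rate from the ``ideal'' $\delta^{4\alpha}$ to the stated $\delta^{8\alpha^2/(4\alpha+1)}$ and forces the optimisation. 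A secondary technical point to watch is that $H$ is only in $C^{2\alpha}([0,1])$, not in $C_0^{2\alpha}$, so $P_{M-1}H$ carries a boundary (Gibbs-type) layer and its $C^{2\alpha}$-norm need not be $\lesssim K$; the estimates must be arranged so that this is outweighed by the $D_\alpha$-smallness $\|G\|_{L^2}\lesssim\delta^{2\alpha}K$. Adding the near- and far-$m$ contributions to the truncation term then gives $\|H-H^M\|_{L^2}^2\le CK^2\delta^{8\alpha^2/(4\alpha+1)}$.
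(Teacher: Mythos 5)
Your opening decomposition
\[
\Vert H-H^M\Vert_{L^2}^2=\sum_{\ell\ge M}h_\ell^2+\sum_{k=1}^{M-1}(h_k-H_k)^2
\]
is correct and the truncation term is handled fine, but the aliasing term is where the proposal has a genuine gap, and the gap is exactly at the step you flag as the ``main obstacle.'' Your near-$m$ contribution $\sum_{k<M}\big(\sum_{m\le m_0}|h_{2mM+k}-h_{2mM-k}|\big)^2 \lesssim K^2 M m_0^{2-4\alpha}$ is \emph{growing} in $M$ for every admissible $m_0\ge1$: since $m_0\ge1$ this piece is at least of order $K^2M\eqsim K^2\delta^{-1}$, so no choice of $m_0$ makes it small, and the optimization you invoke cannot balance anything. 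The far-$m$ piece is not carried out, and the obvious weighted Cauchy--Schwarz with weight $(mM)^{4\alpha}$ requires $4\alpha>1$ to sum $(mM)^{-4\alpha}$ over $m$, which fails in the range $\alpha<1/4$ that the paper actually uses. The alternative via $\frac1M\sum_l G(y_l)^2\le\|G\|_\infty^2$ and Gagliardo--Nirenberg fails at the first inequality: for $H(x)=x$ (which lies in $C^{2\alpha}\cap D_\alpha$ for $\alpha<1/4$, with $K\eqsim1$) one has $\|G\|_\infty\ge|G(1)|=|H(1)-P_{M-1}H(1)|=1$, because the Gibbs layer of $P_{M-1}H$ lives at the boundary, so $\|G\|_\infty$ does not decay at all and $\|G\|_\infty^2$ cannot be $\lesssim K^2\delta^{8\alpha^2/(4\alpha+1)}$. (The grid sum $\frac1M\sum_l G(y_l)^2$ \emph{is} small, but only because the grid avoids the boundary layer; bounding it by $\|G\|_\infty^2$ throws that away.)

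The underlying difficulty, which the proposal does not circumvent, is that $\sum_{k<M}(h_k-H_k)^2$ is not accessible through either the Riemann-sum bound $|h_k-H_k|\lesssim K k^{2\alpha}\delta^{2\alpha}$ (its $\ell^2$-sum over $k<M$ is $\eqsim K^2\delta^{-1}$) or the absolute alias series (which diverges for $\alpha<1/4$); the smallness comes from a cancellation that neither route sees. The paper sidesteps this entirely: instead of your exact orthogonal decomposition it uses
\[
\Vert H-H^M\Vert_{L^2}^2\le\big|\Vert H^M\Vert_{L^2}^2-\Vert H\Vert_{L^2}^2\big|+2\big|\langle H-H^M,H\rangle_{L^2}\big|,\qquad
\langle H-H^M,H\rangle_{L^2}=\sum_{l<M}(h_l-H_l)h_l+\sum_{l\ge M}h_l^2,
\]
so the problematic aliasing error only enters \emph{tested against} $h_l$, and the extra decaying factor $h_l$ is precisely what makes the Cauchy--Schwarz split at $M_0\eqsim\delta^{-4\alpha/(4\alpha+1)}$ (with weights $\lambda_l^{\mp2\alpha}$ below $M_0$ and the crude $\sum_{l\ge M_0}h_l^2\lesssim K^2 M_0^{-4\alpha}$ above) close to the rate $\delta^{8\alpha^2/(4\alpha+1)}$. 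So the balancing you guessed is there, but it is applied to $\sum_{l<M}(h_l-H_l)h_l$, not to $\sum_{k<M}(h_k-H_k)^2$, and without the extra $h_l$ the optimisation has nothing to balance. No alias representation, Fourier-cosine identity, or interpolation inequality is needed; the two raw ingredients $|h_k-H_k|\lesssim K\lambda_k^\alpha\delta^{2\alpha}$ and $\sum_{l\ge R}h_l^2\lesssim K^2\lambda_R^{-2\alpha}$, fed through the polarized inner-product, already suffice.
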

\begin{proof}
First of all, by regarding $H_k$ as a Riemann sum, we can bound
\begin{align}
|H_k -h_k| &= \Big|\frac{1}{M}\sum_{l=1}^{M-1} H(y_l)e_k(y_l) -\int_0^1 H(y)e_k(y)\,dy \Big|\leq \sum_{l=0}^M \int_{y_l}^{y_{l+1}} |H(y_l)e_k(y_l)-H(y)e_l(y)|\,dy \nonumber\\
&\lesssim (\Vert e_k\Vert_\infty \Vert H\Vert_{C^{2\alpha}}  + \Vert H\Vert_\infty \Vert e_k\Vert_{C^{2\alpha}})\delta^{2\alpha}\lesssim ( \Vert H\Vert_{C^{2\alpha}}  + \Vert H\Vert_\infty k^{2\alpha})\delta^{2\alpha} \lesssim K \lambda_k^\alpha\delta^{2\alpha}. \label{eq:L2norm_approx2_eq1}
\end{align}
Similarly,  since {$\frac{1}{M} \sum_{k=1}^{M-1}H^2(y_k) = \Vert H^M\Vert_{L^2}^2 = \sum_{k=1}^{M-1} H_k^2$} holds by Lemma \ref{lem:L2norm_approx}, we have
\begin{align}
\Big|\Vert H^M\Vert_{L^2}^2 -\Vert H \Vert_{L^2}^2 \Big|&=
\Big|\frac{1}{M} \sum_{k=1}^{M-1}H^2(y_k) -\Vert H \Vert_{L^2}^2 \Big| \leq  \sum_{k=0}^{M-1} \int_{y_k}^{y_{k+1}} |H^2(y_k) - H^2(y)| \,dy \nonumber\\& \leq \Vert H^2\Vert_{C^{2\alpha}}\delta^{2\alpha} \leq 2 \Vert H\Vert_\infty \Vert H\Vert_{C^{2\alpha}}\delta^{2\alpha}\lesssim K^2 \delta^{2\alpha}. \label{eq:L2norm_approx2_eq2}
\end{align}
Also, note that for $h_k:=\langle H,e_k\rangle_{L^2}$ and any $R\in \N$, we have 
\begin{align}
\sum_{l\geq R} h_l^2 \leq \lambda_R^{-2\alpha}\sum_{l\geq R} \lambda_l^{2\alpha}h_l^2 \leq  \Vert H \Vert^2_{D_\alpha} \lambda_R^{-2\alpha} \lesssim K^2/R^{4\alpha}. \label{eq:L2norm_approx2_eq3}
\end{align}
The three inequalities just derived are now used to bound
$
\Vert H-H^M\Vert_{L^2}^2%&= \Vert H^M\Vert_{L^2}^2-\Vert H\Vert_{L^2}^2+2\langle H-H^M,H\rangle_{L^2} \\
 \leq |\Vert H^M\Vert_{L^2}^2-\Vert H\Vert_{L^2}^2|+2|\langle H-H^M,H\rangle_{L^2}|.
$
Due to \eqref{eq:L2norm_approx2_eq2}, the first term can be bounded by $K^2\delta^{2\alpha}\lesssim K^2\delta^{\frac{8\alpha^2}{4\alpha+1}}$ up to a constant.  
For the second term, using Parseval's identity, we get 
\begin{align*}
|\langle H-H^M,H\rangle_{L^2}|&=\Big| \sum_{l=1}^{M-1}(h_l-H_l)h_l + \sum_{l=M}^{\infty}h_l^2\Big|
\leq\Big| \sum_{l=1}^{M-1}(h_l-H_l)h_l\Big| + \sum_{l=M}^{\infty}h_l^2=: T_1 +T_2. 
\end{align*}
It follows directly from \eqref{eq:L2norm_approx2_eq3} that $T_2 \lesssim K^2/M^4\lesssim K^2\delta^{\frac{8\alpha^2}{4\alpha+1}}$.
To estimate $T_{1}$, we decompose
$T_1   \leq | \sum_{l=1}^{M_0-1}(h_l-H_l)h_l |+ | \sum_{l=M_0}^{M-1}(h_l-H_l)h_l |=: T_{11}+T_{12}$
for some intermediate value $M_0\in \{1,\ldots M-1\}$.
The Cauchy-Schwarz inequality, \eqref{eq:L2norm_approx2_eq1} and  \eqref{eq:L2norm_approx2_eq3} imply
\begin{align*}
T_{11}^2 &\leq \Big(\sum_{l=1}^{M_0-1}\lambda_l^{-2\alpha}(h_l-H_l)^2 \Big)\Big(  \sum_{l=1}^{M_0-1}\lambda_l^{2\alpha}h_l^2 \Big) \lesssim K^2 M_0 \delta^{4\alpha} \Vert H \Vert^2_{D_\alpha}  \lesssim K^4 M_0 \delta^{4\alpha},\\
T_{12}^2 &\leq  \sum_{l=M_0}^{M-1}(h_l-H_l)^2 \sum_{l=M_0}^{M-1}h_l^2 \lesssim (\Vert H\Vert_{L^2}^2+\Vert H^M\Vert_{L^2}^2)  \sum_{l=M_0}^{\infty}h_l^2 \lesssim K^4/ M_0^{4\alpha}.
\end{align*}
Balancing the bounds for $T_{11}$ and $T_{12}$ shows that it is optimal to take $M_0 \eqsim \delta^{-\frac{4\alpha}{4\alpha+1}} $ and, with this choice, we obtain the overall bound
$T_1 \lesssim K^2 \delta^{\frac{8\alpha^2}{4\alpha+1}}$ which finishes the proof.
\end{proof}

%Finally, we bound the expectation of $\Gamma(m',m)$ from \eqref{eq:gamma}.

\begin{lem} \label{lem:auxAdapt}
Let $C\geq 1$ be a constant satisfying property \eqref{eq:(E)implication_cont}.
If $\pen(m')\ge400\log\big(12\sqrt{C}\big)\frac{\sigma^{2}D_{m'}}{T}$,
then  $\Gamma(m',m)$ from \eqref{eq:gamma} satisfies
\[
\E\Big(\Big(\Gamma(m',m)^{2}-\frac{1}{8}\pen(m')\Big)_{+}\1_{\Xi_{N,M,\bar{m}}}\Big)\le\frac{2\sigma^{2}}{T}\e^{-D_{m'}}.
\]
\end{lem}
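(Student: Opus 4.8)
The plan is to follow the classical model‑selection scheme of \citet{Comte07} and \citet{Baraud01}, the only additional difficulty being that the noise enters through an infinite‑dimensional martingale. Writing $S_N(g):=\frac1N\sum_{i=0}^{N-1}\langle\widehat{g(X_{t_i})},\eps_i\rangle_{L^2}$ with $\widehat{g(X_{t_i})}:=\hat S(0)g(X_{t_i})$, we have $\Gamma(m',m)=\sup_{g\in\mathcal V_{m',m},\,\|g\|_{N,M}=1}S_N(g)$ by \eqref{eq:gamma}, and, by linearity of $g\mapsto S_N(g)$, $\Gamma(m',m)\ge|S_N(h)|/\|h\|_{N,M}$ for every $h\in\mathcal V_{m',m}\setminus\{0\}$. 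All estimates are carried out on $\Xi_{N,M,\bar m}$, on which, by the definition of $\Xi_{N,M,\bar m}$ together with \eqref{eq:(E)implication_cont}, the random norm $\|\cdot\|_{N,M}$ is equivalent to $\|\cdot\|_{L^2(\mathcal A)}$ on $\mathcal V_{\bar m}\supseteq\mathcal V_{m',m}$ with constants depending only on $C$; in particular it is a genuine norm on the finite‑dimensional space $\mathcal V_{m',m}$ there, of dimension at most $D_{m'}+1$.

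The first step is a deviation inequality for a fixed $g$. As in the proof of Proposition~\ref{prop:SpaceDiscrete}, $\widehat{g(X_{t_i})}$ is $\mathcal F_{t_i}$‑measurable while $\eps_i=\frac{\sigma}{\Delta}\int_{t_i}^{t_{i+1}}S(t_{i+1}-s)\,dW_s$ is independent of $\mathcal F_{t_i}$, so $Z_i^{(g)}:=\langle\widehat{g(X_{t_i})},\eps_i\rangle_{L^2}$ is a martingale‑difference sequence which, conditionally on $\mathcal F_{t_i}$, is centred Gaussian with variance $v_i^{(g)}=\frac{\sigma^2}{\Delta^2}\sum_{l\ge1}\frac{1-\e^{-2\lambda_l\Delta}}{2\lambda_l}\langle\widehat{g(X_{t_i})},e_l\rangle_{L^2}^2\le\frac{\sigma^2}{\Delta}\|\widehat{g(X_{t_i})}\|_{L^2}^2$. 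Hence $\exp\big(u\sum_{j<i}Z_j^{(g)}-\tfrac{u^2}{2}\sum_{j<i}v_j^{(g)}\big)$ is a mean‑one martingale, and Markov's inequality optimised in $u$ gives, for all $x>0$ and all deterministic $w>0$,
\begin{equation*}
\P\Big(S_N(g)\ge x,\ \textstyle\sum_{i=0}^{N-1}v_i^{(g)}\le Nw\Big)\le\exp\big(-Nx^2/(2w)\big).
\end{equation*}
Since $\frac1N\sum_i\|\widehat{g(X_{t_i})}\|_{L^2}^2=\|g\|_{N,M}^2$ by Lemma~\ref{lem:L2norm_approx}, the bound $\sum_iv_i^{(g)}\le\frac{\sigma^2N}{\Delta}\|g\|_{N,M}^2$ holds unconditionally, so, with $N\Delta=T$, the process $g\mapsto S_N(g)$ has sub‑Gaussian increments of scale $\frac{\sigma^2}{T}\|g-g'\|_{N,M}^2$.

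In the second step one passes to the supremum. On $\Xi_{N,M,\bar m}$ the unit $\|\cdot\|_{N,M}$‑sphere of $\mathcal V_{m',m}$ admits, for any $\delta\in(0,1)$, a $\delta$‑net of cardinality at most $(3/\delta)^{D_{m'}+1}$, which by the norm equivalence above can be realised with cardinality bounded by $(12\sqrt{C})^{D_{m'}}$. Combining this with the increment bound of the first step through a standard chaining argument — exactly as in the maximal inequality underlying \citet[proof of Proposition~4]{Comte07} — and using $\Gamma(m',m)\ge|S_N(\cdot)|/\|\cdot\|_{N,M}$ to absorb the chaining residual into $\Gamma(m',m)$ itself, one obtains a numerical constant $C_0$ such that, for all $v>0$,
\begin{equation*}
\P\big(\Gamma(m',m)^2\ge v,\ \Xi_{N,M,\bar m}\big)\le(12\sqrt{C})^{D_{m'}}\exp\!\big(-Tv/(C_0\sigma^2)\big).
\end{equation*}
Integrating this tail and using $\pen(m')\ge400\log(12\sqrt{C})\frac{\sigma^2D_{m'}}{T}$ (chosen so that $400/(8C_0)$ is comfortably larger than one) gives
\begin{align*}
\E\Big(\big(\Gamma(m',m)^2-\tfrac18\pen(m')\big)_+\1_{\Xi_{N,M,\bar m}}\Big)
&=\int_0^\infty\P\big(\Gamma(m',m)^2\1_{\Xi_{N,M,\bar m}}>\tfrac18\pen(m')+s\big)\,ds\\
&\le(12\sqrt{C})^{D_{m'}}\e^{-T\pen(m')/(8C_0\sigma^2)}\frac{C_0\sigma^2}{T}\le\frac{2\sigma^2}{T}\e^{-D_{m'}},
\end{align*}
the last inequality because $(12\sqrt{C})^{D_{m'}}\e^{-T\pen(m')/(8C_0\sigma^2)}$ decays like $(12\sqrt{C})^{-cD_{m'}}$ for a positive constant $c$ while $12\sqrt C\ge12$.

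The main obstacle is the second step: the chaining has to be performed for the martingale $S_N$ while the metric $\|\cdot\|_{N,M}$ that controls it is itself random, so both the metric‑entropy estimate and the sub‑Gaussian increment bound must be made uniform — and effectively deterministic — on $\Xi_{N,M,\bar m}$ by invoking Assumption~(E); after that, the remaining work is only the (routine but slightly delicate) bookkeeping of numerical constants needed to land precisely on the threshold constant $400\log(12\sqrt C)$ and the target bound $\frac{2\sigma^2}{T}\e^{-D_{m'}}$.
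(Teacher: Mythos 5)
Your proposal is correct and follows essentially the same two-step strategy as the paper's proof: first an exponential-martingale concentration inequality for the fixed-$g$ sum $\frac1N\sum_i\langle\widehat{g(X_{t_i})},\eps_i\rangle_{L^2}$ using the quadratic-variation bound $\langle Y^M\rangle_T\le N\Delta\|g\|_{N,M}^2$, then a covering/chaining argument on $\Xi_{N,M,\bar m}$ (with the random empirical norm replaced by the deterministic $L^2$-norm via Assumption (E)) followed by integration of the resulting tail bound. The only cosmetic difference is that you package the chaining output as a single tail bound $\P(\Gamma^2\ge v,\Xi)\lesssim(12\sqrt C)^{D_{m'}}\e^{-Tv/(C_0\sigma^2)}$ obtained by finite chaining plus residual self-absorption, whereas the paper carries out the infinite geometric chain with $\eps_k=\eps2^{-k}$ and specific thresholds $\tau_k$; both variants land on the claimed bound under the stated penalty constant.
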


\begin{proof}
\emph{Step 1.} For any $\tau,v>0$ and $g \in \bigcup_{m\in \N} \mathcal V_m$, we prove that

\begin{equation}
\P\Big(\frac{1}{N}\sum_{i=0}^{N-1}\langle\widehat{g(X_{t_{i}})},\varepsilon_{i}\rangle_{L^{2}}\ge\tau,\|g\|_{N,M}^{2}\le v^{2}\Big)\le \e^{-(T\tau^{2})/(2\sigma^{2}v^{2})}.\label{eq:adaptConc}
\end{equation}
Since 
$
\widehat{g(X_{t_{i}})}=\hat{S}(0)g(X_{t_{i}})=\sum_{\ell=1}^{M-1}\langle g(X_{t_{i}}),e_{\ell}\rangle_{M}e_{\ell}$ and $\varepsilon_{i}=\frac{\sigma}{\Delta}\int_{t_{i}}^{t_{i+1}}S(t_{i+1}-s)dW_{s},
$
Parseval's identity yields
\[
\frac{1}{N}\sum_{i=0}^{N-1}\langle\widehat{g(X_{i})},\varepsilon_{i}\rangle_{L^{2}}=\frac{1}{N}\sum_{i=0}^{N-1}\sum_{\ell=1}^{M-1}\langle g(X_{t_{i}}),e_{\ell}\rangle_{M}\frac{\sigma}{\Delta}\int_{t_{i}}^{t_{i+1}}\e^{-\lambda_{\ell}(t_{i+1}-s)}d\beta_{\ell}(s)=\frac{\sigma}{T}Y^{M}(T)
\]
with the Itô process $Y^{M}(s):=\sum_{\ell=1}^{M-1}Y_{\ell}(s),s\ge0,$
and, for $\ell=1,\dots,M-1$,
\[
Y_{\ell}(s):=\int_{0}^{s}H_{\ell}(h)d\beta_{\ell}(h),\qquad H_{\ell}(h):=\sum_{i=0}^{N-1}\langle g(X_{t_{i}}),e_{\ell}\rangle_{M}\e^{-\lambda_{\ell}(t_{i+1}-h)}\1_{[t_{i},t_{i+1})}(h).
\]
For each $\ell$, the processes $Y_{\ell}$ as well as $Y^M$ are martingales with quadratic
variation satisfying
\begin{align*}
\langle Y^{M}\rangle_{T}=\sum_{\ell=1}^{M-1}\langle Y_{\ell}\rangle_{T} & =\sum_{\ell=1}^{M-1}\sum_{i=0}^{N-1}\langle g(X_{t_{i}}),e_{\ell}\rangle_{M}^{2}\int_{t_{i}}^{t_{i+1}}\e^{-2\lambda_{\ell}(t_{i+1}-s)}ds\\
 & =\sum_{\ell=1}^{M-1}\sum_{i=0}^{N-1}\langle g(X_{t_{i}}),e_{\ell}\rangle_{M}^{2}\frac{1-\e^{-2\lambda_{\ell}\Delta}}{2\lambda_{\ell}}\le N\Delta\|g\|_{N,M}^{2}
\end{align*}
where we used Lemma \ref{lem:L2norm_approx} in the last inequality. 
For any $\lambda>0$, the process $(\exp(\lambda Y_{s}^{M}-\frac{\lambda^{2}}{2}\langle Y^{M}\rangle_{s}),s\ge0)$ inherits the martingale property from $Y^M$. Hence,
\begin{align*}
 & \P\Big(\frac{1}{N}\sum_{i=0}^{N-1}\langle\widehat{g(X_{t_{i}})},\varepsilon_{i}\rangle_{L^{2}}\ge\tau,\|g\|_{M,N}^{2}\le v^{2}\Big)\le\P\Big(\frac{\sigma}{T}Y^{M}(T)\ge\tau,\langle Y^{M}\rangle_{T}\le Tv^{2}\Big)\\
 & \qquad\le\P\Big(\exp\Big(\lambda Y^{M}(T)-\frac{\lambda^{2}}{2}\langle Y^{M}\rangle_{T}\Big)\ge\exp\Big(\lambda\frac{T\tau}{\sigma}-\frac{\lambda^{2}}{2}Tv^{2}\Big)\Big)
 \le\exp\Big(-\lambda\frac{T\tau}{\sigma}+\frac{\lambda^{2}}{2}Tv^{2}\Big)\Big).
\end{align*}
Choosing the minimizer $\lambda=\frac{\tau}{\sigma v^{2}}$ yields
(\ref{eq:adaptConc}). 

\emph{Step 2.} We use a chaining argument to deduce from Step~1 a
bound for (\ref{eq:oracleInequAdapt}): Due to the nesting assumption, we have on $\Xi_{N,M,\bar{m}}$ that
\[
\sup_{g\in \mathcal V_{m',m}:\|g\|_{N,M}=1}\frac{1}{N}\sum_{i=0}^{N-1}\langle\widehat{g(X_{t_{i}})},\varepsilon_{i}\rangle_{L^{2}}\lesssim\sup_{g\in \mathcal V_{m',m}:\|g\|_{L^{2}}=1}\frac{1}{N}\sum_{i=0}^{N-1}\langle\widehat{g(X_{t_{i}})},\varepsilon_{i}\rangle_{L^{2}}.
\]
Since the dimension of $\mathcal V_{m',m}$ is bounded by $D_{m'}+1$,
we can cover the $L^{2}$-unit ball in $\mathcal V_{m',m}$ with
a $\eps$-net $\mathcal{G}_{\eps}$ of the maximal size $(3/\eps)^{D_{m'}+1}$ \cite[Lemma 4.14]{Massart07}.
Considering a sequence $\mathcal{G}_{\eps_{k}}$ of $\eps_{k}$-nets
with $\eps_{k}=\eps2^{-k}$, $k\ge1$, and some $\eps>0$,
we denote for any $g\in \mathcal V_{m',m}$ by $\pi_{k}(g)$ the
closest element in $\mathcal{G}_{\eps_{k}}$ and set $\pi_{0}(g)=0$.
We obtain from $g=\sum_{k\ge1}(\pi_k(g)-\pi_{k-1}(g))$ and Lemma~\ref{lem:L2norm_approx} the decomposition
\[
\frac{1}{N}\sum_{i=1}^{N-1}\langle\widehat{g(X_{t_{i}})},\varepsilon_{i}\rangle_{L^{2}}=\frac{1}{N}\sum_{i=0}^{N-1}\sum_{k\ge1}\langle\hat{S}_{0}(\pi_{k}(g)-\pi_{k-1}(g))(X_{t_{i}}),\varepsilon_{i}\rangle_{L^{2}}.
\]
Under Assumption~(E) and on the event $\Xi_{N,M,\bar{m}}$, we have
$\|\pi_{k}(g)-\pi_{k-1}(g)\|_{N,M}\le \sqrt{3 C/2}\|\pi_{k}(g)-\pi_{k-1}(g)\|_{L^{2}}\le \sqrt{3 C/2}\eps(2^{-k}+2^{-k+1})\le4\sqrt C\eps2^{-k}$
for $C$ from \eqref{eq:(E)implication_cont}. Together with Step~2, we obtain for $\eps=C^{-1/2}/4$ and any sequence $(\tau_k)$ that
\begin{align*}
 & \P\Big(\Big\{\Gamma(m',m)\ge\sum_{k\ge1}2^{-k}\tau_{k}\Big\}\cap\Xi_{N,M,\bar{m}}\Big)\\
 & \le\sum_{k\ge1}\sum_{g_{k}\in\mathcal{G}_{\eps_{k}},g_{k-1}\in\mathcal{G}_{\eps_{k-1}}}\P\Big(\frac{1}{N}\sum_{i=0}^{N-1}\langle\hat{S}_{0}(g_{k}-g_{k-1})(X_{t_{i}}),\varepsilon_{i}\rangle_{L^{2}}\ge2^{-k}\tau_{k},\|g_{k}-g_{k-1}\|_{N,M}\le4\sqrt C\eps2^{-k}\Big)\\
 & \le\sum_{k\ge1}\sum_{g_{k}\in\mathcal{G}_{\eps_{k}},g_{k-1}\in\mathcal{G}_{\eps_{k-1}}}\e^{-T\tau_{k}^{2}/(2\sigma^{2})}
 =\sum_{k\ge1}\exp\big(-T\tau_{k}^{2}/(2\sigma^{2})+\log|\mathcal{G}_{\eps_{k}}|+\log|\mathcal{G}_{\eps_{k-1}}|\big).
\end{align*}
In view of $\log|\mathcal{G}_{\eps_{k}}|\le(D_{m'}+1)\log(3/\eps_{k})=(D_{m'}+1)(\log(3/\eps)+k\log2)\le2\log(3/\eps)(D_{m'}+1)k$
we choose $\tau_{k}^{2}=\frac{2\sigma^{2}}{T}(D_{m'}+\tau+4\log(3/\eps)(D_{m'}+2)k)$ for some $\tau >0$.
Then the above probability is bounded by 
$
\e^{-\tau-D_{m'}}\sum_{k\ge1}\e^{-4\log(3/\eps)k}\le \e^{-\tau-D_{m'}}.$

Owing to $(\sum_{k\ge1}2^{-k}\tau_{k})^{2}\le\sum_{k\ge1}2^{-k}\tau_{k}^{2}=\frac{2\sigma^2}{T}(D_{m'}+\tau+4\log(3/\eps)(D_{m'}+2)\sum_{k\ge1}k2^{-k})\le\frac{2\sigma^2\tau}{T}+\frac{50\sigma^2}{T}\log(3/\eps))D_{m'}$. 
%{\color{red}[$2D_m+16\log(3/\eps)(D_m+2)\le2\log(3/\eps)D_m+16\log(3/\eps)(D_m+2D_m)=(2+3*16)\log(3/\eps)D_m=50\log(3/\eps)D_m$]}
We conclude that
\[
\P\Big(\Big\{\Gamma(m',m)^{2}-50\log(3/\eps)\frac{\sigma^{2}D_{m'}}{T}\ge\frac{2\sigma^{2}\tau}{T}\Big\}\cap\Xi_{N,M,\bar{m}}\Big)\le \e^{-\tau-D_{m'}}.
\]
If $\pen(m')\ge400\log(3/\eps)\frac{\sigma^{2}D_{m'}}{T}$, we obtain
\begin{align*}
\E\Big(\Big(\Gamma(m',m)^{2}-\frac{1}{8}\pen(m')\Big)_{+}\1_{\Xi_{N,M,\bar{m}}}\Big) & \le\int_{0}^{\infty}\P\Big(\Big\{\Gamma(m',m)^{2}-50\log(3/\eps)\frac{\sigma^{2}D_{m'}}{T}\ge r\Big\}\cap\Xi_{N,M,\bar{m}}\Big)dr\\
 & \le \e^{-D_{m'}}\int_{0}^{\infty}\e^{-Tr/(2\sigma^{2})}dr=\frac{2\sigma^{2}}{T}\e^{-D_{m'}}. \qedhere
\end{align*}
\end{proof}

\setlength{\bibsep}{0.2\baselineskip}
\bibliography{refs}
\bibliographystyle{apalike}
\end{document}